\DeclareSymbolFont{bbold}{U}{bbold}{m}{n}
\DeclareSymbolFontAlphabet{\mathbbold}{bbold}
\newcommand{\bbone}{\mathbbold{1}}
\DeclareSymbolFontAlphabet{\amsbb}{AMSb}
\renewcommand{\mathbb}[1]{\amsbb{#1}}
\newtheorem{theorem}{Theorem}
\numberwithin{theorem}{subsection}
\newtheorem{thm}[theorem]{Theorem}
\newtheorem{proposition}[theorem]{Proposition}
\newtheorem{propn}[theorem]{Proposition}
\newtheorem{corollary}[theorem]{Corollary}
\newtheorem{cor}[theorem]{Corollary}
\newtheorem{lemma}[theorem]{Lemma}
\theoremstyle{definition}
\newtheorem{definition}[theorem]{Definition}
\newtheorem{defn}[theorem]{Definition}
\newtheorem{example}[theorem]{Example}
\newtheorem{examples}[theorem]{Examples}
\newtheorem{notation}[theorem]{Notation}
\newtheorem{remark}[theorem]{Remark}
\newtheorem{observation}[theorem]{Observation}
\newtheorem{construction}[theorem]{Construction}
\newtheorem{warning}[theorem]{Warning}
\newtheorem{variant}[theorem]{Variant}
\providecommand{\op}{\mathrm{op}}
\providecommand{\xint}{\mathrm{int}}
\newcommand{\xHom}{\operatorname{Hom}}
\newcommand{\xFun}{\operatorname{Fun}}
\newcommand{\act}{\name{act}}
\newcommand{\Act}{\name{Act}}
\newcommand{\el}{\name{el}}
\DeclareMathOperator{\colimP}{colim}
\DeclareMathOperator{\Ar}{Ar}
\DeclareMathOperator{\ev}{ev}
\DeclareMathOperator{\Emb}{Emb}
\newcommand{\colim}{\mathop{\colimP}}
\newcommand{\PSh}{\operatorname{P}}
\newcommand{\xMap}{\operatorname{Map}}
\newcommand{\Map}{\operatorname{Map}}
\newcommand{\xE}{\mathcal{E}}
\newcommand{\xS}{\mathcal{S}}
\newcommand{\xV}{\mathcal{V}}
\newcommand{\id}{\operatorname{id}}
\newcommand{\xF}{\mathbb{F}}
\newcommand{\xFs}{\xF_{*}}
\newcommand{\xFn}{\xFs^{\natural}}
\newcommand*\cocolon{%
  \nobreak
  \mskip6mu plus1mu
  \mathpunct{}%
  \nonscript
  \mkern-\thinmuskip
  {:}%
  \mskip2mu
  \relax
}
\newcommand{\icat}{$\infty$-category}
\newcommand{\icatl}{$\infty$-categorical}
\newcommand{\igpd}{$\infty$-groupoid}
\newcommand{\igpds}{$\infty$-groupoids}
\newcommand{\icats}{$\infty$-categories}
\newcommand{\iopd}{$\infty$-operad}
\newcommand{\iopds}{$\infty$-operads}
\newcommand{\isoto}{\xrightarrow{\sim}}
\newcommand{\xto}[1]{\xrightarrow{#1}}
\newcommand{\xfrom}[1]{\xleftarrow{#1}}
\newcommand{\csquare}[8]{ %
\[ %
\begin{tikzpicture} %
\matrix (m) [matrix of math nodes,row sep=3em,column sep=2.5em,text height=1.5ex,text depth=0.25ex] %
{ #1 \pgfmatrixnextcell #2 \\ %
  #3 \pgfmatrixnextcell #4 \\ }; %
\path[->,font=\footnotesize] %
(m-1-1) edge node[auto] {$#5$} (m-1-2)%
(m-1-1) edge node[left] {$#6$} (m-2-1)%
(m-1-2) edge node[auto] {$#7$} (m-2-2)%
(m-2-1) edge node[below] {$#8$} (m-2-2);%
\end{tikzpicture}%
\]%
}
\def\@tocline#1#2#3#4#5#6#7{\relax
  \ifnum #1>\c@tocdepth % then omit
  \else
    \par \addpenalty\@secpenalty\addvspace{#2}%
    \begingroup \hyphenpenalty\@M
    \@ifempty{#4}{%
      \@tempdima\csname r@tocindent\number#1\endcsname\relax
    }{%
      \@tempdima#4\relax
    }%
    \parindent\z@ \leftskip#3\relax \advance\leftskip\@tempdima\relax
    \rightskip\@pnumwidth plus4em \parfillskip-\@pnumwidth
    #5\leavevmode\hskip-\@tempdima
      \ifcase #1
       \or \hskip -1em \or \hskip 1em \or \hskip 3em \else \hskip 5em \fi%
      #6\nobreak\relax
    \hfill\hbox to\@pnumwidth{\@tocpagenum{#7}}
      \par
    \nobreak
    \endgroup
  \fi}
\newcommand{\name}[1]{\ensuremath{\text{\textup{#1}}}}
\newcommand{\simp}{\bbDelta}
\newcommand{\Dop}{\simp^{\op}}
\newcommand{\Dopn}{\simp^{\op,\natural}}
\newcommand{\Dnop}{\simp^{n,\op}}
\newcommand{\Dnopn}{\simp^{n,\op,\natural}}
\newcommand{\Tq}[1]{\bbTheta_{#1}}
\newcommand{\TqX}[2]{\bbTheta_{#1,#2}}
\newcommand{\Tqop}[1]{\Tq{#1}^{\op}}
\newcommand{\TqXop}[2]{\TqX{#1}{#2}^{\op}}
\newcommand{\Tqopn}[1]{\Tq{#1}^{\op,\natural}}
\newcommand{\Tn}{\Tq{n}}
\newcommand{\Tnopn}{\Tqopn{n}}
\newcommand{\bbO}{\bbOmega}
\newcommand{\bbOop}{\bbO^{\op}}
\newcommand{\Seg}{\name{Seg}}
\newcommand{\Mon}{\name{Mon}}
\newcommand{\Algd}{\name{Algd}}
\newcommand{\Fun}{\name{Fun}}
\newcommand{\blank}{\text{\textendash}}
\newcommand{\Cat}{\name{Cat}}
\newcommand{\CatI}{\Cat_{\infty}}
\newcommand{\LCatI}{\widehat{\Cat}_{\infty}}
\newcommand{\IFF}{if and only if}
\newcommand{\AlgPatt}{\name{AlgPatt}}
\newcommand{\Alg}{\name{Alg}}
\newcommand{\ie}{i.e.\@}
\newcommand{\cf}{cf.\@}
\newcommand{\angled}[1]{\langle #1 \rangle}
\newcommand{\bX}{\mathbb{X}}
\newcommand{\bU}{\mathbb{U}}
\newcommand{\bO}{\mathbb{O}}
\newcommand{\bUop}{\bU^{\op}}
\newcommand{\bUopn}{\bU^{\op,\natural}}
\newcommand{\ObX}{\mathcal{O}_{\bX}}
\newcommand{\bY}{\mathbb{Y}}
\DeclareMathOperator{\Span}{Span}
\newcommand{\RFib}{\name{RFib}}
\newcommand{\LFib}{\name{LFib}}
\newcommand{\Day}{\name{Day}}
\newcommand{\actto}{\rightsquigarrow}
\newcommand{\intto}{\rightarrowtail}
\def\MT_leftarrow_fill:{%
  \arrowfill@\leftarrow\relbar\relbar}
\def\MT_rightarrow_fill:{%
  \arrowfill@\relbar\relbar\rightarrow}
\newcommand{\xrightleftarrows}[2][]{\mathrel{%
  \raise.55ex\hbox{%
    $\ext@arrow 0359\MT_rightarrow_fill:{\phantom{#1}}{#2}$}%
  \setbox0=\hbox{%
    $\ext@arrow 3095\MT_leftarrow_fill:{#1}{\phantom{#2}}$}%
  \kern-\wd0 \lower.55ex\box0}}
\newcommand{\Cart}{\name{Cart}}
\newcommand{\pbcorner}{\arrow[phantom]{dr}[at start,description]{\lrcorner}}
\title{Enriched Homotopy-Coherent Structures}
\author{Hongyi Chu}
\address{Max Planck Institute for Mathematics, Bonn, Germany}
\author{Rune Haugseng}
\address{Norwegian University of Science and Technology (NTNU), Trondheim, Norway}
\date{\today}
\begin{document}
\begin{abstract}
  We introduce a general notion of enrichment for homotopy-coherent
  algebraic structures described by Segal conditions, using the
  framework of ``algebraic patterns'' developed in our previous work.
  This recovers several known examples of enriched structures,
  including enriched \icats{}, enriched \iopds{}, and enriched
  $\infty$-properads. As new examples we discuss enriched modular
  \iopds{}, enriched $n$-fold \icats{}, and a non-iterative definition of enriched $(\infty,n)$-categories.
\end{abstract}

\maketitle
\tableofcontents

\section{Introduction}

Many homotopy-coherent algebraic and categorical structures can be
conveniently described in terms of \emph{Segal conditions}, and in
previous work \cite{patterns,freealg} we introduced a general
framework for such structures, called \emph{algebraic patterns}: An
algebraic pattern consists of an \icat{} $\mathcal{O}$ equipped with
an ``inert--active'' factorization system and a collection of
``elementary'' objects, and a \emph{Segal $\mathcal{O}$-object} in a complete \icat{}
$\mathcal{C}$ is a functor $F \colon \mathcal{O} \to \mathcal{C}$
satisfying the Segal-type limit condition that for every $O \in
\mathcal{O}$ we have
\[ F(O) \simeq \lim_{E \in \mathcal{O}^{\el}_{X/}} F(X),\]
where $\mathcal{O}^{\el}_{X/}$ is the \icat{} of inert maps from $X$
to elementary objects. Structures that can be described as Segal $\mathcal{O}$-spaces (meaning Segal $\mathcal{O}$-objects in the \icat{} of spaces) include \icats{}, \iopds{}, $\infty$-properads, $(\infty,n)$-categories, and cyclic or modular \iopds{}.

However, for all of these structures it is often interesting to
consider variants that are \emph{enriched} in some symmetric monoidal
\icat{} (such as vector spaces, spectra, or chain complexes). Our goal
in the present paper is to set up a general notion of enriched
algebraic structures controlled by an algebraic pattern. More
precisely, we will define an \emph{enrichable pattern} to consist of
an algebraic pattern $\mathcal{O}$ together with a suitable map
$\mathcal{O} \to \xFs$, where $\xFs$ is the category of finite pointed
sets. Here the fibre $\mathcal{O}_{0}$ over the one-point set inherits
an algebraic pattern structure, and our enriched Segal
$\mathcal{O}$-objects, which following \cite{Macpherson} we will call
\emph{$\mathcal{O}$-algebroids}, will have an underlying Segal
$\mathcal{O}_{0}$-space. (In the simplest cases, such as for enriched
\icats{} and $\infty$-operads, this is just a space --- the underlying
space of objects.)  We then define $\mathcal{O}$-algebroids in a
symmetric monoidal, or more generally $\mathcal{O}$-monoidal, \icat{}
$\mathcal{V}$ as algebras in $\mathcal{V}$ for a family of patterns
$\mathcal{O}_{\bX}$ parametrized by a Segal $\mathcal{O}_{0}$-space
$\bX$. (These are \emph{cartesian} patterns in the sense of
\cite{freealg}, meaning that the limits in their Segal conditions are
indexed by finite sets, and so it makes sense to talk about algebras
for them in $\mathcal{V}$.)

In \S\ref{sec:general} we introduce enrichable patterns and their
algebroids, and develop some general theory. The main results are as
follows:
\begin{itemize}
\item $\mathcal{O}$-algebroids in spaces are equivalent to Segal
  $\mathcal{O}$-spaces (\cref{thm:algdspc}).
\item Algebroids in a presheaf \icat{} with respect to Day convolution
  can also be described as Segal spaces for a certain pattern (\cref{AlgdDayisSeg}), which
  allows us to show that the \icat{} of $\mathcal{O}$-algebroids in a presentably
  $\mathcal{O}$-monoidal \icat{} is presentable (\cref{cor:algdpres}).
\item If $\mathcal{O}$ is extendable (see \cite[\S 8]{patterns}), we
  give a description of free $\mathcal{O}$-algebroids
  (\cref{cor:freealg}).
\end{itemize}
We also discuss functoriality of algebroids in
\S\ref{subsec:algdfun}. Many of the proofs are generalizations of
arguments from \cite{enriched,enrcomp,ChuHaugseng} for particular
cases of enrichable patterns.

We then describe a number of examples of enrichable patterns and
algebroids in \S\ref{sec:ex}. This includes the previously studied
cases of enriched \icats{} \cite{enriched}, enriched \iopds{}
\cite{ChuHaugseng} and enriched $\infty$-properads \cite{iprpd}, but
we also look at some new examples:
\begin{itemize}
\item In \S\ref{subsec:cartprod} we show that cartesian products of
  enrichable patterns are again enrichable in a natural way, and use
  this to define enriched $n$-fold \icats{}.
\item In \S\ref{subsec:wreath} we define \emph{wreath products} of
  enrichable patterns, which give a (non-iterative) definition of
  enriched $(\infty,n)$-categories.
\item In \S\ref{subsec:modular} we show that the category of
  undirected graphs used by Hackney, Robertson, and Yau
  \cite{HRYModular} to define modular \iopds{} is enrichable, which
  allows us to define enriched modular \iopds{}. (Similar arguments
  also apply to other categories of trees and graphs defined by the
  same authors, which gives enriched versions of for instance cyclic
  \iopds{} and wheeled $\infty$-properads.)
\end{itemize}
We note that in this paper we consider all of these examples as merely
\emph{algebraic} structures. That is, we do not invert some class of
``fully faithful and essentially surjective'' morphisms, for instance
by restricting to an appropriate full subcategory of ``complete''
objects; indeed, we expect such completeness results can only be proved on a case
by case basis.

\subsubsection*{Acknowledgments}
The first author was supported by Labex CEMPI (ANR-11-LABX-0007-01) in
the early stages of work on this project.  Parts of the paper were
written while the second author was in residence at the Matematical
Sciences Research Institute in Berkeley, California, during (the
pre-COVID part of) the Spring 2020 semester, and it is thereby partially
supported by the National Science Foundation under grant DMS-1440140.

We thank Shaul Barkan and Jan Steinebrunner for helpful comments on a
draft of the paper, in particular in connection with \S\ref{sec:lke}.

\section{General theory}\label{sec:general}

\subsection{Review of algebraic patterns and Segal objects}
In this section we will briefly review the notions of algebraic
patterns and their Segal objects, as introduced in \cite{patterns}, as
well as the special case of \emph{cartesian patterns} introduced in
\cite{freealg}.

\begin{defn}
  An \emph{algebraic pattern} consists of an \icat{} $\mathcal{O}$
  equipped with a factorization system, whereby each map factors as an
  \emph{inert} map followed by an \emph{active} map, together with a
  full subcategory $\mathcal{O}^{\el} \subseteq \mathcal{O}^{\xint}$
  consisting of \emph{elementary} objects, where $\mathcal{O}^{\xint}$
  is the subcategory of $\mathcal{O}$ containing only the inert
  maps. A morphism of algebraic patterns from $\mathcal{O}$ to
  $\mathcal{P}$ is a functor $f \colon \mathcal{O} \to \mathcal{P}$
  that preserves inert and active morphisms and elementary objects.
\end{defn}

\begin{notation}
  We write $\AlgPatt$ for the \icat{} of algebraic patterns and
  morphisms thereof; see \cite[\S 5]{patterns} for more
  details.
\end{notation}

\begin{notation}
  If $\mathcal{O}$ is an algebraic pattern we will indicate an inert
  map between objects $O,O'$ of $\mathcal{O}$ as $O \intto O'$ and an
  active map as $O \actto O'$. These symbols are not meant to suggest
  any intuition about the nature of inert and active maps.
\end{notation}

\begin{example}
  Let $\xF_{*}$ denote a skeleton of the category of finite pointed
  sets with objects $\angled{n} := (\{0,1,\ldots,n\},0)$. This has a
  factorization system where an inert map is a morphism that is an
  isomorphism away from the base point, and an active map is a
  morphism that doesn't send anything except the base point to the
  base point. We can give $\xF_{*}$ the two following pattern
  structures using this factorization system, both of which will play
  an important role in this paper:
  \begin{itemize}
  \item We write $\xF_{*}^{\flat}$ for the
    algebraic pattern where
    $\angled{1}$ is the only elementary object.
  \item We write $\xF_{*}^{\natural}$ for the algebraic pattern where
    $\angled{0}$ and $\angled{1}$ are both elementary objects.
  \end{itemize}
\end{example}

\begin{notation}
  If $\mathcal{O}$ is an algebraic pattern and $O$ is an object of
  $\mathcal{O}$, we write
  \[\mathcal{O}^{\el}_{O/} :=
    \mathcal{O}^{\el}\times_{\mathcal{O}^{\xint}}
    \mathcal{O}^{\xint}_{O/}\]
  for the \icat{} of inert maps from $O$ to elementary objects, and
  inert maps between them.
\end{notation}

\begin{defn}
  A \emph{cartesian pattern} is an algebraic pattern $\mathcal{O}$
  equipped with a morphism of algebraic patterns
  $|\blank|_{\mathcal{O}} \colon \mathcal{O} \to \xF_{*}^{\flat}$ such that
  for every object $O \in \mathcal{O}$ the induced map
  \[ \mathcal{O}^{\el}_{O/} \to
    \xF_{*,|O|_{\mathcal{O}}/}^{\flat,\el} \] is an equivalence. (When
  the pattern $\mathcal{O}$ is clear from context, we will usually
  abbreviate $|O|_{\mathcal{O}}$ to $|O|$.) A \emph{morphism of
    cartesian patterns} is a morphism of algebraic patterns over
  $\xF_{*}^{\flat}$.
\end{defn}

\begin{notation}
  We write $\rho_{i} \colon \angled{n} \to \angled{1}$, $i = 1,\ldots,n$, for the inert
  map given by
  \[ \rho_{i}(j) =
    \begin{cases}
      0, & j \neq i, \\
      1, & j = i.
    \end{cases}
  \]
  Then $\xF^{\flat, \el}_{*,\angled{n}/}$ is equivalent to the
  discrete set $\{\rho_{1},\ldots,\rho_{n}\}$. On the other hand,
  $\xF^{\natural, \el}_{*,\angled{n}/}$ is the category
  $\{\rho_{1},\ldots,\rho_{n}\}^{\triangleright}$, with the additional
  terminal object corresponding to the unique (inert) map $\angled{n}
  \to \angled{0}$.
\end{notation}

\begin{notation}
  If $\mathcal{O}$ is a cartesian pattern and $O$ is an object of
  $\mathcal{O}$ such that $|O| \cong \angled{n}$, then the \icat{}
  $\mathcal{O}^{\el}_{O/}$ is equivalent to a discrete set consisting
  of $n$ inert morphisms with source $O$, with an essentially unique
  such morphism lying over
  $\rho_{i} \colon \angled{n} \to \angled{1}$ for $i = 1,\ldots,n$.
  We will often denote these inert morphisms by $\rho_{i}^{O}\colon O \to O_{i}$.
\end{notation}

\begin{defn}
  Let $\mathcal{O}$ be an algebraic pattern. A Segal
  $\mathcal{O}$-object in an \icat{} $\mathcal{C}$ is a functor $F \colon
  \mathcal{O} \to \mathcal{C}$ such that for every $O \in \mathcal{O}$
  the induced functor
  \[ (\mathcal{O}^{\el}_{X/})^{\triangleleft} \to \mathcal{O} \to
    \mathcal{C} \]
  is a limit diagram.  We write $\Seg_{\mathcal{O}}(\mathcal{C})$ for
  the full subcategory of $\Fun(\mathcal{O},\mathcal{C})$ spanned by
  the Segal $\mathcal{O}$-objects. We refer to Segal
  $\mathcal{O}$-objects in the \icat{} $\mathcal{S}$ of spaces as
  \emph{Segal $\mathcal{O}$-spaces}.
\end{defn}

\begin{defn}
  If $\mathcal{O}$ is a cartesian pattern, we
  will refer to an $\mathcal{O}$-Segal object as an
  \emph{$\mathcal{O}$-monoid}. We write
  \[\Mon_{\mathcal{O}}(\mathcal{C}) :=
    \Seg_{\mathcal{O}}(\mathcal{C})\] for the \icat{} of
  $\mathcal{O}$-monoids in $\mathcal{C}$.
\end{defn}

\begin{observation}
  Note that the limit condition for an $\mathcal{O}$-monoid $M$ is a
  finite product: for $O \in \mathcal{O}$ over $\angled{n}$ in
  $\xF_{*}$, we want the natural map
  \[ M(O) \to \prod_{i} M(O_{i}),\]
  induced by the inert maps $\rho_{i}^{O} \colon O \to O_{i}$, to be
  an equivalence.
\end{observation}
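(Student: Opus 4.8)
The plan is to unwind the Segal condition once the cartesian structure map is taken into account; the statement is a formal consequence of the definitions recalled above, and there is no real obstacle here — the only point requiring a moment's care is the bookkeeping that identifies the comparison map.

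First I would use the defining property of the cartesian pattern $\mathcal{O}$: for $O \in \mathcal{O}$ with $|O| \cong \angled{n}$, the structure morphism induces an equivalence $\mathcal{O}^{\el}_{O/} \isoto \xF^{\flat,\el}_{*,\angled{n}/}$, and the target is the discrete set $\{\rho_{1},\ldots,\rho_{n}\}$. Hence $\mathcal{O}^{\el}_{O/}$ is equivalent to a discrete \icat{} on $n$ objects, represented (as recorded in the Notation above) by the inert maps $\rho_{i}^{O} \colon O \to O_{i}$ for $i = 1,\ldots,n$.

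Next I would invoke two standard facts: limits are invariant under equivalences of the indexing \icat{}, and a limit indexed by a discrete set is the corresponding product of the values of the diagram. Applying these to the composite $\mathcal{O}^{\el}_{O/} \to \mathcal{O} \xto{M} \mathcal{C}$, which carries the object $\rho_{i}^{O}$ to $M(O_{i})$, the Segal condition for $M$ at $O$ — that $(\mathcal{O}^{\el}_{O/})^{\triangleleft} \to \mathcal{O} \xto{M} \mathcal{C}$ be a limit diagram — becomes the requirement that the canonical map $M(O) \to \prod_{i=1}^{n} M(O_{i})$ be an equivalence. To finish, I would observe that the universal map out of the cone point of a limit diagram has, as its component at each object of the indexing \icat{}, the corresponding leg of the diagram; here the leg at $\rho_{i}^{O}$ is $M$ applied to the inert map $\rho_{i}^{O} \colon O \to O_{i}$, so the $i$-th component of $M(O) \to \prod_{i} M(O_{i})$ is $M(\rho_{i}^{O})$, exactly as in the statement. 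Letting $O$ range over all of $\mathcal{O}$ gives the observation.
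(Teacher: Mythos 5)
Your argument is correct and is exactly the unwinding the paper intends: the preceding Notation items already record that $\xF^{\flat,\el}_{*,\angled{n}/}$ is the discrete set $\{\rho_{1},\ldots,\rho_{n}\}$ and hence that $\mathcal{O}^{\el}_{O/}$ is discrete on the $\rho_{i}^{O}$, so the Segal limit collapses to the stated finite product with components $M(\rho_{i}^{O})$. Nothing further is needed.
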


\begin{defn}
  A morphism of algebraic patterns $f \colon \mathcal{O} \to
  \mathcal{P}$ is a \emph{Segal morphism} if the functor $f^{*}\colon
  \Fun(\mathcal{P}, \mathcal{S}) \to \Fun(\mathcal{O}, \mathcal{S})$
  given by composition with $f$ restricts to a functor
  \[ f^{*} \colon \Seg_{\mathcal{P}}(\mathcal{S}) \to
    \Seg_{\mathcal{O}}(\mathcal{S}), \]
  or equivalently if the analogue holds for Segal objects in every
  \icat{} (\cite[Lemma 4.5]{patterns}). This is in particular the case if for every $O \in \mathcal{O}$, the
  induced functor
  \[ \mathcal{O}^{\el}_{O/} \to \mathcal{P}^{\el}_{f(O)/} \] is
  coinitial, in which case we say that $f$ is a \emph{strong Segal
    morphism}. We will further say that $f$ is an \emph{iso-Segal
    morphism} if these functors are all equivalences.
\end{defn}

\begin{observation}
  Suppose $f \colon \mathcal{O} \to \mathcal{P}$ is a morphism of
  cartesian patterns. Then for any $O \in \mathcal{O}$ we have a
  commutative triangle
  \[
    \begin{tikzcd}
      \mathcal{O}^{\el}_{O/} \arrow{rr} \arrow{dr}[swap]{\sim} & &
      \mathcal{P}^{\el}_{f(O)/} \arrow{dl}{\sim} \\
       & \xF^{\flat}_{*,|O|/},
    \end{tikzcd}
  \]
  so the horizontal morphism is also an equivalence. In particular,
  $f$ is an iso-Segal morphism.
\end{observation}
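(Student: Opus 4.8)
The plan is to exhibit the claimed triangle by unwinding definitions, and then deduce both the ``horizontal equivalence'' statement and the iso-Segal claim as formal consequences. First I would record what a morphism of cartesian patterns gives us: since $f$ is by definition a morphism of algebraic patterns lying over $\xF_{*}^{\flat}$, we have $|\blank|_{\mathcal{P}} \circ f \simeq |\blank|_{\mathcal{O}}$, and in particular $|f(O)| \simeq |O|$ for every $O \in \mathcal{O}$. This is what makes the two diagonal targets in the triangle literally agree, so there is a well-defined bottom corner $\xF^{\flat,\el}_{*,|O|/}$.

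Next I would construct the triangle. Because $f$ preserves inert morphisms and elementary objects, it restricts to functors $\mathcal{O}^{\xint} \to \mathcal{P}^{\xint}$ and $\mathcal{O}^{\el} \to \mathcal{P}^{\el}$, and hence, by functoriality of the slice construction, induces the horizontal functor $\mathcal{O}^{\el}_{O/} \to \mathcal{P}^{\el}_{f(O)/}$ sending an inert map $O \intto E$ with $E$ elementary to $f(O) \intto f(E)$. The two diagonal functors are exactly the maps ``$\mathcal{O}^{\el}_{O/} \to \xF^{\flat,\el}_{*,|O|/}$'' appearing in the definition of a cartesian pattern, induced by $|\blank|_{\mathcal{O}}$ and $|\blank|_{\mathcal{P}}$ respectively. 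The triangle then commutes because all three functors are instances of the assignment $(\mathcal{Q},Q) \mapsto \mathcal{Q}^{\el}_{Q/}$ applied to morphisms in $\AlgPatt$, and $f$ lies over $\xF_{*}^{\flat}$; concretely, the composite $\mathcal{O}^{\el}_{O/} \to \mathcal{P}^{\el}_{f(O)/} \to \xF^{\flat,\el}_{*,|f(O)|/}$ is the functor induced by $|\blank|_{\mathcal{P}} \circ f$, which is $|\blank|_{\mathcal{O}}$. I would either verify this directly or, more cleanly, phrase the whole diagram as the image of the evident commutative triangle of patterns $\mathcal{O} \to \mathcal{P} \to \xF_{*}^{\flat}$ under the (pointed) slice-at-elementaries functor.

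Finally, both diagonal functors are equivalences by the defining property of a cartesian pattern, so the horizontal functor, being the composite of the left diagonal with the inverse of the right diagonal, is an equivalence too; since $O$ was arbitrary, $f$ is iso-Segal by definition. (One may also note in passing that equivalences are coinitial, so $f$ is in particular a strong Segal, hence a Segal, morphism.) I do not expect any genuine obstacle here: the only point that requires a little care is the commutativity of the triangle, and that is purely formal, coming from functoriality of $\mathcal{Q} \mapsto \mathcal{Q}^{\el}_{Q/}$ together with $f$ being a map over $\xF_{*}^{\flat}$.
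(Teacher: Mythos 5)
Your argument is correct and is exactly the reasoning the paper intends (the paper states this as an observation with the triangle and two-out-of-three for equivalences doing all the work): both diagonal functors are equivalences by the definition of a cartesian pattern, the triangle commutes because $f$ is a morphism over $\xF_{*}^{\flat}$, and hence the horizontal functor is an equivalence for every $O$, which is the definition of iso-Segal. Your extra care about functoriality of $\mathcal{Q} \mapsto \mathcal{Q}^{\el}_{Q/}$ is a fine way to justify the commutativity that the paper leaves implicit.
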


\begin{defn}\label{def SegalOfib}
  Let $\mathcal{O}$ be an algebraic pattern. A \emph{Segal
    $\mathcal{O}$-fibration} is a cocartesian fibration $\pi \colon \mathcal{E}
  \to \mathcal{O}$ whose associated functor $\mathcal{O} \to \CatI$ is
  a Segal $\mathcal{O}$-object. By \cite[Lemma 9.4]{patterns} the
  \icat{} $\mathcal{E}$ inherits a canonical pattern structure, where
  \begin{itemize}
  \item the inert morphisms are the cocartesian morphisms that lie over inert
    morphisms in $\mathcal{O}$,
  \item the active morphisms are all the morphisms the lie over active
    morphisms in $\mathcal{O}$,
  \item the elementary objects are the objects that lie over
    elementary objects in $\mathcal{O}$.
  \end{itemize}
  With this structure, the projection
  $\pi \colon \mathcal{E} \to \mathcal{O}$ is a strong Segal
  morphism --- indeed, the functor
  \[ \mathcal{E}^{\el}_{X/} \to \mathcal{O}^{\el}_{\pi(X)/} \]
  is an equivalence, so $\pi$ is even an iso-Segal morphism.
\end{defn}

\begin{defn}
  If $\mathcal{O}$ is a cartesian pattern, we refer to a
  Segal $\mathcal{O}$-fibration $\xE\to \mathcal{O}$ as an
  \emph{$\mathcal{O}$-monoidal \icat{}}.
\end{defn}

\begin{defn}\label{def alg}
  Let $\mathcal{O}$ be a cartesian pattern and
  $\mathcal{C}^{\otimes} \to \mathcal{O}$ an $\mathcal{O}$-monoidal
  \icat{}. An \emph{$\mathcal{O}$-algebra} in $\mathcal{C}^{\otimes}$
  is a section
  \[
    \begin{tikzcd}
      \mathcal{O} \arrow[equals]{dr} \arrow{rr}{A} & &
      \mathcal{C}^{\otimes} \arrow{dl} \\
       & \mathcal{O},
    \end{tikzcd}
  \]
  where $A$ takes inert morphisms in $\mathcal{O}$ to cocartesian
  morphisms in $\mathcal{C}^{\otimes}$.
  More generally, for a morphism $f \colon \mathcal{P} \to
  \mathcal{O}$ of cartesian patterns we define a \emph{$\mathcal{P}$-algebra 
  in $\mathcal{C}^{\otimes}$ over $\mathcal{O}$} to be a commutative triangle
  \[
  \begin{tikzcd}
  \mathcal{P} \arrow{rr}{A} \arrow{dr}[swap]{f} & &
  \mathcal{C}^{\otimes} \arrow{dl} \\
  & \mathcal{O}
  \end{tikzcd}
  \]
  such that $A$ takes inert morphisms in $\mathcal{P}$ to cocartesian
  morphisms in $\mathcal{C}^{\otimes}$, which are inert by
  definition. (This condition is equivalent to $A$ being a morphism of
  algebraic patterns over $\mathcal{O}$, as is clear from the
  definition of the pattern structure on $\mathcal{C}^{\otimes}$.) We
  write $\Alg_\mathcal{O}(\mathcal{C}^\otimes)$ and
  $\Alg_{\mathcal{P}/\mathcal{O}}(\mathcal{C}^\otimes)$ for the full subcategories of
  $\xFun_{/\mathcal{O}}(\mathcal{O}, \mathcal{C}^\otimes)$ and
  $\xFun_{/\mathcal{O}}(\mathcal{P}, \mathcal{C}^\otimes)$ spanned by $\mathcal{O}$-algebras and
  $\mathcal{P}$-algebras over $\mathcal{O}$, respectively.
\end{defn}

\begin{observation}\label{rem pullback_algebras}
  The pullback of an $\mathcal{O}$-monoidal \icat{} $\mathcal{C}^\otimes$ along a
  morphism $f \colon \mathcal{P} \to \mathcal{O}$ of cartesian
  patterns yields a $\mathcal{P}$-monoidal \icat{} $f^*\mathcal{C}^\otimes$. Hence,
  base change along $f$ induces a natural equivalence
  \[\Alg_{\mathcal{P}/\mathcal{O}}(\mathcal{C}^\otimes)\simeq \Alg_{\mathcal{P}}(f^*\mathcal{C}^\otimes).\]
\end{observation}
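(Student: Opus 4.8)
The plan is to establish the statement in two steps, mirroring its two assertions: first that $f^{*}\mathcal{C}^{\otimes} := \mathcal{P}\times_{\mathcal{O}}\mathcal{C}^{\otimes}$ is again a $\mathcal{P}$-monoidal \icat{}, and then that base change along the projection $f^{*}\mathcal{C}^{\otimes}\to\mathcal{C}^{\otimes}$ identifies the two \icats{} of algebras.

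For the first step, I would argue as follows. The pullback of a cocartesian fibration along an arbitrary functor is again a cocartesian fibration, and the functor $\mathcal{P}\to\CatI$ classifying $f^{*}\mathcal{C}^{\otimes}\to\mathcal{P}$ is the composite of $f$ with the functor $\mathcal{O}\to\CatI$ classifying $\mathcal{C}^{\otimes}\to\mathcal{O}$; the latter is a Segal $\mathcal{O}$-object, since $\mathcal{C}^{\otimes}\to\mathcal{O}$ is an $\mathcal{O}$-monoidal \icat{} (\cref{def SegalOfib}). Now $f$, being a morphism of cartesian patterns, is an iso-Segal morphism (as observed above), hence in particular a Segal morphism, so composition with $f$ carries Segal $\mathcal{O}$-objects to Segal $\mathcal{P}$-objects in any \icat{}, in particular in $\CatI$ (\cite[Lemma 4.5]{patterns}). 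Thus the classifying functor of $f^{*}\mathcal{C}^{\otimes}$ is a Segal $\mathcal{P}$-object, so $f^{*}\mathcal{C}^{\otimes}\to\mathcal{P}$ is a Segal $\mathcal{P}$-fibration, which is precisely a $\mathcal{P}$-monoidal \icat{}.

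For the second step, the universal property of the pullback gives a natural equivalence $\xFun_{/\mathcal{P}}(\mathcal{P},f^{*}\mathcal{C}^{\otimes})\simeq \xFun_{/\mathcal{O}}(\mathcal{P},\mathcal{C}^{\otimes})$, sending a section $B$ of $f^{*}\mathcal{C}^{\otimes}\to\mathcal{P}$ to its composite $A$ with the projection $f^{*}\mathcal{C}^{\otimes}\to\mathcal{C}^{\otimes}$, which is a lift of $f$. By \cref{def alg}, $\Alg_{\mathcal{P}}(f^{*}\mathcal{C}^{\otimes})$ and $\Alg_{\mathcal{P}/\mathcal{O}}(\mathcal{C}^{\otimes})$ are the full subcategories spanned by those sections, respectively lifts, that send inert morphisms to cocartesian morphisms, so it remains to check that this condition corresponds under the equivalence. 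Here I would invoke the standard fact that a morphism of the pullback $f^{*}\mathcal{C}^{\otimes}$ is cocartesian over $\mathcal{P}$ if and only if its image in $\mathcal{C}^{\otimes}$ is cocartesian over $\mathcal{O}$, together with the fact that $f$ carries inert morphisms of $\mathcal{P}$ to inert morphisms of $\mathcal{O}$: for an inert morphism $e$ of $\mathcal{P}$, the morphism $B(e)$ is cocartesian over $\mathcal{P}$ exactly when $A(e)$ is cocartesian over $\mathcal{O}$, so $B$ is a $\mathcal{P}$-algebra in $f^{*}\mathcal{C}^{\otimes}$ precisely when $A$ is a $\mathcal{P}$-algebra in $\mathcal{C}^{\otimes}$ over $\mathcal{O}$. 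Hence the equivalence restricts to the desired equivalence on full subcategories, with naturality inherited from functoriality of the pullback.

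I do not anticipate a genuine obstacle: the content is entirely formal, the only external input being the behaviour of cocartesian morphisms under pullback. The one point requiring care is to keep the two lifting conditions — cocartesian over $\mathcal{O}$ versus over $\mathcal{P}$ — distinct and to verify that they match under the projection $f^{*}\mathcal{C}^{\otimes}\to\mathcal{C}^{\otimes}$, which is exactly where the characterisation of cocartesian edges in a pullback is used.
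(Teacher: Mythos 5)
Your argument is correct and is exactly the formal verification the paper leaves implicit (the statement is an unproved \emph{observation} there): pullback of a Segal fibration along a Segal morphism is a Segal fibration since morphisms of cartesian patterns are iso-Segal, and the algebra conditions match because cocartesian edges in a pulled-back cocartesian fibration are detected in the original one. No gaps.
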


\subsection{Enrichable patterns and algebroids}
We are now ready to introduce the data we need to define enriched
structures, which we call an \emph{enrichable pattern}.

\begin{notation}
  Given a morphism of algebraic patterns $\mathcal{O}\to
  \xF_{*}^{\natural}$, we write $\mathcal{O}^{\flat}$ for the pullback
  of patterns $\mathcal{O}\times_{\xF_{*}^{\natural}}
  \xF_{*}^{\flat}$; this exists by \cite[Corollary 5.5]{patterns} and
  is simply given by the \icat{} $\mathcal{O}$ equipped with the same
  factorization system as before, but with $\mathcal{O}^{\flat,\el}$
  consisting only of the elementary objects in $\mathcal{O}$ that lie
  over $\angled{1}$.
\end{notation}

\begin{defn}
  An \emph{enrichable pattern} is an algebraic pattern $\mathcal{O}$
  equipped with a morphism of patterns $|\blank|_{\mathcal{O}} \colon \mathcal{O}
  \to \xF_{*}^{\natural}$, such that $\mathcal{O}^{\flat}$ is a
  cartesian pattern.
\end{defn}

\begin{remark}
  Instead of working over $\xFn$ we could also consider the pattern
  $\Span(\xF)^{\natural}$, consisting of the (2,1)-category $\Span(\xF)$ whose objects are finite sets and whose morphisms from $X$ to $Y$ are spans
  \[
    \begin{tikzcd}
      {} & S \arrow{dl} \arrow{dr} \\
      X & & Y,
    \end{tikzcd}
  \]
  with composition given by pullbacks, together with the factorization
  system where the inert and active maps are those whose forwards and
  backwards components are isomorphisms, respectively, and with the
  one-element set as the only elementary object. This gives a somewhat
  more general notion of enrichable patterns, since we can regard
  $\xF_{*}$ as the subcategory of $\Span(\xF)$ containing the spans
  where the backwards maps are injective. Since the examples we are
  interested in all live over $\xFn$, however, we will stick with the
  version that is directly supported by our previous work in
  \cite{freealg}.
\end{remark}

\begin{remark}
  We postpone a discussion of specific examples until \S\ref{sec:ex},
  but we note here that the \emph{hypermoment categories} of Berger
  \cite{BergerMoment} are a special case of our enrichable patterns,
  which includes all the examples we discuss. Any \emph{perfect
    operator category} in the sense of Barwick \cite{bar} also gives
  rise to an enrichable pattern; see \cref{var:opcat}.
\end{remark}

\begin{notation}
  Let $\mathcal{O}$ be an enrichable pattern. We will often denote the
  specified map to $\xFs$ as simply $|\blank|$ when it is clear from
  context that this refers to the pattern $\mathcal{O}$. Moreover, we
  will refer to an $\mathcal{O}^{\flat}$-monoid as an
  \emph{$\mathcal{O}$-monoid}, and write
  $\Mon_{\mathcal{O}}(\mathcal{C})$ for the \icat{}
  $\Mon_{\mathcal{O}^{\flat}}(\mathcal{C})$ of these in
  $\mathcal{C}$. In the same way we will speak about
  $\mathcal{O}$-monoidal \icats{}, $\mathcal{O}$-algebras, and so
  forth.
\end{notation}

\begin{lemma}\label{lem enrpatt}
  If $\mathcal{O}$ is an enrichable pattern and $\pi \colon
  \mathcal{E} \to \mathcal{O}$ is a Segal $\mathcal{O}$-fibration,
  then $\mathcal{E}$ is also an enrichable pattern via the composite
  $\mathcal{E} \xto{\pi} \mathcal{O} \xto{|\blank|} \xF_{*}^{\natural}$.
\end{lemma}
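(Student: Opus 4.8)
The plan is to verify that the composite $\mathcal{E} \xto{\pi} \mathcal{O} \xto{|\blank|} \xFs^{\natural}$ is a morphism of algebraic patterns and that the resulting $\mathcal{E}^{\flat}$ is a cartesian pattern. The first part is essentially formal: by \cref{def SegalOfib} the projection $\pi$ is a (strong, indeed iso-)Segal morphism of algebraic patterns, so it preserves inert and active maps and elementary objects; composing with the pattern morphism $|\blank|_{\mathcal{O}}$ gives a pattern morphism $\mathcal{E} \to \xFs^{\natural}$. So the content of the lemma is the cartesianness of $\mathcal{E}^{\flat}$.

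First I would identify $\mathcal{E}^{\flat}$ concretely. By the \emph{Notation} preceding the definition of enrichable pattern, $\mathcal{O}^{\flat} = \mathcal{O} \times_{\xFs^{\natural}} \xFs^{\flat}$ has the same underlying \icat{} and factorization system as $\mathcal{O}$, but with elementary objects restricted to those lying over $\angled{1}$. Likewise $\mathcal{E}^{\flat}$ has underlying \icat{} $\mathcal{E}$ with elementary objects exactly those $X$ with $|\pi(X)| \cong \angled{1}$. I claim that $\mathcal{E}^{\flat} \to \mathcal{O}^{\flat}$ is again a Segal $\mathcal{O}^{\flat}$-fibration: it is the same cocartesian fibration $\pi$, whose straightening is unchanged, and one checks that the Segal condition for the pattern $\mathcal{O}^{\flat}$ holds because $\mathcal{O}^{\flat,\el}_{O/}$ is the full subcategory of $\mathcal{O}^{\el}_{O/}$ on the objects over $\angled{1}$ — but since $\mathcal{O}$ is enrichable, $\mathcal{O}^{\flat}$ is cartesian, so by the \emph{Observation} following the definition of cartesian patterns this limit is a finite product indexed by $\{\rho_1,\dots,\rho_n\}$, which the straightening of $\pi$ sends to a limit diagram (it already did so for the larger pattern $\mathcal{O}$). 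Hence \cref{def SegalOfib} applies to give $\mathcal{E}^{\flat}$ its pattern structure, and by that same definition the map $\mathcal{E}^{\flat} \to \mathcal{O}^{\flat}$ is iso-Segal, i.e.\ the functors $(\mathcal{E}^{\flat})^{\el}_{X/} \to (\mathcal{O}^{\flat})^{\el}_{\pi(X)/}$ are equivalences.

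Now I would assemble the cartesian structure on $\mathcal{E}^{\flat}$ as the composite
\[ \mathcal{E}^{\flat} \xto{\pi} \mathcal{O}^{\flat} \xto{|\blank|_{\mathcal{O}^{\flat}}} \xFs^{\flat}, \]
which is a morphism of algebraic patterns (composite of such). To check the cartesian condition, fix $X \in \mathcal{E}^{\flat}$ with image $O = \pi(X)$; I must show $(\mathcal{E}^{\flat})^{\el}_{X/} \to \xFs^{\flat,\el}_{*,|X|/}$ is an equivalence. But this factors as
\[ (\mathcal{E}^{\flat})^{\el}_{X/} \xto{\ \sim\ } (\mathcal{O}^{\flat})^{\el}_{O/} \xto{\ \sim\ } \xFs^{\flat,\el}_{*,|O|/}, \]
where the first map is the iso-Segal equivalence just established and the second is an equivalence because $\mathcal{O}^{\flat}$ is cartesian; since $|X| = |O|$ this is the desired equivalence. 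Finally, a routine check confirms that the cartesian structure $|\blank|_{\mathcal{E}^{\flat}}$ is compatible with $|\blank|_{\mathcal{E}}$ and $|\blank|_{\mathcal{O}}$ in the evident way, so that the enrichable pattern structure on $\mathcal{E}$ indeed lies over that of $\mathcal{O}$.

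I expect the only real subtlety to be the first step above — verifying that the restricted fibration $\mathcal{E}^{\flat} \to \mathcal{O}^{\flat}$ really is a Segal $\mathcal{O}^{\flat}$-fibration, i.e.\ that shrinking the collection of elementary objects on the base does not spoil the Segal condition for the straightening $\mathcal{O} \to \CatI$ of $\pi$. This is where one must use that $\mathcal{O}^{\flat}$ is already cartesian: the relevant limit shrinks to a finite product over $\{\rho_1^O, \dots, \rho_n^O\}$, and the straightening of $\pi$ sends the inert maps $\rho_i^O$ to a product diagram because it did so (a priori over a possibly larger diagram) as a Segal $\mathcal{O}$-object. Everything else is a formal manipulation of pullbacks of patterns and the iso-Segal property of Segal fibrations.
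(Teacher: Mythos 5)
Your overall skeleton matches the paper's: everything reduces to showing that for $X \in \mathcal{E}$ the composite $\mathcal{E}^{\flat,\el}_{X/} \to \mathcal{O}^{\flat,\el}_{\pi(X)/} \to \xF^{\flat,\el}_{*,|\pi(X)|/}$ is an equivalence, the second map being one because $\mathcal{O}^{\flat}$ is cartesian. The problem is your justification of the first equivalence. You derive it from the claim that $\mathcal{E}^{\flat} \to \mathcal{O}^{\flat}$ is again a Segal $\mathcal{O}^{\flat}$-fibration, arguing that the straightening $F$ of $\pi$ sends $(\mathcal{O}^{\flat,\el}_{O/})^{\triangleleft}$ to a limit diagram ``because it already did so for the larger pattern $\mathcal{O}$''. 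That inference is a non-sequitur: knowing $F(O) \simeq \lim_{\mathcal{O}^{\el}_{O/}} F$ does not give $F(O) \simeq \lim_{\mathcal{O}^{\flat,\el}_{O/}} F \simeq \prod_{i} F(O_{i})$, since restricting a limit to a subdiagram changes it in general. In fact a Segal $\mathcal{O}^{\flat}$-fibration is by definition an $\mathcal{O}$-monoidal \icat{}, and by \cref{lem:monoidrestrpt} (applied with $\mathcal{C} = \CatI$) a Segal $\mathcal{O}$-fibration has this property only when its straightening is constant at the terminal object on $\mathcal{O}_{0}$. Concretely, for $\mathcal{O} = \Dopn$ a Segal $\Dop$-fibration is a category object in $\CatI$, with $F([2]) \simeq F([1]) \times_{F([0])} F([1])$; this agrees with $F([1]) \times F([1])$ only when $F([0]) \simeq *$. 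So the step you yourself flag as ``the only real subtlety'' is exactly where the argument breaks, and the intermediate claim is false, not merely unproved.

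The conclusion you actually need from that claim is nevertheless true and has a direct proof: $\mathcal{E}^{\flat,\el}_{X/}$ and $\mathcal{O}^{\flat,\el}_{\pi(X)/}$ are the pullbacks of $\mathcal{E}^{\el}_{X/}$ and $\mathcal{O}^{\el}_{\pi(X)/}$ along the inclusion $\xF^{\flat,\el}_{*,|\pi(X)|/} \to \xF^{\natural,\el}_{*,|\pi(X)|/}$, so the map between them is a pullback of the equivalence $\mathcal{E}^{\el}_{X/} \isoto \mathcal{O}^{\el}_{\pi(X)/}$ recorded in \cref{def SegalOfib}, hence an equivalence. This is precisely the paper's two-cartesian-squares argument. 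With that substitution your proof closes; no assertion that $\mathcal{E}^{\flat} \to \mathcal{O}^{\flat}$ is a Segal fibration for the $\flat$ pattern is needed, and none is available.
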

\begin{proof}
  For $X \in \mathcal{E}$, consider the commutative diagram
  \[
    \begin{tikzcd}
      \xE^{\flat,\el}_{X/} \ar[r] \ar[d] &
      \mathcal{O}^{\flat,\el}_{\pi(X)/} \ar[r, "\sim"] \ar[d] & 
      \xF^{\flat,\el}_{*,|\pi(X)|/} \ar[d]\\
      \xE^\el_{X/} \ar[r, "\sim"] & \mathcal{O}^\el_{\pi(X)/} \ar[r] & \xF^{\natural,\el}_{*,|\pi(X)|/},
    \end{tikzcd}
  \]
  where both squares are cartesian. Here the bottom left horizontal
  morphism is an equivalence since $\pi$ is a Segal
  $\mathcal{O}$-fibration, while the top right horizontal morphism is
  one since $\mathcal{O}^{\flat}$ is cartesian. Hence the composite in
  the top row is an equivalence, which says precisely that
  $\mathcal{E}^{\flat}$ is cartesian, as required.
\end{proof}

\begin{notation}\label{no O_0}
  Let $\mathcal{O}$ be an enrichable pattern. We write
  $\mathcal{O}_{0}$ for the full subcategory of $\mathcal{O}$ spanned
  by objects that lie over $\angled{0} \in \xF_{*}$, or equivalently
  the fibre product $\mathcal{O} \times_{\xF_{*}} \{\angled{0}\}$. We
  can regard this as a fibre product
  $\mathcal{O} \times_{\xF_{*}^{\natural}} \{\angled{0}\}$ in the
  \icat{} of algebraic patterns (where $\angled{0}$ is elementary in
  $\{\angled{0}\}$), so that $\mathcal{O}_{0}$ inherits a pattern
  structure.  Indeed, since the functor from $\mathcal{O}$ to
  $\xF_{*}$ is compatible with the inert--active factorization system,
  it follows that the inert--active factorization of a morphism in
  $\mathcal{O}_{0}$ again lies in $\mathcal{O}_{0}$, so we get a
  restricted factorization system on $\mathcal{O}_{0}$, and we take
  the elementary objects to be the elementary objects of $\mathcal{O}$
  that lie over $\angled{0}$. We denote the inclusion
  $\mathcal{O}_{0} \hookrightarrow \mathcal{O}$ by
  $i_{0}^{\mathcal{O}}$ (or just $i_{0}$ if $\mathcal{O}$ is clear
  from the context); this is a morphism of algebraic patterns. 
\end{notation}

\begin{observation}\label{rem O_0 int}
  Let $\mathcal{O}$ be an enrichable pattern, and suppose $O$ is an
  object in $\mathcal{O}_{0}$. If we have an inert morphism $O \intto
  O'$ in $\mathcal{O}$, then the object $O'$ must also lie in
  $\mathcal{O}_{0}$ (since there are no inert morphisms $\angled{0}
  \to \angled{n}$ in $\xF_{*}$ unless $n = 0$). This means that we
  have an equivalence of \icats{}
  \[ \mathcal{O}^{\xint}_{0,O/} \isoto \mathcal{O}^{\xint}_{O/},\]
  which restricts to an equivalence
  \[ \mathcal{O}^{\el}_{0,O/} \isoto \mathcal{O}^{\el}_{O/}.\]
  This shows that $i_{0}^{\mathcal{O}}$ is an iso-Segal morphism.
\end{observation}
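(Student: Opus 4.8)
The plan is to verify, in order, the three assertions packaged in the statement: that an inert map out of an object of $\mathcal{O}_{0}$ has target again in $\mathcal{O}_{0}$; that the resulting comparison functors on inert and elementary slices are equivalences; and that this is exactly the condition for $i_{0}^{\mathcal{O}}$ to be iso-Segal.

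First I would record the combinatorial fact about $\xF_{*}$. Since $|\blank|_{\mathcal{O}}\colon \mathcal{O}\to \xF_{*}^{\natural}$ is a morphism of patterns it preserves inert maps, so an inert map $O \intto O'$ with $O \in \mathcal{O}_{0}$ is sent to an inert map $\angled{0}\cong |O| \to |O'|$ in $\xF_{*}$. An inert map in $\xF_{*}$ is an isomorphism away from the base point, hence has singleton fibres over every non-base-point element of its target; as $\angled{0}$ has no non-base-point elements this is only possible if $|O'| \cong \angled{0}$, i.e.\ $O' \in \mathcal{O}_{0}$.

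Next I would observe that the inclusion $i_{0}^{\mathcal{O}}\colon \mathcal{O}_{0}\hookrightarrow \mathcal{O}$ restricts to a \emph{fully faithful} functor $\mathcal{O}_{0}^{\xint}\hookrightarrow \mathcal{O}^{\xint}$: this holds because $\mathcal{O}_{0}\subseteq \mathcal{O}$ is a full subcategory carrying the restricted factorization system, so for objects of $\mathcal{O}_{0}$ the inert maps between them are the same whether computed in $\mathcal{O}_{0}$ or in $\mathcal{O}$. Slicing under $O$ then exhibits $\mathcal{O}^{\xint}_{0,O/}$ as a full subcategory of $\mathcal{O}^{\xint}_{O/}$, and by the previous paragraph every object of $\mathcal{O}^{\xint}_{O/}$ --- an inert map $O\intto O'$ --- has target $O'\in\mathcal{O}_{0}$, so this full subcategory is everything; thus $\mathcal{O}^{\xint}_{0,O/}\isoto\mathcal{O}^{\xint}_{O/}$. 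To pass to elementary objects, write $\mathcal{O}^{\el}_{0,O/}=\mathcal{O}_{0}^{\el}\times_{\mathcal{O}_{0}^{\xint}}\mathcal{O}^{\xint}_{0,O/}$ and $\mathcal{O}^{\el}_{O/}=\mathcal{O}^{\el}\times_{\mathcal{O}^{\xint}}\mathcal{O}^{\xint}_{O/}$; since $\mathcal{O}_{0}^{\el}$ is by definition the full subcategory of $\mathcal{O}^{\el}$ on the objects lying over $\angled{0}$, and since --- again by the first paragraph --- every object of $\mathcal{O}^{\xint}_{O/}$ already lies over $\angled{0}$, the equivalence of inert slices identifies these two full subcategories, giving $\mathcal{O}^{\el}_{0,O/}\isoto\mathcal{O}^{\el}_{O/}$.

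Finally, unwinding the definition of an iso-Segal morphism, saying that $i_{0}^{\mathcal{O}}$ is iso-Segal means precisely that $\mathcal{O}^{\el}_{0,O/}\to\mathcal{O}^{\el}_{i_{0}(O)/}$ is an equivalence for every $O\in\mathcal{O}_{0}$, which is what the previous step supplies. The argument is essentially bookkeeping, with no genuine obstacle; the one point I would be most careful about is keeping track of the two possible readings of ``elementary'': an object of $\mathcal{O}_{0}^{\el}$ is by definition an elementary object of $\mathcal{O}$ that happens to lie over $\angled{0}$, and it is this identification --- together with the first paragraph, which makes the condition ``lies over $\angled{0}$'' automatic for targets of inert maps out of $\mathcal{O}_{0}$ --- that makes the restriction to elementary objects go through cleanly.
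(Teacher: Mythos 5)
Your proposal is correct and fills in exactly the argument the paper leaves implicit in its parenthetical remark: the fibre condition characterizing inert maps in $\xF_{*}$ forces the target of an inert map out of $\angled{0}$ to be $\angled{0}$, the slice comparison is then a fully faithful and essentially surjective inclusion, and the elementary case follows because $\mathcal{O}_{0}^{\el}$ is by definition the elementary objects of $\mathcal{O}$ over $\angled{0}$. This matches the paper's (sketched) reasoning, so there is nothing further to add.
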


\begin{observation}\label{rem O_0 act}
  Suppose $\mathcal{O}$ is an
  enrichable pattern. If $O$ is an object of $\mathcal{O}_{0}$ and
  $O' \actto O$ is an active morphism, then $O'$ must also lie in
  $\mathcal{O}_{0}$ (since there are no active morphisms $\angled{n}
  \to \angled{0}$ in $\xF_{*}$ unless $n = 0$). We thus have an
  equivalence of \icats{}
  \[\mathcal{O}^{\act}_{0,/O} \isoto
  \mathcal{O}_{/O}^{\act}\] for all $O \in \mathcal{O}_{0}$, which
  means precisely that $i_{0}^{\mathcal{O}}$ has \emph{unique lifting
    of active morphisms} in the sense of \cite[\S 6]{patterns}.
\end{observation}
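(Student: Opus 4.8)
The plan is to establish the two assertions in turn, the second being a formal consequence of the first together with \cref{no O_0}. First I would check that an active morphism with target in $\mathcal{O}_{0}$ has source in $\mathcal{O}_{0}$: if $O \in \mathcal{O}_{0}$ and $O' \actto O$ is active in $\mathcal{O}$, then since $|\blank| \colon \mathcal{O} \to \xF_{*}^{\natural}$ is a morphism of algebraic patterns it sends this to an active map $|O'| \actto |O| \cong \angled{0}$ in $\xF_{*}$. An active map with target $\angled{0}$ must send every element of its source to the basepoint; by the definition of active maps in $\xF_{*}$ this is only possible when the source is $\angled{0}$ as well, so $|O'| \cong \angled{0}$ and thus $O' \in \mathcal{O}_{0}$.

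Next I would assemble this into the claimed equivalence. Since $i_{0}^{\mathcal{O}} \colon \mathcal{O}_{0} \hookrightarrow \mathcal{O}$ is a morphism of patterns it preserves active morphisms, hence induces a functor $\mathcal{O}^{\act}_{0,/O} \to \mathcal{O}^{\act}_{/O}$ for each $O \in \mathcal{O}_{0}$. By the previous step every object of $\mathcal{O}^{\act}_{/O}$ --- that is, every active map $O' \actto O$ --- already has $O' \in \mathcal{O}_{0}$, and such a map, being active in $\mathcal{O}$, is active in $\mathcal{O}_{0}$ for the restricted factorization system of \cref{no O_0}; so the functor is surjective on objects, in particular essentially surjective. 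It is fully faithful because $\mathcal{O}_{0}$ is a full subcategory of $\mathcal{O}$ and its active morphisms are precisely those active morphisms of $\mathcal{O}$ between objects of $\mathcal{O}_{0}$, so the mapping spaces in $\mathcal{O}^{\act}_{0}$ and in $\mathcal{O}^{\act}$ between objects of $\mathcal{O}_{0}$ agree, and hence so do the fibres that compute the mapping spaces in the two slice \icats{}. Therefore $\mathcal{O}^{\act}_{0,/O} \isoto \mathcal{O}^{\act}_{/O}$, which is by definition the statement that $i_{0}^{\mathcal{O}}$ has unique lifting of active morphisms in the sense of \cite[\S 6]{patterns}.

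There is no genuine obstacle in this argument; the only step that calls for a little care is the fully faithfulness of the comparison functor, which comes down to the compatibility of the factorization system on $\mathcal{O}_{0}$ with that on $\mathcal{O}$ recorded in \cref{no O_0} --- one should make sure that ``active in $\mathcal{O}_{0}$'' and ``active in $\mathcal{O}$ with both endpoints in $\mathcal{O}_{0}$'' really denote the same morphisms, which is exactly what the restricted factorization system provides.
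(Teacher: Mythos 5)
Your proposal is correct and follows the same line as the paper, which justifies the observation with exactly the parenthetical remark that an active map in $\xF_{*}$ with target $\angled{0}$ must have source $\angled{0}$; you have simply filled in the routine verification that the resulting inclusion of slice \icats{} is essentially surjective and fully faithful. No issues.
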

Applying \cite[Proposition 6.3]{patterns}, we then get the following:
\begin{lemma}\label{lem i_0 Seg}
  Let $\mathcal{C}$ be a complete \icat{}. If $\mathcal{O}$ is an
  enrichable pattern, then right Kan extension
  along the inclusion $i^{\mathcal{O}}_{0} \colon \mathcal{O}_{0} \hookrightarrow \mathcal{O}$
  takes Segal $\mathcal{O}_{0}$-objects in $\mathcal{C}$ to Segal $\mathcal{O}$-objects. \qed
\end{lemma}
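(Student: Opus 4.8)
The plan is to obtain the lemma as a direct application of \cite[Proposition 6.3]{patterns}, which guarantees that right Kan extension along a morphism of algebraic patterns takes Segal objects to Segal objects provided that morphism is an iso-Segal morphism with unique lifting of active morphisms. So the only work is to verify that the inclusion $i_{0}^{\mathcal{O}} \colon \mathcal{O}_{0} \hookrightarrow \mathcal{O}$ has both properties, and both have essentially been recorded in the two preceding observations.

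First I would recall \cref{rem O_0 int}: since $\xF_{*}$ admits no inert map out of $\angled{0}$ other than the identity, any inert morphism $O \intto O'$ with $O \in \mathcal{O}_{0}$ forces $O' \in \mathcal{O}_{0}$, so $i_{0}^{\mathcal{O}}$ induces equivalences $\mathcal{O}^{\xint}_{0,O/} \isoto \mathcal{O}^{\xint}_{O/}$ and hence $\mathcal{O}^{\el}_{0,O/} \isoto \mathcal{O}^{\el}_{O/}$ for every $O \in \mathcal{O}_{0}$; thus $i_{0}^{\mathcal{O}}$ is an iso-Segal morphism. Dually, I would recall \cref{rem O_0 act}: since $\xF_{*}$ admits no active map into $\angled{0}$ except the identity, every active morphism $O' \actto O$ with $O \in \mathcal{O}_{0}$ has $O' \in \mathcal{O}_{0}$, giving equivalences $\mathcal{O}^{\act}_{0,/O} \isoto \mathcal{O}^{\act}_{/O}$; this is precisely the statement that $i_{0}^{\mathcal{O}}$ has unique lifting of active morphisms in the sense of \cite[\S 6]{patterns}.

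With these two facts in hand, \cite[Proposition 6.3]{patterns} applies to $f = i_{0}^{\mathcal{O}}$ and the complete \icat{} $\mathcal{C}$, and shows that the right Kan extension $(i_{0}^{\mathcal{O}})_{*}$ carries $\Seg_{\mathcal{O}_{0}}(\mathcal{C})$ into $\Seg_{\mathcal{O}}(\mathcal{C})$, which is exactly the assertion. There is no genuine obstacle here: the substance lies entirely in \cite[Proposition 6.3]{patterns}, and the present lemma is merely the observation that the combinatorics of $\xF_{*}$ over $\angled{0}$ forces the hypotheses of that proposition to hold for $i_{0}^{\mathcal{O}}$. (If one wanted to avoid black-boxing \cite[Proposition 6.3]{patterns}, the argument would instead proceed by using unique lifting of active morphisms to identify the value of the right Kan extension at $O$ with a limit over $\mathcal{O}^{\act}_{/O}$ of values of the original functor along active maps, and then checking the Segal condition at $O$ against the Segal conditions of the $\mathcal{O}_{0}$-object using the equivalence of elementary slices from \cref{rem O_0 int}; but since that is exactly the content of \cite[Proposition 6.3]{patterns}, invoking it is the natural move.)
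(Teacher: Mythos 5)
Your proposal is correct and matches the paper's own argument exactly: the paper states the lemma with a \qed precisely because \cref{rem O_0 int} and \cref{rem O_0 act} establish that $i_{0}^{\mathcal{O}}$ is an iso-Segal morphism with unique lifting of active morphisms, after which \cite[Proposition 6.3]{patterns} gives the conclusion immediately. Nothing is missing.
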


\begin{observation}\label{enrpattmonSeg}
  If $\mathcal{O}$ is an enrichable pattern, then it follows from
  \cite[Example 6.6]{patterns} that any
  $\mathcal{O}$-monoid in an \icat{} $\mathcal{C}$ is also a Segal
  $\mathcal{O}$-object. We thus have a fully faithful inclusion
  \[ \Mon_{\mathcal{O}}(\mathcal{C}) \hookrightarrow
    \Seg_{\mathcal{O}}(\mathcal{C}) \]
  of full subcategories of $\Fun(\mathcal{O}, \mathcal{C})$.
  In particular, any $\mathcal{O}$-monoidal \icat{} is a Segal
  $\mathcal{O}$-fibration, and so is an enrichable pattern by
  \cref{lem enrpatt}. We can give a more precise description of which Segal $\mathcal{O}$-objects are $\mathcal{O}$-monoids:
\end{observation}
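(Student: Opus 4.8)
The statement I anticipate is a characterization of $\mathcal{O}$-monoids among Segal $\mathcal{O}$-objects: a Segal $\mathcal{O}$-object $F \colon \mathcal{O} \to \mathcal{C}$ is an $\mathcal{O}$-monoid if and only if $F$ carries every object of $\mathcal{O}_{0}$ to a terminal object of $\mathcal{C}$ --- equivalently, by the Segal condition together with \cref{rem O_0 int}, if it does so on every elementary object lying over $\angled{0}$ --- which can also be packaged as a cartesian square exhibiting $\Mon_{\mathcal{O}}(\mathcal{C})$ as the fibre of $i_{0}^{*} \colon \Seg_{\mathcal{O}}(\mathcal{C}) \to \Seg_{\mathcal{O}_{0}}(\mathcal{C})$ over the constant Segal $\mathcal{O}_{0}$-object at the terminal object. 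My plan is to compare, one object at a time, the Segal condition for $\mathcal{O}$ with the Segal condition for the cartesian pattern $\mathcal{O}^{\flat}$. These two patterns have the same underlying \icat{} and factorization system; and since $|\blank| \colon \mathcal{O} \to \xF_{*}^{\natural}$ is a pattern morphism, every elementary object of $\mathcal{O}$ lies over $\angled{0}$ or $\angled{1}$, so $\mathcal{O}^{\flat}$ is obtained from $\mathcal{O}$ by discarding precisely the elementary objects over $\angled{0}$. Thus for $O \in \mathcal{O}$ with $|O| \cong \angled{n}$, the \icat{} $\mathcal{O}^{\flat,\el}_{O/}$ is the full subcategory of $\mathcal{O}^{\el}_{O/}$ obtained by deleting the objects $O \intto E$ with $E$ over $\angled{0}$, and since $\mathcal{O}^{\flat}$ is cartesian we have $\lim_{\mathcal{O}^{\flat,\el}_{O/}} F \simeq \prod_{i=1}^{n} F(O_{i})$.

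The forward implication is immediate: if $F$ is an $\mathcal{O}$-monoid and $O \in \mathcal{O}_{0}$, then $|O| \cong \angled{0}$, so $\mathcal{O}^{\flat,\el}_{O/} \simeq \xF^{\flat,\el}_{*,\angled{0}/}$ is empty and the $\mathcal{O}^{\flat}$-Segal condition reads $F(O) \simeq \lim_{\emptyset} F$, which is a terminal object.

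For the converse, let $F$ be a Segal $\mathcal{O}$-object that is terminal on $\mathcal{O}_{0}$ and fix $O$ over $\angled{n}$. The crucial input is that $\xF_{*}$ has no inert morphism $\angled{0} \to \angled{1}$: consequently $\mathcal{O}^{\el}_{O/}$ has no morphism from an object $O \intto E$ with $E$ over $\angled{0}$ to an object of the form $O \intto O_{i}$, since such a morphism would be an inert map $E \to O_{i}$ lying over an inert map $\angled{0} \to \angled{1}$ in $\xF_{*}$. Therefore the slice \icat{} $\mathcal{O}^{\flat,\el}_{O/} \times_{\mathcal{O}^{\el}_{O/}} (\mathcal{O}^{\el}_{O/})_{E/}$ is empty for every such $E$; combined with $F(E) \simeq \ast$, this is exactly the pointwise criterion for $F|_{\mathcal{O}^{\el}_{O/}}$ to be the right Kan extension of $F|_{\mathcal{O}^{\flat,\el}_{O/}}$ along the full inclusion $\mathcal{O}^{\flat,\el}_{O/} \hookrightarrow \mathcal{O}^{\el}_{O/}$. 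Since the limit of a right Kan extension along a fully faithful functor agrees with the limit of the original diagram, we obtain $F(O) \simeq \lim_{\mathcal{O}^{\el}_{O/}} F \simeq \lim_{\mathcal{O}^{\flat,\el}_{O/}} F \simeq \prod_{i} F(O_{i})$; unwinding the identifications shows the composite is the natural map induced by the inert projections $\rho_{i}^{O}$, so $F$ is an $\mathcal{O}$-monoid. The reformulation as a cartesian square then follows formally, using that $i_{0}$ is an iso-Segal morphism (\cref{rem O_0 int}), so that $i_{0}^{*}$ restricts to Segal objects, together with the fact that the constant functor at the terminal object is a Segal $\mathcal{O}_{0}$-object.

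I expect the main obstacle to be the right Kan extension step: setting up the pointwise formula for the Kan extension along the full subcategory inclusion and, more to the point, verifying on the nose that the abstract equivalence $F(O) \simeq \prod_{i} F(O_{i})$ it produces is the canonical finite-product comparison map, so that one genuinely checks the $\mathcal{O}$-monoid condition rather than merely the existence of some equivalence. The emptiness of the relevant slice categories, which records that $\angled{0}$ admits no inert morphism to or from $\angled{1}$ in $\xF_{*}$, is the technical heart; everything else is bookkeeping with the facts, already established, that $\mathcal{O}^{\flat}$ is cartesian and that $i_{0}$ is iso-Segal.
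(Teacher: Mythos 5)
Your argument is correct, and it is essentially the paper's own: the right Kan extension of $F|_{\mathcal{O}^{\flat,\el}_{O/}}$ along the full inclusion into $\mathcal{O}^{\el}_{O/}$, with the emptiness of the relevant slices coming from the absence of inert maps $\angled{0}\to\angled{1}$ in $\xF_{*}$, is exactly the computation in the paper's proof of the follow-up characterization \cref{lem:monoidrestrpt}. The only difference is one of packaging: for the observation itself the paper simply cites \cite[Example 6.6]{patterns}, whereas your argument (run in the forward direction, using that a monoid is automatically terminal on elementaries over $\angled{0}$) gives a self-contained proof of that claim as well.
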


\begin{lemma}\label{lem:monoidrestrpt}
  Let $\mathcal{O}$ be an enrichable pattern and $\mathcal{C}$ a
  complete \icat{}. A Segal
  $\mathcal{O}$-object in $\mathcal{C}$ is an $\mathcal{O}$-monoid
  \IFF{} its restriction to $\mathcal{O}_{0}$ is a constant functor
  with value the terminal object of $\mathcal{C}$.
\end{lemma}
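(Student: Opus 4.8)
The plan is to analyze the Segal condition for an object $O \in \mathcal{O}$ over $\angled{n}$ and show that the $\mathcal{O}^{\flat}$-Segal (i.e.\ monoid) condition at $O$ is equivalent to the $\mathcal{O}$-Segal condition at $O$ \emph{together with} the requirement that $F$ restricted to $\mathcal{O}_{0}$ be constant at the terminal object. The key comparison is between the indexing categories $\mathcal{O}^{\el}_{O/}$ and $\mathcal{O}^{\flat,\el}_{O/}$: the latter is the full subcategory of the former spanned by those inert maps $O \intto E$ with $|E| \cong \angled{1}$. Since $\mathcal{O}$ is enrichable, $\mathcal{O}^{\flat,\el}_{O/} \simeq \xF^{\flat,\el}_{*,\angled{n}/}$ is the discrete set $\{\rho^{O}_{1},\dots,\rho^{O}_{n}\}$, whereas $\mathcal{O}^{\el}_{O/}$ lies over $\xF^{\natural,\el}_{*,\angled{n}/} \simeq \{\rho_{1},\dots,\rho_{n}\}^{\triangleright}$, so it also contains inert maps $O \intto E'$ with $|E'| \cong \angled{0}$, i.e.\ inert maps into $\mathcal{O}_{0}$.

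First I would treat the ``only if'' direction. Suppose $F$ is an $\mathcal{O}$-monoid, i.e.\ a Segal $\mathcal{O}^{\flat}$-object. By \cref{enrpattmonSeg} it is automatically a Segal $\mathcal{O}$-object, so it remains to identify its restriction to $\mathcal{O}_{0}$. For $O \in \mathcal{O}_{0}$ we have $|O| \cong \angled{0}$, so $\mathcal{O}^{\flat,\el}_{O/} \simeq \xF^{\flat,\el}_{*,\angled{0}/}$ is empty; hence the $\mathcal{O}^{\flat}$-Segal condition at $O$ says $F(O) \simeq \lim_{\emptyset} \simeq *$, the terminal object. Functoriality then forces $F|_{\mathcal{O}_{0}}$ to be constant at $*$ (every morphism between terminal objects is an equivalence, and one checks this assembles into a constant functor, e.g.\ using that $F|_{\mathcal{O}_0}$ factors through the contractible $\infty$-groupoid of terminal objects).

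For the ``if'' direction, suppose $F$ is a Segal $\mathcal{O}$-object whose restriction to $\mathcal{O}_{0}$ is constant at the terminal object $*$. Fix $O \in \mathcal{O}$ over $\angled{n}$; I must show $F(O) \simeq \prod_{i=1}^{n} F(O_{i})$, where $O_i$ are the targets of $\rho_i^O$. We already know $F(O) \simeq \lim_{E \in \mathcal{O}^{\el}_{O/}} F(E)$. Now stratify $\mathcal{O}^{\el}_{O/}$ according to the map to $\xF^{\natural,\el}_{*,\angled{n}/} \simeq \{\rho_1,\dots,\rho_n\}^{\triangleright}$: the fibres over the $\rho_i$ are (essentially uniquely) the single objects $O \intto O_i$, while the fibre over the cone point consists of inert maps $O \intto E'$ with $E' \in \mathcal{O}_0$. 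On this last part $F$ is constant at $*$, so it contributes nothing to the limit; more precisely, the full subcategory of $\mathcal{O}^{\el}_{O/}$ spanned by the $\{O \intto O_i\}$ is coinitial once we discard the value-$*$ part. The cleanest way to make this rigorous is to observe that $F|_{\mathcal{O}^{\el}_{O/}}$ is right Kan extended from the discrete subcategory $\{\rho^O_1,\dots,\rho^O_n\} \simeq \mathcal{O}^{\flat,\el}_{O/}$ — the extra objects map to the terminal object and the over-categories involved are cofiltered enough — so that $\lim_{\mathcal{O}^{\el}_{O/}} F \simeq \lim_{\mathcal{O}^{\flat,\el}_{O/}} F \simeq \prod_i F(O_i)$, which is exactly the $\mathcal{O}^{\flat}$-Segal condition.

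The main obstacle is this coinitiality/right-Kan-extension step: I must verify carefully that the inclusion of the discrete set $\{O \intto O_i\}$ into $\mathcal{O}^{\el}_{O/}$, after ``collapsing'' the objects lying over $\angled 0$, does not change the limit of $F$. Abstractly this should follow from the explicit structure of $\xF^{\natural,\el}_{*,\angled{n}/} = \{\rho_1,\dots,\rho_n\}^{\triangleright}$ and the fact that $\mathcal{O}^{\el}_{O/} \to \xF^{\natural,\el}_{*,|O|/}$ has discrete fibres over the $\rho_i$ (by the cartesian condition for $\mathcal{O}^{\flat}$, via \cref{rem O_0 int}), together with the observation that any inert $O \intto E'$ with $E' \in \mathcal{O}_0$ admits, over each $\rho_i$, no further inert refinement — so these objects are genuinely ``terminal-like'' in $\mathcal{O}^{\el}_{O/}$ and carry the value $*$. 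I expect this is exactly the content of \cite[Example 6.6]{patterns} invoked in \cref{enrpattmonSeg}, run in reverse, and I would quote that machinery rather than redo the cofinality computation by hand.
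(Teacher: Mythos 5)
Your proof is correct and follows essentially the same route as the paper: the forward direction uses that $\mathcal{O}^{\flat,\el}_{O/}$ is empty for $O\in\mathcal{O}_{0}$, and the converse shows that $F|_{\mathcal{O}^{\el}_{O/}}$ is right Kan extended along $j\colon \mathcal{O}^{\flat,\el}_{O/}\hookrightarrow \mathcal{O}^{\el}_{O/}$, so the two Segal limits agree. The only step you leave slightly informal --- why $F'\to j_{*}j^{*}F'$ is an equivalence --- is exactly the paper's computation that $(\mathcal{O}^{\flat,\el}_{O/})_{\alpha/}$ is $\{\alpha\}$ when the target of $\alpha$ lies over $\angled{1}$ and empty when it lies over $\angled{0}$, which is precisely the ``no further inert refinement'' observation you already identify.
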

\begin{proof}
  For any $O \in \mathcal{O}_{0}$ we must have
  $\mathcal{O}^{\flat,\el}_{O/} \simeq \emptyset$, since this lives
  over $\xF_{*,\angled{0}/}^{\flat,\el}$, which is empty (since there
  are no inert maps $\angled{0} \to \angled{1}$ in
  $\xF_{*}$). Therefore, if 
  $M \colon \mathcal{O} \to \mathcal{C}$ is an
  $\mathcal{O}$-monoid, the object $M(O)$ is the terminal object of
  $\mathcal{C}$, so that $M|_{\mathcal{O}_{0}}$ is indeed constant
  with value the terminal object.

  Conversely, suppose $F \colon \mathcal{O} \to \mathcal{C}$ is a
  Segal $\mathcal{O}$-object such that $F|_{\mathcal{O}_{0}}$ is
  constant with value the terminal object. We claim that this means
  that the induced functor $F' \colon \mathcal{O}^{\el}_{O/} \to \mathcal{C}$ is
  right Kan extended along the inclusion $j \colon
  \mathcal{O}^{\flat,\el}_{O/} \to \mathcal{O}^{\el}_{O/}$ for every 
  $O \in \mathcal{O}$. Indeed, for $\alpha \colon O \intto E$ in
  $\mathcal{O}^{\el}_{O/}$ we have that
  $(\mathcal{O}^{\flat,\el}_{O/})_{\alpha/}$ is $\{\alpha\}$ if $E$
  lies over $\angled{1}$, and empty if $E$ lies over $\angled{0}$. For
  a functor $\phi \colon \mathcal{O}^{\flat,\el}_{O/} \to \mathcal{C}$
  we thus get
  \[ j_{*}\phi(\alpha) \simeq
    \begin{cases}
      \phi(\alpha), & \alpha \in \mathcal{O}^{\flat,\el}_{O/} \\
      *, & \alpha \notin \mathcal{O}^{\flat,\el}_{O/}.
    \end{cases}
  \]
  The map $F' \to j_{*}j^{*}F'$ is therefore an equivalence, hence so
  is the induced map on limits. The Segal map $F(O) \to \prod_{i}
  F(O_{i})$ decomposes as
  \[ F(O) \isoto \lim_{E \in \mathcal{O}^{\el}_{O/}} F(E) \isoto \prod_{i}
    F(O_{i})\]
  where the first map is an equivalence since $F$ is a Segal
  $\mathcal{O}$-object and the second is the map on limits we just saw
  was an equivalence. Thus $F$ is indeed an $\mathcal{O}$-monoid.
\end{proof}

We are now ready to define the algebraic structures associated to an
enrichable pattern $\mathcal{O}$ that we think of as \emph{enriched}
versions of Segal $\mathcal{O}$-spaces.

\begin{defn}\label{def OX}
  Let $\mathcal{O}$ be an enrichable pattern. Given a functor
  $\bX \colon \mathcal{O}_{0} \to \mathcal{S}$, we write
  $\ObX \to \mathcal{O}$ for the left fibration corresponding to the
  functor $i_{0,*}\bX \colon \mathcal{O} \to \mathcal{S}$ obtained as
  the right Kan extension of $\bX$ along $i_{0}$. If $\bX$ is a Segal
  $\mathcal{O}_{0}$-space, then $i_{0,*}\bX$ is a Segal
  $\mathcal{O}$-space by \cref{lem i_0 Seg}. Thus
  $\ObX \to \mathcal{O}$ is a Segal $\mathcal{O}$-fibration; we write
  $\ObX^{\natural}$ for the pattern structure lifted from
  $\mathcal{O}$, which is again an enrichable pattern by \cref{lem
    enrpatt}. Combining the right Kan extension functor $i_{0,*}$ with
  the straightening equivalence, we obtain in particular a functor
  \[ \mathcal{O}_{(\blank)} \colon \Seg_{\mathcal{O}_{0}}(\mathcal{S})
    \to \AlgPatt_{/\mathcal{O}}. \]
\end{defn}

\begin{observation}\label{rem O_0X}
  Let $\bX$ be a functor $\mathcal{O}_{0} \to \mathcal{S}$. If we pull
  back the left fibration $\mathcal{O}_\bX\to \mathcal{O}$ to
  $\mathcal{O}_{0}$, we get the left fibration
  $\mathcal{O}_{0,\bX}\to \mathcal{O}_0$ that corresponds to the
  functor $i_0^*i_{0,*}(\bX)\colon \mathcal{O}_0\to
  \mathcal{S}$. Since $i_0$ is fully faithful, the counit
  $i_0^*i_{0,*}\to \id$ is a natural equivalence. Hence,
  $\mathcal{O}_{0,\bX}\to \mathcal{O}_0$ is the left fibration for the
  functor $\bX$.
\end{observation}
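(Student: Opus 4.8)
The plan is to reduce the statement to two standard facts: the compatibility of the straightening--unstraightening equivalence with base change, and the fact that the counit of a right Kan extension along a fully faithful functor is invertible. First I would recall that, by construction in \cref{def OX}, the left fibration $\mathcal{O}_{\bX} \to \mathcal{O}$ is the unstraightening of the functor $i_{0,*}\bX \colon \mathcal{O} \to \mathcal{S}$. Since left fibrations are closed under pullback, and unstraightening over $\mathcal{O}$ is compatible with base change in the sense that for any functor $p \colon \mathcal{A} \to \mathcal{O}$ the pullback along $p$ of the left fibration classified by $G \colon \mathcal{O} \to \mathcal{S}$ is the left fibration over $\mathcal{A}$ classified by $p^{*}G = G \circ p$, pulling $\mathcal{O}_{\bX} \to \mathcal{O}$ back along $i_{0} \colon \mathcal{O}_{0} \hookrightarrow \mathcal{O}$ yields the left fibration $\mathcal{O}_{0,\bX} \to \mathcal{O}_{0}$ classified by $i_{0}^{*}i_{0,*}\bX$.

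Next I would invoke the adjunction $i_{0}^{*} \dashv i_{0,*}$ between restriction and right Kan extension along $i_{0}$. Because $\mathcal{O}_{0} \subseteq \mathcal{O}$ is a full subcategory, $i_{0}$ is fully faithful, hence $i_{0,*}$ is fully faithful, which is equivalent to the counit $i_{0}^{*}i_{0,*} \Rightarrow \id_{\Fun(\mathcal{O}_{0},\mathcal{S})}$ being a natural equivalence. Evaluating at $\bX$ gives a natural equivalence $i_{0}^{*}i_{0,*}\bX \simeq \bX$ of functors $\mathcal{O}_{0} \to \mathcal{S}$. Combining this with the previous step, the left fibration $\mathcal{O}_{0,\bX} \to \mathcal{O}_{0}$ is classified by $\bX$, as claimed.

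There is no serious obstacle here: each step is a direct appeal to a standard property, and one only needs to be slightly careful to match the pullback of left fibrations with restriction of classifying functors on the correct side. It is worth noting that the hypothesis that $\bX$ be a Segal $\mathcal{O}_{0}$-space plays no role in this observation --- the identification holds for an arbitrary functor $\bX \colon \mathcal{O}_{0} \to \mathcal{S}$.
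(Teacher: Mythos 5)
Your proposal is correct and follows exactly the paper's own reasoning: identify the pullback with the unstraightening of $i_0^*i_{0,*}\bX$ via compatibility of straightening with base change, then use that the counit $i_0^*i_{0,*}\to\id$ is an equivalence because $i_0$ is fully faithful. Your closing remark that the Segal hypothesis is not needed is also consistent with the statement, which is indeed formulated for an arbitrary functor $\bX$.
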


\begin{defn}
  Let $\mathcal{O}$ be an enrichable pattern, and suppose
  $\mathcal{C}$ is an $\mathcal{O}$-monoidal \icat{}.
  For $\bX \in \Seg_{\mathcal{O}_{0}}(\mathcal{S})$, we define an 
  \emph{$\mathcal{O}$-algebroid} in $\mathcal{C}$ with
  underlying Segal $\mathcal{O}_{0}$-space $\bX$ to be an $\ObX$-algebra
  in $\mathcal{C}$. In other words, it is
  a commutative triangle
  \[
    \begin{tikzcd}
      \mathcal{O}_{\bX} \arrow{rr} \arrow{dr} & & \mathcal{C}^{\otimes}
      \arrow{dl} \\
      & \mathcal{O}
    \end{tikzcd}
  \]
  where the horizontal functor takes inert morphisms in
  $\mathcal{O}_{\bX}$ to cocartesian morphisms in
  $\mathcal{C}^{\otimes}$. We write
  \[ \Algd_{\mathcal{O}}(\mathcal{C}) \to
  \Seg_{\mathcal{O}_{0}}(\mathcal{S})\] for the cartesian fibration
  for the functor $\Seg_{\mathcal{O}_{0}}(\mathcal{S})^{\op} \to
  \CatI$ that takes $\bX \in \Seg_{\mathcal{O}_{0}}(\mathcal{S})$ to
  $\Alg_{\mathcal{O}_{\bX}/\mathcal{O}}(\mathcal{C})$; here we call
  $\Algd_{\mathcal{O}}(\mathcal{C})$ the \emph{\icat{} of
  $\mathcal{O}$-algebroids in $\mathcal{C}$}. 
\end{defn}

\begin{defn}
  More generally, given enrichable patterns $\mathcal{O}$ and
  $\mathcal{P}$, a morphism $f\colon \mathcal{P}\to \mathcal{O}$ of
  algebraic patterns over $\xF_{*}^{\natural}$, and an
  $\mathcal{O}$-monoidal \icat{} $\mathcal{C}$, we define a
  \emph{$\mathcal{P}$-algebroid over $\mathcal{O}$} in $\mathcal{C}$
  with underlying Segal $\mathcal{P}_{0}$-space $\bX$ to be a
  $\mathcal{P}_{\bX}$-algebra in $\mathcal{C}$ over
  $\mathcal{O}$. This is equivalently a commutative square of \icats{}
  \[
    \begin{tikzcd}
      \mathcal{P}_\bX \arrow[r] \arrow[d] & \mathcal{C}^\otimes \arrow[d]\\
      \mathcal{P} \arrow[r, "f"] & \mathcal{O}
    \end{tikzcd}
  \]
  such that the upper horizontal map preserves inert morphisms. We
  then take
  \[ \Algd_{\mathcal{P}/\mathcal{O}}(\mathcal{C})\to
  \Seg_{\mathcal{P}_0}(\xS)\] to be the cartesian fibration
  corresponding to the functor
  $\Seg_{\mathcal{P}_0}(\xS)^\op\to \CatI$ taking $\bX$ to
  $\Alg_{\mathcal{P}_\bX/\mathcal{O}}(\mathcal{C}^\otimes)$;  we
  call $\Algd_{\mathcal{P}/\mathcal{O}}(\mathcal{C})$ the
  \emph{\icat{} of $\mathcal{P}$-algebroids over $\mathcal{O}$ in
    $\mathcal{C}$}. We will denote
  $\Algd_{\mathcal{P}/\mathcal{O}}(\mathcal{C})$ as 
  just $\Algd_{\mathcal{P}}(\mathcal{C})$
  when the slicing over $\mathcal{O}$ is clear from the context.
\end{defn}

\subsection{Enrichment in spaces}\label{subsec:enrspc}
In this section we fix an enrichable pattern $\mathcal{O}$ and perform a sanity check: 
We will show that an $\mathcal{O}$-algebroid in spaces is the same thing as a Segal
$\mathcal{O}$-space. More precisely, we will show:
\begin{thm}\label{thm:algdspc}
  There is a canonical equivalence between $\Algd_{\mathcal{O}}(\mathcal{S})$ and
$\Seg_{\mathcal{O}}(\mathcal{S})$, compatible with the projections to $\Seg_{\mathcal{O}_{0}}(\mathcal{S})$.
\end{thm}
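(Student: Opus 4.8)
The plan is to describe the fibre of $\Algd_{\mathcal{O}}(\mathcal{S})$ over each $\bX \in \Seg_{\mathcal{O}_{0}}(\mathcal{S})$ through a chain of natural equivalences ending at the fibre of the restriction functor $i_{0}^{*}\colon \Seg_{\mathcal{O}}(\mathcal{S}) \to \Seg_{\mathcal{O}_{0}}(\mathcal{S})$ --- which lands in Segal objects since $i_{0}$ is iso-Segal (\cref{rem O_0 int}) --- and then to promote this to an equivalence of cartesian fibrations over $\Seg_{\mathcal{O}_{0}}(\mathcal{S})$.

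So fix $\bX$. By \cref{def OX} the pattern $\mathcal{O}_{\bX}$ is enrichable and $\mathcal{O}_{\bX} \to \mathcal{O}$ is a Segal $\mathcal{O}$-fibration, hence an iso-Segal morphism (\cref{def SegalOfib}). The fibre of $\Algd_{\mathcal{O}}(\mathcal{S})$ over $\bX$ is $\Alg_{\mathcal{O}_{\bX}/\mathcal{O}}(\mathcal{S}^{\times})$, where $\mathcal{S}^{\times}$ is $\mathcal{S}$ with its cartesian $\mathcal{O}$-monoidal structure. Since pulling a cartesian monoidal structure back along a morphism of cartesian patterns again yields the cartesian one, \cref{rem pullback_algebras} together with the identification of algebras for a cartesian pattern in a cartesian monoidal \icat{} with its monoids (from \cite{freealg}) gives
\[ \Alg_{\mathcal{O}_{\bX}/\mathcal{O}}(\mathcal{S}^{\times}) \simeq \Mon_{\mathcal{O}_{\bX}}(\mathcal{S}), \]
and by \cref{lem:monoidrestrpt} the right-hand side is the full subcategory of $\Seg_{\mathcal{O}_{\bX}}(\mathcal{S})$ on those functors whose restriction to $(\mathcal{O}_{\bX})_{0}$ is constant at the point. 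Next I would identify $\Seg_{\mathcal{O}_{\bX}}(\mathcal{S})$ with the full subcategory $\Seg_{\mathcal{O}}(\mathcal{S})_{/i_{0,*}\bX} \subseteq \Fun(\mathcal{O},\mathcal{S})_{/i_{0,*}\bX}$ on maps $H \to i_{0,*}\bX$ with $H$ Segal (note $i_{0,*}\bX$ is Segal by \cref{lem i_0 Seg}): unstraightening gives $\Fun(\mathcal{O}_{\bX},\mathcal{S}) \simeq \Fun(\mathcal{O},\mathcal{S})_{/i_{0,*}\bX}$ since $\mathcal{O}_{\bX} \to \mathcal{O}$ is the left fibration for $i_{0,*}\bX$, and since each $(\mathcal{O}_{\bX})^{\el}_{\widetilde{O}/} \to \mathcal{O}^{\el}_{O/}$ is an equivalence one checks, by testing on the fibres of $H(O) \to (i_{0,*}\bX)(O)$ and using that limits commute with passage to fibres, that the Segal condition for $H \to i_{0,*}\bX$ at $O$ is equivalent to the Segal condition at each object of $\mathcal{O}_{\bX}$ over $O$ for the corresponding functor. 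Under this identification, \cref{rem O_0X} and $i_{0}^{*}i_{0,*}\bX \simeq \bX$ turn the monoid condition into the requirement that $i_{0}^{*}H \to \bX$ be an equivalence, and the unit of the adjunction $i_{0}^{*} \dashv i_{0,*}$ on Segal objects then identifies the resulting full subcategory with $\{\, H \in \Seg_{\mathcal{O}}(\mathcal{S}) : i_{0}^{*}H \simeq \bX \,\}$, as $i_{0,*}$ is fully faithful. Thus the fibre of $\Algd_{\mathcal{O}}(\mathcal{S})$ over $\bX$ is naturally equivalent to the fibre of $i_{0}^{*}$ over $\bX$.

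To assemble this over the base, I would verify that $i_{0}^{*}\colon \Seg_{\mathcal{O}}(\mathcal{S}) \to \Seg_{\mathcal{O}_{0}}(\mathcal{S})$ is itself a cartesian fibration whose cartesian lift, at $H$, of a map $\bX' \to i_{0}^{*}H$ is the projection $H \times_{i_{0,*}\bX} i_{0,*}\bX' \to H$ --- this is again a Segal $\mathcal{O}$-space since Segal objects are closed under limits in $\Fun(\mathcal{O},\mathcal{S})$ --- and that under unstraightening the cartesian edge $A|_{\mathcal{O}_{\bX'}} \to A$ of $\Algd_{\mathcal{O}}(\mathcal{S})$ over $\bX' \to \bX$ corresponds precisely to this projection. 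The fibrewise equivalences then combine into a map of cartesian fibrations over $\Seg_{\mathcal{O}_{0}}(\mathcal{S})$ which is a fibrewise equivalence, hence an equivalence.

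The step I expect to be the main obstacle is the identification $\Seg_{\mathcal{O}_{\bX}}(\mathcal{S}) \simeq \Seg_{\mathcal{O}}(\mathcal{S})_{/i_{0,*}\bX}$: one must match the straightening equivalence against the Segal conditions using the iso-Segal property of $\mathcal{O}_{\bX} \to \mathcal{O}$, and then keep track of naturality in $\bX$ carefully enough to conclude that all of the identifications above respect the cartesian fibration structures over $\Seg_{\mathcal{O}_{0}}(\mathcal{S})$.
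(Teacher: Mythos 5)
Your proposal follows essentially the same route as the paper: identify the fibre of $\Algd_{\mathcal{O}}(\mathcal{S})$ over $\bX$ with $\Mon_{\ObX}(\mathcal{S})$ via the ``algebras in a cartesian monoidal \icat{} are monoids'' result, identify $\Seg_{\ObX}(\mathcal{S})$ with $\Seg_{\mathcal{O}}(\mathcal{S})_{/i_{0,*}\bX}$ and cut out the monoids as those $H$ with $i_{0}^{*}H \to \bX$ an equivalence using \cref{lem:monoidrestrpt}, then assemble using that $i_{0}^{*}$ is a cartesian fibration (\cref{propn:SegCartFib}). The only cosmetic difference is that you verify the Segal-condition matching under unstraightening by passing to fibres directly, where the paper invokes distributivity of limits over colimits, and you organize the naturality by hand where the paper routes it through arrow-category pullbacks; both work.
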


We start by noting an alternative description of algebroids in any
cartesian monoidal \icat{}:

\begin{definition}
  Let $\mathcal{C}$ be an \icat{} with finite products. We write
  $\mathcal{M}_{\mathcal{O}}(\mathcal{C}) \to 
  \Seg_{\mathcal{O}_{0}}(\mathcal{S})$ for the cartesian fibration
  corresponding to the functor
  \[ \bX \mapsto \Mon_{\ObX}(\mathcal{C}),\]
  using the functoriality of $\mathcal{O}_{(\blank)}$ noted in
  \cref{def OX}.
\end{definition}

\begin{lemma}\label{lem AlgM}
  Let $\mathcal{C}$ be an \icat{} with finite products. Then there is
  a natural equivalence
  \[ \Algd_{\mathcal{O}/\xF_{*}}(\mathcal{C}^{\times}) \simeq
    \mathcal{M}_{\mathcal{O}}(\mathcal{C})\]
  over $\Seg_{\mathcal{O}_{0}}(\mathcal{S})$.
\end{lemma}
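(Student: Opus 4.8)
The plan is to establish the equivalence $\Algd_{\mathcal{O}/\xF_{*}}(\mathcal{C}^{\times}) \simeq \mathcal{M}_{\mathcal{O}}(\mathcal{C})$ fibrewise over $\Seg_{\mathcal{O}_{0}}(\mathcal{S})$, that is, to produce for each $\bX \in \Seg_{\mathcal{O}_{0}}(\mathcal{S})$ a natural equivalence
\[ \Alg_{\mathcal{O}_{\bX}/\xF_{*}}(\mathcal{C}^{\times}) \simeq \Mon_{\ObX}(\mathcal{C}), \]
and then check that these assemble into an equivalence of cartesian fibrations over $\Seg_{\mathcal{O}_{0}}(\mathcal{S})$. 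The first step is a general fact about algebras in cartesian monoidal \icats{}: for any cartesian pattern $\mathcal{Q}$, an algebra in the cartesian $\mathcal{Q}$-monoidal \icat{} $\mathcal{C}^{\times} \to \mathcal{Q}$ (obtained by pulling back $\mathcal{C}^{\times} \to \xF_{*}^{\flat}$ along $|\blank|_{\mathcal{Q}}$) is the same datum as a $\mathcal{Q}$-monoid in $\mathcal{C}$, i.e.\ $\Alg_{\mathcal{Q}}(\mathcal{C}^{\times}) \simeq \Mon_{\mathcal{Q}}(\mathcal{C})$. This should be essentially the defining property of cartesian patterns together with the universal property of $\mathcal{C}^{\times} \to \xF_{*}^{\flat}$ — a section preserving inert morphisms amounts, after straightening, to a functor $\mathcal{Q} \to \mathcal{C}$ sending the elementary inert maps out of each $Q$ to the projections, which is exactly the Segal/product condition defining an $\mathcal{Q}$-monoid since $\mathcal{Q}^{\el}_{Q/} \simeq \xF^{\flat,\el}_{*,|Q|/}$ is the discrete set of $|Q|$ points. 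I expect this is already recorded in \cite{freealg}; if so I would simply cite it.

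Next I would apply this to $\mathcal{Q} = \mathcal{O}_{\bX}^{\flat}$. For this to make sense I need $\mathcal{O}_{\bX}$ — viewed as an enrichable pattern via the composite $\mathcal{O}_{\bX} \to \mathcal{O} \xto{|\blank|} \xF_{*}^{\natural}$ as in \cref{def OX} — to have $\mathcal{O}_{\bX}^{\flat}$ cartesian, which holds by \cref{lem enrpatt} since $\mathcal{O}_{\bX} \to \mathcal{O}$ is a Segal $\mathcal{O}$-fibration. The $\mathcal{O}$-monoidal \icat{} $\mathcal{C}^{\times} \to \mathcal{O}$ here is the pullback of $\mathcal{C}^{\times} \to \xF_{*}^{\flat}$ along $|\blank|_{\mathcal{O}}\colon \mathcal{O}^{\flat} \to \xF_{*}^{\flat}$, and by \cref{rem pullback_algebras} we have
\[ \Alg_{\mathcal{O}_{\bX}/\mathcal{O}}(\mathcal{C}^{\times}) \simeq \Alg_{\mathcal{O}_{\bX}}\bigl( (|\blank|_{\mathcal{O}} \circ \pi_{\bX})^{*}\mathcal{C}^{\times} \bigr), \]
where $\pi_{\bX}\colon \mathcal{O}_{\bX} \to \mathcal{O}$; but the pullback of the cartesian $\xF_{*}^{\flat}$-monoidal \icat{} along $\mathcal{O}_{\bX}^{\flat} \to \xF_{*}^{\flat}$ is again the cartesian $\mathcal{O}_{\bX}^{\flat}$-monoidal \icat{} $\mathcal{C}^{\times} \to \mathcal{O}_{\bX}^{\flat}$. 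Combining with the first step gives $\Alg_{\mathcal{O}_{\bX}/\mathcal{O}}(\mathcal{C}^{\times}) \simeq \Mon_{\mathcal{O}_{\bX}}(\mathcal{C})$. Note the slicing is over $\xF_{*}$ in the statement, but since $\mathcal{C}^{\times} \to \mathcal{O}$ is itself pulled back from $\mathcal{C}^{\times} \to \xF_{*}^{\flat}$, a $\mathcal{P}$-algebra over $\mathcal{O}$ is the same as a $\mathcal{P}$-algebra over $\xF_{*}$; I would spell this compatibility out so that the indexing is unambiguous.

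The main obstacle — and the part requiring genuine care rather than invocation — is \emph{naturality in $\bX$}, i.e.\ upgrading the fibrewise equivalences to an equivalence of cartesian fibrations over $\Seg_{\mathcal{O}_{0}}(\mathcal{S})$. Both sides are, by definition, cartesian fibrations associated to functors $\Seg_{\mathcal{O}_{0}}(\mathcal{S})^{\op} \to \CatI$, namely $\bX \mapsto \Alg_{\mathcal{O}_{\bX}/\mathcal{O}}(\mathcal{C}^{\times})$ and $\bX \mapsto \Mon_{\mathcal{O}_{\bX}}(\mathcal{C})$, so it suffices to produce a natural equivalence of these two functors. The functoriality of the first is induced by the functoriality of $\bX \mapsto \mathcal{O}_{\bX}$ valued in $\AlgPatt_{/\mathcal{O}}$ from \cref{def OX} together with base change of algebras; the functoriality of the second is induced by the same $\bX \mapsto \mathcal{O}_{\bX}$ postcomposed with $\mathcal{Q} \mapsto \Mon_{\mathcal{Q}}(\mathcal{C})$. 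The equivalence constructed in the previous paragraph is manifestly induced by the universal property of $\mathcal{C}^{\times} \to \xF_{*}^{\flat}$, which is a property of the object $\mathcal{C}^{\times}$ and hence natural in the pattern $\mathcal{Q}$ mapping to $\xF_{*}^{\flat}$; so the cleanest route is to exhibit both functors $\Seg_{\mathcal{O}_{0}}(\mathcal{S})^{\op} \to \CatI$ as restrictions along $\bX \mapsto \mathcal{O}_{\bX}$ of naturally equivalent functors $\AlgPatt_{/\xF_{*}^{\flat}}^{\op} \to \CatI$, one sending $\mathcal{Q} \mapsto \Alg_{\mathcal{Q}}(\mathcal{C}^{\times})$ and the other $\mathcal{Q} \mapsto \Mon_{\mathcal{Q}}(\mathcal{C})$, the equivalence between which is exactly the first step applied with naturality. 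I would carry this out by pointing to the relevant coherence in \cite{freealg} if available, and otherwise by the straightening/unstraightening dictionary together with the observation that both fibrations sit inside $\xFun_{/\mathcal{O}}(\text{--}, \mathcal{C}^{\times})$ pulled back appropriately, so the fibrewise equivalence of full subcategories immediately globalizes.
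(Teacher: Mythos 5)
Your proposal is correct and follows essentially the same route as the paper: the paper simply cites \cite[Proposition 5.1]{freealg} for the fibrewise equivalence $\Alg_{\ObX/\xF_{*}}(\mathcal{C}^{\times}) \simeq \Mon_{\ObX}(\mathcal{C})$ and then asserts that naturality in $\bX$ is clear from unwinding that construction. Your more careful treatment of the naturality step (exhibiting both sides as restrictions along $\bX \mapsto \ObX$ of naturally equivalent functors on patterns over $\xF_{*}^{\flat}$) is exactly the content the paper leaves implicit.
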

\begin{proof}
  From \cite[Proposition 5.1]{freealg} we have for any $\bX \in
  \Seg_{\mathcal{O}_{0}}(\mathcal{S})$ an equivalence
  \[ \Alg_{\ObX/\xF_{*}}(\mathcal{C}^{\times}) \simeq 
    \Mon_{\ObX}(\mathcal{C}),\]
  and unwinding the definition of this it is clear that it is natural
  in $\bX$.
\end{proof}

Next, we check that $\Seg_{\mathcal{O}}(\mathcal{C})$ is a cartesian
fibration over $\Seg_{\mathcal{O}_{0}}(\mathcal{C})$:

\begin{propn}\label{propn:SegCartFib}
  Let $\mathcal{C}$ be a complete \icat{}. Restriction along the inclusion $i_{0} \colon \mathcal{O}_{0}
  \hookrightarrow \mathcal{O}$ gives a cartesian fibration
  \[ i_0^{*} \colon \Seg_{\mathcal{O}}(\mathcal{C}) \to
    \Seg_{\mathcal{O}_0}(\mathcal{C}).\] For
  $F \in \Seg_{\mathcal{O}}(\mathcal{C})$, the cartesian lift of a
  morphism $\phi \colon \bX \to i_0^{*}F$ in
  $\Seg_{\mathcal{O}_0}(\mathcal{C})$ is given by the pullback of
  $i_{0, *}\bX \to i_{0, *}i_0^{*}F$ along the unit map
  $F \to i_{0, *}i_0^{*}F$.
\end{propn}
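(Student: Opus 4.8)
The plan is to exhibit the cartesian lifts explicitly, by the formula in the statement, and then verify the universal property with a short computation using the $(i_0^{*}, i_{0,*})$-adjunction; the argument is largely formal. First I would collect the structural inputs. The inclusion $i_{0} \colon \mathcal{O}_{0} \hookrightarrow \mathcal{O}$ is an iso-Segal morphism by \cref{rem O_0 int}, so restriction along it genuinely lands in $\Seg_{\mathcal{O}_{0}}(\mathcal{C})$ and the functor $i_{0}^{*}$ in the statement is well-defined. Since $\mathcal{C}$ is complete, on the ambient functor categories $i_{0}^{*} \colon \Fun(\mathcal{O},\mathcal{C}) \to \Fun(\mathcal{O}_{0},\mathcal{C})$ has a right adjoint $i_{0,*}$, namely right Kan extension, and because $i_{0}$ is fully faithful the counit $i_{0}^{*}i_{0,*} \to \id$ is an equivalence (as already used in \cref{rem O_0X}). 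Finally, $i_{0}^{*}$ preserves all limits (it is a restriction functor, so limits are pointwise), and $\Seg_{\mathcal{O}}(\mathcal{C}) \subseteq \Fun(\mathcal{O},\mathcal{C})$ is a full subcategory closed under limits, the Segal conditions being limit conditions (\cite{patterns}); likewise over $\mathcal{O}_{0}$.

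Next I would check that the stated recipe yields an object of $\Seg_{\mathcal{O}}(\mathcal{C})$. Given $F \in \Seg_{\mathcal{O}}(\mathcal{C})$ and $\phi \colon \bX \to i_{0}^{*}F$ in $\Seg_{\mathcal{O}_{0}}(\mathcal{C})$, set
\[ \widetilde{F} := i_{0,*}\bX \times_{i_{0,*}i_{0}^{*}F} F, \]
the pullback in $\Fun(\mathcal{O},\mathcal{C})$ along $i_{0,*}(\phi)$ and the unit $F \to i_{0,*}i_{0}^{*}F$. Both $i_{0,*}\bX$ and $i_{0,*}i_{0}^{*}F$ are Segal $\mathcal{O}$-objects by \cref{lem i_0 Seg}, so $\widetilde{F}$ is a pullback of Segal $\mathcal{O}$-objects and is therefore itself Segal. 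Applying $i_{0}^{*}$ and using that it preserves pullbacks together with $i_{0}^{*}i_{0,*} \simeq \id$, one obtains a natural identification $i_{0}^{*}\widetilde{F} \simeq \bX \times_{i_{0}^{*}F} i_{0}^{*}F \simeq \bX$ under which the restriction of the projection $p \colon \widetilde{F} \to F$ becomes $\phi$; so $p$ is a morphism in $\Seg_{\mathcal{O}}(\mathcal{C})$ lying over $\phi$.

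It then remains to show that $p$ is $i_{0}^{*}$-cartesian. Since $\Seg_{\mathcal{O}}(\mathcal{C})$ and $\Seg_{\mathcal{O}_{0}}(\mathcal{C})$ are full subcategories, all relevant mapping spaces agree with those in the functor categories, so it suffices to verify that for every $G \in \Fun(\mathcal{O},\mathcal{C})$ the square
\[
\begin{tikzcd}
\Map(G,\widetilde{F}) \arrow[r] \arrow[d] & \Map(G,F) \arrow[d] \\
\Map(i_{0}^{*}G, i_{0}^{*}\widetilde{F}) \arrow[r] & \Map(i_{0}^{*}G, i_{0}^{*}F)
\end{tikzcd}
\]
is cartesian. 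Mapping out of the pullback $\widetilde{F}$ and applying the adjunction $\Map(G, i_{0,*}(-)) \simeq \Map(i_{0}^{*}G, -)$ to the first two terms gives
\[ \Map(G,\widetilde{F}) \simeq \Map(i_{0}^{*}G, \bX) \times_{\Map(i_{0}^{*}G, i_{0}^{*}F)} \Map(G,F), \]
which, under $i_{0}^{*}\widetilde{F} \simeq \bX$, is exactly the pullback of the other three corners. Hence the square is cartesian, and since such a lift has been produced for every $F$ and every $\phi$, the functor $i_{0}^{*}$ is a cartesian fibration with cartesian morphisms as described.

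I expect no genuine obstacle here: the whole argument is formal once \cref{rem O_0 int}, \cref{lem i_0 Seg}, and the full faithfulness of $i_{0}$ are in hand. The only points that need a little care are that a pointwise pullback of Segal objects is again Segal (which is precisely why one wants the Segal conditions to be honest limit conditions, stable under iterated limits) and the compatibility of $i_{0}^{*}$ with both the right Kan extension $i_{0,*}$ and with pullbacks.
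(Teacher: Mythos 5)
Your proposal is correct, and the construction is the same as the paper's: you form exactly the pullback $i_{0,*}\bX \times_{i_{0,*}i_0^{*}F} F$ and use the same inputs (\cref{lem i_0 Seg}, full faithfulness of $i_0$, and the fact that $i_0^{*}$ preserves limits) to see that it is Segal and restricts to $\bX$. The only divergence is in how cartesianness is certified: the paper delegates this entirely to a general criterion for right-adjointable functors [Hau17, Proposition 4.51 and Corollary 4.52], which says at once that $i_0^{*}$ is a cartesian fibration and that the pullback against the unit is the cartesian lift, whereas you verify the mapping-space characterization of a cartesian edge directly by mapping into the pullback and transposing across the $(i_0^{*}, i_{0,*})$-adjunction. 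Your route is self-contained (you are in effect reproving the cited criterion in this instance) at the cost of one small point you wave at rather than spell out: the identification of the composite $\Map(G,F)\to\Map(G,i_{0,*}i_0^{*}F)\simeq\Map(i_0^{*}G,i_0^{*}F)$ (induced by the unit) with the map given by applying $i_0^{*}$, which is the standard triangle-identity compatibility and is harmless. Both arguments are formal and equally valid.
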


\begin{proof}
  We know from \cref{lem i_0 Seg} that right Kan extension along
  $i_{0}$ gives a right adjoint to $i_{0}^{*}$ on Segal objects.  To
  see that $i_{0}^{*}$ is a cartesian fibration we can therefore apply
  the criterion of \cite[Corollary 4.52]{nmorita}, which requires us
  to check that for every $F \in
   \Seg_{\mathcal{O}}(\mathcal{C})$ and every morphism $\bX \to i_0^{*}F$
   in $\Seg_{\mathcal{O}_0}(\mathcal{C})$, the object $\bar{F}$ defined by the pullback square 
   \[
   \begin{tikzcd}
   \bar{F} \arrow{r} \arrow{d} & F \arrow{d} \\
   i_{0, *}\bX \arrow{r} & i_{0, *}i_0^{*}F,
   \end{tikzcd}
   \]
   induces a equivalence
   $i_0^{*}\bar{F} \to i_0^{*}i_{0, *}\bX \to \bX$. 

  Since $i_{0}$ is fully faithful, the counit $i_0^{*}i_{0, *} \to
  \id$ is a natural equivalence. As $i_0^{*}$ preserves limits we then
  get a pullback square
  \[
  \begin{tikzcd}
  i_0^{*}\bar{F} \arrow{r} \arrow{d} & i_0^{*}F \arrow{d}{\id} \\
  \bX \arrow{r} & i_0^{*}F,
  \end{tikzcd}
  \]
  so that the left vertical map is indeed an equivalence. Moreover, by
  \cite[Proposition 4.51]{nmorita} the morphism $\bar{F}\to F$ is here
  the cartesian lift of $\bX\to i_0^*F$ at $F$, as required.
\end{proof}

\begin{observation}\label{obs: seg over rke ftr}
  In the situation above, from the adjunction
  \[ i_{0}^{*} \colon \Seg_{\mathcal{O}}(\mathcal{C})
    \rightleftarrows \Seg_{\mathcal{O}_{0}}(\mathcal{C}) \cocolon i_{0,*}\]
  we obtain an equivalence between the pullback of
  \[ \Ar(\Seg_{\mathcal{O}}(\mathcal{C})) \xto{\ev_{1}}
    \Seg_{\mathcal{O}}(\mathcal{C}) \xfrom{i_{0,*}}
    \Seg_{\mathcal{O}_{0}}(\mathcal{C})\]
  and that of
  \[ \Seg_{\mathcal{O}}(\mathcal{C}) \xto{i_{0}^{*}}
    \Seg_{\mathcal{O}_{0}}(\mathcal{C}) \xfrom{\ev_{0}}
    \Ar(\Seg_{\mathcal{O}_{0}}(\mathcal{C})),\]
  given by taking a pair $(F, F \to i_{0,*}\bX)$ to the adjoint
  morphism $(F, i_{0}^{*}F \to \bX)$, and vice versa.
  Moreover, the projection from the second pullback to 
  $\Seg_{\mathcal{O}}(\mathcal{C})$ has a fully faithful left adjoint,
  which takes $F$ to the pair $(F, \id_{i_{0}^{*}F})$. We thus obtain
  an equivalence between $\Seg_{\mathcal{O}}(\mathcal{C})$ and the
  full subcategory of the first pullback spanned by pairs $(F, F \to
  i_{0,*}\bX)$ where the adjoint morphism $i_{0}^{*}F \to
  i_{0}^{*}i_{0,*}\bX \simeq \bX$ is an equivalence.
  In particular,
  we get an identification of the fibre of $i_{0}^{*}$ at $\bX$ 
  with the full
  subcategory of
  $\Seg_{\mathcal{O}}(\mathcal{C})_{/i_{0, *}\bX}$ spanned by the
  morphisms $F \to i_{0, *}\bX$ such that the adjoint morphism
  $i_0^{*}F \to \bX$ is an equivalence.
\end{observation}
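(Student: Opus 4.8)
The plan is to observe that the entire statement is formal, depending only on the adjunction $i_{0}^{*} \dashv i_{0,*}$ of \cref{propn:SegCartFib} together with the fact that its counit $\epsilon \colon i_{0}^{*}i_{0,*} \to \id$ is an equivalence (since $i_{0}$ is fully faithful). First I would unwind the two pullbacks: writing $\mathcal{P}_{1}$ and $\mathcal{P}_{2}$ for the first and the second, an object of $\mathcal{P}_{1}$ is a pair $(F, \phi \colon F \to i_{0,*}\bX)$ while an object of $\mathcal{P}_{2}$ is a pair $(F, \psi \colon i_{0}^{*}F \to \bX)$, so that $\mathcal{P}_{1}$ and $\mathcal{P}_{2}$ are the comma \icats{} $(\Seg_{\mathcal{O}}(\mathcal{C}) \downarrow i_{0,*})$ and $(i_{0}^{*} \downarrow \Seg_{\mathcal{O}_{0}}(\mathcal{C}))$. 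The passage to adjoint morphisms, sending $\phi$ to $\psi := \epsilon_{\bX} \circ i_{0}^{*}\phi$ and back via $\phi := i_{0,*}\psi \circ \eta_{F}$, is the standard equivalence of comma \icats{} attached to an adjunction; concretely it can be produced by encoding $i_{0}^{*} \dashv i_{0,*}$ as a fibration $\mathcal{M} \to \Delta^{1}$ that is simultaneously cartesian and cocartesian, with fibres $\Seg_{\mathcal{O}}(\mathcal{C})$ over $0$ and $\Seg_{\mathcal{O}_{0}}(\mathcal{C})$ over $1$, and identifying both $\mathcal{P}_{1}$ and $\mathcal{P}_{2}$ with the \icat{} of arrows of $\mathcal{M}$ from the fibre over $0$ to the fibre over $1$. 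This gives the first assertion.

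For the left adjoint I would use that $\ev_{0} \colon \Ar(\Seg_{\mathcal{O}_{0}}(\mathcal{C})) \to \Seg_{\mathcal{O}_{0}}(\mathcal{C})$ is a cocartesian fibration whose fibre over $\bY$ is the coslice, with initial object $\id_{\bY}$, and whose cocartesian transport carries $\id_{\bY}$ to $\id_{\bY'}$; this produces the fully faithful left adjoint $\bY \mapsto \id_{\bY}$. Since cocartesian fibrations and their fibrewise initial objects (together with the preservation of these under transport) are stable under base change, the projection $\pi \colon \mathcal{P}_{2} \to \Seg_{\mathcal{O}}(\mathcal{C})$, being the base change of $\ev_{0}$ along $i_{0}^{*}$, inherits a fully faithful left adjoint $\sigma \colon F \mapsto (F, \id_{i_{0}^{*}F})$. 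This is the second assertion.

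Finally, full faithfulness of $\sigma$ identifies $\Seg_{\mathcal{O}}(\mathcal{C})$ with its essential image in $\mathcal{P}_{2}$, which is spanned by the pairs $(F, \psi)$ with $\psi$ an equivalence (an object lies in the image precisely when its arrow is equivalent to an identity, i.e.\ is an equivalence). Transporting along $\mathcal{P}_{2} \simeq \mathcal{P}_{1}$, the condition on $\psi$ becomes the condition that the adjoint $i_{0}^{*}F \to i_{0}^{*}i_{0,*}\bX \simeq \bX$ be an equivalence, which is the third assertion. For the last claim I would pass to the fibre over a fixed $\bX \in \Seg_{\mathcal{O}_{0}}(\mathcal{C})$: the fibre of $\mathcal{P}_{1} \to \Seg_{\mathcal{O}_{0}}(\mathcal{C})$ over $\bX$ is the slice $\Seg_{\mathcal{O}}(\mathcal{C})_{/i_{0,*}\bX}$, and since the composite $\Seg_{\mathcal{O}}(\mathcal{C}) \simeq \mathrm{ess.im}(\sigma) \hookrightarrow \mathcal{P}_{1} \to \Seg_{\mathcal{O}_{0}}(\mathcal{C})$ is $i_{0}^{*}$, intersecting with the full subcategory cut out above yields exactly the asserted description of the fibre of $i_{0}^{*}$ at $\bX$.

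The step I expect to be the main obstacle is the comma-\icat{} comparison of the first paragraph: in contrast to ordinary category theory one cannot merely ``take adjoints'' objectwise, but must promote the natural identification of adjoint morphisms to a genuine equivalence of \icats{}, which is what forces the use of the bicartesian-fibration model of the adjunction (or of a ready-made lemma to that effect). Once this is in hand, the remaining assertions are formal consequences of the full faithfulness of $\sigma$ and the stability of the relevant fibration-theoretic structures under base change.
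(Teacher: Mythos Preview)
Your proposal is correct and in fact supplies considerably more detail than the paper does: the paper records this as an \emph{observation} with no separate proof, simply asserting the equivalence of pullbacks via adjoint morphisms and the existence of the fully faithful left adjoint as formal consequences of the adjunction $i_0^* \dashv i_{0,*}$. Your use of the bicartesian-fibration model to justify the comma-category equivalence, and your base-change argument for the fibrewise initial objects, are exactly the kind of justifications the paper leaves implicit.
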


We now restrict to the case of Segal $\mathcal{O}$-spaces, where we
want to describe the fibre of $\Seg_{\mathcal{O}}(\mathcal{S})$ at
$\bX \in \Seg_{\mathcal{O}_{0}}(\mathcal{S})$. The starting point for
this is the following observation, which already appeared as
\cite[Proposition 3.2.5]{iopdprop} (but we include a proof for
completeness).

\begin{propn}\label{lem:LFibSegslice}
  Let $\mathcal{O}$ be an algebraic pattern, and suppose
  $\pi \colon \mathcal{F} \to \mathcal{O}$ is the left fibration
  corresponding to a Segal $\mathcal{O}$-space
  $F \colon \mathcal{O} \to \mathcal{S}$. Then left Kan extension
  along $\pi$ gives a functor
  \[ \pi_{!} \colon \Seg_{\mathcal{F}}(\mathcal{S}) \to
    \Seg_{\mathcal{O}}(\mathcal{S})\]
  where $\mathcal{F}$ has the lifted pattern structure as in
  \cref{def SegalOfib}, 
  and this induces an equivalence
  \[ \Seg_{\mathcal{F}}(\mathcal{S})  \isoto
    \Seg_{\mathcal{O}}(\mathcal{S})_{/F}.\]
\end{propn}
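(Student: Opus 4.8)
The plan is to identify $\pi_{!}$ with the canonical equivalence $\Fun(\mathcal{F},\mathcal{S}) \simeq \Fun(\mathcal{O},\mathcal{S})_{/F}$ supplied by straightening/unstraightening, and then to check fibrewise that under this equivalence the Segal objects on $\mathcal{F}$ correspond exactly to the Segal objects on $\mathcal{O}$ equipped with a map to $F$. For the first point, since $\pi$ is a left fibration, postcomposition with $\pi$ together with the fact that a morphism between left fibrations over $\mathcal{O}$ is again a left fibration gives an equivalence between left fibrations over $\mathcal{F}$ and left fibrations over $\mathcal{O}$ equipped with a map to $\mathcal{F}$; combining this with straightening yields an equivalence $\Fun(\mathcal{F},\mathcal{S}) \isoto \Fun(\mathcal{O},\mathcal{S})_{/F}$, and unwinding it one sees it is precisely $\pi_{!}$ together with the natural map $\pi_{!}G \to \pi_{!}(\ast) \simeq F$. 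Explicitly, if $G \colon \mathcal{F} \to \mathcal{S}$ is classified by the left fibration $q \colon \mathcal{G} \to \mathcal{F}$, then $\pi_{!}G$ is classified by $\pi q \colon \mathcal{G} \to \mathcal{O}$ and the map $\pi_{!}G \to F$ by $q$ itself. Since an equivalence of \icats{} restricts to an equivalence between any pair of corresponding full subcategories, it then suffices to show that $G$ is a Segal $\mathcal{F}$-object \IFF{} $\pi_{!}G$ is a Segal $\mathcal{O}$-object.

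Fix $O \in \mathcal{O}$ and set $H := \pi_{!}G$. From the description above, evaluated at $O$ the transformation $H \to F$ is the map on fibres $\mathcal{G}_{O} \to \mathcal{F}_{O}$, which, being the pullback of $q$ over the \igpd{} $\mathcal{F}_{O}$, is the left fibration classifying $G|_{\mathcal{F}_{O}}$; in particular the fibre of $H(O) \to F(O)$ over a point $X \in \mathcal{F}_{O}$ is $G(X)$. Now consider the commutative square given by the Segal maps of $H$ and of $F$ at $O$ together with the transformation $H \to F$. Since $F$ is a Segal $\mathcal{O}$-space the bottom map $F(O) \isoto \lim_{E \in \mathcal{O}^{\el}_{O/}} F(E)$ is an equivalence, so the Segal map $H(O) \to \lim_{E \in \mathcal{O}^{\el}_{O/}} H(E)$ is an equivalence \IFF{} it induces an equivalence on the fibre over each $X \in \mathcal{F}_{O}$.

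Over such an $X$, the fibre on the source is $G(X)$, while the fibre of $\lim_{E} H(E) \to \lim_{E} F(E)$ over the compatible family $(F(\alpha)(X))_{\alpha \colon O \intto E}$ is $\lim_{(O \intto E)} G(X_{E})$, where $X \intto X_{E}$ denotes the inert (cocartesian) lift of $\alpha$; here one uses that the fibre of a limit of spaces over a compatible family of basepoints is the limit of the fibres. Since $\pi$ is an iso-Segal morphism (\cref{def SegalOfib}), the assignment $(O \intto E) \mapsto (X \intto X_{E})$ is an equivalence $\mathcal{O}^{\el}_{O/} \isoto \mathcal{F}^{\el}_{X/}$, so this limit is $\lim_{Z \in \mathcal{F}^{\el}_{X/}} G(Z)$, and the resulting map $G(X) \to \lim_{Z \in \mathcal{F}^{\el}_{X/}} G(Z)$ is the Segal map of $G$ at $X$. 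Hence $H$ is a Segal $\mathcal{O}$-object \IFF{} the Segal map of $G$ is an equivalence at every object of $\mathcal{F}$, i.e. \IFF{} $G$ is a Segal $\mathcal{F}$-object, which is what we wanted. The main work — and the only real obstacle — lies in making these fibrewise identifications natural in $E$, so that one genuinely recovers the Segal diagram of $G$ indexed by $\mathcal{F}^{\el}_{X/}$ and not merely its values, and in assembling the colimit over $\mathcal{F}_{O}$ together with the fibres of the relevant limits in a coherent way; none of this is deep, but it is where the argument must be spelled out with care.
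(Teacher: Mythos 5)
Your proposal is correct and follows essentially the same two-step strategy as the paper: first identify $\pi_{!}$ with the equivalence $\Fun(\mathcal{F},\mathcal{S}) \simeq \Fun(\mathcal{O},\mathcal{S})_{/F}$ (the paper cites \cite[Corollary 9.8]{freepres} for this), then verify that the Segal conditions match by working fibrewise over $F(O)$ using that left Kan extension along a left fibration is computed by colimits over the fibres. The only cosmetic difference is that the paper packages the fibrewise comparison via distributivity of $\mathcal{O}^{\el}_{O/}$-limits over $F(O)$-indexed colimits, whereas you pass fibres through the limit directly; these amount to the same computation, and the naturality issues you flag at the end are likewise left implicit in the paper's commutative diagram.
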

\begin{proof}
  Left Kan extension along $\pi$ induces an equivalence
  \[ \Fun(\mathcal{F}, \mathcal{S}) \isoto \Fun(\mathcal{O},
    \mathcal{S})_{/F}\] by \cite[Corollary 9.8]{freepres}.
  It therefore suffices to show that the
  full subcategories of Segal objects correspond under this
  equivalence. In other words, we must show that
  $X \colon \mathcal{F} \to \mathcal{S}$ is a Segal
  $\mathcal{F}$-space \IFF{} $\pi_{!}X$ is a Segal
  $\mathcal{O}$-space. To see this, we observe that we have a
  commutative diagram
  \[
    \begin{tikzcd}[column sep=small]
      \colim_{p \in F(P)} X(p) \arrow{d}{\sim} \arrow{r} & \colim_{p \in F(P)}
      \lim_{\alpha \in \mathcal{O}^{\el}_{P/}} X(\alpha_{!}p)
      \arrow{r}{\sim} & \lim_{E \in \mathcal{O}^{\el}_{P/}} \colim_{q \in
        F(E)} X(q) \arrow{d}{\sim} \\
      \pi_{!}X(P) \arrow{rr} & & \lim_{E \in \mathcal{O}^{\el}_{P/}} \pi_{!}X(E).
    \end{tikzcd}
  \]
  Here the vertical morphisms are equivalences since $\pi$ is a left
  fibration (so that the left Kan extension along $\pi$ is given by
  colimits over the fibres) and the top right horizontal morphism is
  an equivalence because for any functor $K \colon \mathcal{I} \to
  \mathcal{S}$, $\mathcal{I}$-limits distribute over $K$-colimits 
  in $\mathcal{S}$ by \cite[Corollary 6.15]{patterns}. Thus the
  bottom horizontal morphism is an equivalence \IFF{} the top left
  horizontal morphism is one. Moreover, the latter is an equivalence
  \IFF{} it is an equivalence on fibres over each point $p \in F(p)$,
  \ie{} \IFF{} the map $X(p) \to \lim_{\alpha \in
    \mathcal{O}^{\el}_{P/}} X(\alpha_{!}p)$ is an equivalence for all
  $p \in F(P)$. This condition holds for all $p \in \mathcal{F}$
  precisely when $X$ is a Segal $\mathcal{F}$-space, so we see that
  this is equivalent to $\pi_{!}X$ being a Segal $\mathcal{O}$-space.
\end{proof}

\begin{observation}\label{obs lke seg lfib nat}
  In the situation above, if $\phi \colon F \to G$ is a morphism in
  $\Seg_{\mathcal{O}}(\mathcal{S})$ corresponding to a functor $\Phi
  \colon \mathcal{F} \to \mathcal{G}$ between the corresponding left
  fibrations, then we have a commutative square
  \[
    \begin{tikzcd}
      \Seg_{\mathcal{G}}(\mathcal{S}) \arrow{r}{\Phi^{*}}
      \arrow{d}[swap]{\sim} & \Seg_{\mathcal{F}}(\mathcal{S})
      \arrow{d}{\sim}
      \\
      \Seg_{\mathcal{O}}(\mathcal{S})_{/G} \arrow{r}{\phi^{*}} &
      \Seg_{\mathcal{O}}(\mathcal{S})_{/F},
    \end{tikzcd}
  \]
  where $\Phi^{*}$ is given by composition with $\Phi$ and $\phi^{*}$
  by pullback along $\phi$. In other words, the equivalence of 
  \cref{lem:LFibSegslice} is natural.
\end{observation}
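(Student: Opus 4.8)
The plan is to first establish the analogous statement for all presheaves, and then cut down to Segal objects. Write $\pi_{\mathcal{F}}\colon\mathcal{F}\to\mathcal{O}$ and $\pi_{\mathcal{G}}\colon\mathcal{G}\to\mathcal{O}$ for the two left fibrations, and recall that the vertical equivalences of \cref{lem:LFibSegslice} are restrictions of the equivalences $(\pi_{\mathcal{F}})_{!}\colon\Fun(\mathcal{F},\mathcal{S})\isoto\Fun(\mathcal{O},\mathcal{S})_{/F}$ and $(\pi_{\mathcal{G}})_{!}\colon\Fun(\mathcal{G},\mathcal{S})\isoto\Fun(\mathcal{O},\mathcal{S})_{/G}$ of \cite[Corollary 9.8]{freepres}. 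The first step is to check that the square
\[
\begin{tikzcd}
\Fun(\mathcal{G},\mathcal{S}) \ar[r, "\Phi^{*}"] \ar[d, "(\pi_{\mathcal{G}})_{!}"', "\sim"] & \Fun(\mathcal{F},\mathcal{S}) \ar[d, "(\pi_{\mathcal{F}})_{!}"', "\sim"] \\
\Fun(\mathcal{O},\mathcal{S})_{/G} \ar[r, "\phi^{*}"] & \Fun(\mathcal{O},\mathcal{S})_{/F}
\end{tikzcd}
\]
commutes, i.e.\ that this equivalence is natural in the left fibration over $\mathcal{O}$. I would prove this by unwinding the equivalence: a functor $D\colon\mathcal{G}\to\mathcal{S}$ classified by a left fibration $\mathcal{D}\to\mathcal{G}$ is sent by $(\pi_{\mathcal{G}})_{!}$ to the straightening of the composite left fibration $\mathcal{D}\to\mathcal{G}\xto{\pi_{\mathcal{G}}}\mathcal{O}$, together with its tautological map to $G$ (this is the colimit formula for left Kan extension along $\pi_{\mathcal{G}}$, computed fibrewise). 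With this description, $\Phi^{*}D$ is classified by $\mathcal{D}\times_{\mathcal{G}}\mathcal{F}\to\mathcal{F}$, and viewing this over $\mathcal{O}$ via $\pi_{\mathcal{F}}$ exhibits it as the pullback of $(\pi_{\mathcal{G}})_{!}D\to G$ along $\phi$; since $\Phi$ corresponds to $\phi$, this is precisely $\phi^{*}(\pi_{\mathcal{G}})_{!}D$, naturally in $D$.

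Granting the big square, I restrict to Segal objects. By \cref{lem:LFibSegslice} the two vertical functors restrict to equivalences $\Seg_{\mathcal{F}}(\mathcal{S})\isoto\Seg_{\mathcal{O}}(\mathcal{S})_{/F}$ and $\Seg_{\mathcal{G}}(\mathcal{S})\isoto\Seg_{\mathcal{O}}(\mathcal{S})_{/G}$. The functor $\phi^{*}$ carries $\Seg_{\mathcal{O}}(\mathcal{S})_{/G}$ into $\Seg_{\mathcal{O}}(\mathcal{S})_{/F}$: for $H\to G$ with $H$ a Segal $\mathcal{O}$-space, $\phi^{*}(H\to G)$ is the pullback $H\times_{G}F$ formed in $\Fun(\mathcal{O},\mathcal{S})$, which is again a Segal $\mathcal{O}$-space since $F$, $G$, $H$ are and Segal objects are closed under limits in $\Fun(\mathcal{O},\mathcal{S})$. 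Hence for $X\in\Seg_{\mathcal{G}}(\mathcal{S})$ the presheaf $(\pi_{\mathcal{F}})_{!}\Phi^{*}X\simeq\phi^{*}(\pi_{\mathcal{G}})_{!}X$ is a Segal $\mathcal{O}$-space, and because $X'$ is Segal if and only if $(\pi_{\mathcal{F}})_{!}X'$ is Segal (as shown in the proof of \cref{lem:LFibSegslice}), it follows that $\Phi^{*}X\in\Seg_{\mathcal{F}}(\mathcal{S})$. Thus all four functors restrict to the subcategories of Segal objects, and the restricted square commutes since the ambient one does.

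The main obstacle is the first paragraph: identifying the equivalence of \cref{lem:LFibSegslice} --- which is phrased via left Kan extension --- with the more geometric ``compose the classifying left fibration with $\pi$'' description, compatibly with the structure maps to $F$ and $G$. Once this matching is in place, naturality is immediate, since both $\Phi^{*}$ and $\phi^{*}$ are then literally given by pulling back the left fibration $\mathcal{D}\to\mathcal{O}$ along $\Phi$; the remaining bookkeeping with Segal objects is formal.
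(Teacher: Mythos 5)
Your argument is correct; the paper states this as an observation without proof, and your route --- identifying $(\pi_{\mathcal{G}})_{!}$ with ``compose the classifying left fibration with $\pi_{\mathcal{G}}$'', so that both $\Phi^{*}$ and $\phi^{*}$ become pullback of left fibrations along $\mathcal{F}\to\mathcal{G}$, then restricting to Segal objects using closure under limits and the detection statement from the proof of \cref{lem:LFibSegslice} --- is exactly the intended justification. No gaps.
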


\begin{cor}\label{prop MonSeg}
  There is an equivalence
  \[ \mathcal{M}_{\mathcal{O}}(\mathcal{S}) \simeq
    \Seg_{\mathcal{O}}(\mathcal{S}) \] over
  $\Seg_{\mathcal{O}_{0}}(\mathcal{S})$, which takes an $\ObX$-monoid
  to its left Kan extension along the projection $\ObX \to \mathcal{O}$.  
\end{cor}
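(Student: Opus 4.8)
The plan is to assemble this equivalence from the two results already in hand: \cref{lem AlgM}, which identifies $\Algd_{\mathcal{O}/\xF_{*}}(\mathcal{S}^{\times})$ with $\mathcal{M}_{\mathcal{O}}(\mathcal{S})$ over $\Seg_{\mathcal{O}_{0}}(\mathcal{S})$ (using that $\mathcal{S}$ is cartesian closed so $\mathcal{O}$-monoidal structures on $\mathcal{S}$ come from the cartesian one), and \cref{lem:LFibSegslice}, which produces for each $\bX$ a natural equivalence $\Seg_{\mathcal{O}_{\bX}}(\mathcal{S}) \simeq \Seg_{\mathcal{O}}(\mathcal{S})_{/i_{0,*}\bX}$ via left Kan extension along the left fibration $\mathcal{O}_{\bX} \to \mathcal{O}$. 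The point is that for $\mathcal{C} = \mathcal{S}$ with its cartesian monoidal structure, an $\mathcal{O}_{\bX}$-algebroid is the same as an $\mathcal{O}_{\bX}$-monoid in $\mathcal{S}$, which by \cref{lem:LFibSegslice} is the same as a Segal $\mathcal{O}$-space over $i_{0,*}\bX$.

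First I would fix $\bX \in \Seg_{\mathcal{O}_{0}}(\mathcal{S})$ and spell out the chain of equivalences on fibres:
\[
\Alg_{\mathcal{O}_{\bX}/\xF_{*}}(\mathcal{S}^{\times}) \simeq \Mon_{\mathcal{O}_{\bX}}(\mathcal{S}) \simeq \Seg_{\mathcal{O}_{\bX}}(\mathcal{S}) \simeq \Seg_{\mathcal{O}}(\mathcal{S})_{/i_{0,*}\bX},
\]
where the first equivalence is \cite[Proposition 5.1]{freealg} as used in \cref{lem AlgM}, the second holds because $\mathcal{O}_{\bX}^{\natural}$ is an enrichable pattern (so by \cref{enrpattmonSeg} monoids are automatically Segal objects) and — crucially — because the underlying Segal $(\mathcal{O}_{\bX})_{0}$-space of the corresponding algebroid is terminal, by \cref{lem:monoidrestrpt} together with \cref{rem O_0X} (the relevant space $\bX$ over $(\mathcal{O}_{\bX})_0$ being contractible since $(\mathcal{O}_\bX)_0 \to \mathcal{O}_0$ is the left fibration for $\bX$ and we are taking its total space, hence sections land in the terminal object fibrewise); wait — more carefully: $\Mon_{\mathcal{O}_\bX}(\mathcal{S})$ just \emph{is} $\Seg_{\mathcal{O}_\bX}(\mathcal{S})$ by definition since $\mathcal{O}_\bX$ is cartesian, so this step is a tautology and only the third equivalence, \cref{lem:LFibSegslice}, does any work. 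Then I would identify $\mathcal{M}_{\mathcal{O}}(\mathcal{S})$ — whose fibre at $\bX$ is $\Mon_{\mathcal{O}_{\bX}}(\mathcal{S})$ by definition of $\mathcal{M}_{\mathcal{O}}$ — with the cartesian fibration over $\Seg_{\mathcal{O}_{0}}(\mathcal{S})$ whose fibre at $\bX$ is $\Seg_{\mathcal{O}}(\mathcal{S})_{/i_{0,*}\bX}$, and recognize the latter as $\Seg_{\mathcal{O}}(\mathcal{S})$ itself via \cref{propn:SegCartFib}, since the fibre of the cartesian fibration $i_0^* \colon \Seg_{\mathcal{O}}(\mathcal{S}) \to \Seg_{\mathcal{O}_0}(\mathcal{S})$ at $\bX$ was identified in \cref{obs: seg over rke ftr} with the full subcategory of $\Seg_{\mathcal{O}}(\mathcal{S})_{/i_{0,*}\bX}$ on those $G \to i_{0,*}\bX$ whose adjoint $i_0^*G \to \bX$ is an equivalence — and under \cref{lem:LFibSegslice} this full subcategory corresponds exactly to $\Seg_{\mathcal{O}_\bX}(\mathcal{S})$, because the underlying $(\mathcal{O}_\bX)_0$-space of $\pi_! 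X$ for $X \in \Seg_{\mathcal{O}_\bX}(\mathcal{S})$ is precisely $\bX$.

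The real content is therefore the \emph{naturality in $\bX$}: both sides are defined as cartesian fibrations over $\Seg_{\mathcal{O}_{0}}(\mathcal{S})$ (the left via \cref{def OX}'s functoriality of $\mathcal{O}_{(-)}$, the right via \cref{propn:SegCartFib}), and one must check the fibrewise equivalences above assemble into an equivalence of cartesian fibrations. For this I would invoke \cref{obs lke seg lfib nat}, which says that for a morphism $\phi \colon \bX \to \bY$ in $\Seg_{\mathcal{O}_0}(\mathcal{S})$, inducing $\Phi \colon \mathcal{O}_\bX \to \mathcal{O}_\bY$, the square relating $\Phi^*$ and pullback along $i_{0,*}\phi$ commutes — i.e. left Kan extension along the $\mathcal{O}_{(-)}$ is compatible with base change. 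Combined with \cite[Proposition 4.51, Corollary 4.52]{nmorita}'s description of cartesian lifts in $\Seg_{\mathcal{O}}(\mathcal{S}) \to \Seg_{\mathcal{O}_0}(\mathcal{S})$ as pullbacks (as recalled in \cref{propn:SegCartFib}), and the matching description of the cartesian fibration $\mathcal{M}_{\mathcal{O}}(\mathcal{S})$, this shows the fibrewise equivalences are compatible with cartesian transport, hence glue. Finally, since $\Algd_{\mathcal{O}}(\mathcal{S}) = \Algd_{\mathcal{O}/\xF_*}(\mathcal{S}^\times)$ by definition (the $\mathcal{O}$-monoidal structure on $\mathcal{S}$ being the cartesian one pulled back along $|\blank| \colon \mathcal{O} \to \xF_*^\flat$), chaining \cref{lem AlgM} with \cref{prop MonSeg} gives the desired equivalence $\Algd_{\mathcal{O}}(\mathcal{S}) \simeq \Seg_{\mathcal{O}}(\mathcal{S})$ over $\Seg_{\mathcal{O}_0}(\mathcal{S})$.

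I expect the main obstacle to be purely bookkeeping: verifying that the equivalence of \cref{lem:LFibSegslice}, applied fibrewise over $\Seg_{\mathcal{O}_0}(\mathcal{S})$, really is natural as a map of \emph{cartesian} fibrations and not merely a fibrewise equivalence — i.e. that it intertwines the two notions of cartesian lift (pullback of Segal $\mathcal{O}$-spaces along a restriction map of $i_{0,*}$-images, versus restriction along a morphism $\mathcal{O}_\bX \to \mathcal{O}_\bY$ of patterns). This is exactly what \cref{obs: seg over rke ftr} and \cref{obs lke seg lfib nat} are set up to handle, so in the write-up it amounts to citing those observations and checking the relevant squares of straightening/unstraightening commute; no genuinely new idea is needed beyond carefully threading the adjunction $i_0^* \dashv i_{0,*}$ through the identifications.
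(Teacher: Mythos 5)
Your overall route is the paper's: combine the natural slice equivalence of \cref{lem:LFibSegslice} and \cref{obs lke seg lfib nat} with the arrow-category description of the cartesian fibration $i_0^*$ from \cref{obs: seg over rke ftr}, and check that the fibrewise identifications glue. But there is one genuine error, and it sits exactly at the point where you talk yourself out of your (correct) first instinct. The claim in your ``wait --- more carefully'' aside, that $\Mon_{\ObX}(\mathcal{S})$ \emph{is} $\Seg_{\ObX}(\mathcal{S})$ by definition because $\ObX$ is cartesian, is false: by the notational convention for enrichable patterns, $\Mon_{\ObX}(\mathcal{S})$ means $\Seg_{\ObX^{\flat}}(\mathcal{S})$ --- Segal objects for the pattern whose only elementaries lie over $\angled{1}$ --- whereas the equivalence of \cref{lem:LFibSegslice} concerns $\Seg_{\ObX^{\natural}}(\mathcal{S})$ for the lifted pattern of \cref{def SegalOfib}, which also has elementaries over $\angled{0}$. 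These differ: by \cref{lem:monoidrestrpt} a Segal $\ObX^{\natural}$-space is an $\ObX$-monoid precisely when its restriction to $\mathcal{O}_{0,\bX}$ is constant at $*$, and this is a nontrivial condition (for $\mathcal{O} = \Dop$ it says the functor assigns $*$ to every point of $X$ over $[0]$). If the step really were a tautology, the fibre of $\mathcal{M}_{\mathcal{O}}(\mathcal{S})$ at $\bX$ would be all of $\Seg_{\mathcal{O}}(\mathcal{S})_{/i_{0,*}\bX}$ and the total space would be the arrow-category pullback $\mathcal{E}_{\mathcal{O}}$, not $\Seg_{\mathcal{O}}(\mathcal{S})$.

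Your second paragraph's claim that the underlying $\mathcal{O}_{0,\bX}$-space of $\pi_!X$ is precisely $\bX$ for an \emph{arbitrary} Segal $\ObX$-space $X$ fails for the same reason: $(i_0^*\pi_!X)(O) \simeq \colim_{p \in (i_{0,*}\bX)(O)} X(p)$, which equals $\bX(O)$ only when $X(p) \simeq *$ for all $p$, i.e.\ only for monoids. The two errors cancel numerically but not logically. The fix is exactly what you wrote before the ``wait'': keep \cref{lem:monoidrestrpt}, and observe that under $\pi_!$ the monoid condition translates into the condition that $i_0^*\pi_!M \to \bX$ is an equivalence, which by \cref{obs: seg over rke ftr} carves out exactly the fibre of $i_0^*$ at $\bX$ inside $\Seg_{\mathcal{O}}(\mathcal{S})_{/i_{0,*}\bX}$. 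With that reinstated, your argument coincides with the paper's. (The opening appeal to \cref{lem AlgM} is not needed for this corollary --- it belongs to the deduction of \cref{thm:algdspc} --- but is harmless.)
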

\begin{proof}
  From \cref{obs lke seg lfib nat} we obtain a natural equivalence
  \[ \Seg_{\ObX}(\mathcal{S}) \simeq
    \Seg_{\mathcal{O}}(\mathcal{S})_{/i_{0,*}\bX},\]
  given by left Kan extension along the projection $\ObX \to
  \mathcal{O}$. Let $\mathcal{E}_{\mathcal{O}} \to
  \Seg_{\mathcal{O}_{0}}(\mathcal{S})$ be the cartesian fibration for
  the left-hand functor; then by combining this natural equivalence
  with \cref{obs: seg over rke ftr} we have equivalences
  \begin{equation}
    \label{eq:segsliceeq}
    \mathcal{E}_{\mathcal{O}} \simeq   
    \Ar(\Seg_{\mathcal{O}}(\mathcal{S}))
    \times_{\Seg_{\mathcal{O}}(\mathcal{S})} 
    \Seg_{\mathcal{O}_{0}}(\mathcal{S})
    \simeq
    \Seg_{\mathcal{O}}(\mathcal{S})
    \times_{\Seg_{\mathcal{O}_{0}}(\mathcal{S})}
    \Ar(\Seg_{\mathcal{O}_{0}}(\mathcal{S})),
  \end{equation}
 where we can identify $\Seg_{\mathcal{O}}(\mathcal{S})$ with the full
 subcategory on the right consisting of pairs $(F, i_{0}^{*}F \to
 \bX)$ where the second component is an equivalence. Moreover,
 $\mathcal{M}_{\mathcal{O}}(\mathcal{S})$ is a full subcategory of
 $\mathcal{E}_{\mathcal{O}}$ by \cref{enrpattmonSeg}, so to complete
 the proof we only need to show that this full subcategory corresponds
 precisely to the image of $\Seg_{\mathcal{O}}(\mathcal{S})$ under
 \cref{eq:segsliceeq}. It suffices to check this on fibres over each
 $\bX \in \Seg_{\mathcal{O}_{0}}(\mathcal{S})$, where we want to
 identify $\Mon_{\ObX}(\mathcal{S})$ with Segal $\mathcal{O}$-spaces
 $M$ such that $i_{0}^{*}M \simeq \bX$. From 
 \cref{lem:monoidrestrpt} we know that
 $\Mon_{\ObX}(\mathcal{S})$ is the full subcategory of
 $\Seg_{\mathcal{O}_{\ObX}}(\mathcal{S})$ spanned by those Segal
 $\ObX$-spaces $M$ whose restriction to $\mathcal{O}_{0,\bX}$ is
 constant at $*$. Under the equivalence
 $\Seg_{\mathcal{O}_{\ObX}}(\mathcal{S}) \simeq
 \Seg_{\mathcal{O}}(\mathcal{S})_{/i_{0,*}\bX}$ given by left Kan
 extension along the projection $\pi \colon \ObX \to \mathcal{O}$,
 this condition corresponds to the map $i_{0}^{*}\pi_{!}M \to\bX$
 being an equivalence, since its fibres can be identified with the
 values of $M$ on $\mathcal{O}_{0,\bX}$. This is precisely the
 correspondence we want.
\end{proof}

\begin{proof}[Proof of \cref{thm:algdspc}]
  Combine the equivalences of \cref{lem AlgM} and \cref{prop MonSeg}.
\end{proof}

\subsection{Functoriality of algebroids}\label{subsec:algdfun}
In this section we will show that under certain conditions a Segal morphism
$f \colon \mathcal{O} \to \mathcal{P}$ over $\xF_{*}^{\natural}$
between enrichable patterns induces (under mild assumptions on the
$\mathcal{P}$-monoidal \icat{} $\mathcal{C}$) a commutative square
\[
  \begin{tikzcd}
  \Algd_{\mathcal{P}}(\mathcal{C}) \arrow{r}{f^{*}} \arrow{d}
  & \Algd_{\mathcal{O}}(\mathcal{C})   \arrow{d} \\
  \Seg_{\mathcal{P}_{0}}(\mathcal{S}) \arrow{r}{f_{0}^{*}} & \Seg_{\mathcal{O}_{0}}(\mathcal{S})
  \end{tikzcd}
\]
where $f^{*}$ preserves cartesian morphisms. To see why we need to impose a condition on $f$ for this to happen, we will first look at the case of
algebroids in
spaces, which we just saw are equivalent to Segal objects in
spaces. It is therefore convenient to start with some general
observations on (Segal) morphisms between enrichable patterns.

\begin{defn}
  If $\mathcal{O}$ and $\mathcal{P}$ are enrichable patterns, then a
  \emph{morphism of enrichable patterns} between them is a commutative
  triangle of algebraic patterns
  \[
    \begin{tikzcd}
      \mathcal{O} \arrow{dr} \arrow{rr}{f} & & \mathcal{P} \arrow{dl}
      \\
      & \xF_{*}^{\natural}.
    \end{tikzcd}
  \]
\end{defn}

\begin{observation}
  Any morphism $f \colon \mathcal{O} \to \mathcal{P}$ of
  enrichable patterns restricts to a morphism of algebraic patterns
  \[f_{0} := f \times_{\xF_{*}^{\natural}} \{\angled{0}\} \colon
    \mathcal{O}_{0} \to \mathcal{P}_{0}.\]
  Moreover, if $P$ is an object of $\mathcal{P}_{0}$ and
  $O\actto f(P)$ is an active morphism, then $O$ must lie in
  $\mathcal{O}_{0}$ (since there are no active morphisms $\angled{n}
  \to \angled{0}$ in $\xF_{*}$ unless $n = 0$). We thus have an
  equivalence of \icats{}
  \[ \mathcal{O}^{\act}_{0,/P} \isoto \mathcal{O}_{/P}^{\act}.\]
\end{observation}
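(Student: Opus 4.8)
The plan is to verify both claims by direct inspection of the definitions; there is essentially no content beyond unwinding notation. For the first assertion, recall from \cref{no O_0} that $\mathcal{O}_{0}$ and $\mathcal{P}_{0}$ are, as algebraic patterns, the pullbacks $\mathcal{O}\times_{\xF_{*}^{\natural}}\{\angled{0}\}$ and $\mathcal{P}\times_{\xF_{*}^{\natural}}\{\angled{0}\}$ in $\AlgPatt$ (these exist by \cite[Corollary 5.5]{patterns}). Since $f$ is a morphism of algebraic patterns commuting with the structure maps to $\xF_{*}^{\natural}$, functoriality of these pullbacks produces the desired morphism $f_{0}=f\times_{\xF_{*}^{\natural}}\{\angled{0}\}\colon\mathcal{O}_{0}\to\mathcal{P}_{0}$. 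Alternatively one checks this by hand: $f$ carries an object over $\angled{0}$ to an object over $\angled{0}$ (it lies over $\xF_{*}$), and since the inert, active, and elementary structure on $\mathcal{O}_{0}$ and $\mathcal{P}_{0}$ is simply restricted from $\mathcal{O}$ and $\mathcal{P}$, the restriction of $f$ automatically preserves all of it.

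For the second assertion I would run the argument of \cref{rem O_0 act}, the single combinatorial input being that the only object of $\xF_{*}$ admitting an active map to $\angled{0}$ is $\angled{0}$ itself (an active map sends nothing but the basepoint to the basepoint). Hence, given $P\in\mathcal{P}_{0}$ and an object of the active slice $\mathcal{O}^{\act}_{/P}$ formed along $f$ --- that is, an object $O\in\mathcal{O}$ together with an active morphism $f(O)\actto P$ in $\mathcal{P}$ --- the object $f(O)$ lies over $\angled{0}$, so $f(O)\in\mathcal{P}_{0}$, and since $|O|\cong|f(O)|\cong\angled{0}$ we get $O\in\mathcal{O}_{0}$. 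Therefore the fully faithful functor $\mathcal{O}^{\act}_{0,/P}\to\mathcal{O}^{\act}_{/P}$ induced by the full inclusions $\mathcal{O}_{0}\hookrightarrow\mathcal{O}$ and $\mathcal{P}_{0}\hookrightarrow\mathcal{P}$ is essentially surjective, hence an equivalence; equivalently, $f$ has unique lifting of active morphisms over objects of $\mathcal{P}_{0}$, in the sense of \cite[\S 6]{patterns}.

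I do not anticipate any real difficulty here. The only point that needs a moment's care is the bookkeeping in the slice notation --- keeping straight that $\mathcal{O}^{\act}_{0,/P}$ and $\mathcal{O}^{\act}_{/P}$ are the active slices over $P$ taken along $f_{0}$ and $f$ respectively, and that the comparison between them is literally an inclusion of full subcategories which the combinatorics of $\xF_{*}$ forces to be surjective on objects, hence an equivalence of \icats{}.
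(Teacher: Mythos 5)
Your proof is correct and follows essentially the same (purely definitional) argument the paper intends: the only combinatorial input is that $\angled{0}$ admits no active maps from $\angled{n}$ with $n>0$, and you correctly read the slice $\mathcal{O}^{\act}_{/P}$ as consisting of pairs $(O, f(O)\actto P)$ (the statement's ``$O\actto f(P)$'' is evidently a typo), so that the full inclusion $\mathcal{O}^{\act}_{0,/P}\to\mathcal{O}^{\act}_{/P}$ is forced to be essentially surjective. Nothing further is needed.
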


\begin{warning}
  A morphism of
  enrichable patterns is \emph{not} necessarily a Segal
  morphism. (In particular, if $\mathcal{O}$ is an enrichable pattern,
  then the canonical morphism to $\xF_{*}^{\natural}$ may not be a Segal morphism.)
  We can give a simple criterion for such a map to be a strong
  Segal morphism, however:
\end{warning}

\begin{lemma}\label{enrpattxF*Segmor}
  A morphism $f \colon \mathcal{O} \to \mathcal{P}$ of enrichable
  patterns is a strong Segal morphism \IFF{} for every
  $O \in \mathcal{O}$ and every inert morphism
  $\phi \colon f(O) \intto E$ with $E \in \mathcal{P}^{\el}_{0}$, the
  \icat{} $(\mathcal{O}^{\el}_{O})_{/\phi}$ is weakly contractible. In
  particular, the morphism $\mathcal{O} \to \xF_{*}^{\natural}$ is a
  strong Segal morphism \IFF{} the \icat{} $\mathcal{O}^{\el}_{O/}$ is
  weakly contractible for all $O \in \mathcal{O}$.
\end{lemma}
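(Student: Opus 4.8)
The plan is to unwind the definition of a strong Segal morphism and then sort the objects of $\mathcal{P}^{\el}_{f(O)/}$ according to their image in $\xF_{*}^{\natural}$. By definition, $f$ is a strong Segal morphism precisely when, for each $O \in \mathcal{O}$, the functor $g_{O} \colon \mathcal{O}^{\el}_{O/} \to \mathcal{P}^{\el}_{f(O)/}$ is coinitial, and this in turn holds \IFF{} for every object $\phi \colon f(O) \intto E$ of $\mathcal{P}^{\el}_{f(O)/}$ the comma \icat{} $(\mathcal{O}^{\el}_{O})_{/\phi} := \mathcal{O}^{\el}_{O/} \times_{\mathcal{P}^{\el}_{f(O)/}} (\mathcal{P}^{\el}_{f(O)/})_{/\phi}$ is weakly contractible. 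Since $E$ is elementary in $\mathcal{P}$ and $|\blank|_{\mathcal{P}}$ carries elementary objects to elementary objects, $|E|$ is either $\angled{0}$ or $\angled{1}$, and I would treat these two cases separately; the claim is then that the case $|E| \cong \angled{1}$ is automatic, so that the stated condition --- which is the case $|E| \cong \angled{0}$, i.e.\ $E \in \mathcal{P}^{\el}_{0}$ --- is both necessary and sufficient.

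The heart of the argument is the case $|E| \cong \angled{1}$, i.e.\ $E \in \mathcal{P}^{\flat,\el}$. The point is that, since there is no inert map $\angled{0} \to \angled{1}$ in $\xF_{*}$, the full subcategory $\mathcal{P}^{\flat,\el}_{f(O)/} \subseteq \mathcal{P}^{\el}_{f(O)/}$ is a sieve (any object admitting a morphism to an object over $\angled{1}$ must again lie over $\angled{1}$), and likewise $\mathcal{O}^{\flat,\el}_{O/} \subseteq \mathcal{O}^{\el}_{O/}$; hence for $\phi$ lying over $\angled{1}$ one has $(\mathcal{O}^{\el}_{O})_{/\phi} = \mathcal{O}^{\flat,\el}_{O/} \times_{\mathcal{P}^{\flat,\el}_{f(O)/}} (\mathcal{P}^{\flat,\el}_{f(O)/})_{/\phi}$, so the comma \icat{} is computed entirely inside the ``flat'' pieces. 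Now the induced map $f^{\flat} \colon \mathcal{O}^{\flat} \to \mathcal{P}^{\flat}$ is a morphism of cartesian patterns (both $\mathcal{O}^{\flat}$ and $\mathcal{P}^{\flat}$ are cartesian because $\mathcal{O}$ and $\mathcal{P}$ are enrichable, and $f^{\flat}$ lies over $\xF_{*}^{\flat}$), hence is an iso-Segal morphism, so $\mathcal{O}^{\flat,\el}_{O/} \isoto \mathcal{P}^{\flat,\el}_{f(O)/}$. Therefore $(\mathcal{O}^{\el}_{O})_{/\phi} \simeq (\mathcal{P}^{\flat,\el}_{f(O)/})_{/\phi}$, which has the terminal object $\id_{\phi}$ and is in particular weakly contractible. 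This shows the cases $|E| \cong \angled{1}$ impose no condition, so coinitiality of all the $g_{O}$ is equivalent to weak contractibility of $(\mathcal{O}^{\el}_{O})_{/\phi}$ for $O \in \mathcal{O}$ and $\phi \colon f(O) \intto E$ with $E \in \mathcal{P}^{\el}_{0}$, which is the asserted criterion.

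For the ``in particular'' statement I would specialize to $\mathcal{P} = \xF_{*}^{\natural}$ and $f = |\blank|_{\mathcal{O}}$. If $|O| \cong \angled{n}$, then $\xF^{\natural,\el}_{*,\angled{n}/} \simeq \{\rho_{1},\dots,\rho_{n}\}^{\triangleright}$ has exactly one object $\phi$ lying over $\angled{0}$, namely the terminal object given by the unique inert map $\angled{n} \intto \angled{0}$; as $\phi$ is terminal, $(\xF^{\natural,\el}_{*,\angled{n}/})_{/\phi} = \xF^{\natural,\el}_{*,\angled{n}/}$, and hence $(\mathcal{O}^{\el}_{O})_{/\phi} = \mathcal{O}^{\el}_{O/}$. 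So the criterion from the first part reduces exactly to weak contractibility of $\mathcal{O}^{\el}_{O/}$ for all $O$.

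I expect the main obstacle to be purely a matter of bookkeeping: setting up the comma \icats{} with the correct variance, and checking carefully that, when $E$ lies over $\angled{1}$, the entire comma \icat{} is captured by the full ``flat'' subcategories on which the cartesian structures force $f^{\flat}$ to restrict to an equivalence on elementary slices. Once these identifications are in place the argument is formal, relying only on the characterization of coinitiality via weak contractibility of comma \icats{}, the nonexistence of inert maps $\angled{0} \to \angled{1}$, and the fact that morphisms of cartesian patterns are iso-Segal.
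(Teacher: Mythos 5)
Your proof is correct and follows essentially the same route as the paper's: reduce coinitiality to weak contractibility of the comma \icats{} via \cite[Theorem 4.1.3.1]{ht}, observe that for $\phi$ lying over $\angled{1}$ the comma is computed inside the flat elementary slices because there is no inert map $\angled{0}\intto\angled{1}$, and conclude contractibility there from the cartesianness of $\mathcal{O}^{\flat}$ and $\mathcal{P}^{\flat}$. Your explicit treatment of the ``in particular'' clause (using that the unique object of $\xF^{\natural,\el}_{*,\angled{n}/}$ over $\angled{0}$ is terminal) is a detail the paper leaves implicit, but it is the intended specialization.
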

\begin{proof}
  By \cite[Theorem 4.1.3.1]{ht}, the functor
  $\mathcal{O}^{\el}_{O/} \to \mathcal{P}^{\el}_{P/}$ is coinitial
  \IFF{} for all $\phi \in \mathcal{P}^{\el}_{P/}$, the \icat{}
  $(\mathcal{O}^{\el}_{O/})_{/\phi}$ is weakly contractible. If $\phi$
  lies over $\rho_{i}$ in $\xF_{*}$ then we can identify this \icat{}
  with
  $\mathcal{O}^{\flat,\el}_{O/} \times_{\mathcal{P}^{\flat,\el}_{P/}}
  (\mathcal{P}^{\flat,\el}_{P/})_{\phi/}$ (as there is no inert map
$\angled{0} \intto \angled{1}$), and this is a contractible \igpd{}
since $\mathcal{O}^{\flat}$ and $\mathcal{P}^{\flat}$ are cartesian.
It therefore suffices to know that the contractibility condition holds when $\phi$
lies over a map to $\angled{0}$, as claimed.
\end{proof}

\begin{lemma}\label{lem:segprescartcond}
  Let $\mathcal{C}$ be a complete \icat{}, and suppose $f \colon
  \mathcal{O} \to \mathcal{P}$ is a Segal morphism of enrichable
  patterns. Then we have a commutative square
  \begin{equation}
    \label{eq:seg0square}
    \begin{tikzcd}
      \Seg_{\mathcal{P}}(\mathcal{C}) \arrow{r}{f^{*}}
      \arrow{d}{i_{0}^{\mathcal{P},*}} &
      \Seg_{\mathcal{O}}(\mathcal{C}) \arrow{d}{i_{0}^{\mathcal{O},*}}
      \\
      \Seg_{\mathcal{P}_{0}}(\mathcal{C}) \arrow{r}{f_{0}^{*}} & \Seg_{\mathcal{O}_{0}}(\mathcal{C}),
    \end{tikzcd}
  \end{equation}
  where the vertical morphisms are cartesian fibrations. Here $f^{*}$
  takes $i_{0}^{\mathcal{P},*}$-cartesian morphisms to
  $i_{0}^{\mathcal{O},*}$-cartesian ones \IFF{} the Beck--Chevalley morphism
  \[ f^{*}i_{0,*}^{\mathcal{P}}F \to
    i_{0,*}^{\mathcal{O}}f_{0}^{*}F \] is an equivalence for every
  $F \in \Seg_{\mathcal{P}_{0}}(\mathcal{C})$. Moreover, this
  condition holds for all complete \icats{} $\mathcal{C}$ if it holds
  in spaces.
\end{lemma}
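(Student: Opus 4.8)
The plan is to establish the commutative square first, then analyze when $f^*$ preserves cartesian morphisms, and finally reduce the general case to the case of spaces via a representability argument.

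\medskip

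\textbf{Step 1: The square commutes and has cartesian-fibration columns.} Since $f$ is a Segal morphism, $f^* \colon \Fun(\mathcal{P},\mathcal{C}) \to \Fun(\mathcal{O},\mathcal{C})$ restricts to $\Seg_{\mathcal{P}}(\mathcal{C}) \to \Seg_{\mathcal{O}}(\mathcal{C})$; similarly $f_0$ is a morphism of algebraic patterns that is automatically a Segal morphism (by \cref{rem O_0 int}, the inclusions $i_0$ are iso-Segal, and the relevant elementary slices on both sides agree with those of $\mathcal{O}$, $\mathcal{P}$ over the respective objects), so $f_0^*$ restricts appropriately. Commutativity of the square is just $f \circ i_0^{\mathcal{O}} \simeq i_0^{\mathcal{P}} \circ f_0$ as morphisms of patterns. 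That the vertical maps are cartesian fibrations is \cref{propn:SegCartFib}, and there the cartesian lift of $\phi \colon \bX \to i_0^{\mathcal{P},*}F$ is computed as the pullback $F \times_{i_{0,*}^{\mathcal{P}} i_0^{\mathcal{P},*}F} i_{0,*}^{\mathcal{P}}\bX$.

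\medskip

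\textbf{Step 2: Characterizing preservation of cartesian morphisms.} Using the explicit description of cartesian lifts from \cref{propn:SegCartFib}, $f^*$ sends the cartesian lift of $\phi$ at $F$ to $f^*F \times_{f^* i_{0,*}^{\mathcal{P}} i_0^{\mathcal{P},*}F} f^* i_{0,*}^{\mathcal{P}}\bX$ (as $f^*$ preserves pullbacks, being a restriction functor). On the other hand the cartesian lift at $f^*F$ of the image morphism $f_0^*\phi \colon f_0^*\bX \to i_0^{\mathcal{O},*}f^*F$ is $f^*F \times_{i_{0,*}^{\mathcal{O}} i_0^{\mathcal{O},*} f^*F} i_{0,*}^{\mathcal{O}} f_0^*\bX$. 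Comparing these via the Beck--Chevalley transformation $f^* i_{0,*}^{\mathcal{P}} \Rightarrow i_{0,*}^{\mathcal{O}} f_0^*$ (which exists because the square of patterns commutes), one sees that $f^*$ preserves all cartesian morphisms exactly when this Beck--Chevalley transformation is a natural equivalence on $\Seg_{\mathcal{P}_0}(\mathcal{C})$ --- the ``if'' direction is immediate from the pullback formulas, and for ``only if'' one specializes $\phi$ to an identity (or uses that the Beck--Chevalley map is recovered as a cartesian lift in a suitable fibre).

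\medskip

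\textbf{Step 3: Reduction to spaces.} It remains to show the Beck--Chevalley condition for arbitrary complete $\mathcal{C}$ follows from the case $\mathcal{C} = \mathcal{S}$. Here I would argue objectwise: for $O \in \mathcal{O}$ and $F \in \Seg_{\mathcal{P}_0}(\mathcal{C})$, the map $(f^* i_{0,*}^{\mathcal{P}}F)(O) \to (i_{0,*}^{\mathcal{O}} f_0^*F)(O)$ is a map of limits indexed by the comma categories $(\mathcal{P}_0)_{f(O)/}$ and $(\mathcal{O}_0)_{O/}$ respectively, i.e.\ it is the map $\lim_{(\mathcal{P}_0)_{f(O)/}} F \to \lim_{(\mathcal{O}_0)_{O/}} f_0^*F$ induced by the functor $(\mathcal{O}_0)_{O/} \to (\mathcal{P}_0)_{f(O)/}$ over $f_0$. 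Since limits in $\mathcal{C}$ are detected by the corepresentable functors $\Map_{\mathcal{C}}(c,-) \colon \mathcal{C} \to \mathcal{S}$, which commute with all limits, this map is an equivalence in $\mathcal{C}$ if and only if $\Map_{\mathcal{C}}(c,-)$ applied to it is an equivalence in $\mathcal{S}$ for all $c \in \mathcal{C}$; and that is precisely the corresponding Beck--Chevalley statement for the Segal $\mathcal{P}_0$-space $\Map_{\mathcal{C}}(c, F(-))$. Hence the space-level case implies the general one. I expect the main obstacle to be bookkeeping in Step 2: making the two cartesian-lift formulas line up cleanly through the Beck--Chevalley transformation, and verifying the ``only if'' direction carefully (ensuring that preservation of \emph{all} cartesian edges really forces the Beck--Chevalley map to be an equivalence, not merely on some cofinal class).
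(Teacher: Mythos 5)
Your proposal is correct and follows essentially the same route as the paper: the commutative square and cartesian fibrations come from \cref{propn:SegCartFib}, preservation of cartesian edges is tested against the Beck--Chevalley transformation via the pullback description of cartesian lifts, and the reduction to spaces uses the pointwise limit formula for $i_{0,*}$ together with corepresentables $\Map_{\mathcal{C}}(c,-)$. The one caveat is in your ``only if'' step: specializing $\phi$ to an identity produces the identity cartesian lift and yields no information; the paper instead applies $f^{*}$ to the (trivially cartesian) morphism $i_{0,*}^{\mathcal{P}}F \to *$ to the terminal object, which forces $f^{*}i_{0,*}^{\mathcal{P}}F \to i_{0,*}^{\mathcal{O}}f_{0}^{*}F$ to be an equivalence --- this is exactly your parenthetical alternative of recognizing the Beck--Chevalley map as a unit/cartesian lift, so the gap is easily repaired.
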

\begin{proof}
  From \cref{propn:SegCartFib} we know that a morphism $X \to Y$ in $\Seg_{\mathcal{P}}(\mathcal{C})$ is cartesian precisely if the commutative square
  \[
    \begin{tikzcd}
      X \arrow{r} \arrow{d} & Y \arrow{d} \\
      i_{0,*}^{\mathcal{P}}i_{0}^{\mathcal{P},*}X \arrow{r} & i_{0,*}^{\mathcal{P}}i_{0}^{\mathcal{P},*}Y
    \end{tikzcd}
  \]
  is a pullback. Applying $f^{*}$, we get the diagram
  \[
    \begin{tikzcd}
      f^{*}X \arrow{r} \arrow{d} & f^{*}Y \arrow{d} \\
      f^{*}i_{0,*}^{\mathcal{P}}i_{0}^{\mathcal{P},*}X \arrow{r} \arrow{d} & f^{*}i_{0,*}^{\mathcal{P}}i_{0}^{\mathcal{P},*}Y \arrow{d} \\
      i_{0,*}^{\mathcal{O}}i_{0}^{\mathcal{O},*}f^{*}XX \arrow{r} & i_{0,*}^{\mathcal{O}}i_{0}^{\mathcal{O},*}f^{*}Y,
    \end{tikzcd}
  \]
  where we have used the identification
  $f_{0}^{*}i_{0}^{\mathcal{P},*} \simeq
  i_{0}^{\mathcal{O},*}f^{*}$. Here the top square is a pullback
  (since $f^{*}$ preserves limits), so $f^{*}X \to f^{*}Y$ is cartesian if the bottom is also a pullback, which is the case if the natural transformation $f^{*}i_{0,*}^{\mathcal{P}} \to i_{0,*}^{\mathcal{O}}f_{0}^{*}$ is an equivalence at $i_{0}^{\mathcal{P},*}X$ and $i_{0}^{\mathcal{P},*}Y$. Conversely, for $F \in \Seg_{\mathcal{P}_{0}}(\mathcal{C})$, the morphism $i_{0,*}^{\mathcal{P}}F \to *$  to the terminal object is (trivially) a cartesian morphism in $\Seg_{\mathcal{P}}(\mathcal{C})$. If $f^{*}$ preserves cartesian morphisms then we must therefore have a pullback square
  \[
    \begin{tikzcd}
      f^{*}i_{0,*}^{\mathcal{P}}F \arrow{r} \arrow{d} & * \arrow[equals]{d} \\
      i_{0,*}^{\mathcal{O}}f_{0}^{*}F \arrow{r} & *,
    \end{tikzcd}
  \]
  which means that the left vertical map must be an equivalence. If this condition holds when $\mathcal{C}$ is the \icat{} of spaces, then the pointwise formula for right Kan extensions and the Yoneda lemma show that it also holds in $\mathcal{C}$.
\end{proof}

\begin{defn}
  We say a morphism of enrichable patterns
  $f \colon \mathcal{O} \to \mathcal{P}$ is \emph{simple} if it is a
  strong Segal morphism and satisfies the condition of
  \cref{lem:segprescartcond}.
\end{defn}

The pointwise formula for right Kan extensions shows immediately that a morphism of enrichable patterns $f \colon \mathcal{O} \to \mathcal{P}$ is simple if the induced functor
\[ \mathcal{O}_{0,O/} \to \mathcal{P}_{0,fO/}\] is coinitial for all
$O \in \mathcal{O}$, but this is a rather strong condition since it
implies that the natural map
$f^{*}i_{0,*}^{\mathcal{P}}F \to i_{0,*}^{\mathcal{O}}f_{0}^{*}F$ is
an equivalence for \emph{all} functors $F$, rather than just for Segal
$\mathcal{P}_{0}$-objects. Using the Segal condition we can find a weaker criterion: 
\begin{lemma}\label{obs:simplecond}
  Let $f \colon \mathcal{O} \to \mathcal{P}$ be a strong Segal morphism of enrichable patterns. If the functor
  \[ \mathcal{O}^{\el}_{0,E/} \to \mathcal{P}^{\el}_{0,fE/}\]
  induced by $f$ is coinitial for all $E \in \mathcal{O}^{\el}$, then $f$ is simple.
\end{lemma}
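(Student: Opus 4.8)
The plan is to verify the Beck--Chevalley condition of \cref{lem:segprescartcond} directly using the pointwise formula for the right Kan extension $i_{0,*}$, and to reduce the relevant coinitiality question to the hypothesis on elementary slices by means of the Segal condition. So let $F \in \Seg_{\mathcal{P}_{0}}(\mathcal{C})$ with $\mathcal{C}$ the \icat{} of spaces (which suffices by the last clause of \cref{lem:segprescartcond}), and let $O \in \mathcal{O}$. The pointwise formula gives
\[ (f^{*}i_{0,*}^{\mathcal{P}}F)(O) \simeq \lim_{\mathcal{P}_{0,f(O)/}} F, \qquad (i_{0,*}^{\mathcal{O}}f_{0}^{*}F)(O) \simeq \lim_{\mathcal{O}_{0,O/}} f_{0}^{*}F, \]
and the Beck--Chevalley map is induced by the canonical functor $\mathcal{O}_{0,O/} \to \mathcal{P}_{0,f(O)/}$. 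Thus it would suffice to show this functor is coinitial; but as the remark preceding the lemma notes, this is too strong. Instead I would use that $F$ is a Segal $\mathcal{P}_{0}$-object to rewrite $\lim_{\mathcal{P}_{0,f(O)/}} F$ as a limit over a smaller diagram built from elementary slices, and similarly for $f_{0}^{*}F$ (which is a Segal $\mathcal{O}_{0}$-object since $f_{0}$ is a Segal morphism --- here we use that $f$ restricts to a strong Segal morphism on the fibres over $\angled{0}$, which follows from $f$ being a strong Segal morphism and \cref{rem O_0 int} applied to both patterns), and then invoke the hypothesis that $\mathcal{O}^{\el}_{0,E/} \to \mathcal{P}^{\el}_{0,fE/}$ is coinitial.

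More concretely, I would argue as follows. Since $f$ is a strong Segal morphism, for each $O$ the functor $\mathcal{O}^{\el}_{O/} \to \mathcal{P}^{\el}_{f(O)/}$ is coinitial; combined with \cref{rem O_0 int}, which identifies $\mathcal{O}^{\el}_{0,O/} \simeq \mathcal{O}^{\el}_{O/}$ and likewise for $\mathcal{P}$, this shows $f_{0}$ is a strong Segal morphism, so $f_{0}^{*}$ preserves Segal objects. Now the key point is that for a Segal $\mathcal{P}_{0}$-object, the limit $\lim_{\mathcal{P}_{0,f(O)/}} F$ is computed by the Segal condition: writing the overcategory limit via the inert--active factorization (as in \cite[\S 6]{patterns}, cf.\ the proof of \cref{lem i_0 Seg} through \cite[Proposition 6.3]{patterns}), one reduces to a limit indexed by $\mathcal{P}^{\el}_{0,f(O)/}$ of the values of $F$ on elementary objects, via the active maps out of $O$ and the factorization of the resulting composites. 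The same description applies on the $\mathcal{O}$-side. Under these identifications the Beck--Chevalley map becomes the comparison of limits induced by $\mathcal{O}^{\el}_{0,O/} \to \mathcal{P}^{\el}_{0,f(O)/}$, which is coinitial by hypothesis when $O$ is elementary, and for general $O$ one propagates this through the active maps $O \actto E$ with $E$ elementary (using \cref{rem O_0 act} to see these stay in $\mathcal{O}_{0}$). Hence the Beck--Chevalley map is an equivalence, so $f$ satisfies the condition of \cref{lem:segprescartcond}; together with the hypothesis that $f$ is a strong Segal morphism, this is exactly the assertion that $f$ is simple.

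I expect the main obstacle to be bookkeeping the identification of $\lim_{\mathcal{P}_{0,f(O)/}} F$ with an elementary-indexed limit in a way that is natural in $O$ and compatible with $f$ --- that is, making precise the claim that on Segal objects the overcategory $\mathcal{P}_{0,f(O)/}$ can be replaced by $\mathcal{P}^{\el}_{0,f(O)/}$ for the purpose of computing these limits, and that this replacement is functorial along the active maps used to reduce the general case to the elementary one. This is essentially a repackaging of the machinery behind \cite[Proposition 6.3]{patterns} (which already underlies \cref{lem i_0 Seg}), but one must check it interacts correctly with $f^{*}$; once that compatibility is in hand, the coinitiality hypothesis on elementary slices plugs in directly and the rest is formal.
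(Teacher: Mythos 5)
Your strategy coincides with the paper's: reduce to the Beck--Chevalley condition of \cref{lem:segprescartcond} in spaces, compute both sides by the pointwise formula, and use the Segal condition to replace the slice over $\mathcal{P}_{0}$ by the elementary slice so that the coinitiality hypothesis can be applied. However, the step you flag as ``the main obstacle'' --- justifying that $\lim_{\mathcal{P}_{0,fO/}}F$ may be computed over $\mathcal{P}^{\el}_{0,fO/}$, naturally and compatibly with $f$ --- is where essentially all of the content of the paper's proof lies, and your sketch does not carry it out; in particular the phrase ``for general $O$ one propagates this through the active maps $O \actto E$'' does not correspond to an actual argument (the hypothesis is only about elementary $E$, and active maps to elementaries are not what governs the Segal condition).

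The paper closes this gap as follows. First, instead of handling general $O$ by propagation, it notes that both $f^{*}i_{0,*}^{\mathcal{P}}F$ and $i_{0,*}^{\mathcal{O}}f_{0}^{*}F$ are Segal $\mathcal{O}$-spaces by \cref{lem i_0 Seg}, so the comparison map is an equivalence as soon as it is one on objects of $\mathcal{O}^{\el}$; this is the clean form of your reduction to the elementary case and should be done at the outset. Second, for $E \in \mathcal{O}^{\el}$ the replacement of the slice by the elementary slice is done in two stages: unique lifting of active morphisms for $i_{0}$ (\cref{rem O_0 act}) together with \cite[Lemma 6.2]{patterns} reduces $\lim_{\mathcal{P}_{0,fE/}}F$ to $\lim_{\mathcal{P}^{\xint}_{0,fE/}}F$; then, since $F$ restricted to $\mathcal{P}^{\xint}_{0}$ is right Kan extended from $\mathcal{P}^{\el}_{0}$, one forms the cartesian fibration $\mathcal{E}_{E} \to \mathcal{P}^{\xint}_{0,fE/}$ with fibres $\mathcal{P}^{\el}_{0,X/}$, rewrites the limit as $\lim_{\mathcal{E}_{E}}p^{*}F$, and checks that the induced functor $q \colon \mathcal{E}_{E} \to \mathcal{P}^{\el}_{0,fE/}$ is coinitial because each $(\mathcal{E}_{E})_{/\alpha}$ has a terminal object. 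Running the identical argument on the $\mathcal{O}$-side identifies the Beck--Chevalley map with the comparison of limits along $\mathcal{O}^{\el}_{0,E/} \to \mathcal{P}^{\el}_{0,fE/}$, at which point the hypothesis applies. So your outline points at the right proof, but it is not complete without this construction; note also that it is slightly more than a repackaging of \cite[Proposition 6.3]{patterns}, since one needs the auxiliary fibration $\mathcal{E}_{E}$ and its coinitiality to pass from the inert slice to the elementary slice.
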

\begin{proof}
  We must show that for $F \in \Seg_{\mathcal{P}_{0}}(\mathcal{S})$ and $O \in \mathcal{O}$, the map
  \[ (f^{*}i_{0,*}^{\mathcal{P}}F)(O)  \to (i_{0,*}^{\mathcal{O}}f_{0}^{*}F)(O)\] is an equivalence. By \cref{lem i_0 Seg} the source and target are both Segal $\mathcal{O}$-spaces, so it suffices to show this for $O \in \mathcal{O}^{\el}$. Moreover, since $i_{0}$ has unique lifting of active morphisms, \cite[Lemma 6.2]{patterns} and the pointwise formula for right Kan extensions imply that this holds when
  \[ \lim_{\mathcal{P}^{\xint}_{0,fE/}} F \to \lim_{\mathcal{O}^{\xint}_{0,E/}} f_{0}^{*}F \]
  is an equivalence for $E \in \mathcal{O}^{\el}$ and $F \in \Seg_{\mathcal{P}_{0}}(\mathcal{S})$. Since $F$ is a Segal $\mathcal{P}_{0}$-space, $F|_{\mathcal{P}_{0}^{\xint}}$ is right Kan extended from $\mathcal{P}_{0}^{\el}$. If we write $\mathcal{E}_{E} \to \mathcal{P}^{\xint}_{0,fE/}$ for the cartesian fibration of the functor that takes $fE \intto X$ to $\mathcal{P}_{0,X/}^{\el}$ and $\mathcal{E}_{E} \to \mathcal{P}_{0}^{\el}$ for the projection, we can therefore rewrite the source as
  \[ \lim_{(fE \intto X) \in \mathcal{P}^{\xint}_{0,fE/}} F(X) \simeq \lim_{(fE \intto X) \in \mathcal{P}^{\xint}_{0,fE/}} \lim_{(X \intto E') \in \mathcal{P}^{\el}_{0,X/}} F(E') \simeq \lim_{\mathcal{E}_{E}} p^{*}F. \]
  Now we observe that the functor $p$ factors through a functor $q \colon \mathcal{E}_{E} \to \mathcal{P}^{\el}_{0,fE/}$ that takes 
  $(fE \intto X, X \intto E')$ to the composite $fE \intto E'$. We claim that $q$ is coinitial. To see this it suffices to show that for $\alpha \colon fE \intto E'$ with $E' \in \mathcal{P}_{0}^{\el}$, the \icat{} $(\mathcal{E}_{E})_{/\alpha}$ is weakly contractible, and this is clear since $(\alpha, \id_{E'})$ is a terminal object. It follows that $(f^{*}i_{0,*}^{\mathcal{P}}F)(E)$ can be computed as $\lim_{fE \intto E' \in \mathcal{P}^{\el}_{0,fE/}} F(E')$, and applying the same argument to $(i_{0,*}^{\mathcal{O}}f_{0}^{*}F)(E)$ we see that the map between them will be an equivalence if $\mathcal{O}^{\el}_{0,E/} \to \mathcal{P}^{\el}_{0,fE/}$ is coinitial, as required.
\end{proof}

\begin{observation}\label{obs:simplemonmor}
  If $\mathcal{C}$ is the \icat{} $\mathcal{S}$ of spaces and $f \colon \mathcal{O} \to \mathcal{P}$ is a simple morphism of enrichable patterns, then  the
  square \cref{eq:seg0square} corresponds under the equivalence of
  \cref{prop MonSeg}
  to a natural transformation given
  at $\bX \in \Seg_{\mathcal{P}_{0}}(\mathcal{S})$ by a functor
  \[ f^{*}_{\bX} \colon \Mon_{\mathcal{P}_{\bX}}(\mathcal{S}) \to
    \Mon_{\mathcal{O}_{f_{0}^{*}\bX}}(\mathcal{S}).\]
  This is simply given by composing with the projection
  $f_{\bX} \colon f^{*}\mathcal{P}_{\bX} \to \mathcal{P}_{\bX}$ and using the identification
  \begin{equation}
    \label{eq:simplepb}
  f^{*}\mathcal{P}_{\bX} \simeq \mathcal{O}_{f_{0}^{*}\bX}  
  \end{equation}
  of right fibrations over $\mathcal{O}$, corresponding to the equivalence $f^{*}i_{0,*}^{\mathcal{P}}\bX
  \simeq i_{0,*}^{\mathcal{O}}f_{0}^{*}\bX$. 
\end{observation}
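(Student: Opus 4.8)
The plan is to transport the square \cref{eq:seg0square} through the equivalences of \cref{prop MonSeg} and then unwind the resulting fibrewise functors. Since $f$ is simple, \cref{lem:segprescartcond} tells us that $f^{*}$ carries $i_{0}^{\mathcal{P},*}$-cartesian morphisms to $i_{0}^{\mathcal{O},*}$-cartesian ones; as the vertical maps in \cref{eq:seg0square} are cartesian fibrations, straightening this square along $\Seg_{\mathcal{P}_{0}}(\mathcal{S})^{\op}\to\CatI$ produces a natural transformation from the functor classifying $i_{0}^{\mathcal{P},*}$ to the functor classifying $i_{0}^{\mathcal{O},*}$ precomposed with $f_{0}^{*}$. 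By \cref{prop MonSeg}, these two functors are identified --- as cartesian fibrations over $\Seg_{\mathcal{P}_{0}}(\mathcal{S})$, respectively $\Seg_{\mathcal{O}_{0}}(\mathcal{S})$ --- with $\mathcal{M}_{\mathcal{P}}(\mathcal{S})$ and $\mathcal{M}_{\mathcal{O}}(\mathcal{S})$, so the natural transformation is given at $\bX$ by a functor $f^{*}_{\bX}\colon \Mon_{\mathcal{P}_{\bX}}(\mathcal{S}) \to \Mon_{\mathcal{O}_{f_{0}^{*}\bX}}(\mathcal{S})$; here $f_{0}$ is a strong Segal morphism, since for $O\in\mathcal{O}_{0}$ the map $\mathcal{O}^{\el}_{0,O/}\to\mathcal{P}^{\el}_{0,f_{0}(O)/}$ is identified, via the equivalences of \cref{rem O_0 int} for $\mathcal{O}$ and $\mathcal{P}$, with the coinitial functor $\mathcal{O}^{\el}_{O/}\to\mathcal{P}^{\el}_{f(O)/}$, so that $f_{0}^{*}\bX$ is again a Segal $\mathcal{O}_{0}$-space and $\mathcal{O}_{f_{0}^{*}\bX}$ is a bona fide enrichable pattern.

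Next I would unwind what $f^{*}_{\bX}$ does. Since the equivalence of \cref{prop MonSeg} sends a monoid to its left Kan extension along the projection to $\mathcal{O}$, respectively $\mathcal{P}$, the functor $f^{*}_{\bX}$ is the composite $(\pi^{\mathcal{O}}_{!})^{-1}\circ f^{*}\circ \pi^{\mathcal{P}}_{!}$, where $\pi^{\mathcal{P}}_{!}$ and $\pi^{\mathcal{O}}_{!}$ denote left Kan extension along $\mathcal{P}_{\bX}\to\mathcal{P}$ and $\mathcal{O}_{f_{0}^{*}\bX}\to\mathcal{O}$. This is legitimate because for a monoid $M$ the Segal $\mathcal{O}$-space $f^{*}\pi^{\mathcal{P}}_{!}M$ lies over $f^{*}i_{0,*}^{\mathcal{P}}\bX\simeq i_{0,*}^{\mathcal{O}}f_{0}^{*}\bX$ and satisfies $i_{0}^{\mathcal{O},*}f^{*}\pi^{\mathcal{P}}_{!}M\simeq f_{0}^{*}i_{0}^{\mathcal{P},*}\pi^{\mathcal{P}}_{!}M\simeq f_{0}^{*}\bX$ (using $i_{0}^{\mathcal{O},*}f^{*}\simeq f_{0}^{*}i_{0}^{\mathcal{P},*}$ and that $M$ is a monoid), so by \cref{lem:LFibSegslice} and the fibrewise criterion extracted in the proof of \cref{prop MonSeg} it lies in the essential image of $\pi^{\mathcal{O}}_{!}$ restricted to monoids. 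It then remains to identify $(\pi^{\mathcal{O}}_{!})^{-1}(f^{*}\pi^{\mathcal{P}}_{!}M)$ with the restriction $f_{\bX}^{*}M$ of $M$ along the projection $f_{\bX}\colon f^{*}\mathcal{P}_{\bX}\to\mathcal{P}_{\bX}$, where the Beck--Chevalley equivalence $f^{*}i_{0,*}^{\mathcal{P}}\bX\simeq i_{0,*}^{\mathcal{O}}f_{0}^{*}\bX$ supplied by simplicity of $f$ identifies $f^{*}\mathcal{P}_{\bX}\simeq\mathcal{O}_{f_{0}^{*}\bX}$ as left fibrations over $\mathcal{O}$ and exhibits $\mathcal{O}_{f_{0}^{*}\bX}\to\mathcal{O}$ as the base change of $\mathcal{P}_{\bX}\to\mathcal{P}$ along $f$. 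Equivalently, I must prove the base-change identity $f^{*}\pi^{\mathcal{P}}_{!}M\simeq \pi^{\mathcal{O}}_{!}(f_{\bX}^{*}M)$, which follows from the pointwise formula: since $\mathcal{P}_{\bX}\to\mathcal{P}$ is a left fibration we have $(\pi^{\mathcal{P}}_{!}M)(P)\simeq\colim_{x\in(i_{0,*}^{\mathcal{P}}\bX)(P)}M(x)$, a colimit over the fibre (which is a space), and the analogous formula for $\pi^{\mathcal{O}}_{!}(f_{\bX}^{*}M)$ at $O\in\mathcal{O}$ involves the fibre of $f^{*}\mathcal{P}_{\bX}$ over $O$, namely the fibre of $\mathcal{P}_{\bX}$ over $f(O)$ with $f_{\bX}$ the canonical inclusion, so that it reproduces $(f^{*}\pi^{\mathcal{P}}_{!}M)(O)=(\pi^{\mathcal{P}}_{!}M)(f(O))$ under the Beck--Chevalley identification of fibres. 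Finally, $f_{\bX}^{*}M$ is genuinely a monoid: it is a Segal $\mathcal{O}_{f_{0}^{*}\bX}$-space, because $\pi^{\mathcal{O}}_{!}(f_{\bX}^{*}M)\simeq f^{*}\pi^{\mathcal{P}}_{!}M$ is a Segal $\mathcal{O}$-space ($\pi^{\mathcal{P}}_{!}M$ is Segal by \cref{lem:LFibSegslice} and $f$ is a Segal morphism), so \cref{lem:LFibSegslice} transfers this back; and it is a monoid because $f_{\bX}$ carries $(\mathcal{O}_{f_{0}^{*}\bX})_{0}=\mathcal{O}_{0,f_{0}^{*}\bX}$ into $(\mathcal{P}_{\bX})_{0}=\mathcal{P}_{0,\bX}$ (by \cref{rem O_0X}, since $f$ sends $\mathcal{O}_{0}$ to $\mathcal{P}_{0}$), on which $M$ is constant at the terminal object by \cref{lem:monoidrestrpt}.

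The one step that requires genuine care is matching the naturality of the equivalence of \cref{prop MonSeg} (which, following \cref{obs lke seg lfib nat}, comes from left Kan extension along left fibrations) with that of the Beck--Chevalley transformation $f^{*}i_{0,*}^{\mathcal{P}}\to i_{0,*}^{\mathcal{O}}f_{0}^{*}$: this is what upgrades the pointwise computation above to an identification of the functors $f^{*}_{\bX}$ with ``restriction along $f_{\bX}$'' that is natural in $\bX$, rather than a bare equivalence of objects. One can sidestep this bookkeeping by observing that left Kan extension along a left fibration $p\colon\mathcal{F}\to\mathcal{O}$ is just the equivalence $\Fun(\mathcal{F},\mathcal{S})\simeq\Fun(\mathcal{O},\mathcal{S})_{/F}$ of \cite[Corollary 9.8]{freepres}, which is manifestly stable under base change in $\mathcal{O}$, so that the whole identification becomes formal; everything else is unwinding definitions and invoking \cref{lem:segprescartcond}, \cref{prop MonSeg}, \cref{lem:LFibSegslice}, and \cref{lem:monoidrestrpt}.
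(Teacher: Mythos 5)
Your argument is correct and is essentially a careful expansion of the justification the paper itself gives for this observation: the square of \cref{eq:seg0square} straightens to a natural transformation because simplicity guarantees preservation of cartesian morphisms, and the fibrewise functor is identified with restriction along $f_{\bX}$ via the Beck--Chevalley equivalence $f^{*}i_{0,*}^{\mathcal{P}}\bX \simeq i_{0,*}^{\mathcal{O}}f_{0}^{*}\bX$ together with base change for left Kan extension along left fibrations. The paper states this without detailed proof, and your verification of the base-change identity, of the Segal and monoid conditions for $f_{\bX}^{*}M$, and of the naturality bookkeeping supplies exactly the missing details along the intended route.
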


\begin{construction}
  Given a simple morphism of enrichable patterns $f \colon \mathcal{O}
  \to \mathcal{P}$ and an
  $\mathcal{O}$-monoidal \icat{} $\mathcal{V}$, we
  will define an induced functor on algebroids that fits in a
  commutative square
  \[
    \begin{tikzcd}
      \Algd_{\mathcal{P}}(\mathcal{V}) \arrow{r}{f^{*}} \arrow{d} &
      \Algd_{\mathcal{O}}(\mathcal{V}) \arrow{d} \\
      \Seg_{\mathcal{P}_{0}}(\mathcal{C}) \arrow{r}{f_{0}^{*}} & \Seg_{\mathcal{O}_{0}}(\mathcal{C}),
    \end{tikzcd}
  \]
  such that $f^{*}$ preserves cartesian morphisms. On the fibre over
  $\bX \in \Seg_{\mathcal{P}_{0}}(\mathcal{C})$, we define this as the
  restriction functor
  \[  f_{\bX}^{*} \colon \Alg_{\mathcal{P}_{\bX}}(\mathcal{V}) \to 
    \Alg_{\mathcal{O}_{f_{0}^{*}\bX}}(\mathcal{V}),\]
  and then use the obvious naturality of this in $\bX$. Note that this is also clearly compatible with composition of simple morphisms.
\end{construction}

\begin{observation}\label{prop nat}
  Let $g\colon \mathcal{C}\to \mathcal{D}$ be a lax $\mathcal{O}$-monoidal functor. Then composition with $g$ induces a commutative triangle 
  \[
  \begin{tikzcd}[column sep=small]
  \Algd_\mathcal{O}(\mathcal{C}) \arrow[rr, "g_*"] \arrow[rd] & & \Algd_{\mathcal{O}}(\mathcal{D})\arrow[ld]\\
  & \Seg_{\mathcal{O}_0}(\xS), &
  \end{tikzcd}
\]
since $g$ clearly induces a natural transformation of functors $\Alg_{\mathcal{O}_{\blank}/\mathcal{O}}(\mathcal{C})\to \Alg_{\mathcal{O}_{\blank}/\mathcal{O}}(\mathcal{D})$.
\end{observation}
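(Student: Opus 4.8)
The plan is to assemble $g_{*}$ fibrewise over $\Seg_{\mathcal{O}_{0}}(\xS)$. Recall that $\Algd_{\mathcal{O}}(\mathcal{C})$ and $\Algd_{\mathcal{O}}(\mathcal{D})$ are, by definition, the cartesian fibrations over $\Seg_{\mathcal{O}_{0}}(\xS)$ classified by the functors $\Seg_{\mathcal{O}_{0}}(\xS)^{\op}\to\CatI$ sending $\bX$ to $\Alg_{\mathcal{O}_{\bX}/\mathcal{O}}(\mathcal{C})$, respectively $\Alg_{\mathcal{O}_{\bX}/\mathcal{O}}(\mathcal{D})$. It therefore suffices to produce a natural transformation between these two functors: the functor on total spaces it induces then automatically lies over $\Seg_{\mathcal{O}_{0}}(\xS)$, which is the triangle, and since it comes from a natural transformation it moreover preserves cartesian morphisms.

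First I would fix $\bX\in\Seg_{\mathcal{O}_{0}}(\xS)$ and define the component $g_{\bX,*}$. Write $\mathcal{C}^{\otimes}\to\mathcal{O}$ and $\mathcal{D}^{\otimes}\to\mathcal{O}$ for the cocartesian fibrations underlying the two $\mathcal{O}$-monoidal \icats{}; then a lax $\mathcal{O}$-monoidal functor $g$ is a functor $\mathcal{C}^{\otimes}\to\mathcal{D}^{\otimes}$ over $\mathcal{O}$ that preserves inert morphisms, that is, it sends cocartesian lifts of inert morphisms of $\mathcal{O}$ to cocartesian morphisms of $\mathcal{D}^{\otimes}$. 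Composition with $g$ then gives a functor
\[ \xFun_{/\mathcal{O}}(\mathcal{O}_{\bX},\mathcal{C}^{\otimes}) \longrightarrow \xFun_{/\mathcal{O}}(\mathcal{O}_{\bX},\mathcal{D}^{\otimes}), \]
and the one thing to verify is that it carries $\mathcal{O}_{\bX}$-algebras over $\mathcal{O}$ to $\mathcal{O}_{\bX}$-algebras over $\mathcal{O}$. This is the heart of the argument, but it is immediate from the definitions: if $A\colon\mathcal{O}_{\bX}\to\mathcal{C}^{\otimes}$ is such an algebra, then $g\circ A$ is again a section of $\mathcal{D}^{\otimes}\to\mathcal{O}$, and for an inert morphism $\beta$ of $\mathcal{O}_{\bX}$ --- which lies over an inert morphism of $\mathcal{O}$ by the pattern structure of \cref{def SegalOfib} --- the morphism $A(\beta)$ is an inert morphism of $\mathcal{C}^{\otimes}$, so $g(A(\beta))$ is an inert, in particular cocartesian, morphism of $\mathcal{D}^{\otimes}$. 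Hence $g\circ A$ is an $\mathcal{O}_{\bX}$-algebra over $\mathcal{O}$, and we obtain the component $g_{\bX,*}\colon\Alg_{\mathcal{O}_{\bX}/\mathcal{O}}(\mathcal{C})\to\Alg_{\mathcal{O}_{\bX}/\mathcal{O}}(\mathcal{D})$.

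It then remains to check naturality in $\bX$. For a morphism $\phi\colon\bX\to\bY$ in $\Seg_{\mathcal{O}_{0}}(\xS)$, the functoriality of $\mathcal{O}_{(\blank)}$ recorded in \cref{def OX} supplies a map $\mathcal{O}_{\bX}\to\mathcal{O}_{\bY}$ over $\mathcal{O}$, and the transition maps of the two classifying functors are the corresponding restriction functors $\phi^{*}$; post-composition with $g$ commutes with these restrictions on the nose, as both operations are simply composition of functors, so $g_{(\blank),*}$ is a natural transformation of the desired functors $\Seg_{\mathcal{O}_{0}}(\xS)^{\op}\to\CatI$. Passing to the classified cartesian fibrations gives the commutative triangle with $g_{*}\colon\Algd_{\mathcal{O}}(\mathcal{C})\to\Algd_{\mathcal{O}}(\mathcal{D})$ over $\Seg_{\mathcal{O}_{0}}(\xS)$. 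I do not expect a genuine obstacle here: the argument is entirely formal, and the only point that deserves care is recognizing that ``lax $\mathcal{O}$-monoidal'' --- preservation of inert morphisms --- is exactly the hypothesis that makes the algebra-preservation check go through, whereas an arbitrary functor over $\mathcal{O}$ would not suffice.
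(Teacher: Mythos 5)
Your proposal is correct and fills in exactly the details the paper leaves implicit: the paper's justification is precisely that composition with $g$ gives a natural transformation $\Alg_{\mathcal{O}_{\blank}/\mathcal{O}}(\mathcal{C})\to\Alg_{\mathcal{O}_{\blank}/\mathcal{O}}(\mathcal{D})$, which you verify fibrewise (using that a lax $\mathcal{O}$-monoidal functor preserves cocartesian lifts of inert morphisms) and then in $\bX$. This is the same approach, just written out.
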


\begin{propn}\label{propn:algdtomonnat}
  The equivalence of \cref{prop MonSeg} is natural in simple morphisms of enrichable patterns.
\end{propn}
\begin{proof}
  For an enrichable pattern $\mathcal{O}$, consider the pullback
  \[
    \begin{tikzcd}
      \mathcal{F}_{\mathcal{O}} \arrow{r} \arrow{d} & \Ar(\LFib(\mathcal{O})) \arrow{d} \\
      \Seg_{\mathcal{O}}(\mathcal{S}) \times \Seg_{\mathcal{O}_{0}}(\mathcal{S}) \arrow{r} & \LFib(\mathcal{O}) \times \LFib(\mathcal{O})
    \end{tikzcd}
  \]
  with the bottom horizontal functor given by straightening over $\mathcal{O}$ in the first variable and by right Kan extending to $\mathcal{O}$ and then straightening in the second.
  We can view $\mathcal{M}_{\mathcal{O}}(\mathcal{S})$ as a full subcategory of
  $\mathcal{F}_{\mathcal{O}}$ by thinking of an $\mathcal{O}_{\bX}$-monoid $M$ in $\mathcal{S}$ as a morphism
  \[
    \begin{tikzcd}
      \mathcal{M} \arrow{rr} \arrow{dr} & & \mathcal{O}_{\bX} \arrow{dl} \\
       & \mathcal{O}
    \end{tikzcd}
  \]
  of left fibrations over $\mathcal{O}$.
  The equivalence to $\Seg_{\mathcal{O}}(\mathcal{S})$ then corresponds to the domain projection.

  If $f \colon \mathcal{P} \to \mathcal{O}$ is a simple morphism, pulling back along $f$ gives a functor $\mathcal{F}_{\mathcal{O}} \to \mathcal{F}_{\mathcal{P}}$ over $f^{*} \times f_{0}^{*}$, which restricts to a functor $\mathcal{M}_{\mathcal{O}}(\mathcal{S}) \to \mathcal{M}_{\mathcal{P}}(\mathcal{S})$ over $f^{*}$. In other words, we have a commutative square
  \[
    \begin{tikzcd}
      \mathcal{M}_{\mathcal{O}}(\mathcal{S}) \arrow{r}{f^{*}} \arrow{d}[swap]{\sim} & \mathcal{M}_{\mathcal{P}}(\mathcal{S}) \arrow{d}{\sim} \\
      \Seg_{\mathcal{O}}(\mathcal{S}) \arrow{r}{f^{*}} & \Seg_{\mathcal{P}}(\mathcal{S}),
    \end{tikzcd}
  \]
  as required.
\end{proof}

\subsection{Enrichment in presheaves and presentability}

If  $\mathcal{C}^{\otimes}$ is a small $\mathcal{O}$-monoidal
\icat{}, then by \cite[\S 6]{freealg} there is a Day convolution
$\mathcal{O}$-monoidal structure
$\PSh_{\mathcal{O}}(\mathcal{C})^{\otimes}$ on presheaves. In this
section we will show that algebroids in
$\PSh_{\mathcal{O}}(\mathcal{C})$ admit a simple description, namely that
\[ \Algd_{\mathcal{O}}(\PSh_{\mathcal{O}}(\mathcal{C})) \simeq
  \Seg_{\mathcal{C}^{\op,\otimes}}(\mathcal{S}). \] We will then see
that this equivalence is compatible with $\mathcal{O}$-monoidal
localizations of $\PSh_{\mathcal{O}}(\mathcal{C})$, and use this to
show that algebroids in any presentably $\mathcal{O}$-monoidal \icat{}
form a presentable \icat{}.

\begin{propn}\label{prop AlgdM}
  Let $\mathcal{O}$ be an enrichable pattern and
  $\mathcal{C}^{\otimes}$ a small $\mathcal{O}$-monoidal \icat{}. Then
  there is a natural equivalence
  \[ \Algd_{\mathcal{O}}(\PSh_{\mathcal{O}}(\mathcal{C})) \simeq
    \mathcal{M}_{\mathcal{C}^{\op,\otimes}}(\mathcal{S}) \]
  over $\Seg_{\mathcal{O}_{0}}(\mathcal{S})$ which is compatible with simple morphisms of enrichable patterns.
\end{propn}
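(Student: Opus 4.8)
The plan is to reduce the statement to a combination of results we have already established, and to the universal property of Day convolution from \cite[\S 6]{freealg}. Recall that by definition $\Algd_{\mathcal{O}}(\PSh_{\mathcal{O}}(\mathcal{C}))$ is the cartesian fibration over $\Seg_{\mathcal{O}_{0}}(\mathcal{S})$ whose fibre over $\bX$ is $\Alg_{\mathcal{O}_{\bX}/\mathcal{O}}(\PSh_{\mathcal{O}}(\mathcal{C})^{\otimes})$. By \cref{rem pullback_algebras} this is $\Alg_{\mathcal{O}_{\bX}}(\pi_{\bX}^{*}\PSh_{\mathcal{O}}(\mathcal{C})^{\otimes})$ for the projection $\pi_{\bX}\colon \mathcal{O}_{\bX}\to\mathcal{O}$. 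Since $\mathcal{O}_{\bX}\to\mathcal{O}$ is a Segal $\mathcal{O}$-fibration, the pullback of the Day convolution $\mathcal{O}$-monoidal structure along this (strong Segal, in fact iso-Segal) morphism is again a Day convolution structure — this is the compatibility of Day convolution with base change along Segal fibrations proved in \cite[\S 6]{freealg}. Concretely, $\pi_{\bX}^{*}\PSh_{\mathcal{O}}(\mathcal{C})^{\otimes}$ is identified with $\PSh_{\mathcal{O}_{\bX}}(\mathcal{C}\times_{\mathcal{O}}\mathcal{O}_{\bX})^{\otimes}$, the Day convolution $\mathcal{O}_{\bX}$-monoidal structure on presheaves on the pulled-back family $\mathcal{C}_{\bX} := \mathcal{C}\times_{\mathcal{O}}\mathcal{O}_{\bX}\to\mathcal{O}_{\bX}$.

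First I would apply the universal property of Day convolution: for a cartesian pattern $\mathcal{Q}$ (here $\mathcal{Q} = \mathcal{O}_{\bX}$) and a small $\mathcal{Q}$-monoidal \icat{} $\mathcal{D}^{\otimes}\to\mathcal{Q}$, there is a natural equivalence
\[ \Alg_{\mathcal{Q}}(\PSh_{\mathcal{Q}}(\mathcal{D})^{\otimes}) \simeq \Mon_{\mathcal{D}^{\op,\otimes}}(\mathcal{S}), \]
where $\mathcal{D}^{\op,\otimes}$ is the opposite cartesian pattern (the Segal monoid structure on the fibrewise opposite). This is exactly the statement that monoids in Day convolution presheaves are the same as lax monoidal functors out of the dual, specialised to the cartesian setting; it is part of the package developed in \cite[\S 6]{freealg} together with \cite[Proposition 5.1]{freealg} (which we already invoked in \cref{lem AlgM}). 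Applying this with $\mathcal{D}^{\otimes} = \mathcal{C}_{\bX}^{\otimes}$ gives
\[ \Alg_{\mathcal{O}_{\bX}/\mathcal{O}}(\PSh_{\mathcal{O}}(\mathcal{C})^{\otimes}) \simeq \Mon_{\mathcal{C}_{\bX}^{\op,\otimes}}(\mathcal{S}). \]
The remaining point is to identify the right-hand side, as $\bX$ varies, with the fibre of $\mathcal{M}_{\mathcal{C}^{\op,\otimes}}(\mathcal{S})\to\Seg_{\mathcal{O}_{0}}(\mathcal{S})$ over $\bX$. Here $\mathcal{C}^{\op,\otimes}$ is itself an enrichable pattern (its underlying pattern over $\xFs^{\natural}$ is obtained by composing the fibrewise-opposite pattern with the map $\mathcal{C}^{\otimes}\to\mathcal{O}\to\xFs^{\natural}$, using \cref{lem enrpatt} applied to the Segal $\mathcal{O}$-fibration $\mathcal{C}^{\otimes}\to\mathcal{O}$ — note that taking fibrewise opposites of a cocartesian fibration whose straightening is a Segal object again yields such a fibration, since Segal conditions are limit conditions preserved by $(\blank)^{\op}$ on $\CatI$). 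By definition $\mathcal{M}_{\mathcal{C}^{\op,\otimes}}(\mathcal{S})$ has fibre $\Mon_{(\mathcal{C}^{\op,\otimes})_{\bY}}(\mathcal{S})$ over $\bY\in\Seg_{(\mathcal{C}^{\op,\otimes})_{0}}(\mathcal{S})$, so I must check two things: that $(\mathcal{C}^{\op,\otimes})_{0}$, the fibre over $\angled{0}$, recovers $\mathcal{O}_{0}$ (since the fibres over $\angled{0}$ of $\mathcal{C}^{\otimes}\to\mathcal{O}$ are, by \cref{lem:monoidrestrpt} and the description of the pattern structure, terminal, so nothing is added over the base point), giving $\Seg_{(\mathcal{C}^{\op,\otimes})_{0}}(\mathcal{S}) \simeq \Seg_{\mathcal{O}_{0}}(\mathcal{S})$; and that under this identification $(\mathcal{C}^{\op,\otimes})_{\bX} \simeq \mathcal{C}_{\bX}^{\op,\otimes}$ as cartesian patterns over $\mathcal{O}_{\bX}$, i.e.\ that forming $\mathcal{O}_{(\blank)}$ commutes with fibrewise opposites and with the Day-convolution base change. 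This last identification is a diagram chase with the defining pullback $\mathcal{C}_{\bX} = \mathcal{C}\times_{\mathcal{O}}\mathcal{O}_{\bX}$, using \cref{rem O_0X} to see that the underlying Segal $(\mathcal{C}^{\op,\otimes})_{0}$-space of the pattern $(\mathcal{C}^{\op,\otimes})_{\bX}$ is $\bX$ again.

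Assembling, for each $\bX$ we obtain
\[ \Algd_{\mathcal{O}}(\PSh_{\mathcal{O}}(\mathcal{C}))_{\bX} \simeq \Mon_{\mathcal{C}_{\bX}^{\op,\otimes}}(\mathcal{S}) \simeq \Mon_{(\mathcal{C}^{\op,\otimes})_{\bX}}(\mathcal{S}) \simeq \mathcal{M}_{\mathcal{C}^{\op,\otimes}}(\mathcal{S})_{\bX}, \]
and the naturality of each of these equivalences in $\bX$ — for the first, the naturality statement in \cite[Proposition 5.1]{freealg} exactly as used in \cref{lem AlgM}; for the Day-convolution universal property, its naturality in the monoidal input — upgrades the fibrewise equivalence to an equivalence of cartesian fibrations over $\Seg_{\mathcal{O}_{0}}(\mathcal{S})$. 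Compatibility with a simple morphism $f\colon\mathcal{O}'\to\mathcal{O}$ of enrichable patterns is then the statement that the functor $f^{*}$ on algebroids (from the Construction following \cref{obs:simplecond}) corresponds, under the identification \cref{eq:simplepb} of $f^{*}\mathcal{O}_{\bX}$ with $\mathcal{O}'_{f_{0}^{*}\bX}$, to the functor induced on the monoid side by pulling back the family $\mathcal{C}_{\bX}$ and restricting along $f_{\bX}$; both sides are "restrict along the pullback projection", so this is a matter of tracking the identifications through, as in \cref{obs:simplemonmor}. The main obstacle is the bookkeeping in the middle step: proving cleanly that the assignment $\bX\mapsto\mathcal{O}_{\bX}$ intertwines Day convolution, fibrewise opposites, and the formation of the enrichable pattern $\mathcal{C}^{\op,\otimes}$ — i.e.\ that $(\mathcal{C}^{\op,\otimes})_{\bX}\simeq \mathcal{C}_{\bX}^{\op,\otimes}$ naturally in $\bX$ — since all three operations are defined via (co)cartesian fibrations and straightening, and one wants a single coherent diagram rather than a string of ad hoc equivalences; I expect this to follow formally once one observes that $\mathcal{O}_{(\blank)}$ is, by construction (\cref{def OX}), right Kan extension followed by straightening, and that right Kan extension along $i_{0}$ commutes with the relevant pullbacks because $i_{0}$ is fully faithful (\cref{rem O_0X}), but writing it out is where the real work lies.
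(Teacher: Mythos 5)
Your proposal is correct and follows essentially the same route as the paper: both reduce fibrewise to the Day-convolution/monoid correspondence of \cite[\S 6]{freealg}, identify $(\mathcal{C}^{\op,\otimes})_{\bX}$ with the pullback $\mathcal{O}_{\bX}\times_{\mathcal{O}}\mathcal{C}^{\op,\otimes}$ using that $\mathcal{C}^{\otimes}_{0}\simeq\mathcal{O}_{0}$ makes the projection simple, and then track naturality in $\bX$ and in simple morphisms. The only (immaterial) difference is that you factor the key input into base change of Day convolution (\cite[Lemma 6.20]{freealg}) followed by the absolute universal property, whereas the paper invokes the relative statement \cite[Corollary 6.21]{freealg} in one step.
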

\begin{proof}
  Since $\mathcal{C}^{\otimes}$ is $\mathcal{O}$-monoidal, it is by definition a Segal $\mathcal{O}^\flat$-fibration and therefore $\mathcal{C}^{\otimes}_{0} \simeq \mathcal{O}_{0}$. Hence,
  \cref{obs:simplecond} implies that the projection $\mathcal{C}^{\otimes} \to \mathcal{O}$ is a simple morphism of enrichable patterns, so that 
for $\bX \in \Seg_{\mathcal{O}_{0}}(\mathcal{S})$ we have a natural equivalence
  \[ \mathcal{C}^{\otimes}_{\bX} \simeq \mathcal{C}^{\otimes}
    \times_{\mathcal{O}} \mathcal{O}_{\bX}.\]
From \cite[Corollary 6.21]{freealg} we have for any
morphism of cartesian patterns $f \colon \mathcal{P} \to \mathcal{O}^{\flat}$
 a natural equivalence
\[ \Alg_{\mathcal{P}/\mathcal{O}}(\PSh_{\mathcal{O}}(\mathcal{C}))
  \simeq \Mon_{\mathcal{P} \times_{\mathcal{O}}
    \mathcal{C}^{\op,\otimes}}(\mathcal{S}).\]
Applying this to the cartesian patterns
$\mathcal{O}_{\bX}^{\flat}$, we get equivalences
\[
  \Alg_{\mathcal{O}_{\bX}/\mathcal{O}}(\PSh_{\mathcal{O}}(\mathcal{C}))
  \simeq \Mon_{\mathcal{O}_{\bX} \times_{\mathcal{O}}
    \mathcal{C}^{\op,\otimes}}(\mathcal{S}) \simeq
  \Mon_{\mathcal{C}^{\op,\otimes}_{\bX}}(\mathcal{S}) \simeq
  \mathcal{M}_{\mathcal{C}^{\op,\otimes}}(\mathcal{S})_{\bX},\]
natural in $\bX$.

To show the compatibility with simple morphisms we first observe that for every $\bX\in \Seg_{\mathcal{O}_0}(\xS)$, the pullback along a simple morphism $f\colon 
  \mathcal{P} \to \mathcal{O}$ takes an object $A\in 
  \Alg_{\mathcal{O}_\bX/\mathcal{O}}(\PSh_{\mathcal{O}}(\mathcal{C}))$ given by
  \[
  \begin{tikzcd}
  \mathcal{O}_\bX \arrow[rr, "A"] \arrow[rd]&& \PSh_{\mathcal{O}}(\mathcal{C})^{\otimes}\arrow[ld]\\
  &\mathcal{O}&
  \end{tikzcd}
  \]
  to the commutative triangle 
  \[
  \begin{tikzcd}
  \mathcal{O}_\bX\times_\mathcal{O} \mathcal{P} \arrow[rr, "f^*A"] \arrow[rd]&& \PSh_{\mathcal{O}}(\mathcal{C})\times_\mathcal{O} \mathcal{P}\arrow[ld]\\
  &\mathcal{P}.&
  \end{tikzcd}
  \]
  Here we have
  $\mathcal{O}_\bX\times_{\mathcal{O}} \mathcal{P} \simeq
  \mathcal{P}_{f_{0}^*\bX}$ and
  $\PSh_{\mathcal{O}}(\mathcal{C})\times_\mathcal{O} \mathcal{P}\simeq
  \PSh_{\mathcal{P}}(f^*\mathcal{C})$ by \cref{eq:simplepb} in
  \cref{obs:simplemonmor} and \cite[Lemma 6.20]{freealg},
  respectively. It follows from the construction that $f^*A$ takes
  inert morphisms to cocartesian morphisms, \ie{} $f^*A$ lies in
  $\Alg_{\mathcal{P}_{f_{0}^*\bX}/\mathcal{P}}(\PSh_{\mathcal{P}}(f^*\mathcal{C}))$,  so that
  we have a functor
  $\Alg_{\mathcal{O}_\bX/\mathcal{O}}(\PSh_{\mathcal{O}}(\mathcal{C}))\to
  \Alg_{\mathcal{P}_{f_{0}^*\bX}/\mathcal{P}}(\PSh_{\mathcal{P}}(f^*\mathcal{C}))$
  natural in $\bX$. By the first part of the proof and \cite[\S
  6]{freealg} this functor can be identified with the functor
  $\Mon_{\mathcal{C}^{\op,\otimes}_{\bX}}(\mathcal{S})\to
  \Mon_{(f^*\mathcal{C})^{\op,\otimes}_{f_{0}^{*}\bX}}(\mathcal{S})$ induced by the
  canonical map
  \[(f^*\mathcal{C})^{\op,\otimes}_{f_{0}^{*}\bX} \simeq f^{*}(\mathcal{C}^{\op,\otimes}_{\bX}) \to
  \mathcal{C}^{\op,\otimes}_{\bX}.\] The naturality in $\bX$ then gives a
commutative diagram 
\[
  \begin{tikzcd}[column sep=small]
    \Algd_{\mathcal{O}}(\PSh_\mathcal{O}(\mathcal{C})) \arrow{dd} \arrow{rr}{f^{*}} \arrow[dash]{dr}{\sim} &  & \Algd_{\mathcal{P}}\PSh_\mathcal{P}(f^*\mathcal{C}) \arrow[dash]{dr}{\sim} \arrow{dd} \\
    & \mathcal{M}_{\mathcal{C}^{\op,\otimes}}(\xS) \arrow[crossing over]{rr}[near start]{f^{*}} \arrow{dl} & &    \mathcal{M}_{f^*\mathcal{C}^{\op,\otimes}}(\xS) \arrow{dl} \\
    \Seg_{\mathcal{O}_0}(\xS) \arrow{rr}{f_{0}^{*}} & & \Seg_{\mathcal{P}_0}(\xS),
  \end{tikzcd}
\]
  so that the equivalence
  $\Algd_{\mathcal{O}}(\PSh_{\mathcal{O}}(\mathcal{C})) \simeq
  \mathcal{M}_{\mathcal{C}^{\op,\otimes}}(\mathcal{S})$ is indeed
  compatible with simple morphisms.
\end{proof}

Combining \cref{prop AlgdM} with \cref{prop MonSeg}, we get:
\begin{cor}\label{AlgdDayisSeg}
    Let $\mathcal{O}$ be an enrichable pattern and
  $\mathcal{C}^{\otimes}$ a small $\mathcal{O}$-monoidal \icat{}. Then
  there is an equivalence
  \[ \Algd_{\mathcal{O}}(\PSh_{\mathcal{O}}(\mathcal{C})) \simeq \Seg_{\mathcal{C}^{\op,\otimes}}(\mathcal{S}),\]
  natural in simple morphisms of enrichable patterns. \qed
\end{cor}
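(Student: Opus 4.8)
The plan is to obtain the equivalence by composing \cref{prop AlgdM} with \cref{prop MonSeg}, so that the only real work is bookkeeping. First I would record that $\mathcal{C}^{\op,\otimes}$ is an enrichable pattern with $(\mathcal{C}^{\op,\otimes})_{0}\simeq \mathcal{O}_{0}$ --- as is already implicitly used in \cref{prop AlgdM}. Indeed, since $\mathcal{C}^{\otimes}\to \mathcal{O}^{\flat}$ is a Segal $\mathcal{O}^{\flat}$-fibration, so is its fibrewise opposite $\mathcal{C}^{\op,\otimes}\to \mathcal{O}^{\flat}$, because the opposite of a finite product of \icats{} is the finite product of the opposites; hence \cref{lem enrpatt}, applied to the composite $\mathcal{C}^{\op,\otimes}\to \mathcal{O}\to \xFs^{\natural}$, makes $\mathcal{C}^{\op,\otimes}$ an enrichable pattern. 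Moreover the fibre of $\mathcal{C}^{\otimes}$ over any object of $\mathcal{O}_{0}$ is contractible (an empty product of fibres), and this is unchanged by passing to fibrewise opposites, so $(\mathcal{C}^{\op,\otimes})_{0}\to \mathcal{O}_{0}$ is an equivalence of algebraic patterns, just as for $\mathcal{C}^{\otimes}$ in the proof of \cref{prop AlgdM}.

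With this in hand, I would apply \cref{prop MonSeg} --- whose proof works verbatim for any enrichable pattern --- to $\mathcal{C}^{\op,\otimes}$, obtaining an equivalence $\mathcal{M}_{\mathcal{C}^{\op,\otimes}}(\mathcal{S})\simeq \Seg_{\mathcal{C}^{\op,\otimes}}(\mathcal{S})$ over $\Seg_{(\mathcal{C}^{\op,\otimes})_{0}}(\mathcal{S})\simeq \Seg_{\mathcal{O}_{0}}(\mathcal{S})$, implemented by left Kan extension along the projections $(\mathcal{C}^{\op,\otimes})_{\bX}\to \mathcal{C}^{\op,\otimes}$. Composing this with the equivalence $\Algd_{\mathcal{O}}(\PSh_{\mathcal{O}}(\mathcal{C}))\simeq \mathcal{M}_{\mathcal{C}^{\op,\otimes}}(\mathcal{S})$ of \cref{prop AlgdM}, both over $\Seg_{\mathcal{O}_{0}}(\mathcal{S})$, yields the asserted equivalence $\Algd_{\mathcal{O}}(\PSh_{\mathcal{O}}(\mathcal{C}))\simeq \Seg_{\mathcal{C}^{\op,\otimes}}(\mathcal{S})$.

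It remains to make this natural in a simple morphism $f\colon \mathcal{P}\to \mathcal{O}$ of enrichable patterns. The square
\[
\begin{tikzcd}[column sep=large]
\Algd_{\mathcal{O}}(\PSh_{\mathcal{O}}(\mathcal{C})) \arrow{r}{f^{*}} \arrow{d}[swap]{\sim} & \Algd_{\mathcal{P}}(\PSh_{\mathcal{P}}(f^{*}\mathcal{C})) \arrow{d}{\sim} \\
\mathcal{M}_{\mathcal{C}^{\op,\otimes}}(\mathcal{S}) \arrow{r} & \mathcal{M}_{(f^{*}\mathcal{C})^{\op,\otimes}}(\mathcal{S})
\end{tikzcd}
\]
commutes by the last part of the proof of \cref{prop AlgdM}, its bottom functor being induced by the canonical morphism $g\colon (f^{*}\mathcal{C})^{\op,\otimes}\to \mathcal{C}^{\op,\otimes}$; as $g$ is the pullback of $f$ along the Segal fibration $\mathcal{C}^{\op,\otimes}\to \mathcal{O}$, it is a strong Segal morphism, so restriction along $g$ gives a functor $g^{*}\colon \Seg_{\mathcal{C}^{\op,\otimes}}(\mathcal{S})\to \Seg_{(f^{*}\mathcal{C})^{\op,\otimes}}(\mathcal{S})$. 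Since the equivalence of \cref{prop MonSeg} is assembled from the natural left Kan extension equivalence of \cref{lem:LFibSegslice} (see \cref{obs lke seg lfib nat}) together with the adjunction bookkeeping of \cref{obs: seg over rke ftr}, it is compatible with this base change, so the corollary follows once one checks that, under the two instances of \cref{prop MonSeg}, the bottom functor of the square matches $g^{*}$. This compatibility check --- matching the fibrewise restriction maps $\Mon_{\mathcal{C}^{\op,\otimes}_{\bX}}(\mathcal{S})\to \Mon_{(f^{*}\mathcal{C})^{\op,\otimes}_{f_{0}^{*}\bX}}(\mathcal{S})$ of \cref{prop AlgdM} with left Kan extension along the projections, via the description of simple morphisms on monoids in \cref{obs:simplemonmor} --- is the only genuine point, and I expect it to be the main (though routine) obstacle; everything else is a concatenation of equivalences already established earlier in the paper.
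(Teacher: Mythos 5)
Your proposal is correct and follows exactly the paper's argument: the paper's proof is the one-line "combine the equivalences of \cref{prop AlgdM} and \cref{prop MonSeg}," and your additional observations (that $\mathcal{C}^{\op,\otimes}$ is an enrichable pattern with $(\mathcal{C}^{\op,\otimes})_{0}\simeq\mathcal{O}_{0}$, and the compatibility of the two equivalences with base change along a simple morphism) are just the implicit bookkeeping the paper leaves to the reader.
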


\begin{defn}
  Let $\mathcal{O}$ be a cartesian pattern. We say an
  $\mathcal{O}$-monoidal \icat{} $\mathcal{V}^{\otimes}$ is
  \emph{compatible with small colimits} if the \icats{}
  $\mathcal{V}_{E}$ are cocomplete for all $E \in \mathcal{O}^{\el}$,
  and for every active map $\alpha \colon O \actto E$ in $\mathcal{O}$
  with $E \in \mathcal{O}^{\el}$, the cocartesian transport
  functor
  \[\alpha_{!} \colon \prod_{i} \mathcal{V}_{O_{i}} \simeq
    \mathcal{V}^{\otimes}_{O} \to \mathcal{V}_{E} \] preserves small
  colimits in each variable. We say that $\mathcal{V}^{\otimes}$ is
  \emph{presentably $\mathcal{O}$-monoidal} if it is compatible with
  small colimits and in addition the \icats{} $\mathcal{V}_{E}$ for
  $E \in \mathcal{O}^{\el}$ are all presentable.
\end{defn}

\begin{defn}
  Given a commutative triangle
  \[
    \begin{tikzcd}
      \mathcal{C} \arrow{rr}{G} \arrow{dr} & & \mathcal{D}
      \arrow{dl} \\
      & \mathcal{B},
    \end{tikzcd}
  \]
  we say that $G$ has a left adjoint \emph{relative to $\mathcal{B}$}
  if $G$ has a left adjoint $F \colon \mathcal{D} \to \mathcal{C}$ and
  the unit map $d \to GFd$ maps to an equivalence in $\mathcal{B}$ for
  all $d \in \mathcal{D}$. (Then $F$ is automatically a functor over $\mathcal{B}$ by \cite[Remark 7.3]{freealg}.)
\end{defn}

\begin{defn}
  If $G \colon \mathcal{C}^{\otimes} \to \mathcal{D}^{\otimes}$ is a
  lax $\mathcal{O}$-monoidal functor, we say that $G$ has an
  \emph{$\mathcal{O}$-monoidal left adjoint} $F$ if $F$ is a left
  adjoint to $G$ relative to $\mathcal{O}$; then $F$ is necessarily an
  $\mathcal{O}$-monoidal functor by \cite[Remark 7.5]{freealg}. We say
  $F$ is an \emph{$\mathcal{O}$-monoidal localization} if in addition
  $G$ is fully faithful.
\end{defn}

We now want to look at algebroids in localizations of Day
convolutions at classes of maps compatible
with the $\mathcal{O}$-monoidal structure in the following sense:

\begin{definition}
  Let $\mathcal{C}^{\otimes}$ be a small $\mathcal{O}$-monoidal
  \icat{}. A collection $\mathbb{S} = (\mathbb{S}_{E})_{E \in \mathcal{O}^{\el}}$
  of sets of morphisms $\mathbb{S}_{E}$ in $\PSh(\mathcal{C}_{E})$ is
  \emph{compatible with the $\mathcal{O}$-monoidal structure} if for
  every active morphism $\phi \colon O
  \actto E$ in $\mathcal{O}$ with $E \in \mathcal{O}^{\el}$, the functor
  \[\phi_{!}^{\PSh_\mathcal{O}(\mathcal{C})} \colon \prod_{i} \PSh(\mathcal{C}_{O_{i}}) \simeq
  \PSh_{\mathcal{O}}(\mathcal{C})^{\otimes}_{O} \to
  \PSh(\mathcal{C}_{E}) \]
  takes a morphism $(\id,\ldots,\id,s,\id,\ldots,\id)$ with $s
  \in \mathbb{S}_{O_{i}}$ into the strongly saturated
  class $\overline{\mathbb{S}}_{E}$ generated by $\mathbb{S}_{E}$. We
  then write $\PSh_{\mathcal{O},\mathbb{S}}(\mathcal{C})^{\otimes}$
  for the full subcategory generated by the collection of full subcategories
  $\PSh_{\mathbb{S}_{E}}(\mathcal{C}_{E})$ of $\mathbb{S}_{E}$-local
  objects, \ie{} the objects $\Phi$ such that
  $\Map_{\PSh(\mathcal{C}_E)}(\blank, \Phi)$ takes the morphisms in
  $\mathbb{S}_E$ to equivalences.
\end{definition}

\begin{remark}\label{rem monloc}
  Let $\mathcal{C}^\otimes$ and $\mathbb{S}$ be as in the previous definition. Then \cite[Corollary 7.19]{freealg} shows that there is an $\mathcal{O}$-monoidal localization
  \[\PSh_{\mathcal{O}}(\mathcal{C})^\otimes\xto{L_{\mathbb{S}}}
  \PSh_{\mathcal{O},\mathbb{S}}(\mathcal{C})^\otimes\]
  left adjoint to the inclusion
  $\PSh_{\mathcal{O},\mathbb{S}}(\mathcal{C})^{\otimes}
  \hookrightarrow \PSh_{\mathcal{O}}(\mathcal{C})^{\otimes}$.
\end{remark}

\begin{remark}\label{rem Vk}
  Given a presentably
  $\mathcal{O}$-monoidal \icat{} $\mathcal{V}^{\otimes}$, then \cite[Corollary 7.15 and Remark 7.21]{freealg} show that there exists a 
  a regular cardinal $\kappa$ and a small $\mathcal{O}$-monoidal \icat{}
  $\mathcal{V}^{\kappa,\otimes}$ such that 
  \[\mathcal{V}^{\otimes}\simeq\PSh_{\mathcal{O},\mathbb{S}^{\kappa}}(\mathcal{V}^{\kappa})^{\otimes},\]
  where $\mathbb{S}^{\kappa}_{E}$ denotes the collection of maps
  \[ \colim_{\mathcal{I}} y(\phi) \to y(\colim_{\mathcal{I}} \phi) \]
  in $\PSh(\mathcal{V}^{\kappa}_{E})$ where $\phi \colon \mathcal{I}
  \to \mathcal{V}^{\kappa}_{E}$ ranges over a set of representatives
  of $\kappa$-small colimit diagrams. In particular, for any presentably
  $\mathcal{O}$-monoidal \icat{} $\mathcal{V}^{\otimes}$ there exists
  a small $\mathcal{O}$-monoidal \icat{} $\mathcal{C}^{\otimes}$
  and an $\mathcal{O}$-monoidal localization
  \[ \PSh_{\mathcal{O}}(\mathcal{C})^{\otimes} \xto{L}
  \mathcal{V}^{\otimes}.\]
\end{remark}

\begin{definition}
  Let $\mathcal{C}^{\otimes}$ be a small $\mathcal{O}$-monoidal
  \icat{} and let $\mathbb{S}=(\mathbb{S}_E)_{E\in \mathcal{O}^\el}$ be a collection of sets of morphisms
  compatible with the $\mathcal{O}$-monoidal structure on
  $\PSh_{\mathcal{O}}(\mathcal{C})^{\otimes}$. If for $E\in \mathcal{O}^\el$ the map $\iota_E\colon \mathcal{C}_E^\op \to \mathcal{C}^{\op,\otimes}$ denotes the natural inclusion of the fibre at $E$, we write $\Seg^\mathbb{S}_{\mathcal{C}^{\op, \otimes}}(\xS)$ for the full subcategory of $\xFun(\mathcal{C}^{\op, \otimes},\xS)$ spanned by Segal objects $F$ such that $\iota_E^*(F)\in \PSh(\mathcal{C}_E)$ is $\mathbb{S}_E$-local for every $E$.
\end{definition}

\begin{proposition}\label{prop:algdSloc}
  Let $\mathcal{C}^{\otimes}$ be a small $\mathcal{O}$-monoidal
  \icat{} and let $\mathbb{S} = (\mathbb{S}_{E})_{E \in \mathcal{O}^{\el}}$ be
  sets of morphisms which are compatible with the $\mathcal{O}$-monoidal structure on
  $\PSh_{\mathcal{O}}(\mathcal{C})^{\otimes}$. Then the natural equivalence $\Algd_{\mathcal{O}}(\PSh_{\mathcal{O}}(\mathcal{C})) \simeq
  \mathcal{M}_{\mathcal{C}^{\op,\otimes}}(\mathcal{S})$ of \cref{prop AlgdM} restricts to an equivalence 
  \[
  \Algd_{\mathcal{O}}(\PSh_{\mathcal{O},\mathbb{S}}(\mathcal{C}))\simeq \Seg_{\mathcal{C}^{\op, \otimes}}^\mathbb{S}(\xS).
  \]
\end{proposition}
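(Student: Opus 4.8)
The plan is to adapt the argument of \cref{prop AlgdM}, carrying the $\mathbb{S}$‑locality condition through each identification, and to verify the claim fibrewise over $\Seg_{\mathcal{O}_{0}}(\mathcal{S})$. Both sides of the asserted equivalence sit over $\Seg_{\mathcal{O}_{0}}(\mathcal{S})$: on the right this is by definition of $\Seg^{\mathbb{S}}_{\mathcal{C}^{\op,\otimes}}(\mathcal{S})$ as a subcategory of $\Seg_{\mathcal{C}^{\op,\otimes}}(\mathcal{S})$, and on the left it follows because the $\mathcal{O}$‑monoidal localization $L_{\mathbb{S}}$ of \cref{rem monloc}, together with \cref{prop nat}, realizes $\Algd_{\mathcal{O}}(\PSh_{\mathcal{O},\mathbb{S}}(\mathcal{C}))$ as a (fibrewise full) subcategory of $\Algd_{\mathcal{O}}(\PSh_{\mathcal{O}}(\mathcal{C}))$ over $\Seg_{\mathcal{O}_{0}}(\mathcal{S})$. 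So it suffices to identify, for each fixed $\bX$, the fibres of these two subcategories under the fibrewise equivalence $\Alg_{\mathcal{O}_{\bX}/\mathcal{O}}(\PSh_{\mathcal{O}}(\mathcal{C}))\simeq \Mon_{\mathcal{C}^{\op,\otimes}_{\bX}}(\mathcal{S})$ appearing in the proof of \cref{prop AlgdM}.

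The first step is to describe the left‑hand fibre. Since $\PSh_{\mathcal{O},\mathbb{S}}(\mathcal{C})^{\otimes}$ is a full subpattern of $\PSh_{\mathcal{O}}(\mathcal{C})^{\otimes}$ whose inert morphisms coincide with those of the ambient pattern (the cocartesian pushforward along an inert map is a projection, which already preserves fibrewise $\mathbb{S}$‑locality), $\Alg_{\mathcal{O}_{\bX}/\mathcal{O}}(\PSh_{\mathcal{O},\mathbb{S}}(\mathcal{C}))$ is the full subcategory of $\Alg_{\mathcal{O}_{\bX}/\mathcal{O}}(\PSh_{\mathcal{O}}(\mathcal{C}))$ on those algebroids $A$ taking all values in $\PSh_{\mathcal{O},\mathbb{S}}(\mathcal{C})^{\otimes}$; as $A$ carries inert maps to cocartesian ones and $\PSh_{\mathcal{O},\mathbb{S}}(\mathcal{C})^{\otimes}$ is generated fibrewise by the $\PSh_{\mathbb{S}_{E}}(\mathcal{C}_{E})$, this is equivalent to demanding $A(E')\in\PSh_{\mathbb{S}_{E}}(\mathcal{C}_{E})$ for every elementary object $E'$ of $\mathcal{O}_{\bX}$ lying over $E\in\mathcal{O}^{\el}$. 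Under the equivalence of \cite[Corollary 6.21]{freealg}, combined with the identification $\mathcal{O}_{\bX}\times_{\mathcal{O}}\mathcal{C}^{\op,\otimes}\simeq\mathcal{C}^{\op,\otimes}_{\bX}$ used in \cref{prop AlgdM}, the monoid $M$ associated to $A$ satisfies $M(x,c)\simeq A(x)(c)$; restricting $M$ along the inclusion of the fibre of $\mathcal{C}^{\op,\otimes}_{\bX}$ over an elementary $E'$ — which is a copy of $\mathcal{C}_{E}^{\op}$ — therefore recovers exactly the presheaf $A(E')$ on $\mathcal{C}_{E}$. Hence the left‑hand fibre corresponds to the full subcategory of $\Mon_{\mathcal{C}^{\op,\otimes}_{\bX}}(\mathcal{S})$ spanned by monoids whose restriction to each elementary fibre $\mathcal{C}_{E}^{\op}$ is $\mathbb{S}_{E}$‑local.

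It then remains to transport this condition along the equivalence $\Mon_{\mathcal{C}^{\op,\otimes}_{\bX}}(\mathcal{S})\simeq\Seg_{\mathcal{C}^{\op,\otimes}}(\mathcal{S})_{/i_{0,*}\bX}$ of \cref{prop MonSeg}, given by left Kan extension $\pi_{!}$ along the left fibration $\pi\colon\mathcal{C}^{\op,\otimes}_{\bX}\to\mathcal{C}^{\op,\otimes}$, and to match it with the condition defining $\Seg^{\mathbb{S}}_{\mathcal{C}^{\op,\otimes}}(\mathcal{S})$. For $E\in\mathcal{O}^{\el}$ and the fibre inclusion $\iota_{E}\colon\mathcal{C}_{E}^{\op}\to\mathcal{C}^{\op,\otimes}$, base change along $\iota_{E}$ — using that $\pi$ is a left fibration, so $\pi_{!}$ is computed by colimits over the fibres — expresses $\iota_{E}^{*}\pi_{!}M$ as the left Kan extension of the restriction of $M$ to $\mathcal{C}_{E}^{\op}\times_{\mathcal{O}}\mathcal{O}_{\bX}$ along the projection to $\mathcal{C}_{E}^{\op}$, which fibrewise over the underlying space $i_{0}^{*}\pi_{!}M$ is precisely one of the elementary‑fibre restrictions $A(E')$ of $M$; thus requiring all of these to be $\mathbb{S}_{E}$‑local is exactly the condition imposed on $\iota_{E}^{*}\pi_{!}M$ in the definition of $\Seg^{\mathbb{S}}_{\mathcal{C}^{\op,\otimes}}(\mathcal{S})$. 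The compatibility with simple morphisms of enrichable patterns will then be inherited directly from \cref{prop AlgdM}, as all the identifications above are the ones used there. The main obstacle is this last matching step: one must keep careful track of how $\pi_{!}$ interacts with restriction to the fibres of $\mathcal{C}^{\op,\otimes}$ over elementary objects, so as to be sure that the locality condition produced by the algebroid description (a condition on the restrictions of the associated monoid to the elementary fibres) is literally the locality condition built into $\Seg^{\mathbb{S}}$.
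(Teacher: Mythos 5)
Your proposal follows the paper's proof essentially verbatim: you establish full faithfulness of the inclusion via the monoidal localization of \cref{rem monloc}, translate the condition of landing in $\PSh_{\mathcal{O},\mathbb{S}}(\mathcal{C})^{\otimes}$ into $\mathbb{S}_{E}$-locality of the restrictions of the corresponding monoid to the elementary fibres $\mathcal{C}_{E}^{\op}$ (using that algebras send inert maps to cocartesian ones), and then transport this fibrewise condition through the equivalences of \cref{prop AlgdM} and \cref{prop MonSeg}, all natural in $\bX$. The only difference is one of emphasis: you explicitly single out the interaction of $\pi_{!}$ with the fibre inclusions $\iota_{E}$ as the delicate point in matching the two locality conditions, a step the paper's proof passes over in a single sentence.
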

\begin{proof}
  Composition with the lax $\mathcal{O}$-monoidal inclusion
  $\PSh_{\mathcal{O},\mathbb{S}}(\mathcal{C})^{\otimes}
  \hookrightarrow \PSh_{\mathcal{O}}(\mathcal{C})^{\otimes}$ of
  \cref{rem monloc} gives (\cf{} \cref{prop nat}) a
  functor
  \[\Algd_\mathcal{O}(\PSh_{\mathcal{O},\mathbb{S}}(\mathcal{C}))\to
    \Algd_\mathcal{O}(\PSh_{\mathcal{O}}(\mathcal{C})),\] which is
  then clearly fully faithful.
  
  By definition an object $\Phi\in \Alg_{\mathcal{O}_{\bX}/\mathcal{O}}(\PSh_{\mathcal{O}}(\mathcal{C}))$ lies in $\Alg_{\mathcal{O}_{\bX}/\mathcal{O}}(\PSh_{\mathcal{O},\mathbb{S}}(\mathcal{C}))$ if and only if for every $E_X\in \mathcal{O}^\el_\bX$ the object $\Phi(E_X)$ is $\mathbb{S}_E$-local. Therefore, under the equivalence $ \Alg_{\mathcal{O}_{\bX}/\mathcal{O}}(\PSh_{\mathcal{O}}(\mathcal{C}))
  \simeq \Mon_{\mathcal{C}^{\op,\otimes}_{\bX}}(\mathcal{S})$ of \cref{prop AlgdM}, 
  the full subcategory \[\Alg_{\mathcal{O}_{\bX}/\mathcal{O}}(\PSh_{\mathcal{O}, \mathbb{S}}(\mathcal{C}))\subseteq \Alg_{\mathcal{O}_{\bX}/\mathcal{O}}(\PSh_{\mathcal{O}}(\mathcal{C}))\] corresponds to the full subcategory of $\Mon_{\mathcal{C}^{\op,\otimes}_{\bX}}(\xS)$ spanned by objects $\Phi$
  such that for every $E_X\in \mathcal{O}^\el_\bX$, $\Phi$ restricts to an $\mathbb{S}_E$-local object  $\{E_X\}\times_{\mathcal{O}}\mathcal{C}^{\op, \otimes}\simeq \mathcal{C}_E^\op\to \xS$. Finally, the equivalence \[\Mon_{\mathcal{C}^{\op,\otimes}_{\bX}}(\xS)\simeq \Seg_{\mathcal{C}^{\op, \otimes}}(\xS)_\bX\] from \cref{prop MonSeg} shows that $\Alg_{\mathcal{O}_{\bX}/\mathcal{O}}(\PSh_{\mathcal{O}, \mathbb{S}}(\mathcal{C}))$ can be identified with $\Seg_{\mathcal{C}^{\op, \otimes}}^{\mathbb{S}}(\xS)_\bX$. Since this identification is natural in $\bX$, it gives an equivalence \[\Algd_{\mathcal{O}}(\PSh_{\mathcal{O},\mathbb{S}}(\mathcal{C}))\simeq \Seg_{\mathcal{C}^{\op, \otimes}}^\mathbb{S}(\xS) \qedhere\]
\end{proof}
\begin{corollary}\label{cor:algdpres}
  If $\xV^\otimes$ is presentably $\mathcal{O}$-monoidal, then $\Algd_{\mathcal{O}}(\xV)$ is presentable.
\end{corollary}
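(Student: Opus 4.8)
The plan is to combine \cref{rem Vk} with \cref{prop:algdSloc} in order to realize $\Algd_{\mathcal{O}}(\mathcal{V})$ as the \icat{} of local objects for a set of morphisms inside a presheaf \icat{}. First I would invoke \cref{rem Vk} to choose a regular cardinal $\kappa$, a small $\mathcal{O}$-monoidal \icat{} $\mathcal{V}^{\kappa,\otimes}$, and the associated collection $\mathbb{S}^{\kappa} = (\mathbb{S}^{\kappa}_{E})_{E \in \mathcal{O}^{\el}}$ of sets of morphisms compatible with the $\mathcal{O}$-monoidal structure, so that $\mathcal{V}^{\otimes} \simeq \PSh_{\mathcal{O},\mathbb{S}^{\kappa}}(\mathcal{V}^{\kappa})^{\otimes}$ as $\mathcal{O}$-monoidal \icats{}. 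Applying \cref{prop:algdSloc} with $\mathcal{C}^{\otimes} = \mathcal{V}^{\kappa,\otimes}$ then yields an equivalence
\[ \Algd_{\mathcal{O}}(\mathcal{V}) \simeq \Seg^{\mathbb{S}^{\kappa}}_{\mathcal{V}^{\kappa,\op,\otimes}}(\mathcal{S}), \]
so it suffices to prove that the \icat{} on the right is presentable.

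Second, I would unwind the definition: $\Seg^{\mathbb{S}^{\kappa}}_{\mathcal{V}^{\kappa,\op,\otimes}}(\mathcal{S})$ is the full subcategory of $\Fun(\mathcal{V}^{\kappa,\op,\otimes}, \mathcal{S})$ spanned by those functors $F$ that are (i) Segal objects for the pattern $\mathcal{V}^{\kappa,\op,\otimes}$ and such that (ii) for every $E \in \mathcal{O}^{\el}$ the restriction $\iota_{E}^{*}F \in \PSh(\mathcal{V}^{\kappa}_{E})$ is $\mathbb{S}^{\kappa}_{E}$-local. Since $\mathcal{V}^{\kappa,\otimes}$ is small, $\Fun(\mathcal{V}^{\kappa,\op,\otimes}, \mathcal{S})$ is presentable, and the Segal condition (i) is the standard condition of being local with respect to a (small) set of maps --- namely the maps encoding the Segal limit condition at each object of $\mathcal{V}^{\kappa,\op,\otimes}$. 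For condition (ii), I would use that each restriction functor $\iota_{E}^{*}$ has a left adjoint, given by left Kan extension $(\iota_{E})_{!}$ along $\iota_{E}$; hence $\iota_{E}^{*}F$ is $\mathbb{S}^{\kappa}_{E}$-local if and only if $F$ is local with respect to the set $(\iota_{E})_{!}(\mathbb{S}^{\kappa}_{E})$, and taking the union over $E \in \mathcal{O}^{\el}$ still yields a set. Thus $\Seg^{\mathbb{S}^{\kappa}}_{\mathcal{V}^{\kappa,\op,\otimes}}(\mathcal{S})$ is precisely the full subcategory of the presentable \icat{} $\Fun(\mathcal{V}^{\kappa,\op,\otimes}, \mathcal{S})$ spanned by the objects local with respect to a set of morphisms, and is therefore presentable, completing the argument.

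I do not expect a genuine obstacle; given the preceding results the statement is essentially formal. The one point deserving attention is the smallness bookkeeping: one must check that conditions (i) and (ii) are each cut out by a \emph{set}, rather than a proper class, of morphisms. This is ensured by the smallness of $\mathcal{V}^{\kappa,\otimes}$ supplied by \cref{rem Vk}, together with the essential smallness of $\mathcal{O}$ (and hence of $\mathcal{O}^{\el}$ and of the slice categories appearing in the Segal conditions). With this in hand, the corollary follows from the standard fact that the full subcategory of a presentable \icat{} spanned by the objects local with respect to a set of maps is again presentable.
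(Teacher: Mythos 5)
Your proposal is correct and takes essentially the same route as the paper: both invoke \cref{rem Vk} and \cref{prop:algdSloc} to identify $\Algd_{\mathcal{O}}(\mathcal{V})$ with $\Seg^{\mathbb{S}}_{\mathcal{C}^{\op,\otimes}}(\mathcal{S})$ for a small $\mathcal{O}$-monoidal $\mathcal{C}^{\otimes}$, and then recognize this as an accessible localization of the presentable \icat{} $\Fun(\mathcal{C}^{\op,\otimes},\mathcal{S})$. The only cosmetic difference is that the paper cites \cite[Lemma 2.10]{patterns} for the presentability of $\Seg_{\mathcal{C}^{\op,\otimes}}(\mathcal{S})$, whereas you unwind that step by exhibiting the Segal and locality conditions directly as locality with respect to a set of maps.
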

\begin{proof}
  According to \cref{rem Vk} there exists a small $\mathcal{O}$-monoidal \icat{}
  $\mathcal{C}^{\otimes}$ and a collection
        $\mathbb{S}=(\mathbb{S}_E)_{E\in \mathcal{O}^\el}$ such that $\mathcal{V}^{\otimes}\simeq\PSh_{\mathcal{O},\mathbb{S}}(\mathcal{C})^{\otimes}$. \cref{prop:algdSloc} then gives an equivalence 
  \[  \Algd_{\mathcal{O}}(\xV)\simeq \Seg_{\mathcal{C}^{\op, \otimes}}^{\mathbb{S}}(\xS).\]
  Here $\Seg_{\mathcal{C}^{\op, \otimes}}(\xS)$ is presentable by \cite[Lemma 2.10]{patterns}. The \icat{} $\Seg_{\mathcal{C}^{\op, \otimes}}^{\mathbb{S}}(\xS)$ is an accessible localization of this, and so is again presentable.
\end{proof}

\subsection{Naturality of Day convolution}

Our aim in this section is to show that the equivalence
\[ \Algd_{\mathcal{O}}(\PSh_{\mathcal{O}}(\mathcal{C})) \simeq \Seg_{\mathcal{C}^{\op,\otimes}}(\mathcal{S})\]
is natural in the $\mathcal{O}$-monoidal \icat{} $\mathcal{C}$. The key input for this is the naturality of the equivalence
\[ \Alg_{\mathcal{O}}(\PSh_{\mathcal{O}}(\mathcal{C})) \simeq \Mon_{\mathcal{C}^{\op,\otimes}}(\mathcal{S}), \]
in $\mathcal{C}$, which was not discussed in \cite{freealg}. We will prove this by identifying the (co)cartesian fibrations for the two functors.

\begin{propn}\label{obs:slicefunfib}
  Suppose $p \colon \mathcal{E} \to \mathcal{B}$ is the cocartesian
  fibration for a functor $F \colon \mathcal{B} \to \Cat$. Given a functor
  $\mathcal{E} \to \mathcal{K}$ that takes $p$-cocartesian morphisms
  to equivalences (which corresponds to a natural transformation
  $F \to \mathcal{K}$) and a map $f \colon \mathcal{L} \to \mathcal{K}$, we can identify the cocartesian fibration
  for the functor $\Fun_{/\mathcal{K}}(\mathcal{L}, F(\blank))$ as the pullback
  \[ \Fun_{/\mathcal{K}}(\mathcal{L}, \mathcal{E}) \times_{\Fun_{/\mathcal{K}}(\mathcal{L}, \mathcal{B} \times \mathcal{K})} \mathcal{B} \to \mathcal{B}.\]
\end{propn}
\begin{proof}
  For any \icat{} $\mathcal{L}$ we know from \cite[Proposition 3.1.2.1]{ht} that
  $\mathcal{E}^{\mathcal{L}} \to \mathcal{B}^{\mathcal{L}}$ is also a
  cocartesian fibration, and hence so is its base change
  $\mathcal{E}^{\mathcal{L}} \times_{\mathcal{B}^{\mathcal{L}}}
  \mathcal{B} \to \mathcal{B}$ to the constant functors. The latter
  fibration corresponds to the functor $\Fun(\mathcal{L}, F(\blank))$ ---
  indeed, this construction is evidently right adjoint to the functor
  \[ (\mathcal{E} \to \mathcal{B}) \mapsto (\mathcal{E} \times
    \mathcal{L} \to \mathcal{B}),\] which under straightening corresponds to
  $F \mapsto \mathcal{L} \times F(\blank)$.

  We now observe that the fibration we want to identify fits in a pullback square
  \[
    \begin{tikzcd}
    \Fun_{/\mathcal{K}}(\mathcal{L}, \mathcal{E}) \times_{\Fun_{/\mathcal{K}}(\mathcal{L}, \mathcal{B} \times \mathcal{K})} \mathcal{B} \arrow{r} \arrow{d} \pbcorner & \mathcal{E}^{\mathcal{L}} \times_{\mathcal{B}^{\mathcal{L}}} \mathcal{B} \arrow{d} \\
      \mathcal{B} \times \{\id_{\mathcal{K}}\} \arrow{r} & \mathcal{B} \times \mathcal{K}^{\mathcal{L}}
    \end{tikzcd}
  \]
  of cocartesian fibrations over $\mathcal{B}$. This corresponds on functors to the pullback square
  \[
    \begin{tikzcd}
      \Fun_{/\mathcal{K}}(\mathcal{L}, F(\blank))  \pbcorner \arrow{r} \arrow{d} & \Fun(\mathcal{L}, F(\blank)) \arrow{d} \\
      \{f \} \arrow{r} & \Fun(\mathcal{L}, \mathcal{K}),
    \end{tikzcd}
  \]
  as required.
\end{proof}

\begin{defn}
  Let $\mathcal{O}$ be a cartesian pattern. We define the cocartesian fibration
  \begin{equation}
    \label{eq:daytensfib}
    \Day^{\otimes}_{\mathcal{O}} \to \Mon_{\mathcal{O}}(\CatI) \times \mathcal{O}
 \end{equation}
 as the pullback
  \[
    \begin{tikzcd}
      \Day^{\otimes}_{\mathcal{O}} \pbcorner \arrow{r} \arrow{d} & \RFib^{\times} \arrow{d} \\
      \Mon_{\mathcal{O}}(\CatI) \times \mathcal{O} \arrow{r} & \CatI^{\times},
    \end{tikzcd}
  \]
  where the bottom functor is adjoint to the equivalence
  $\Mon_{\mathcal{O}}(\CatI) \simeq
  \Alg_{\mathcal{O}}(\CatI^{\times})$.
\end{defn}

\begin{observation}\label{obs:dayotimesfunctor}
  The cocartesian fibration
  \[ \Day^{\otimes}_{\mathcal{O}} \to \Mon_{\mathcal{O}}(\CatI)\]
  obtained from \cref{eq:daytensfib} corresponds to
  the (covariant) functor
  $\mathcal{C}^{\otimes} \mapsto
  \PSh_{\mathcal{O}}(\mathcal{C})^{\otimes}$, given on a morphism
  $f \colon \mathcal{C}^{\otimes} \to \mathcal{D}^{\otimes}$ by the $\mathcal{O}$-monoidal functor
  \[ F_{!} \colon \PSh_{\mathcal{O}}(\mathcal{C})^{\otimes} \to
    \PSh_{\mathcal{O}}(\mathcal{D})^{\otimes}\] given over an
  elementary object $E \in \mathcal{O}$ by left Kan extension along
  $F_{E}$. Here $F_{!}$ has a lax $\mathcal{O}$-monoidal right adjoint
  $F^{*}$ (apply \cite[Proposition 7.3.2.6]{ha} as in the proof of
  \cite[Corollary 7.3.2.7]{ha}); this is given over an elementary
  object $E$ by composition with $F_{E}$. In particular, the
  projection from $\Day^{\otimes}_{\mathcal{O}}$ to
  $\Mon_{\mathcal{O}}(\CatI)$ is also a cartesian fibration.
\end{observation}

\begin{cor}\label{lem:algopshfib}
  Let $\phi \colon \mathcal{Q} \to \mathcal{O}$ be a morphism of cartesian patterns. The cocartesian fibration for the functor $\Alg_{\mathcal{Q}/\mathcal{O}}(\PSh_{\mathcal{O}}(\blank))$ is the pullback
    \begin{equation}
    \label{eq:algpshfib}
    \begin{tikzcd}
      \Alg'_{\mathcal{Q}/\mathcal{O}}(\Day_{\mathcal{O}}^{\otimes}) \arrow{r} \arrow{d} \pbcorner & \Alg_{\mathcal{Q}/\mathcal{O}}(\Day^{\otimes}_{\mathcal{O}}) \arrow{d} \\
      \Mon_{\mathcal{O}}(\CatI) \arrow{r} & \Alg_{\mathcal{Q}/\mathcal{O}}(\Mon_{\mathcal{O}}(\CatI) \times \mathcal{O}).
    \end{tikzcd}
  \end{equation}
\end{cor}
\begin{proof}
  By \cref{obs:slicefunfib}, the functor $\Fun_{/\mathcal{O}}(\mathcal{Q}, \PSh_{\mathcal{O}}(\blank))$ corresponds to the fibration
\[    \Fun_{/\mathcal{O}}(\mathcal{Q}, \Day_{\mathcal{O}}^{\otimes}) \times_{\Fun_{/\mathcal{O}}(\mathcal{Q}, \Mon_{\mathcal{O}}(\CatI) \times \mathcal{O})}\Mon_{\mathcal{O}}(\CatI) \to \Mon_{\mathcal{O}}(\CatI).
\]
 Here $\Alg'_{\mathcal{Q}/\mathcal{O}}(\Day_{\mathcal{O}}^{\otimes})$ is the full
 subcategory of the domain given fibrewise by $\mathcal{Q}$-algebras,
 and so classifies the subfunctor
 $\Alg_{\mathcal{Q}/\mathcal{O}}(\PSh_{\mathcal{O}}(\blank))$.
\end{proof}

Next, we observe that the cocartesian fibration \cref{eq:algpshfib} has an alternative description:
\begin{observation}
  We have a commutative diagram 
  \[
    \begin{tikzcd}
      \Alg'_{\mathcal{Q}/\mathcal{O}}(\Day_{\mathcal{O}}^{\otimes}) \arrow{r} \pbcorner \arrow{d} & \Alg_{\mathcal{Q}/\mathcal{O}}(\Day_{\mathcal{O}}^{\otimes}) \arrow{r} \pbcorner \arrow{d} & \Alg_{\mathcal{Q}/\xF_{*}}(\RFib^{\times}) \arrow{d} \\
      \Mon_{\mathcal{O}}(\CatI) \arrow{r} & \Alg_{\mathcal{Q}/\mathcal{O}}(\Mon_{\mathcal{O}}(\CatI) \times \mathcal{O}) \arrow{r} & \Alg_{\mathcal{Q}/\xF_{*}}(\CatI^{\times}).
    \end{tikzcd}
  \]
  Here both squares are cartesian, and the composite in the bottom row can be identified with the map $\phi^{*} \colon \Mon_{\mathcal{O}}(\CatI) \to \Mon_{\mathcal{Q}}(\CatI)$ given by composition with $\phi$. It follows that \cref{eq:algpshfib} is equivalent to the pullback along $\phi^{*}$ of the cocartesian fibration
  \begin{equation}
    \label{eq:algrfib}
    \Mon_{\mathcal{Q}}(\RFib) \to \Mon_{\mathcal{Q}}(\CatI),
  \end{equation}
  given by the target projection. Our next goal is to give an alternative description of this fibration.
\end{observation}

\begin{propn}\label{propn:funtorfib}
  Let $\mathcal{K}$ be an \icat{}.
  \begin{enumerate}[(i)]
  \item There is a natural equivalence $\Fun(\mathcal{K}, \RFib) \simeq \RFib \times_{\CatI} \Cart(\mathcal{K}^{\op})$ over the cartesian straigthening equivalence $\Fun(\mathcal{K}, \CatI) \simeq \Cart(\mathcal{K}^{\op})$, where the pullback is via the target projection on $\RFib$ and the source projection on $\Cart(\mathcal{K}^{\op})$.
  \item The cocartesian fibration $\Fun(\mathcal{K}, \RFib) \to \Fun(\mathcal{K}, \CatI)$ classifies the functor $\Phi \mapsto \PSh(\int^{\name{cart}}_{\mathcal{K}^{\op}} \Phi)$, where $\int^{\name{cart}}_{\mathcal{K}^{\op}} \Phi$ denotes the cartesian fibration that corresponds to $\Phi$.
  \end{enumerate}
\end{propn}
\begin{proof}
  By definition, $\RFib$ is a full subcategory of $\Ar(\CatI)$, which lets us identify $\Fun(\mathcal{K}, \RFib)$ with the full subcategory of
  \[ \Fun(\mathcal{K}, \Ar(\CatI)) \simeq \Ar(\Fun(\mathcal{K}, \Cat)) \simeq \Ar(\Cart(\mathcal{K}^{\op}))\]
  spanned by commutative triangles
  \[
    \begin{tikzcd}
      \mathcal{E} \arrow{rr}{\phi} \arrow{dr}[swap]{p} & & \mathcal{F} \arrow{dl}{q} \\
       & \mathcal{K}^{\op}
    \end{tikzcd}
  \]
  where $p$ and $q$ are cartesian fibrations, $\phi$ preserves
  cartesian morphisms, and on each fibre over $k \in \mathcal{K}^{\op}$ the
  functor $ \phi_{k} \colon \mathcal{E}_{k} \to \mathcal{F}_{k}$ is a
  right fibration. We claim that these conditions are equivalent to:
  $q$ is a cartesian fibration and $\phi$ is a right fibration. That
  the first conditions imply that $\phi$ is a right fibration follows
  from \cite[Lemma A.1.8]{cois} (since condition (4) is vacuous when
  the fibres are right fibrations). On the other hand, if $q$ is a
  cartesian fibration and $\phi$ is a right fibration then the
  composite $p$ is a cartesian fibration, and by \cite[Proposition
  2.4.1.3(3)]{ht} and the uniqueness of cartesian lifts, the
  $p$-cartesian morphisms are precisely the $\phi$-cartesian morphisms
  over $q$-cartesian morphisms, so that $\phi$ preserves cartesian
  morphisms. Moreover, since $\phi$ is a right fibration, this means
  that the $p$-cartesian morphisms are \emph{precisely} the morphisms
  that map to $q$-cartesian morphisms in $\mathcal{F}$.

  This description of the $p$-cartesian morphisms implies that for a square
  \[
    \begin{tikzcd}
      \mathcal{E} \arrow{r}{\phi} \arrow{d}[swap]{e} & \mathcal{F} \arrow{d}{f} \\
      \mathcal{E}' \arrow{r}{\phi'} & \mathcal{F}'
    \end{tikzcd}
  \]
  of cartesian fibrations over $\mathcal{K}^{\op}$, where $\phi$ and $\phi'$ are right fibrations, the map $e$ will automatically preserve cartesian morphisms if $f$ does so.
  We can identify $\Ar(\Cart(\mathcal{K}^{\op}))$ with a subcategory of
  \[ \Ar(\Cat_{\infty/\mathcal{K}^{\op}}) \simeq \Ar(\CatI) \times_{\CatI} \Cat_{\infty/\mathcal{K}^{\op}},\]
  and our description of the objects and morphisms in the subcategory corresponding to $\Fun(\mathcal{K}, \RFib)$ shows that this subcategory is precisely
  \[ \RFib \times_{\CatI} \Cart(\mathcal{K}^{\op}),\]
  as required.

  This shows that the fibration $\Fun(\mathcal{K}, \RFib) \to \Fun(\mathcal{K}, \CatI)$ is equivalent to the projection $\RFib \times_{\CatI} \Cart(\mathcal{K}^{\op}) \to \Cart(\mathcal{K}^{\op})$. It is thus the pullback along the domain functor $\Cart(\mathcal{K}^{\op}) \to \CatI$ of the fibration $\RFib \to \CatI$, which corresponds to the (covariant) presheaf functor. Its pullback then classifies the composite
  \[ \Cart(\mathcal{K}^{\op}) \to \CatI \xto{\PSh_{!}} \LCatI,\]
  which gives (ii).
\end{proof}

\begin{cor}\label{cor:monrfibftr}
  For a cartesian pattern $\mathcal{O}$, the cocartesian fibration $\Mon_{\mathcal{O}}(\RFib) \to \Mon_{\mathcal{O}}(\CatI)$ classifies the functor $\mathcal{C}^{\otimes} \mapsto \Mon_{\mathcal{C}^{\op,\otimes}}(\mathcal{S})$.
\end{cor}
\begin{proof}
  From \cref{propn:funtorfib} we know that the pullback of $\Fun(\mathcal{O}, \RFib) \to \Fun(\mathcal{O}, \CatI)$ to $\Mon_{\mathcal{O}}(\CatI)$ classifies the functor
  \[ \mathcal{C}^{\otimes} \mapsto \PSh(\mathcal{C}_{\otimes}) \simeq
    \Fun(\mathcal{C}^{\op,\otimes}, \mathcal{S}),\] where
  $\mathcal{C}_{\otimes} \to \mathcal{O}^{\op}$ is the cartesian
  fibration for the same functor as the cocartesian fibration
  $\mathcal{C}^{\otimes} \to \mathcal{O}$, and we have
  $\mathcal{C}^{\op,\otimes} := (\mathcal{C}_{\otimes})^{\op}$. It remains to show that the restriction to the full subcategory $\Mon_{\mathcal{O}}(\RFib)$ corresponds to the subfunctor given by monoids. This amounts to the claim that a right fibration $\mathcal{E} \to \mathcal{C}_{\otimes}$ corresponds to a $\mathcal{C}^{\op,\otimes}$-monoid \IFF{} the composite $\mathcal{E} \to \mathcal{O}^{\op}$ corresponds to an $\mathcal{O}$-monoid in $\CatI$; this follows from the proof of \cite[Proposition 6.18]{freealg}.
\end{proof}

Combining our results so far, we have shown:
\begin{cor}\label{cor:covtalgpsh=mon}
  Let $\phi \colon \mathcal{Q} \to \mathcal{O}$ be a morphism of cartesian patterns. There is a natural equivalence
  \[ \Alg_{\mathcal{Q}/\mathcal{O}}(\PSh_{\mathcal{O}}(\blank)) \simeq \Mon_{\mathcal{Q} \times_{\mathcal{O}} (\blank)^{\op,\otimes}}(\mathcal{S})\]
  of functors $\Mon_{\mathcal{O}}(\CatI) \to \LCatI$.  \qed
\end{cor}

From \cref{obs:dayotimesfunctor} we see that on the left the functoriality is given for an $\mathcal{O}$-monoidal functor $F \colon \mathcal{C}^{\otimes} \to \mathcal{D}^{\otimes}$ by composition with the $\mathcal{O}$-monoidal functor $F_{!}\colon \PSh_{\mathcal{O}}(\mathcal{C}) \to \PSh_{\mathcal{O}}(\mathcal{D})$, while on the right it is given by left Kan extension along $F$ plus localization to monoids. Both these functors have right adjoints, given respectively by composition with the lax $\mathcal{O}$-monoidal right adjoint $F^{*} \colon \PSh(\mathcal{D}) \to \PSh(\mathcal{C})$ and by composition with
the functor $\mathcal{Q} \times_{\mathcal{O}}\mathcal{C}^{\op,\otimes} \to \mathcal{Q} \times_{\mathcal{O}}\mathcal{D}^{\op,\otimes}$ induced by $F$. (Since $F$ preserves all cocartesian morphisms over $\mathcal{O}$, it corresponds to a functor $\mathcal{C}_{\otimes} \to \mathcal{D}_{\otimes}$ between the dual cartesian fibrations.) Thus our two equivalent cocartesian fibrations over $\Mon_{\mathcal{O}}(\CatI)$ are also cartesian fibrations, and we can identify their contravariant unstraightenings:
\begin{cor}\label{cor:contravtalgpsh=mon}
  Let $\mathcal{O}$ be a cartesian pattern. There is a natural equivalence
  \[ \Alg_{\mathcal{Q}/\mathcal{O}}(\PSh_{\mathcal{O}}(\blank)) \simeq \Mon_{\mathcal{Q} \times_{\mathcal{O}} (\blank)^{\op,\otimes}}(\mathcal{S})\]
  of functors $\Mon_{\mathcal{O}}(\CatI)^{\op} \to \LCatI$.  \qed
\end{cor}

\begin{observation}\label{obs:monrfibcartmor}
  Let $\mathcal{Q}$ be a cartesian pattern. From the proof of \cref{cor:monrfibftr} we know that we can think of an object of $\Mon_{\mathcal{Q}}(\RFib)$ as a right fibration $\mathcal{E} \to \mathcal{C}_{\otimes}$ that corresponds to a $\mathcal{C}_{\otimes}^{\op}$-monoid in $\mathcal{S}$, where $\mathcal{C}_{\otimes} \to \mathcal{Q}^{\op}$ is the cartesian fibration for a $\mathcal{Q}$-monoid in $\CatI$. A morphism to another such object $\mathcal{F} \to \mathcal{D}_{\otimes}$ is then a commutative square
  \[
    \begin{tikzcd}
      \mathcal{E} \arrow{r} \arrow{d} & \mathcal{F} \arrow{d} \\
      \mathcal{C}_{\otimes} \arrow{r} & \mathcal{D}_{\otimes},
    \end{tikzcd}
  \]
  where the bottom horizontal functor preserves cartesian morphisms over $\mathcal{Q}$.
  This morphism is cartesian \IFF{} the square is a pullback.

  If now $\psi \colon \mathcal{Q}' \to \mathcal{Q}$ is a morphism of cartesian patterns, we get a commutative square
  \[
    \begin{tikzcd}
      \Mon_{\mathcal{Q}}(\RFib) \arrow{r}{\psi} \arrow{d} & \Mon_{\mathcal{Q}'}(\RFib) \arrow{d} \\
      \Mon_{\mathcal{Q}}(\Cat) \arrow{r}{\psi} & \Mon_{\mathcal{Q}'}(\RFib).
    \end{tikzcd}
  \]
  In our fibrational picture the horizontal functors are given by pullback along $\psi^{\op}$, and so preserve cartesian morphisms.
\end{observation}

\begin{observation}
  The fibration
  \[ \Mon_{\mathcal{O}}(\CatI) \times_{\Mon_{\mathcal{Q}}(\CatI)}
    \Mon_{\mathcal{Q}}(\RFib) \to \Mon_{\mathcal{O}}(\CatI)\] is
  evidently functorial in $\mathcal{Q}$ and $\mathcal{O}$, and by
  \cref{obs:monrfibcartmor} we get maps that preserve cartesian
  morphisms. The same goes for
  $\Alg'_{\mathcal{Q}/\mathcal{O}}(\Day^{\otimes}_{\mathcal{O}}) \to
  \Mon_{\mathcal{O}}(\CatI)$, so our contravariant identification of
  the corresponding functors is likewise natural.
\end{observation}

Applying this observation to algebroids, we conclude that for an enrichable pattern $\mathcal{O}$, the equivalence
\[ \Alg_{\mathcal{O}_{\bX}/\mathcal{O}}(\PSh_{\mathcal{O}}(\mathcal{C})) \simeq \Mon_{\mathcal{C}^{\op,\otimes}_{\bX}}(\mathcal{S})\]
is natural in both $\bX$ and $\mathcal{C}$. Combining this with the naturality of the equivalence between monoids and Segal spaces from \cref{propn:algdtomonnat}, we get:

\begin{cor}\label{cor:algdpshissegnat}
  The equivalence $\Algd_{\mathcal{O}}(\PSh_{\mathcal{O}}(\mathcal{C})) \simeq \Seg_{\mathcal{C}^{\op,\otimes}}(\mathcal{S})$ is natural in $\mathcal{C}^{\otimes} \in \Mon_{\mathcal{O}}(\CatI)$.
\end{cor}

\begin{remark}
  We expect that the naturality in $\mathcal{O}$-monoidal functors we have considered here can be extended to \emph{oplax} $\mathcal{O}$-monoidal functors. Spelling this out would require some $(\infty,2)$-categorical machinery that we do not wish to introduce here, however. Similarly, we will not discuss the compatibility between the naturality of \cref{cor:algdpshissegnat} and that of \cref{AlgdDayisSeg}.
\end{remark}

\subsection{Left Kan extensions for enrichable patterns}\label{sec:lke}

In the next section we will look at free algebroids and more
general left adjoints on algebroids to restrictions along simple maps
of enrichable patterns. In preparation for this, here we will discuss
left Kan extensions of Segal objects in spaces along maps of
enrichable patterns. In particular, we will give conditions for these
to decompose into well-behaved fibrewise left Kan extensions along the
decomposition from \cref{prop MonSeg}.

We start by looking at a general extendable morphism between
enrichable patterns for which $\mathcal{S}$ is admissible. In
\cref{propn:pbextdecomp} we will see that in this case the left Kan
extension does decompose fibrewise, but in a somewhat unsatisfactory
way.  Here we refer to \cite[Definitions 7.7 and 7.12]{patterns} for
the precise conditions needed for a morphism
$f \colon \mathcal{O} \to \mathcal{P}$ to be \emph{extendable} and for
a complete \icat{} $\mathcal{C}$ to be \emph{$f$-admissible}; the
point of these definitions is that if $f$ is extendable and
$\mathcal{C}$ is $f$-admissible, then left Kan extension along $f$
restricts to a functor
$f_{!}\colon \Seg_{\mathcal{O}}(\mathcal{C}) \to
\Seg_{\mathcal{P}}(\mathcal{C})$.

\begin{notation}
  Suppose $f \colon \mathcal{O} \to \mathcal{P}$ is a simple morphism
  of enrichable patterns such that
  $f_{0} \colon \mathcal{O}_{0} \to \mathcal{P}_{0}$ is extendable and
  $\mathcal{S}$ is $f_{0}$-admissible. Given an object
  $\bX \in \Seg_{\mathcal{O}_0}(\mathcal{S})$, we write
  $\overline{f}_\bX \colon \mathcal{O}_{\bX} \to
  \mathcal{P}_{f_{0,!}\bX}$ for the composite
  \[ \mathcal{O}_{\bX} \xto{\mathcal{O}_{\eta_{\bX}}}
    \mathcal{O}_{f_{0}^{*}f_{0,!}\bX} \xto{f_{f_{0,!}\bX}}
    \mathcal{P}_{f_{0,!}\bX},\] where the first functor is induced by
  the unit map $\eta_{\bX} \colon \bX \to f_{0}^{*}f_{0,!}\bX$.
\end{notation}

\begin{propn}\label{propn:pbextdecomp}
  Suppose $f \colon \mathcal{O} \to \mathcal{P}$ is a simple morphism
between enrichable patterns that is extendable, and assume also that
$\mathcal{S}$ is $f$-admissible. Then left Kan
  extension along $\overline{f}_{\bX}$ restricts to a functor
    \[
      \overline{f}_{\bX,!}\colon \Mon_{\mathcal{O}_{\bX}}(\mathcal{S}) \to
      \Mon_{\mathcal{P}_{f_{0,!}\bX}}(\mathcal{S}),
    \]
    which is left adjoint to the functor 
    \[ \overline{f}_{\bX}^{*} \colon \Mon_{\mathcal{P}_{f_{0,!}\bX}}(\mathcal{S}) \xto{f_{f_{0,!}\bX}^{*}} \Mon_{\mathcal{O}_{f_{0}^{*}f_{0,!}\bX}}(\mathcal{S}) \xto{\mathcal{O}_{\eta_{\bX}}^{*}} \Mon_{\mathcal{O}_{\bX}}(\mathcal{S})\]
    given by restriction along $\overline{f}_{\bX}$.
\end{propn}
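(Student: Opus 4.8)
The plan is to deduce this from the general machinery of \cite[\S 7--8]{patterns} on left Kan extensions of Segal objects, applied to the morphism $\overline{f}_{\bX}$, together with \cref{lem:monoidrestrpt} to identify $\mathcal{O}_{\bX}$-monoids inside Segal $\mathcal{O}_{\bX}$-objects. First I would observe that $\overline{f}_{\bX}$ is a morphism of \emph{cartesian} patterns over $\xF_{*}^{\flat}$: by \cref{obs:simplemonmor} the source $\mathcal{O}_{\bX}$ has underlying $\mathcal{O}_{\bX,0} = \mathcal{O}_{0,\bX}$ (the left fibration for $\bX$), and the pattern $\mathcal{P}_{f_{0,!}\bX}$ likewise sits over $\mathcal{P}$, hence over $\xF_{*}^{\natural}$; since $f$ is a morphism over $\xF_{*}^{\natural}$ and both $\mathcal{O}_{\bX}^{\flat}$ and $\mathcal{P}_{f_{0,!}\bX}^{\flat}$ are cartesian by \cref{lem enrpatt}, the composite $\overline{f}_{\bX}$ respects the cartesian structure. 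In particular one has the adjunction $\overline{f}_{\bX}^{*} \dashv \overline{f}_{\bX,!}$ at the level of functor categories, and the content of the proposition is (a) that $\overline{f}_{\bX,!}$ preserves Segal objects, hence $\mathcal{O}_{\bX}$-monoids, landing in $\mathcal{P}_{f_{0,!}\bX}$-monoids, and (b) that the restriction $\overline{f}_{\bX}^{*}$ of the proposition agrees with the two-step composite described in the statement.

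For step (b): the identification is essentially formal. The functor $\mathcal{O}_{\eta_{\bX}}^{*}$ is restriction along $\mathcal{O}_{\eta_{\bX}} \colon \mathcal{O}_{\bX} \to \mathcal{O}_{f_{0}^{*}f_{0,!}\bX}$, and $f_{f_{0,!}\bX}^{*}$ is restriction along $f_{f_{0,!}\bX} \colon \mathcal{O}_{f_{0}^{*}f_{0,!}\bX} = f^{*}\mathcal{P}_{f_{0,!}\bX} \to \mathcal{P}_{f_{0,!}\bX}$, where I use the identification \cref{eq:simplepb}. Since $\overline{f}_{\bX}$ is by definition the composite of these two functors, restriction along it is the composite of the two restrictions; no further argument is needed beyond unwinding the definitions.

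The main work is step (a): showing $\overline{f}_{\bX,!}$ restricts to a functor on monoids. The cleanest route is to verify that $\overline{f}_{\bX}$ is extendable and that $\mathcal{S}$ is $\overline{f}_{\bX}$-admissible, so that \cite[Proposition 7.13]{patterns} (or its analogue for cartesian patterns) applies directly and gives $\overline{f}_{\bX,!} \colon \Seg_{\mathcal{O}_{\bX}}(\mathcal{S}) \to \Seg_{\mathcal{P}_{f_{0,!}\bX}}(\mathcal{S})$ left adjoint to restriction; since for cartesian patterns Segal objects in $\mathcal{S}$ \emph{are} monoids, this is already the desired statement. To check extendability of $\overline{f}_{\bX}$ I would reduce to extendability of $f_{0}$ (which is assumed) together with the fact that $\mathcal{O}_{\bX} \to \mathcal{O}$ and $\mathcal{P}_{f_{0,!}\bX} \to \mathcal{P}$ are left fibrations associated to Segal objects: the relevant slice categories $\mathcal{O}_{\bX,/O_{\bX}}^{\act}$ over active maps are, by \cref{rem O_0 act} applied to the enrichable pattern $\mathcal{O}_{\bX}^{\natural}$ and the fact that active maps into objects over $\angled{0}$ stay over $\angled{0}$, controlled by the corresponding slices in $\mathcal{O}_{0,\bX}$; and the fibre of $\mathcal{O}_{0,\bX}$ over $\mathcal{O}_{0}$ is $\bX$, whose values are spaces on which the extendability conditions from $f_{0}$ can be transported. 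Admissibility of $\mathcal{S}$ is automatic since $\mathcal{S}$ has all small colimits and these are universal. I expect this extendability verification — tracing the conditions of \cite[Definitions 7.7 and 7.12]{patterns} through the pullback $\mathcal{O}_{\bX} = \mathcal{O} \times_{\xF_{*}^{\natural}}(\text{left fibration})$ and checking they are inherited from $f_{0}$ — to be the main obstacle; everything else is bookkeeping with the adjunctions and the identifications from \cref{obs:simplemonmor}.
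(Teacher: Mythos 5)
Your step (b) is fine, and your overall strategy (reduce everything to an extendability statement for $\overline{f}_{\bX}$ and invoke the left Kan extension machinery of \cite{patterns}) is in the right family of ideas, but step (a) has a genuine gap: you treat $\overline{f}_{\bX}$ as a morphism of \emph{cartesian} patterns $\mathcal{O}_{\bX}^{\flat} \to \mathcal{P}_{f_{0,!}\bX}^{\flat}$ and plan to verify that \emph{this} morphism is extendable, so that preservation of Segal objects (which for the flat structure are exactly the monoids) follows at once. But extendability of $\overline{f}_{\bX}^{\flat}$ does not follow from the hypotheses of the proposition --- the paper explicitly remarks right after the proof that we do \emph{not} know it is extendable, and supplying sufficient conditions for it is the whole point of \cref{propn:getflatext} and \cref{cor:fwextblecond}. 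The obstruction is concrete: for the flat structure, the cofinality condition in the definition of extendability asks that $\mathcal{O}^{\act}_{\bX/\bar{P}} \to \prod_{i}\mathcal{O}^{\act}_{\bX/\bar{P}_{i}}$ be cofinal, where the product runs only over elementaries lying over $\angled{1}$; this differs from the corresponding condition for the natural structure precisely by the factors indexed by elementaries over $\angled{0}$, and those factors must be \emph{contractible}, not merely weakly contractible, for your reduction to work. Under the stated hypotheses one only gets weak contractibility (\cref{lem:actslicewc}).

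What the paper does instead is to work with the \emph{natural} pattern structure: \cref{lem:fwextble} shows that $\overline{f}_{\bX}^{\natural}$ is extendable and $\mathcal{S}$ is admissible for it --- note this uses that $f$ itself is extendable and that the vertical maps are Segal fibrations, not just extendability of $f_{0}$ as your sketch suggests --- which gives $\overline{f}_{\bX,!}$ on Segal objects for the natural structures, left adjoint to restriction. Since monoids are exactly the natural Segal objects whose restriction to the fibre over $\angled{0}$ is constant at the terminal object (\cref{lem:monoidrestrpt}), one must then check separately that $\overline{f}_{\bX,!}M(\bar{P}) \simeq \|\mathcal{O}^{\act}_{0,\bX/\bar{P}}\| \simeq *$ for $\bar{P}$ over $\angled{0}$, which is supplied by the weak contractibility of \cref{lem:actslicewc}. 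That extra computation is what your proposal is missing; alternatively, if you intended the natural structure all along, then your closing claim that ``Segal objects in $\mathcal{S}$ are monoids'' is false for it and the same step is still missing.
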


Before we prove \cref{propn:pbextdecomp} it is convenient to state several lemmas that will also be useful later.

\begin{lemma}\label{lem:extrestrictto0}
  Suppose $f \colon \mathcal{O} \to \mathcal{P}$ is a morphism of enrichable patterns. Then:
  \begin{enumerate}[(i)]
  \item If $f$ is extendable, so is the restriction $f_0 \colon \mathcal{O}_{0} \to \mathcal{P}_{0}$.
  \item If $\mathcal{S}$ is $f$-admissible, then it is also
    $f_0$-admissible.
  \item If $f_{0}$ is extendable and $\mathcal{S}$ is $f_{0}$-admissible, then the mate transformation
    \[ f_{0,!}i_{0}^{\mathcal{O},*} \to i_{0}^{\mathcal{P},*}f_{!}\]
    is an equivalence of functors
    $\Seg_{\mathcal{O}}(\mathcal{S}) \to
    \Seg_{\mathcal{P}_{0}}(\mathcal{S})$.
  \end{enumerate}
\end{lemma}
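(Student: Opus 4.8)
The plan is to unwind the definitions of \emph{extendable} (\cite[Definition 7.7]{patterns}) and of \emph{$f$-admissible} (\cite[Definition 7.12]{patterns}), which are conditions indexed by the objects $P$ of the target pattern, phrased in terms of the comma categories $\mathcal{O}^{\xint}_{P/}$, $\mathcal{O}^{\el}_{P/}$ and the categories of active maps into elementary objects of $\mathcal{O}$ lying over $P$ (together, for admissibility, with the existence and the required distributivity of certain limits and colimits in $\mathcal{C} = \mathcal{S}$). For $f_{0}\colon \mathcal{O}_{0}\to\mathcal{P}_{0}$ one only needs these conditions at objects $P\in\mathcal{P}_{0}$, and the point is that all the relevant categories coincide with their counterparts for $f$: by \cref{rem O_0 int} and \cref{rem O_0 act}, applied both to $\mathcal{O}$ and (in the same way) to $\mathcal{P}$, we have $\mathcal{O}^{\xint}_{0,O/}\simeq\mathcal{O}^{\xint}_{O/}$, $\mathcal{O}^{\el}_{0,O/}\simeq\mathcal{O}^{\el}_{O/}$ and $\mathcal{O}^{\act}_{0,/O}\simeq\mathcal{O}^{\act}_{/O}$ for $O\in\mathcal{O}_{0}$, and likewise for $\mathcal{P}$; moreover $\mathcal{O}_{0}=\mathcal{O}\times_{\mathcal{P}}\mathcal{P}_{0}$ since $f$ lies over $\xF_{*}^{\natural}$, and any elementary object of $\mathcal{P}$ which is the target of an active map from an object over $\angled{0}$ again lies over $\angled{0}$ (there is no active map $\angled{n}\to\angled{0}$ with $n>0$), hence in $\mathcal{P}_{0}$. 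Substituting these identifications into the defining conditions for $f$ at objects over $\angled{0}$ yields verbatim the defining conditions for $f_{0}$; and since $\mathcal{S}$ is the ambient \icat{} throughout and its distributivity of limits over colimits (\cite[Corollary 6.15]{patterns}) restricts to any subdiagram, this proves (i) and (ii).

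\textbf{Part (iii).} Since $f$ lies over $\xF_{*}^{\natural}$ and $\mathcal{O}_{0},\mathcal{P}_{0}$ are the fibres over $\angled{0}$, the square
\[
  \begin{tikzcd}
    \mathcal{O}_{0} \arrow{r}{i_{0}^{\mathcal{O}}} \arrow{d}[swap]{f_{0}} & \mathcal{O} \arrow{d}{f} \\
    \mathcal{P}_{0} \arrow{r}{i_{0}^{\mathcal{P}}} & \mathcal{P}
  \end{tikzcd}
\]
commutes on the nose, so $i_{0}^{\mathcal{O},*}f^{*}\simeq f_{0}^{*}i_{0}^{\mathcal{P},*}$, and passing to left adjoints produces the mate $f_{0,!}i_{0}^{\mathcal{O},*}\to i_{0}^{\mathcal{P},*}f_{!}$. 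Under the hypotheses of (iii), $f_{0,!}$ restricts to a functor $\Seg_{\mathcal{O}_{0}}(\mathcal{S})\to\Seg_{\mathcal{P}_{0}}(\mathcal{S})$ by \cite[\S 7]{patterns}, and $i_{0}^{\mathcal{O},*}$ restricts to $\Seg_{\mathcal{O}}(\mathcal{S})\to\Seg_{\mathcal{O}_{0}}(\mathcal{S})$ since $i_{0}^{\mathcal{O}}$ is an iso-Segal morphism (\cref{rem O_0 int}); thus the source of the mate lands in $\Seg_{\mathcal{P}_{0}}(\mathcal{S})$, and it suffices to check that the mate is an equivalence after evaluating at each $F\in\Seg_{\mathcal{O}}(\mathcal{S})$ and each $P\in\mathcal{P}_{0}$. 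Here I would use the explicit description of $f_{!}$ on Segal objects supplied by extendability (\cite[\S 7]{patterns}): schematically, $(f_{!}F)(P)\simeq\lim_{(P\,\intto\, E)\,\in\,\mathcal{P}^{\el}_{P/}}\ \colim_{O\in\mathcal{O},\ f(O)\,\actto\, E}F(O)$, the inner colimit being over the appropriate category of pairs consisting of $O\in\mathcal{O}$ and an active map $f(O)\actto E$. For $P\in\mathcal{P}_{0}$ every inert map $P\intto E$ has $E$ over $\angled{0}$ (there is no inert map $\angled{0}\to\angled{n}$ with $n>0$), so $\mathcal{P}^{\el}_{P/}=\mathcal{P}^{\el}_{0,P/}$; and for such an elementary $E$ over $\angled{0}$, every active map $f(O)\actto E$ forces $O$ over $\angled{0}$, so the inner colimit is indexed by pairs with $O\in\mathcal{O}_{0}$ and only sees $F|_{\mathcal{O}_{0}}=i_{0}^{\mathcal{O},*}F$. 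The expression thus collapses to precisely the value $(f_{0,!}i_{0}^{\mathcal{O},*}F)(P)$ given by the same formula applied to the extendable morphism $f_{0}$, and one checks that the resulting identification is the mate.

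\textbf{Main obstacle.} The genuine work is in part (iii). One must (a) verify that the comparison coming from the ``collapse of index categories'' above really is the mate transformation — a routine but fiddly chase of units and counits — and (b) justify invoking the extendability formula for $f_{!}$. When $f$ is itself extendable (the situation of the intended application \cref{propn:pbextdecomp}, and the case secured by (i)+(ii) starting from $f$-hypotheses) this is immediate, and in addition $i_{0}^{\mathcal{P},*}f_{!}$ then automatically lands in $\Seg_{\mathcal{P}_{0}}(\mathcal{S})$, because for $P\in\mathcal{P}_{0}$ the Segal condition on a $\mathcal{P}_{0}$-object at $P$ is literally the Segal condition on a $\mathcal{P}$-object at $P$ (the indexing category $\mathcal{P}^{\el}_{P/}$ is the same). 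To make do with only the stated hypothesis that $f_{0}$ is extendable, one instead reruns the distributivity argument of \cite[\S 7]{patterns} ``fibrewise over $\angled{0}$'': the same computation that proves the formula for $f_{!}$ goes through once one knows the fibre over $\angled{0}$ is well behaved, which is exactly what $f_{0}$-extendability and $f_{0}$-admissibility of $\mathcal{S}$ provide — this is the only place where a little care beyond bookkeeping is needed.
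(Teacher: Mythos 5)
Your argument matches the paper's: parts (i) and (ii) are proved there by exactly the identifications $\mathcal{O}^{\act}_{0,/P}\simeq\mathcal{O}^{\act}_{/P}$ and $\mathcal{P}^{\el}_{0,P/}\simeq\mathcal{P}^{\el}_{P/}$ from \cref{rem O_0 act} and \cref{rem O_0 int} that you invoke, and part (iii) by the same collapse of the indexing categories in the left Kan extension formula at objects of $\mathcal{P}_{0}$. The subtlety you flag in (iii) about justifying the extendability formula for $f_{!}$ when only $f_{0}$ is assumed extendable is genuine, but the paper's one-sentence proof of (iii) glosses over it at least as much as you do, so your proposal is on par with the published argument.
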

\begin{proof}
  We first prove that $f_0$ is extendable when $f$ is. It is clear
  that $f_{0}$ inherits unique lifting of inert morphisms from $f$.
  For condition (2) in \cite[Definition 7.6]{patterns} we have to show
  that the canonical map
  \[ \mathcal{O}_{0/P}^{\act}\to \lim_{E\in
      \mathcal{P}^\el_{0,P/}}\mathcal{O}^{\act}_{0/E}\] is cofinal for
  $P \in \mathcal{P}_{0}$.  As in \cref{rem O_0 act} we can here
  identify $\mathcal{O}_{0/P}^{\act}$ with $\mathcal{O}_{/P}^{\act}$,
  and also $\mathcal{P}^{\el}_{0,P/}$ with $\mathcal{P}^{\el}_{P/}$ by
  \cref{rem O_0 int}, so this also follows from the corresponding
  condition for $f$.  For the same reason, condition (3) is also
  inherited from $f$, as is the admissibility of $\mathcal{S}$ for (ii).

  Identifying $\mathcal{O}^{\act}_{0,/P}$ with
  $\mathcal{O}_{/P}^{\act}$ for $P \in \mathcal{P}_0$ and the formula
  for left Kan extensions then show that the natural transformation in
  (iii) is an equivalence.
\end{proof}

\begin{lemma}\label{lem:fwextble}
  Suppose $f \colon \mathcal{O} \to \mathcal{P}$ is a simple morphism
  of enrichable patterns such that
  $f_{0} \colon \mathcal{O}_{0} \to \mathcal{P}_{0}$ is extendable and
  $\mathcal{S}$ is $f_{0}$-admissible. Then
  $\overline{f}^{\natural}_{\bX} \colon \mathcal{O}^{\natural}_{\bX} \to
  \mathcal{P}^{\natural}_{f_{0,!}\bX}$ is extendable. Moreover, if $\mathcal{S}$ is $f$-admissible then it is also $\overline{f}^{\natural}_{\bX}$-admissible.
\end{lemma}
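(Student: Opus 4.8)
The plan is to exploit the factorization $\overline{f}^{\natural}_{\bX} = f_{f_{0,!}\bX} \circ \mathcal{O}^{\natural}_{\eta_{\bX}}$ built into the Notation above, showing that each factor is extendable and that $\mathcal{S}$ is admissible for it, and then concluding by closure of such morphisms under composition (straightforward from \cite[Definitions 7.7 and 7.12]{patterns}). For the first factor $\mathcal{O}^{\natural}_{\eta_{\bX}} \colon \mathcal{O}^{\natural}_{\bX} \to \mathcal{O}^{\natural}_{f_{0}^{*}f_{0,!}\bX}$, observe that it is the image under $\mathcal{O}_{(\blank)}$ of \cref{def OX} of the morphism $\eta_{\bX}$ of Segal $\mathcal{O}_{0}$-spaces, hence a morphism of left fibrations over $\mathcal{O}$ lying over $\id_{\mathcal{O}}$. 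For such a morphism unique lifting of inert morphisms is automatic --- the inert morphisms of $\mathcal{O}^{\natural}_{(\blank)}$ are the cocartesian lifts of inert morphisms in $\mathcal{O}$, which are unique and are preserved by any functor over $\mathcal{O}$ --- and the remaining cofinality and admissibility conditions of \cite[Definitions 7.7 and 7.12]{patterns} reduce, fibrewise over $\mathcal{O}$, to the trivial ones for $\id_{\mathcal{O}}$ together with the interchange of fibrewise colimits with the Segal limits in $\mathcal{S}$, exactly as in the proof of \cref{lem:LFibSegslice} (via \cite[Corollary 6.15]{patterns}).

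The substantive part is the second factor $f_{f_{0,!}\bX} \colon \mathcal{O}^{\natural}_{f_{0}^{*}f_{0,!}\bX} \to \mathcal{P}^{\natural}_{f_{0,!}\bX}$, which by \cref{eq:simplepb} of \cref{obs:simplemonmor} is the base change of $f$ along the left fibration $\mathcal{P}^{\natural}_{f_{0,!}\bX} \to \mathcal{P}$. Since $f$ itself is \emph{not} assumed extendable, I would verify the conditions of \cite[Definition 7.7]{patterns} by hand, organized along the $\xF_{*}^{\natural}$-grading. For an object lying over some $\angled{n}$, its elementary inert maps over the $\rho_{i}$ lift uniquely because $\mathcal{O}^{\flat}$ and $\mathcal{P}^{\flat}$ are cartesian, and the matching cofinality condition on the active slices over these ``cartesian'' elementaries is precisely the strong Segal property of $f$; this part of the data is essentially discrete, so it imposes no admissibility condition. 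The contribution of the elementary objects over $\angled{0}$ is, after passing to the fibres over $\angled{0}$ and using the identifications of \cref{rem O_0 int}, \cref{rem O_0 act} and \cref{rem O_0X}, the base change of $f_{0}$ along the left fibration $(\mathcal{P}_{0})_{f_{0,!}\bX} \to \mathcal{P}_{0}$; since $f_{0}$ is extendable and $\mathcal{S}$ is $f_{0}$-admissible, stability of extendability and of $\mathcal{S}$-admissibility under base change along left fibrations handles this. Finally, admissibility of $\mathcal{S}$ for the genuine active structure of $\mathcal{O}^{\natural}_{f_{0}^{*}f_{0,!}\bX}$ at objects not over $\angled{0}$ is exactly what the hypothesis that $\mathcal{S}$ is $f$-admissible provides (and $f$-admissibility implies $f_{0}$-admissibility by \cref{lem:extrestrictto0}).

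The main obstacle I anticipate is this disentangling for $f_{f_{0,!}\bX}$: the limits appearing in the conditions of \cite[Definition 7.7]{patterns} are indexed by the full elementary slices $\mathcal{P}^{\el}_{f(O)/}$, which mix the cartesian elementaries over $\angled{1}$ with those over $\angled{0}$, so one must use the cone structure of $\xF^{\natural,\el}_{*,\angled{n}/}$ to split these limits cleanly and match each half against the cartesian-pattern structure, respectively against the extendability of $f_{0}$. Verifying that base change along the relevant left fibrations genuinely preserves the active-slice cofinality conditions also needs care, since the active slice of a left fibration over a fixed target is not simply a pullback from the base; once these points are settled, the admissibility claim follows by the same split, with the $f$-admissibility hypothesis feeding precisely the active-colimit conditions that the cartesian part cannot, and the full statement follows by composition.
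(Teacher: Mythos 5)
There is a genuine gap, and it sits exactly where you flagged the ``substantive part''. Your verification of condition (2) of \cite[Definition 7.6]{patterns} for $f_{f_{0,!}\bX}$ at objects over $\angled{n}$ with $n\geq 1$ rests on the claim that ``the matching cofinality condition on the active slices over these cartesian elementaries is precisely the strong Segal property of $f$''. These are two unrelated conditions: the strong Segal property is the coinitiality of $\mathcal{O}^{\el}_{O/}\to\mathcal{P}^{\el}_{fO/}$ (inert maps \emph{out of} $O$), whereas the extendability condition asks that $\mathcal{O}^{\act}_{/P}\to\lim_{E\in\mathcal{P}^{\el}_{P/}}\mathcal{O}^{\act}_{/E}$ be cofinal (active maps \emph{into} $P$ and its elementaries). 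A simple morphism need not satisfy the latter --- already for $f=\id_{\mathcal{O}}$ restricted to $\mathcal{O}^{\xint}\to\mathcal{O}$ this cofinality is the nontrivial condition that \emph{defines} extendability of a pattern --- so the cartesianness of $\mathcal{O}^{\flat}$ and $\mathcal{P}^{\flat}$ plus ``simple'' cannot produce it. Relatedly, the active slices over the $\angled{1}$-elementaries are not ``essentially discrete'' (for $\Dopn$ the slice $\simp^{\op,\act}_{/[1]}$ contains all $[n]\actto[1]$), so the admissibility claim for that part does not come for free either. The upshot is that extendability of $f$ itself (not just of $f_{0}$) is genuinely needed; note that the statement's hypotheses appear to have been inadvertently weakened, since every application of this lemma in the paper assumes $f$ extendable.

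For comparison, the paper's proof is a two-line base-change argument: the square with $\overline{f}_{\bX}$ on top and $f$ on the bottom has Segal fibrations (in fact left fibrations) as vertical maps, so \cite[Proposition 9.5]{patterns} transfers extendability of $f$ and, via \cite[Corollary 7.17]{patterns}, admissibility of $\mathcal{S}$ directly to $\overline{f}_{\bX}$. Your factorization through $\mathcal{O}_{f_{0}^{*}f_{0,!}\bX}$ and the $\xFn$-graded splitting of the elementary slices is not needed once one has that proposition; if you want to keep your hands-on route, you must add ``$f$ is extendable'' to the hypotheses and use it (rather than the strong Segal property) to handle the active-slice cofinality at objects not lying over $\angled{0}$, at which point you are essentially reproving the base-change proposition fibrewise.
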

\begin{proof}
  By construction we have a commutative diagram
  \[
    \begin{tikzcd}
      \mathcal{O}_\bX \arrow[d] \arrow[r, "\overline{f}_\bX"] & \mathcal{P}_{f_{0,!}\bX} \arrow[d] \\
      \mathcal{O} \arrow{r}{f} & \mathcal{P},
    \end{tikzcd}
  \]
  where $f$ is extendable and the vertical maps are Segal fibrations
  (and $\overline{f}_{\bX}$ preserves cocartesian morphisms since
  these are in fact left fibrations). Hence, \cite[Proposition
  9.5]{patterns} implies that the upper horizontal map
  $\overline{f}_\bX$ is extendable and that $\mathcal{S}$ is
  $\overline{f}_{\bX}$-admissible (using \cite[Corollary
  7.17]{patterns}).
\end{proof}

\begin{lemma}\label{lem:actslicewc}
  Suppose $f \colon \mathcal{O} \to \mathcal{P}$ is a simple morphism
  between enrichable patterns such that $f_{0}$ is extendable and
  $\mathcal{S}$ is $f_{0}$-admissible.  Then for every
  $\bX \in \Seg_{\mathcal{O}_{0}}(\mathcal{S})$ and
  $\overline{P} \in \mathcal{P}_{f_{0,!}\bX}$ lying over $\angled{0}$,
 the \icat{} $\mathcal{O}^{\act}_{0,\bX/\bar P}$ is weakly
  contractible (\ie{} its classifying space
  $\|\mathcal{O}^{\act}_{0,\bX/\bar P}\|$ is contractible).
\end{lemma}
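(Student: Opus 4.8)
The plan is to compute the classifying space $\|\mathcal{O}^{\act}_{0,\bX/\bar P}\|$ explicitly and recognise it as a contractible fibre. Since $f_{0}$ is extendable and $\mathcal{S}$ is $f_{0}$-admissible, $f_{0,!}\bX$ is a Segal $\mathcal{P}_{0}$-space, so (arguing as in \cref{rem O_0X} for the pattern $\mathcal{P}$) an object $\bar P$ of $\mathcal{P}_{f_{0,!}\bX}$ lying over $\angled 0$ is the same as a pair $(P,\bar p)$ with $P \in \mathcal{P}_{0}$ and $\bar p \in (f_{0,!}\bX)(P)$; moreover, since an active map into $\bar P$ necessarily comes from an object over $\angled 0$ (as in \cref{rem O_0 act}), the \icat{} $\mathcal{O}^{\act}_{0,\bX/\bar P}$ is just the active comma category of the restricted map $\mathcal{O}_{0,\bX}\to\mathcal{P}_{0,f_{0,!}\bX}$ over $\bar P$. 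Using that $\mathcal{O}_{0,\bX}\to\mathcal{O}_{0}$ is the left fibration for $\bX$ (\cref{rem O_0X}) and writing $\mathcal{O}^{\act}_{0/P}:=\mathcal{O}_{0}\times_{\mathcal{P}_{0}}(\mathcal{P}_{0})^{\act}_{/P}$ for the active comma category of $f_{0}$ over $P$, a direct unwinding shows that the forgetful functor exhibits $\mathcal{O}^{\act}_{0,\bX/\bar P}$ as the left fibration over $\mathcal{O}^{\act}_{0/P}$ classified by
\[
  (O,\ \alpha\colon f_{0}O\actto P)\ \longmapsto\ \operatorname{fib}_{\bar p}\Bigl(\bX(O)\xrightarrow{\eta_{O}}(f_{0,!}\bX)(f_{0}O)\xrightarrow{\alpha_{!}}(f_{0,!}\bX)(P)\Bigr),
\]
where $\eta$ is the unit of $f_{0,!}\dashv f_{0}^{*}$.

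Next I would pass to colimits. The classifying space of a left fibration is the colimit of its classifying functor, and the functor above refines to a functor $\mathcal{O}^{\act}_{0/P}\to\mathcal{S}_{/(f_{0,!}\bX)(P)}$; since colimits in a slice $\mathcal{S}_{/Y}$ are computed in $\mathcal{S}$ and colimits in $\mathcal{S}$ are universal, we obtain
\[
  \bigl\|\mathcal{O}^{\act}_{0,\bX/\bar P}\bigr\|\ \simeq\ \colim_{(O,\alpha)\in\mathcal{O}^{\act}_{0/P}}\operatorname{fib}_{\bar p}(\,\cdots\,)\ \simeq\ \operatorname{fib}_{\bar p}\Bigl(\theta\colon \colim_{(O,\alpha)\in\mathcal{O}^{\act}_{0/P}}\bX(O)\longrightarrow(f_{0,!}\bX)(P)\Bigr),
\]
with $\theta$ induced by the maps $\alpha_{!}\eta_{O}$. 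It therefore suffices to prove that $\theta$ is an equivalence, for then its fibre over $\bar p$ is contractible, as required.

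Finally, to see that $\theta$ is an equivalence I would identify $(f_{0,!}\bX)(P)$ with the pointwise left Kan extension $\colim_{(O,g)\in\mathcal{O}_{0}\times_{\mathcal{P}_{0}}(\mathcal{P}_{0})_{/P}}\bX(O)$ (the restriction of $f_{0,!}$ to Segal objects is still computed pointwise as a functor to $\mathcal{S}$), under which $\theta$ becomes the map on colimits induced by the inclusion $\iota\colon\mathcal{O}_{0}\times_{\mathcal{P}_{0}}(\mathcal{P}_{0})^{\act}_{/P}\hookrightarrow\mathcal{O}_{0}\times_{\mathcal{P}_{0}}(\mathcal{P}_{0})_{/P}$ of the active comma category into the full one. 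So it is enough to show that $\iota$ is cofinal, and by \cite[Theorem 4.1.3.1]{ht} this amounts to showing that for every object $\xi=(O,\,g\colon f_{0}O\to P)$ of the target the comma \icat{} $\iota_{\xi/}$ — whose objects are pairs of an object $(O',\alpha\colon f_{0}O'\actto P)$ of the source together with a map $\xi\to(O',\alpha)$ over $P$ — is weakly contractible. This is where the hypothesis enters: writing $g=g^{\act}\circ g^{\xint}$ for the inert–active factorization, with $g^{\xint}\colon f_{0}O\intto Q$, and lifting $g^{\xint}$ to an inert map $O\intto O'$ with $f_{0}O'\simeq Q$ using that $f_{0}$ (being extendable) has unique lifting of inert morphisms, one checks that the triple $\bigl(O',\,g^{\act}\colon f_{0}O'\actto P,\ O\intto O'\bigr)$ is an \emph{initial} object of $\iota_{\xi/}$: uniqueness of inert–active factorizations and of inert lifts pins down any morphism out of it, so the relevant mapping spaces are contractible, and an \icat{} with an initial object is weakly contractible. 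I expect this last point — the initial-object computation establishing cofinality of $\iota$ — to be the main obstacle; the remaining steps (recognising the left fibration, the colimit/fibre bookkeeping, and identifying $f_{0,!}$ pointwise) are formal.
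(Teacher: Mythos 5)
Your proof is correct, but it takes a genuinely different route from the paper's. The paper first replaces $\mathcal{O}^{\act}_{0,\bX/\bar P}$ by the full comma category $\mathcal{O}_{0,\bX/\bar P}$ (the inclusion being cofinal), and then observes that the square relating $\mathcal{O}_{0,\bX} \to \mathcal{O}_0$ and $\mathcal{P}_{0,f_{0,!}\bX} \to \mathcal{P}_0$ is a unit component for the left adjoint to pullback of left fibrations along $f_0$; since coinitial functors and left fibrations form a factorization system on $\CatI$, the top map $\mathcal{O}_{0,\bX} \to \mathcal{P}_{0,f_{0,!}\bX}$ is coinitial, which is verbatim the weak contractibility of every slice. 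Your argument instead stays with the active comma category, identifies it as a left fibration over $\mathcal{O}^{\act}_{0/P}$ with fibres $\operatorname{fib}_{\bar p}(\alpha_!\eta_O)$, uses universality of colimits to recognise its classifying space as the fibre of $\theta\colon \colim_{\mathcal{O}^{\act}_{0/P}}\bX \to (f_{0,!}\bX)(P)$, and reduces to the cofinality of the active comma category in the full one. That last step is the crux, as you say, but it is not really a new obstacle: it is exactly the statement that left Kan extension along a morphism with unique lifting of inert morphisms (part of extendability, inherited by $f_0$ via \cref{lem:extrestrictto0}) is computed as a colimit over the active comma category, which is already available from \cite{patterns}/\cite{freealg} and underlies the formula in \cref{propn:ifextbleitworks}; you could cite it rather than re-derive the initial-object computation (which does go through, using orthogonality of the factorization system together with uniqueness of inert lifts to pin down the mapping spaces at the space level, not just on $\pi_0$). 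What each approach buys: the paper's is shorter and avoids unwinding any fibres, while yours exhibits $\|\mathcal{O}^{\act}_{0,\bX/\bar P}\|$ explicitly as the fibre of an equivalence, making visible that the only inputs are the pointwise formula for $f_{0,!}$ and the active-comma cofinality. Note that neither argument actually uses the simplicity of $f$.
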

\begin{proof}
  Since the inclusion of $\mathcal{O}^{\act}_{0,\bX/\bar P}$ 
  in $\mathcal{O}_{0,\bX/\bar{P}}$ is cofinal, we can also show that
  the latter \icat{} is weakly contractible. But in the commutative
  square
  \[
    \begin{tikzcd}
      \mathcal{O}_{0,\bX} \arrow{d} \arrow{r} & \mathcal{P}_{0,f_{0,!}\bX} \arrow{d} \\
      \mathcal{O}_{0} \arrow{r}{f_{0}} & \mathcal{P}_{0},
    \end{tikzcd}
  \]
  the right vertical map is the left fibration for the left Kan
  extension of the functor corresponding to the left vertical
  map. This square is therefore a component of the unit transformation
  for the left adjoint to pullback of left fibrations along $f_{0}$.
  But coinitial functors and left fibrations form a factorization
  system on $\CatI$, so this left adjoint is also given by factoring
  the composite $\mathcal{O}_{0,\bX} \to \mathcal{P}_{0}$ as a
  coinitial functor followed by a left fibration. The top horizontal
  functor in the square is therefore coinitial, so that the \icat{}
  $\mathcal{O}_{0,\bX/\bar{P}}$ is weakly contractible for all
  $\bar{P} \in \mathcal{P}_{0,f_{0,!}\bX}$, as required.
\end{proof}

\begin{proof}[Proof of \cref{propn:pbextdecomp}]
  We know from \cref{lem:fwextble} that left Kan extension along
  $\overline{f}_{\bX}$ restricts to a functor
  \[ \overline{f}_{\bX,!} \colon \Seg_{\mathcal{O}_{\bX}}(\mathcal{S})
    \to \Seg_{\mathcal{P}_{f_{0,!}\bX}}(\mathcal{S}),\] left adjoint
  to restriction along $\overline{f}_{\bX}$. It then suffices to show
  that this functor restricts further to monoids, \ie{} that for
  $M \in \Mon_{\mathcal{O}_{\bX}}(\mathcal{S})$ the left Kan extension
  $\overline{f}_{\bX,!}M$ is a
  $\mathcal{P}_{f_{0,!}\bX}$-monoid. By \cref{lem:monoidrestrpt}
  it suffices to show that $\overline{f}_{\bX,!}M|_{\mathcal{P}_{0,f_{0,!}\bX}}$ is constant at $*$. 

  Here we have for every $\bar{P} \in \mathcal{P}_{0,f_{0,!}\bX}$ a chain of equivalences
  \[f_{\bX,!}M(\bar{P}) \simeq
    \colim_{\mathcal{O}_{\bX/\bar{P}}^{\act}} M \simeq
    \colim_{\mathcal{O}^{\act}_{0,\bX/\bar{P}}} M \simeq
    \colim_{\mathcal{O}^{\act}_{0,\bX/\bar P}} * \simeq
    \|\mathcal{O}^{\act}_{0,\bX/\bar P}\|,\] where we used the
  equivalence
  $\mathcal{O}_{\bX/\bar P}^{\act} \simeq
  \mathcal{O}^{\act}_{0,\bX/\bar P}$ of \cref{rem O_0 act}.
  It thus
  suffices to show that
  $\|\mathcal{O}^{\act}_{0,\bX/\bar P}\|\simeq *$, which we saw in \cref{lem:actslicewc}. 
\end{proof}

\begin{remark}
  There are two problems with the conditions in \cref{propn:pbextdecomp}:
  \begin{itemize}
  \item Although we 
    showed that the left Kan extension along the map
    \[ \overline{f}^{\flat}_{\bX} \colon \mathcal{O}^{\flat}_{\bX} \to
      \mathcal{P}^{\flat}_{f_{0,!}\bX} \] preserves monoids, we do
    \emph{not} know that it is extendable.  This prevents us from
    applying results from \cite{freealg} to describe the
    corresponding left adjoint on algebras.
  \item On the other hand, the condition that $\mathcal{S}$ is
    $f$-admissible is rather strong, and in practice only holds if
    $\mathcal{P}$ is a cartesian pattern or the colimits we take in the
    left Kan extension are indexed by $\infty$-groupoids. It is
    therefore potentially useful to have other conditions under which
    a left Kan extension preserves Segal objects in spaces.
  \end{itemize}
  It turns out that if we know that the maps
  $\overline{f}^{\flat}_{\bX}$ are extendable, we can use this to
  circumvent the assumption of $f$-admissibility:
\end{remark}

\begin{propn}\label{propn:lkefibrewise}
  Suppose $f \colon \mathcal{O} \to \mathcal{P}$ is a simple map of enrichable patterns such that
  \begin{enumerate}[(i)]
  \item $f_{0} \colon \mathcal{O}_{0} \to \mathcal{P}_{0}$ is extendable and $\mathcal{S}$ is $f_{0}$-admissible.
  \item For every $\bX$ in $\Seg_{\mathcal{O}_{0}}(\mathcal{S})$, the morphism of cartesian patterns $\overline{f}_{\bX}^{\flat} \colon \mathcal{O}^{\flat}_{\bX} \to \mathcal{P}^{\flat}_{f_{0,!}\bX}$ is extendable.
  \end{enumerate}
  Then left Kan extension along $f$ restricts to a functor
  \[ f_{!} \colon \Seg_{\mathcal{O}}(\mathcal{S}) \to \Seg_{\mathcal{P}}(\mathcal{S}).\]
\end{propn}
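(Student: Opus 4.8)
The plan is to prove this fibrewise, using the decomposition $\Seg_{\mathcal{O}}(\mathcal{S}) \simeq \mathcal{M}_{\mathcal{O}}(\mathcal{S})$ of \cref{prop MonSeg}. Start with $F \in \Seg_{\mathcal{O}}(\mathcal{S})$ and set $\bX := i_{0}^{\mathcal{O},*}F \in \Seg_{\mathcal{O}_{0}}(\mathcal{S})$, so that by \cref{prop MonSeg} we may write $F \simeq \pi_{!}M$ for a $\mathcal{O}_{\bX}$-monoid $M$, where $\pi \colon \mathcal{O}_{\bX} \to \mathcal{O}$ is the left fibration of $i_{0,*}^{\mathcal{O}}\bX$. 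By hypothesis~(i) the Segal $\mathcal{O}_{0}$-space $\bX$ has a left Kan extension $\bY := f_{0,!}\bX \in \Seg_{\mathcal{P}_{0}}(\mathcal{S})$, so that $\mathcal{P}_{\bY}$ is defined, with left fibration projection $\pi' \colon \mathcal{P}_{\bY} \to \mathcal{P}$. By construction of $\overline{f}_{\bX}$, and as recorded in the proof of \cref{lem:fwextble}, the square with horizontal maps $\overline{f}_{\bX}, f$ and vertical maps $\pi, \pi'$ commutes; since left Kan extensions compose, we then get
\[ f_{!}F \simeq f_{!}\pi_{!}M \simeq \pi'_{!}(\overline{f}_{\bX})_{!}M \]
as functors $\mathcal{P} \to \mathcal{S}$.

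The next step is to show that $(\overline{f}_{\bX})_{!}M$ is a $\mathcal{P}_{\bY}$-monoid. This is the point at which hypothesis~(ii) enters: $\overline{f}_{\bX}^{\flat} \colon \mathcal{O}_{\bX}^{\flat} \to \mathcal{P}_{\bY}^{\flat}$ is an extendable morphism of \emph{cartesian} patterns, and $\mathcal{S}$ is automatically $\overline{f}_{\bX}^{\flat}$-admissible --- since $\mathcal{S}$ is cartesian closed, finite products in $\mathcal{S}$ commute with all colimits, which is exactly the admissibility condition for a morphism of cartesian patterns (whose Segal limits are finite products). Hence the general result of \cite[\S 7]{patterns} on left Kan extension along extendable morphisms shows that $(\overline{f}_{\bX})_{!}$ carries $\Mon_{\mathcal{O}_{\bX}}(\mathcal{S})$ into $\Mon_{\mathcal{P}_{\bY}}(\mathcal{S})$. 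This is precisely where the argument differs from \cref{propn:pbextdecomp}: instead of assuming $\mathcal{S}$ is $f$-admissible and then passing through Segal $\mathcal{P}_{\bY}$-objects and \cref{lem:actslicewc}, we work directly with the cartesian patterns $\mathcal{O}_{\bX}^{\flat}$ and $\mathcal{P}_{\bY}^{\flat}$, for which admissibility of $\mathcal{S}$ costs nothing.

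To finish, I would apply \cref{prop MonSeg} for the pattern $\mathcal{P}$: since $(\overline{f}_{\bX})_{!}M$ is a $\mathcal{P}_{\bY}$-monoid, its left Kan extension $\pi'_{!}(\overline{f}_{\bX})_{!}M$ along $\pi'$ is a Segal $\mathcal{P}$-space, and therefore $f_{!}F \in \Seg_{\mathcal{P}}(\mathcal{S})$. Because $f_{!} \colon \Fun(\mathcal{O},\mathcal{S}) \to \Fun(\mathcal{P},\mathcal{S})$ is already a functor and we have shown it carries the full subcategory $\Seg_{\mathcal{O}}(\mathcal{S})$ into the full subcategory $\Seg_{\mathcal{P}}(\mathcal{S})$, it restricts to the claimed functor; the displayed identity moreover exhibits this restriction as compatible with the decomposition of \cref{prop MonSeg}.

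I expect the only genuine subtlety to be the middle step: one must be careful that hypothesis~(ii), which is a statement about $\overline{f}_{\bX}^{\flat}$ as a morphism of cartesian patterns, is exactly enough (together with cartesian closedness of $\mathcal{S}$) to run the extendable-morphism machinery fibrewise, even though $\mathcal{S}$ is generally \emph{not} admissible for $f$ itself --- so one cannot simply invoke \cref{propn:pbextdecomp}. Once this is in place, the rest is formal bookkeeping with the equivalence of \cref{prop MonSeg}, the commuting square from \cref{lem:fwextble}, and the composability of left Kan extensions.
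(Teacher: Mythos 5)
Your proposal is correct and follows essentially the same route as the paper's proof: decompose $F$ as $\pi_{!}M$ via \cref{prop MonSeg}, use the commuting square to write $f_{!}F \simeq \pi'_{!}\overline{f}_{\bX,!}M$, invoke hypothesis~(ii) together with the automatic admissibility of $\mathcal{S}$ for morphisms of cartesian patterns to see that $\overline{f}_{\bX,!}M$ is a monoid, and conclude with \cref{prop MonSeg} again. The ``subtlety'' you flag is exactly the point the paper addresses with its parenthetical appeal to \cite[Corollary 7.18]{patterns}.
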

\begin{proof}
  Suppose $F$ is a Segal $\mathcal{O}$-space, and let $\bX$ be its restriction to $\mathcal{O}_{0}$. Then by \cref{prop MonSeg} there is an $\mathcal{O}_{\bX}$-monoid $M$ such that $F \simeq p_{!} M$ where $p$ is the projection $\mathcal{O}_{\bX} \to \mathcal{O}$. We have a commutative square
  \[
    \begin{tikzcd}
      \mathcal{O}_{\bX} \arrow{r}{\overline{f}_{\bX}} \arrow{d}{p} & \mathcal{P}_{f_{0,!}\bX} \arrow{d}{q} \\
      \mathcal{O} \arrow{r}{f} & \mathcal{P},
    \end{tikzcd}
  \]
  where $q$ is the left fibration for
  $i_{0,*}^{\mathcal{P}}f_{0,!}\bX$, which is a Segal $\mathcal{P}$-space by our first
  assumption. We then have an equivalence
  \[ f_{!}F \simeq q_{!}\overline{f}_{\bX,!} M,\] where
  $\overline{f}_{\bX,!} M$ is a $\mathcal{P}_{f_{0,!}\bX}$-monoid by
  assumption. (Recall that for a morphism $f$ of cartesian patterns,
  $\mathcal{S}$ is always $f$-admissible since it is cartesian closed
  \cite[Corollary 7.18]{patterns}.)  This monoid therefore corresponds to a
  Segal $\mathcal{P}$-space under $q_{!}$, using \cref{prop MonSeg}
  again.
\end{proof}

To be able to apply \cref{propn:lkefibrewise} we need a condition on
$f$ that implies that the maps $\overline{f}^{\flat}_{\bX}$ are
extendable. It turns out that this works provided we can strengthen
the conclusion of \cref{lem:actslicewc}: we need the \icats{}
$\mathcal{O}^{\act}_{0,\bX/\bar P}$ to be \emph{contractible} rather
than merely weakly contractible. This is a rather strong assumption on
$f$, but it seems to be required in order to say anything about left
adjoints on algebroids, which we will discuss in the next section.

The following observation gives a useful starting point:
\begin{propn}\label{propn:getflatext}
  Suppose $f \colon \mathcal{O} \to \mathcal{P}$ is an
  extendable morphism of
  enrichable patterns such that the \icat{} $\mathcal{O}^{\act}_{0/P}$
  is contractible for all $P \in \mathcal{P}^{\el}_{0}$. Then the
  morphism of cartesian patterns
  $f^{\flat} \colon \mathcal{O}^{\flat} \to \mathcal{P}^{\flat}$ is
  extendable.
\end{propn}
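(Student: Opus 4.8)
The plan is to use that $f^{\flat}$ and $f$ have literally the same underlying functor, inert--active factorization system, inert morphisms, and active morphisms, and differ only in the elementary objects: $\mathcal{O}^{\flat,\el}$ (resp.\ $\mathcal{P}^{\flat,\el}$) consists of the elementary objects of $\mathcal{O}$ (resp.\ $\mathcal{P}$) lying over $\angled{1}$. Going through the conditions that make up extendability (\cite[Definitions 7.6 and 7.7]{patterns}), the condition on unique lifting of inert morphisms concerns only the inert structure and is therefore inherited by $f^{\flat}$ from $f$; the remaining lifting-type conditions either concern the (unchanged) active morphisms or are quantified over elementary objects, and since $f$ is a morphism over $\xF_{*}^{\natural}$ it carries the fibre of $\mathcal{O}^{\el}$ over $\angled{1}$ into the fibre of $\mathcal{P}^{\el}$ over $\angled{1}$, so these too are inherited — essentially as in the proof of \cref{lem:extrestrictto0}. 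This reduces everything to the cofinality condition: for each $P \in \mathcal{P}$ the canonical functor
\[ \mathcal{O}^{\act}_{/P} \longrightarrow \lim_{E \in \mathcal{P}^{\flat,\el}_{P/}} \mathcal{O}^{\act}_{/E} \]
must be cofinal, where $\mathcal{O}^{\act}_{/(-)}$ denotes the relevant comma-\icats{} of active maps, as in \cref{lem:extrestrictto0}.

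First I would relate this limit to the corresponding one for $f$, indexed by the larger \icat{} $\mathcal{P}^{\el}_{P/}$. Writing $|P| \cong \angled{n}$, the projection $\mathcal{P}^{\el}_{P/} \to \xF^{\natural,\el}_{*,\angled{n}/} \simeq \{\rho_{1},\dots,\rho_{n}\}^{\triangleright}$ has as preimage of $\{\rho_{1},\dots,\rho_{n}\}$ exactly the full subcategory $\mathcal{P}^{\flat,\el}_{P/}$ (which is a discrete set, since $\mathcal{P}^{\flat}$ is cartesian), and as preimage of the cone point the full subcategory $\mathcal{P}^{\el,0}_{P/}$ of objects over $\angled{0}$ (every elementary object of $\mathcal{P}$ lies over $\angled{0}$ or $\angled{1}$, since $|\blank|$ preserves elementary objects). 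The key claim is that the functor $E \mapsto \mathcal{O}^{\act}_{/E}$ on $\mathcal{P}^{\el}_{P/}$ is right Kan extended along the inclusion $j \colon \mathcal{P}^{\flat,\el}_{P/} \hookrightarrow \mathcal{P}^{\el}_{P/}$. On $\mathcal{P}^{\flat,\el}_{P/}$ the unit is an equivalence since $j$ is fully faithful; for $E \in \mathcal{P}^{\el,0}_{P/}$ there is no inert morphism $E \intto E'$ with $|E'| = \angled{1}$ (there is none $\angled{0} \intto \angled{1}$ in $\xF_{*}$), so the comma category $\mathcal{P}^{\flat,\el}_{P/} \times_{\mathcal{P}^{\el}_{P/}} (\mathcal{P}^{\el}_{P/})_{E/}$ is empty, and the right Kan extension has the value of the terminal \icat{} there — while on the other hand $\mathcal{O}^{\act}_{/E} \simeq \mathcal{O}^{\act}_{0/E}$ (active maps into $E$, which lies over $\angled{0}$, can only come from objects over $\angled{0}$), which is contractible by the hypothesis on $f$. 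So the unit is an equivalence everywhere. This is precisely where the hypothesis that $\mathcal{O}^{\act}_{0/P}$ is genuinely contractible — not merely weakly contractible — is used.

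Granting the claim, $\lim_{E \in \mathcal{P}^{\el}_{P/}} \mathcal{O}^{\act}_{/E} \simeq \lim_{E \in \mathcal{P}^{\flat,\el}_{P/}} \mathcal{O}^{\act}_{/E}$, compatibly with the canonical comparison functors out of $\mathcal{O}^{\act}_{/P}$, so cofinality of the comparison functor for $f$ (part of the extendability of $f$) yields cofinality of that for $f^{\flat}$. The case $P \in \mathcal{P}_{0}$ is subsumed: then $\mathcal{P}^{\flat,\el}_{P/} = \emptyset$, all of $\mathcal{P}^{\el}_{P/}$ lies over $\angled{0}$, the claim shows the target limit for $f$ is the terminal \icat{}, and hence extendability of $f$ forces $\mathcal{O}^{\act}_{/P}$ to be weakly contractible, which is exactly the $f^{\flat}$-cofinality statement at $P$. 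The main obstacle I anticipate is the right Kan extension bookkeeping: checking carefully that the comma category really is empty at each $E$ over $\angled{0}$, that the unit is then an equivalence only because $\mathcal{O}^{\act}_{0/E}$ is contractible, and that the two comparison functors from $\mathcal{O}^{\act}_{/P}$ match up under the passage between $\mathcal{P}^{\el}_{P/}$ and $\mathcal{P}^{\flat,\el}_{P/}$; the rest runs parallel to \cref{lem:extrestrictto0}.
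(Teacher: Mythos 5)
Your proof is correct and follows essentially the same route as the paper's: unique lifting of inert morphisms is inherited because the factorization systems coincide, and the key step is exactly the paper's observation that the functor $E \mapsto \mathcal{O}^{\act}_{/E}$ on $\mathcal{P}^{\el}_{P/}$ is right Kan extended from $\mathcal{P}^{\flat,\el}_{P/}$ because $\mathcal{O}^{\act}_{/E} \simeq \mathcal{O}^{\act}_{0/E}$ is contractible for $E$ over $\angled{0}$, so that $\lim_{E \in \mathcal{P}^{\el}_{P/}} \mathcal{O}^{\act}_{/E} \simeq \lim_{E \in \mathcal{P}^{\flat,\el}_{P/}} \mathcal{O}^{\act}_{/E}$ and the cofinality for $f$ transfers to $f^{\flat}$. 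The only cosmetic difference is that the paper disposes of the remaining (third) extendability condition by citing that it is automatic for any morphism of cartesian patterns, rather than by your somewhat looser ``inherited from $f$'' argument, which is worth tightening to that reference.
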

\begin{proof}
  The morphism $f^{\flat}$ has unique lifting of inert morphisms since
  this is true for $f$  and the factorization systems are
  the same. Moreover, for $P \in \mathcal{P}$ the functor
  \[ \mathcal{O}^{\act}_{/P}\to \prod_{i} \mathcal{O}^{\act}_{/P_{i}}\]
  factors as
  \[ \mathcal{O}^{\act}_{/P}\to \lim_{E \in \mathcal{P}^{\el}_{P/}}
    \mathcal{O}^{\act}_{/E} \to  \prod_{i} \mathcal{O}^{\act}_{/P_{i}}.\] Here the first functor is cofinal
  since $f$ is extendable, so it suffices to show that the second functor is an
  equivalence. This holds because by assumption
  $\mathcal{O}^{\act}_{/E} \simeq \mathcal{O}^{\act}_{0/E}$ is contractible for all
  $E \in \mathcal{P}^{\el}_{0}$, so that the functor
  $\mathcal{P}^{\el}_{P/} \to \CatI$ taking $E$ to
  $\mathcal{O}^{\act}_{/E}$ is right Kan extended from
  $\mathcal{P}^{\flat,\el}_{P/}$; the limit of this functor is
  therefore the same as the limit over $\mathcal{P}^{\flat,\el}_{P/}$,
  which is what we want to prove. This completes the proof since the
  third condition for $f^{\flat}$ to be extendable is automatic for a
  morphism of cartesian patterns (see \cite[Remark 8.4]{freealg}).
\end{proof}

\begin{cor}\label{cor:fwextblecond}
  Suppose $f \colon \mathcal{O} \to \mathcal{P}$ is an extendable
  simple morphism of enrichable patterns such that $\mathcal{S}$ is
  $f_{0}$-admissible. If for any $\bX$ in
  $\Seg_{\mathcal{O}_{0}}(\mathcal{S})$ and $\overline{E}$ in
  $\mathcal{P}_{f_{0,!}\bX}$ over $E \in \mathcal{P}_{0}^{\el}$ we
  have that $\mathcal{O}_{0,\bX/\overline{E}}^{\act}$ is contractible,
  then $\overline{f}^{\flat}_{\bX}$ is extendable.
\end{cor}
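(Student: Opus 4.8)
The plan is to deduce this from \cref{propn:getflatext}, applied not to $f$ but to the induced morphism $\overline{f}_{\bX}\colon \mathcal{O}_{\bX}\to \mathcal{P}_{f_{0,!}\bX}$. First I would record that this application is legitimate: since $f$ is extendable, $f_{0}$ is extendable by \cref{lem:extrestrictto0}(i), so $f_{0,!}\bX$ is a Segal $\mathcal{P}_{0}$-space and $\mathcal{P}_{f_{0,!}\bX}\to\mathcal{P}$ is a Segal $\mathcal{P}$-fibration; hence both $\mathcal{O}_{\bX}$ and $\mathcal{P}_{f_{0,!}\bX}$ are enrichable patterns, by \cref{def OX} and \cref{lem enrpatt}, with structure maps to $\xF_{*}^{\natural}$ factoring through $\mathcal{O}$ and $\mathcal{P}$, and $\overline{f}_{\bX}$ is a morphism of enrichable patterns, fitting into the commutative square over $f$ used in the proof of \cref{lem:fwextble}.

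Next I would check the two hypotheses of \cref{propn:getflatext} for $\overline{f}_{\bX}$. Extendability of $\overline{f}_{\bX}$ as a morphism of enrichable patterns --- equivalently, extendability of $\overline{f}^{\natural}_{\bX}$ --- is exactly \cref{lem:fwextble}, whose hypotheses are all satisfied ($f$ is simple, $f_{0}$ is extendable by the above, and $\mathcal{S}$ is $f_{0}$-admissible by assumption). For the remaining hypothesis I must see that the active slices of $\mathcal{O}_{\bX}$ over the elementary objects of $\mathcal{P}_{f_{0,!}\bX}$ lying over $\angled{0}\in\xF_{*}$ are contractible. By \cref{def SegalOfib} the elementary objects of $\mathcal{P}_{f_{0,!}\bX}$ are precisely those lying over $\mathcal{P}^{\el}$, and since its structure map to $\xF_{*}$ factors through $\mathcal{P}$, such an object lies over $\angled{0}$ exactly when its image lies in $\mathcal{P}_{0}$; so these are precisely the objects $\overline{E}$ over $E\in\mathcal{P}_{0}^{\el}$, and for each of them the relevant active slice --- which, as in \cref{rem O_0 act}, coincides with $\mathcal{O}_{0,\bX/\overline{E}}^{\act}$ --- is contractible by hypothesis. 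Applying \cref{propn:getflatext} then gives that $\overline{f}^{\flat}_{\bX}\colon \mathcal{O}^{\flat}_{\bX}\to\mathcal{P}^{\flat}_{f_{0,!}\bX}$ is extendable, which is the assertion.

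I do not expect a real obstacle here, as the argument is essentially bookkeeping. The points needing a little attention are matching the notion of ``extendable morphism of enrichable patterns'' provided by \cref{lem:fwextble} with the one demanded by \cref{propn:getflatext}, and identifying the elementary objects of $\mathcal{P}_{f_{0,!}\bX}$ that lie over $\angled{0}$ with the objects lying over $\mathcal{P}_{0}^{\el}$ --- both of which are immediate from the definitions.
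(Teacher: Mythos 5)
Your proposal is correct and is exactly the paper's argument: apply \cref{propn:getflatext} to the morphism $\overline{f}_{\bX}$, whose extendability is supplied by \cref{lem:fwextble}, with the hypothesis on $\mathcal{O}^{\act}_{0,\bX/\overline{E}}$ furnishing the required contractibility of the active slices over the elementary objects lying over $\angled{0}$. The additional bookkeeping you spell out (that $\overline{f}_{\bX}$ is a morphism of enrichable patterns and which elementaries of $\mathcal{P}_{f_{0,!}\bX}$ lie over $\angled{0}$) is implicit in the paper's one-line proof.
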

\begin{proof}
  This follows from \cref{propn:getflatext} since we know that
  $\overline{f}^{\natural}_{\bX}$ is extendable by
  \cref{lem:fwextble}.
\end{proof}

\begin{cor}\label{cor:igpdgivesfwext}
  Suppose $f \colon \mathcal{O} \to \mathcal{P}$ is an extendable
  simple morphism of enrichable patterns. If the \icat{}
  $\mathcal{O}^{\act}_{0/E}$ is an \igpd{} for all
  $E \in \mathcal{P}^{\el}_{0}$, then $\mathcal{S}$ is $f_{0}$-admissible and $\overline{f}^{\flat}_{\bX}$ is
  extendable for every $\bX \in \Seg_{\mathcal{O}_{0}}(\mathcal{S})$.
\end{cor}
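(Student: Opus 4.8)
The plan is to prove the two assertions of the corollary in turn, reusing the machinery already assembled. First, $f_{0}\colon \mathcal{O}_{0}\to\mathcal{P}_{0}$ is extendable by \cref{lem:extrestrictto0}(i), and by hypothesis the active slices $\mathcal{O}^{\act}_{0/E}$ over the elementary objects $E\in\mathcal{P}^{\el}_{0}$ are \igpds{}. Thus, after using the extendability of $f_{0}$ to reduce the Segal condition for $f_{0,!}$ to the elementary objects, the colimits computing $f_{0,!}$ are indexed by \igpds{}; in $\mathcal{S}$ such colimits are computed as total spaces of left fibrations, and one checks directly (both sides compute the pullback $\bigl(\lim_{E}\mathcal{O}^{\act}_{0/E}\bigr)\times_{\mathcal{O}_{0}}\widetilde{\mathcal{O}_{0}}$, where $\widetilde{\mathcal{O}_{0}}\to\mathcal{O}_{0}$ is the left fibration for the given Segal $\mathcal{O}_{0}$-space) that they commute with the limits in the Segal conditions. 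Hence $\mathcal{S}$ is $f_{0}$-admissible; I expect this to be exactly the content of \cite[Corollary 7.17]{patterns} applied to $f_{0}$.

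Given this, the second assertion will follow from \cref{cor:fwextblecond}: its hypotheses that $f$ be extendable and simple are assumed, $\mathcal{S}$ is $f_{0}$-admissible by the previous paragraph, so the only thing left to establish is that for every $\bX\in\Seg_{\mathcal{O}_{0}}(\mathcal{S})$ and every $\overline{E}\in\mathcal{P}_{f_{0,!}\bX}$ lying over some $E\in\mathcal{P}^{\el}_{0}$, the \icat{} $\mathcal{O}^{\act}_{0,\bX/\overline{E}}$ is \emph{contractible}. By \cref{lem:actslicewc} we already know it is weakly contractible, so it suffices to show it is an \igpd{} --- a weakly contractible \igpd{} is contractible. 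For this I would consider the forgetful functor
\[ \mathcal{O}^{\act}_{0,\bX/\overline{E}} \longrightarrow \mathcal{O}^{\act}_{0/E}, \]
induced by the left fibrations $\mathcal{O}_{0,\bX}\to\mathcal{O}_{0}$ and $\mathcal{P}_{0,f_{0,!}\bX}\to\mathcal{P}_{0}$ (\cref{rem O_0X}) and the fixed lift $\overline{E}$ of $E$, and claim it is itself a left fibration. Since its target $\mathcal{O}^{\act}_{0/E}$ is an \igpd{} by hypothesis, and a left fibration over an \igpd{} has \igpd{} total space, this gives what we want.

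The heart of the argument, and the step I expect to be the main obstacle, is verifying that $\mathcal{O}^{\act}_{0,\bX/\overline{E}}\to\mathcal{O}^{\act}_{0/E}$ is a left fibration. Write $g\colon\mathcal{O}_{0,\bX}\to\mathcal{P}_{0,f_{0,!}\bX}$ for the restriction of $\overline{f}_{\bX}$ to the fibres over $\angled{0}$; an object of $\mathcal{O}^{\act}_{0,\bX/\overline{E}}$ is a pair $(\overline{O},\alpha\colon g\overline{O}\actto\overline{E})$. Given such a pair lying over $(O,\beta)\in\mathcal{O}^{\act}_{0/E}$ and a morphism $(O,\beta)\to(O',\beta')$ there (a map $\delta\colon O\to O'$ in $\mathcal{O}_{0}$ with a homotopy $\beta'\circ f_{0}\delta\simeq\beta$), one first takes the essentially unique cocartesian lift $\gamma\colon\overline{O}\to\overline{O}'$ of $\delta$ along the left fibration $\mathcal{O}_{0,\bX}\to\mathcal{O}_{0}$; then $g\gamma$ lies over $f_{0}\delta$ and, being an edge of the left fibration $\mathcal{P}_{0,f_{0,!}\bX}\to\mathcal{P}_{0}$, is cocartesian, so there is a contractible space of maps $\alpha'\colon g\overline{O}'\to\overline{E}$ over $\beta'$ with $\alpha'\circ g\gamma\simeq\alpha$; such an $\alpha'$ is automatically active since it lies over the active map $\beta'$. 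This exhibits a contractible space of lifts of the given morphism, which is the left fibration condition. The remaining points requiring care are the exact form of the admissibility criterion cited in the first step and the degenerate case of an object $P$ admitting no inert map to an elementary object (where the Segal condition forces the relevant colimit to be a point), but both are handled by the results of \cite{patterns}.
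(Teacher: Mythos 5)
Your proof is correct and follows essentially the same route as the paper: the admissibility of $\mathcal{S}$ is deduced from the \igpd{} hypothesis on the active slices (the paper invokes \cite[Corollary 7.20]{patterns} here rather than Corollary 7.17, but the content you describe --- colimits indexed by \igpds{} distributing over the Segal limits in $\mathcal{S}$ --- is exactly that result), and the extendability of $\overline{f}^{\flat}_{\bX}$ is obtained from \cref{cor:fwextblecond} by upgrading the weak contractibility supplied by \cref{lem:actslicewc} to contractibility via the observation that $\mathcal{O}^{\act}_{0,\bX/\overline{E}}$ is the source of a left fibration over the \igpd{} $\mathcal{O}^{\act}_{0/E}$. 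Your explicit verification that this projection is a left fibration is a detail the paper merely asserts, and it is sound.
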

\begin{proof}
  We first observe that since the \icat{} $\mathcal{O}_{0/E}^{\act}$
  is an \igpd{} for all $E \in \mathcal{P}_{0}$ by assumption, we know
  from \cite[Corollary 7.20]{patterns} that $\mathcal{S}$ is
  $f_{0}$-admissible.
  
  Next, we want to apply \cref{cor:fwextblecond}. For this we need to
  show that for an object $\overline{E}$ in $\mathcal{P}_{f_{0,!}\bX}$
  lying over $E \in \mathcal{P}^{\el}_{0}$, the \icat{}
  $\mathcal{O}^{\act}_{0,\bX/\overline{E}}$ is contractible. We already know
  from \cref{lem:actslicewc} that this \icat{} is weakly contractible.
  To show that it is furthermore contractible, it then suffices to
  observe that it is actually an \igpd{}: it is the source of a left
  fibration over the \icat{} $\mathcal{O}^{\act}_{0/E}$, which is by
  assumption an \igpd{}.
\end{proof}

Putting this together with \cref{propn:lkefibrewise}, we get:
\begin{cor}
    Suppose $f \colon \mathcal{O} \to \mathcal{P}$ is an extendable
  simple morphism of enrichable patterns. If the \icat{} $\mathcal{O}^{\act}_{0/E}$ is an
  \igpd{} for all $E \in \mathcal{P}^{\el}_{0}$, then left Kan extension along $f$ restricts to a functor
  \[ f_{!} \colon \Seg_{\mathcal{O}}(\mathcal{S}) \to
    \Seg_{\mathcal{P}}(\mathcal{S}),\] left adjoint to restriction
  along $f$. \qed
\end{cor}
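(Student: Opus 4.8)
The plan is to obtain this as an immediate consequence of \cref{propn:lkefibrewise}, once we know that its hypotheses are satisfied, together with a formal argument that the resulting functor is a left adjoint. First I would observe that the \igpd{} hypothesis on $\mathcal{O}^{\act}_{0/E}$ for $E \in \mathcal{P}^{\el}_{0}$ is precisely the input needed for \cref{cor:igpdgivesfwext}, which yields two conclusions: that $\mathcal{S}$ is $f_{0}$-admissible, and that the morphism of cartesian patterns $\overline{f}^{\flat}_{\bX} \colon \mathcal{O}^{\flat}_{\bX} \to \mathcal{P}^{\flat}_{f_{0,!}\bX}$ is extendable for every $\bX \in \Seg_{\mathcal{O}_{0}}(\mathcal{S})$.

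Next I would verify the two conditions of \cref{propn:lkefibrewise}. Condition (i) requires $f_{0}$ to be extendable and $\mathcal{S}$ to be $f_{0}$-admissible: the former follows from \cref{lem:extrestrictto0}(i), since $f$ is assumed extendable, and the latter is the first conclusion of \cref{cor:igpdgivesfwext}. Condition (ii) --- that $\overline{f}^{\flat}_{\bX}$ is extendable for every $\bX$ --- is exactly the second conclusion of \cref{cor:igpdgivesfwext}. Hence \cref{propn:lkefibrewise} applies, and left Kan extension along $f$ restricts to a functor $f_{!} \colon \Seg_{\mathcal{O}}(\mathcal{S}) \to \Seg_{\mathcal{P}}(\mathcal{S})$.

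Finally I would upgrade this to the stated adjunction. Restriction along $f$ is the functor $f^{*} \colon \Fun(\mathcal{P},\mathcal{S}) \to \Fun(\mathcal{O},\mathcal{S})$, which is right adjoint to left Kan extension $f_{!}$ on the full functor \icats{}. Since $f$ is simple it is in particular a strong Segal morphism, hence a Segal morphism, so $f^{*}$ restricts to a functor $\Seg_{\mathcal{P}}(\mathcal{S}) \to \Seg_{\mathcal{O}}(\mathcal{S})$; combined with the previous paragraph, both $f_{!}$ and $f^{*}$ carry the respective full subcategories of Segal objects into one another, so the adjunction $f_{!} \dashv f^{*}$ restricts to these subcategories, which is the claim. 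I expect no real obstacle here: all the substantive content --- extracting the extendable factorizations $\overline{f}^{\flat}_{\bX}$ from the \igpd{} hypothesis and running the fibrewise left Kan extension argument built on \cref{prop MonSeg} --- is already packaged in \cref{cor:igpdgivesfwext} and \cref{propn:lkefibrewise}, so the only thing left to check by hand is the harmless fact that the ambient adjunction restricts to Segal objects.
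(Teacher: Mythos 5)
Your proposal is correct and matches the paper's own (implicit) argument: the corollary is stated there precisely as the combination of \cref{cor:igpdgivesfwext} and \cref{propn:lkefibrewise}, with $f_{0}$ extendable coming from \cref{lem:extrestrictto0}. The concluding remark that the ambient adjunction $f_{!} \dashv f^{*}$ restricts to the full subcategories of Segal objects is the standard formal step the paper leaves implicit.
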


The following is a useful special case of \cref{cor:igpdgivesfwext}:
\begin{cor}
  Suppose $f \colon \mathcal{O} \to \mathcal{P}$ is an extendable
  simple morphism of enrichable patterns such that
  all maps in $\mathcal{O}_{0}$ are inert. Then for all
  $\bX \in \Seg_{\mathcal{O}_{0}}(\mathcal{S})$, the morphism
  \[ \overline{f}_{\bX}^{\flat} \colon \mathcal{O}^{\flat}_{\bX} \to \mathcal{O}^{\flat}_{f_{0,!}\bX}\]
  is extendable.
\end{cor}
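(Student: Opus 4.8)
The plan is to obtain this as an immediate application of \cref{cor:igpdgivesfwext}. Since $f$ is assumed to be an extendable simple morphism of enrichable patterns, the only hypothesis of that corollary still to be checked is that the \icat{} $\mathcal{O}^{\act}_{0/E}$ is an \igpd{} for every $E \in \mathcal{P}^{\el}_{0}$; granting this, \cref{cor:igpdgivesfwext} delivers at once that $\mathcal{S}$ is $f_{0}$-admissible and that $\overline{f}^{\flat}_{\bX}$ is extendable for every $\bX \in \Seg_{\mathcal{O}_{0}}(\mathcal{S})$, which is precisely the assertion. So the whole argument reduces to verifying this groupoid condition from the hypothesis that every morphism of $\mathcal{O}_{0}$ is inert.

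First I would record the elementary observation that, since the inert and active morphisms form a factorization system on $\mathcal{O}_{0}$ (the restricted one of \cref{no O_0}), any morphism which is both inert and active is an equivalence. As by hypothesis \emph{every} morphism of $\mathcal{O}_{0}$ is inert, it follows that every active morphism of $\mathcal{O}_{0}$ is an equivalence; equivalently, the subcategory $\mathcal{O}_{0}^{\act}$ is the maximal sub-\igpd{} of $\mathcal{O}_{0}$. Next I would observe that the forgetful functor $\mathcal{O}^{\act}_{0/E} \to \mathcal{O}_{0}^{\act}$, obtained as the pullback along $f_{0}$ of the slice projection $(\mathcal{P}_{0}^{\act})_{/E} \to \mathcal{P}_{0}^{\act}$, is a right fibration. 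Since right fibrations reflect equivalences and every morphism of the base $\mathcal{O}_{0}^{\act}$ is an equivalence, every morphism of $\mathcal{O}^{\act}_{0/E}$ is an equivalence too, so $\mathcal{O}^{\act}_{0/E}$ is an \igpd{} (possibly empty, which is harmless). Applying \cref{cor:igpdgivesfwext} then finishes the proof.

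There is no real obstacle in this proof: it is a one-line reduction to \cref{cor:igpdgivesfwext} plus the remark that "all morphisms of $\mathcal{O}_{0}$ inert" forces the active morphisms of $\mathcal{O}_{0}$ to be exactly the equivalences. The only point to state carefully is that $\mathcal{O}^{\act}_{0/E}$ is a right fibration over $\mathcal{O}_{0}^{\act}$ (rather than, say, checking weak contractibility by hand), since this is what turns "$\mathcal{O}_{0}^{\act}$ is an \igpd{}" into "$\mathcal{O}^{\act}_{0/E}$ is an \igpd{}" without any nonemptiness assumption.
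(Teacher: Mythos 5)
Your proposal is correct and follows essentially the same route as the paper: the paper's own proof likewise reduces to \cref{cor:igpdgivesfwext} by noting that $\mathcal{O}_{0}^{\act}$ is an \igpd{} (since every morphism of $\mathcal{O}_{0}$ is inert) and that $\mathcal{O}_{0/E}^{\act} \to \mathcal{O}_{0}^{\act}$ is a right fibration. You merely spell out a couple of steps the paper leaves implicit.
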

\begin{proof}
  To see this, it suffices to observe that $\mathcal{O}_{0/P}^{\act}$ is an \igpd{} for all $P \in \mathcal{P}_{0}$: the \icat{} $\mathcal{O}_{0}^{\act}$ is one by assumption, and the projection $\mathcal{O}_{0/P}^{\act} \to \mathcal{O}_{0}^{\act}$ is a right fibration.
\end{proof}

As an important further specialization, we have:
\begin{cor}\label{lem:freealgextble}
  Suppose $\mathcal{O}$ is an enrichable pattern that is also
  extendable, and let $j$ denote the inclusion
  $\mathcal{O}^{\xint} \to \mathcal{O}$. Then for all
  $\bX \in \Seg_{\mathcal{O}^{\xint}_{0}}(\mathcal{S})$, the morphism
  \[ \overline{\jmath}_{\bX}^{\flat} \colon
    \mathcal{O}^{\xint,\flat}_{\bX} \to
    \mathcal{O}^{\flat}_{j_{0,!}\bX}\] is extendable. \qed
\end{cor}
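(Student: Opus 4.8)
The plan is to deduce this from the corollary immediately preceding it (the useful special case of \cref{cor:igpdgivesfwext}), applied to the inclusion $j\colon \mathcal{O}^{\xint} \to \mathcal{O}$ viewed as a morphism of enrichable patterns. First I would fix the pattern structure on $\mathcal{O}^{\xint}$: we take the factorization system in which \emph{every} morphism is inert (so the active morphisms are exactly the equivalences) and keep as elementary objects those of $\mathcal{O}$. The composite $\mathcal{O}^{\xint} \xto{j} \mathcal{O} \xto{|\blank|} \xF_{*}^{\natural}$ is then a morphism of algebraic patterns, and it makes $\mathcal{O}^{\xint}$ into an enrichable pattern: since all morphisms of $\mathcal{O}^{\xint}$ are inert we have $(\mathcal{O}^{\xint})^{\xint}_{O/} = \mathcal{O}^{\xint}_{O/}$, so $(\mathcal{O}^{\xint})^{\flat,\el}_{O/} \simeq \mathcal{O}^{\flat,\el}_{O/}$, and the latter is equivalent to $\xF^{\flat,\el}_{*,|O|/}$ because $\mathcal{O}^{\flat}$ is cartesian. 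With this structure $j$ is a morphism of enrichable patterns, $j_{0}\colon \mathcal{O}^{\xint}_{0} \to \mathcal{O}_{0}$ is again the inert-subcategory inclusion, and in particular all morphisms of $\mathcal{O}^{\xint}_{0}$ are inert.

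Next I would check that $j$ is an extendable simple morphism. That $j$ is extendable is, up to unwinding definitions, exactly the hypothesis that $\mathcal{O}$ is an extendable pattern (see \cite[\S 8]{patterns}). For simplicity: $j$ is a strong Segal morphism since the induced functor $(\mathcal{O}^{\xint})^{\el}_{O/} \to \mathcal{O}^{\el}_{j(O)/}$ is an equivalence (by the identification above), hence coinitial; and it satisfies the remaining condition of \cref{lem:segprescartcond} by \cref{obs:simplecond}, since for every $E \in (\mathcal{O}^{\xint})^{\el}$ the functor $(\mathcal{O}^{\xint})^{\el}_{0,E/} \to \mathcal{O}^{\el}_{0,j(E)/}$ is likewise an equivalence --- both are empty when $E$ lies over $\angled{1}$ (there is no inert map $\angled{1} \intto \angled{0}$), and both agree with $\mathcal{O}^{\el}_{E/}$ when $E$ lies over $\angled{0}$ by \cref{rem O_0 int} --- and hence coinitial. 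The auxiliary data needed to make sense of the notation $\overline{\jmath}^{\flat}_{\bX}$, namely that $j_{0}$ is extendable and $\mathcal{S}$ is $j_{0}$-admissible, then hold by \cref{lem:extrestrictto0}(i) and (since all morphisms of $\mathcal{O}^{\xint}_{0}$ are inert) by \cite[Corollary 7.20]{patterns}.

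With these checks in place, the preceding corollary --- applied with $f = j$ and $\mathcal{P} = \mathcal{O}$ --- shows that $\overline{\jmath}^{\flat}_{\bX}\colon \mathcal{O}^{\xint,\flat}_{\bX} \to \mathcal{O}^{\flat}_{j_{0,!}\bX}$ is extendable for every $\bX \in \Seg_{\mathcal{O}^{\xint}_{0}}(\mathcal{S})$, which is the assertion. The only real work is the bookkeeping of the first two paragraphs: recognising that the inert-subcategory inclusion falls under that corollary. Once $\mathcal{O}^{\xint}$ is known to be enrichable and $j$ to be extendable and simple, the conclusion is immediate --- no $f$-admissibility hypothesis on $\mathcal{O}$ itself is required, because the relevant slices $(\mathcal{O}^{\xint})^{\act}_{0/E}$ are automatically $\infty$-groupoids, being slices of the maximal subgroupoid of $\mathcal{O}^{\xint}_{0}$.
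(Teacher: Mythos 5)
Your proposal follows the paper's intended route exactly: the corollary is stated without proof precisely because it is the specialization of the immediately preceding corollary to $f=j$, and the real content is the bookkeeping you carry out --- that $\mathcal{O}^{\xint}$ (with all morphisms inert) is enrichable, that $j$ being extendable is the definition of $\mathcal{O}$ being an extendable pattern, and that $j$ is simple. One sub-claim in your simplicity check is wrong, though: the unique map $\angled{1}\to\angled{0}$ \emph{is} inert in $\xFs$ (the paper's description of $\xF^{\natural,\el}_{*,\angled{n}/}$ explicitly includes ``the unique (inert) map $\angled{n}\to\angled{0}$''), so for $E$ over $\angled{1}$ the category $\mathcal{O}^{\el}_{0,E/}$ need not be empty --- e.g.\ for $\Dop$ and $E=[1]$ it contains the two inert maps $[1]\to[0]$. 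This does not damage the argument: since $(\mathcal{O}^{\xint})^{\xint}=\mathcal{O}^{\xint}$ with the same elementary objects, the functor $(\mathcal{O}^{\xint})^{\el}_{0,E/}\to\mathcal{O}^{\el}_{0,E/}$ is an equivalence on the nose for every $E$, hence coinitial, which is all \cref{obs:simplecond} requires; you should just replace the emptiness justification with this observation.
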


\subsection{Free algebroids and other left adjoints}
Our goal in this section is to use the results from \cite{freealg}
together with our discussion of extendability in the preceding
section to give an explicit colimit formula for free algebroids
over an enrichable pattern that is also extendable, as well as for
certain other left adjoints. We start by recalling the following case of a result from \cite{freealg}:
\begin{propn}\label{propn:ifextbleitworks}
  Suppose $f \colon \mathcal{O} \to \mathcal{P}$ is a
  simple morphism between enrichable patterns such that $f_{0}$ is extendable, $\mathcal{S}$ is $f_{0}$-admissible, and
  $\overline{f}_{\bX}^{\flat} \colon \mathcal{O}_{\bX}^{\flat} \to
  \mathcal{P}_{f_{0,!}\bX}^{\flat}$ is extendable for some $\bX \in \Seg_{\mathcal{O}_{0}}(\mathcal{S})$. Then if $\mathcal{V}^{\otimes}$ is a presentably $\mathcal{P}$-monoidal \icat{}, we have an adjunction
  \[ \overline{f}_{\bX,!} \colon \Alg_{\mathcal{O}_{\bX}/\mathcal{P}}(\mathcal{V}) \rightleftarrows \Alg_{\mathcal{P}_{f_{0,!}\bX}/\mathcal{P}}(\mathcal{V}) \cocolon \overline{f}_{\bX}^{*},\]
  where the right adjoint is given by by restriction along $\overline{f}_{\bX}$ and the left adjoint is given at $A\in \Alg_{\mathcal{O}_\bX/\mathcal{P}}(\mathcal{V})$ and $\bar{P} \in \mathcal{P}_{f_{0,!}\bX}$ by
  \[\overline{f}_{\bX,!}A(\overline P)\simeq \colim_{(\overline O, \phi\colon f_\bX\overline O\actto \overline P)\in \mathcal{O}^\act_{\bX/\overline P}} \phi_!A(\overline O).\]
\end{propn}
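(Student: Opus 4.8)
The plan is to obtain this as a direct instance of the free-algebra result of \cite{freealg} for an extendable morphism of \emph{cartesian} patterns with values in a presentably monoidal \icat{}, applied to $\overline{f}_{\bX}^{\flat} \colon \mathcal{O}_{\bX}^{\flat} \to \mathcal{P}_{f_{0,!}\bX}^{\flat}$; the hypotheses that $f_{0}$ is extendable and $\mathcal{S}$ is $f_{0}$-admissible enter only so that $f_{0,!}\bX$ is a Segal $\mathcal{P}_{0}$-space and hence $\mathcal{P}_{f_{0,!}\bX}$ and $\overline{f}_{\bX}$ are defined in the first place. The first step is to dispose of the ``over $\mathcal{P}$'' bookkeeping. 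Write $q \colon \mathcal{P}_{f_{0,!}\bX} \to \mathcal{P}$ for the projection of the Segal $\mathcal{P}$-fibration $\mathcal{P}_{f_{0,!}\bX}$; by \cref{def SegalOfib} this is an iso-Segal morphism, and on $\flat$-structures it is a morphism of cartesian patterns carrying elementary objects to elementary objects and active maps to active maps. Hence $q^{*}\mathcal{V}^{\otimes}$ is a $\mathcal{P}_{f_{0,!}\bX}$-monoidal \icat{} (\cref{rem pullback_algebras}) whose fibres over elementary objects and whose active cocartesian transport functors coincide, via $q$, with those of $\mathcal{V}^{\otimes}$; since $\mathcal{V}^{\otimes}$ is presentably $\mathcal{P}$-monoidal, $q^{*}\mathcal{V}^{\otimes}$ is presentably $\mathcal{P}_{f_{0,!}\bX}$-monoidal. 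As the structure map $\mathcal{O}_{\bX} \to \mathcal{P}$ factors as $q \circ \overline{f}_{\bX}$, repeated use of \cref{rem pullback_algebras} (and transitivity of pullback) yields natural equivalences $\Alg_{\mathcal{O}_{\bX}/\mathcal{P}}(\mathcal{V}) \simeq \Alg_{\mathcal{O}_{\bX}/\mathcal{P}_{f_{0,!}\bX}}(q^{*}\mathcal{V}) \simeq \Alg_{\mathcal{O}_{\bX}}(\overline{f}_{\bX}^{*}q^{*}\mathcal{V})$ and $\Alg_{\mathcal{P}_{f_{0,!}\bX}/\mathcal{P}}(\mathcal{V}) \simeq \Alg_{\mathcal{P}_{f_{0,!}\bX}}(q^{*}\mathcal{V})$, compatibly with restriction along $\overline{f}_{\bX}$. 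This reduces us to an extendable morphism of cartesian patterns $\overline{f}_{\bX}^{\flat}$ with presentably $\mathcal{P}_{f_{0,!}\bX}$-monoidal target $q^{*}\mathcal{V}^{\otimes}$.

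The second step is to invoke \cite{freealg}. The patterns $\mathcal{O}_{\bX}^{\flat}$ and $\mathcal{P}_{f_{0,!}\bX}^{\flat}$ are cartesian (combine \cref{lem enrpatt} for $\mathcal{O}_{\bX} \to \mathcal{O}$ and $\mathcal{P}_{f_{0,!}\bX} \to \mathcal{P}$ with the definition of $(\blank)^{\flat}$), $\overline{f}_{\bX}^{\flat}$ is a morphism of cartesian patterns, and it is extendable by hypothesis. Since the $\flat$- and $\natural$-structures share the same underlying factorization system, the notion of inert map --- and hence the \icat{} of algebras --- is insensitive to the decoration, so the cited result applies and produces the adjunction $\overline{f}_{\bX,!} \dashv \overline{f}_{\bX}^{*}$ of the statement, with left adjoint
\[ \overline{f}_{\bX,!}A(\overline{P}) \simeq \colim_{(\overline{O},\, \phi \colon \overline{f}_{\bX}\overline{O} \actto \overline{P}) \in \mathcal{O}^{\act}_{\bX/\overline{P}}} \phi_{!}A(\overline{O}), \]
using that the active slice $(\mathcal{O}_{\bX}^{\flat})^{\act}_{/\overline{P}}$ equals $\mathcal{O}^{\act}_{\bX/\overline{P}}$. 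Transporting back along the equivalences of the first step preserves this formula, since the active slice is intrinsic to $\mathcal{O}_{\bX}$ and independent of the base, and the cocartesian pushforward $\phi_{!}$ computed in $q^{*}\mathcal{V}^{\otimes}$ is carried to the cocartesian pushforward of the image of $\phi$ in $\mathcal{V}^{\otimes}$ by the projection $q^{*}\mathcal{V}^{\otimes} \to \mathcal{V}^{\otimes}$, a pullback of a cocartesian fibration.

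The proof contains no genuinely new argument; the work lies entirely in identifying the precise result of \cite{freealg} being cited and in checking that its hypothesis on the target \icat{} is exactly ``presentably monoidal'' (which we have verified for $q^{*}\mathcal{V}^{\otimes}$), together with unwinding the identifications $q^{*}\mathcal{V}^{\otimes}_{\overline{E}} \simeq \mathcal{V}^{\otimes}_{E}$ and $\mathcal{O}_{\bX} \times_{\mathcal{P}_{f_{0,!}\bX}} q^{*}\mathcal{V}^{\otimes} \simeq \mathcal{O}_{\bX} \times_{\mathcal{P}} \mathcal{V}^{\otimes}$ so that the two colimit formulas visibly agree. I expect this bookkeeping --- in particular pinning down which cited result is ``the'' one and confirming its admissibility hypotheses are the ones at hand --- to be the only real point of friction.
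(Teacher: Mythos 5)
Your proposal is correct and follows the same route as the paper, whose entire proof is the one-line observation that under these hypotheses the statement is a special case of the free-algebra result \cite[Corollary 8.13]{freealg} applied to the extendable morphism of cartesian patterns $\overline{f}_{\bX}^{\flat}$. The reduction via $q^{*}\mathcal{V}^{\otimes}$ and the verification that presentable monoidality is inherited are exactly the bookkeeping the paper leaves implicit, and you carry it out correctly.
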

\begin{proof}
  Under these assumptions, this is just a special case of \cite[Corollary 8.13]{freealg}.
\end{proof}

\begin{remark}
  The assumption that $\mathcal{V}$ is presentably
  $\mathcal{P}$-monoidal can be relaxed to requiring the
  $\mathcal{P}$-monoidal structure to be compatible with the colimits
  that appear (by embedding $\mathcal{V}$ in presheaves in a larger
  universe in the usual way), but we will keep it in order to use only
  results that are explicitly stated in \cite{freealg}.
\end{remark}

To extend \cref{propn:ifextbleitworks} to obtain a left adjoint
between \icats{} of algebroids, we use the following observation:
\begin{propn}\label{propn:leftadjtocartfib}
  Suppose given a commutative square
  \[
    \begin{tikzcd}
      \mathcal{E} \arrow{r}{\overline{R}} \arrow{d}{p} & \mathcal{E}'
      \arrow{d}{p'} \\
      \mathcal{B} \arrow{r}{R} & \mathcal{B}'
    \end{tikzcd}
  \]
  where
  \begin{enumerate}[(i)]
  \item $p$ and $p'$ are cartesian fibration, 
  \item $\overline{R}$ preserves cartesian morphisms,
  \item $R$ has a left adjoint $L \colon \mathcal{B}' \to
    \mathcal{B}$,
  \item for $x \in \mathcal{B}'$ there is a functor $\overline{L}_{x}
    \colon \mathcal{E}'_{x} \to \mathcal{E}_{Lx}$ left adjoint to the
    composite
    \[ \mathcal{E}_{Lx} \xto{\overline{R}_{Lx}} \mathcal{E}'_{RLx}
      \xto{u_{x}^{*}} \mathcal{E}'_{x} \]
    where the second functor is the cartesian transport along the unit
    map $u_{x} \colon x \to RLx$.
  \end{enumerate}
  Then $\overline{R}$ has a left adjoint $\overline{L}$ that lies over
  $L$ and is given
  fibrewise by $\overline{L}_{x}$.
\end{propn}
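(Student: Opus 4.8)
The plan is to build the left adjoint objectwise and then invoke the standard pointwise criterion for the existence of a left adjoint: it suffices to exhibit, for each $y \in \mathcal{E}'$, an object $\overline{L}y \in \mathcal{E}$ and a morphism $\eta_{y} \colon y \to \overline{R}\,\overline{L}y$ such that for every $z \in \mathcal{E}$ the composite
\[ \Map_{\mathcal{E}}(\overline{L}y, z) \xrightarrow{\overline{R}} \Map_{\mathcal{E}'}(\overline{R}\,\overline{L}y, \overline{R}z) \xrightarrow{\eta_{y}^{*}} \Map_{\mathcal{E}'}(y, \overline{R}z) \]
is an equivalence; the values $\overline{L}y$ then automatically organize into a functor left adjoint to $\overline{R}$, and the remaining assertions (that $\overline{L}$ lies over $L$ and restricts to $\overline{L}_{x}$ on the fibre over $x$) can be read off directly from the construction.

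For the construction I would take $\overline{L}y := \overline{L}_{x}(y) \in \mathcal{E}_{Lx}$ for $y \in \mathcal{E}'_{x}$, where $x := p'(y)$. Writing $u_{x} \colon x \to RLx$ for the unit of $L \dashv R$, hypothesis (iv) provides a unit morphism $\nu_{y} \colon y \to u_{x}^{*}\overline{R}_{Lx}\overline{L}_{x}(y)$ in $\mathcal{E}'_{x}$ for the fibrewise adjunction $\overline{L}_{x} \dashv u_{x}^{*}\overline{R}_{Lx}$; composing $\nu_{y}$ with the $p'$-cartesian morphism $u_{x}^{*}\overline{R}_{Lx}\overline{L}_{x}(y) \to \overline{R}_{Lx}\overline{L}_{x}(y)$ lying over $u_{x}$ produces the desired $\eta_{y} \colon y \to \overline{R}\,\overline{L}y$, a morphism of $\mathcal{E}'$ lying over $u_{x}$.

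To verify the criterion, fix $z \in \mathcal{E}$ over $b := p(z)$ and unwind both mapping spaces using the description of mapping spaces in a cartesian fibration: $\Map_{\mathcal{E}}(\overline{L}y, z)$ lies over $\Map_{\mathcal{B}}(Lx, b)$ with fibre $\Map_{\mathcal{E}_{Lx}}(\overline{L}_{x}y, h^{*}z)$ over $h$, while $\Map_{\mathcal{E}'}(y, \overline{R}z)$ lies over $\Map_{\mathcal{B}'}(x, Rb)$ with fibre $\Map_{\mathcal{E}'_{x}}(y, k^{*}\overline{R}z)$ over $k$. The comparison map is covered by $\Map_{\mathcal{B}}(Lx, b) \to \Map_{\mathcal{B}'}(x, Rb)$, $h \mapsto Rh \circ u_{x}$, which is an equivalence because $L \dashv R$; so it remains to compare the maps on fibres for $h$ and $k = Rh \circ u_{x}$. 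Since $\overline{R}$ preserves cartesian morphisms (hypothesis (ii)), applying $\overline{R}$ to the cartesian morphism $h^{*}z \to z$ yields an identification $\overline{R}_{Lx}(h^{*}z) \simeq (Rh)^{*}\overline{R}_{b}z$ in $\mathcal{E}'_{RLx}$, whence $u_{x}^{*}\overline{R}_{Lx}(h^{*}z) \simeq u_{x}^{*}(Rh)^{*}\overline{R}z \simeq k^{*}\overline{R}z$; under this identification I claim the induced map on fibres is exactly the equivalence $\Map_{\mathcal{E}_{Lx}}(\overline{L}_{x}y, h^{*}z) \simeq \Map_{\mathcal{E}'_{x}}(y, u_{x}^{*}\overline{R}_{Lx}(h^{*}z))$ supplied by the fibrewise adjunction of (iv).

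The hard part will be justifying this last claim, which is a coherence check rather than a calculation: factoring a morphism $f \colon \overline{L}_{x}y \to z$ over $h$ as a fibrewise part followed by the cartesian lift of $h$, applying $\overline{R}$ (which carries the cartesian part to a cartesian morphism over $Rh$), precomposing with $\eta_{y} = (\text{cartesian lift of }u_{x}) \circ \nu_{y}$, and using the naturality of cartesian transport, one sees $\overline{R}(f) \circ \eta_{y}$ decompose as the fibrewise adjoint of the fibrewise part of $f$ followed by the cartesian identification above --- i.e. exactly the composite claimed. Everything else is formal manipulation of mapping spaces in cartesian fibrations. (An alternative packaging that sidesteps some of this bookkeeping is to show instead that for each $y$ the comma \icat{} $\mathcal{E} \times_{\mathcal{E}'} (\mathcal{E}')_{y/}$ has an initial object, namely $(\overline{L}y, \eta_{y})$, by projecting to $\mathcal{B} \times_{\mathcal{B}'} (\mathcal{B}')_{x/}$ --- which has the initial object $(Lx, u_{x})$ since $L \dashv R$ --- and analysing the fibre over it using hypothesis (iv); but the mapping-space route above seems the most direct.)
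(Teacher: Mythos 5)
Your proposal is correct and follows essentially the same route as the paper: construct $\overline{L}$ fibrewise via $\overline{L}_{x}$, define the unit $\eta_{y}$ by composing the fibrewise unit of $\overline{L}_{x}\dashv u_{x}^{*}\overline{R}_{Lx}$ with the cartesian lift of $u_{x}$, and then check the pointwise adjunction criterion by comparing the induced maps on base mapping spaces (an equivalence since $L\dashv R$) and on fibres over each $\phi\colon Lx\to b$ (an equivalence by hypothesis (iv), using that $\overline{R}$ preserves cartesian morphisms to identify $\overline{R}\phi^{*}z\simeq (R\phi)^{*}\overline{R}z$). The coherence check you flag as the hard part is exactly what the paper's two commutative diagrams carry out.
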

\begin{proof}
  For $e' \in \mathcal{E}'$ over $x \in \mathcal{B}'$
  the unit of the adjunction $\overline{L}_{x} \dashv
  u_{x}^{*}\overline{R}_{Lx}$ gives a map $e' \to
  u_{x}^{*}\overline{R}_{Lx}\overline{L}_{x}e'$  in
  $\mathcal{E}'_{x}$, \ie{} a map $\overline{u}_{e'} \colon e' \to
  \overline{R}_{Lx}\overline{L}_{x}e'$ in $\mathcal{E}'$ over
  $u_{x}$. From this we get for $e \in \mathcal{E}$ over $b \in \mathcal{B}$ a commutative diagram
  \[
    \begin{tikzcd}
    \Map_{\mathcal{E}}(\overline{L}_{x}e', e) \arrow{r} \arrow{d} &
    \Map_{\mathcal{E}'}(\overline{R}\overline{L}_{x}e', \overline{R}e) \arrow{d} \arrow{r}{\overline{u}_{e'}^{*}} & 
    \Map_{\mathcal{E}'}(e', \overline{R}e) \arrow{d} \\
    \Map_{\mathcal{B}}(Lx, b) \arrow{r} \arrow[bend right]{rr}{\sim} & 
    \Map_{\mathcal{B}'}(RLx, Rb) \arrow{r}{u_{x}^{*}} & \Map_{\mathcal{B}'}(x, Rb),
  \end{tikzcd}
    \]
  and we want to show that the composite in the top row is an
  equivalence. Since the composite in the bottom row is an
  equivalence, to see this it suffices to prove that the map on fibres
  over $\phi \colon Lx \to b$ is an equivalence (as this implies the
  composite square is cartesian). Here we have a commutative diagram
  \[
    \begin{tikzcd}
      \Map_{\mathcal{E}}(\overline{L}_{x}e',e)_{\phi} \arrow{d}{\sim}
      \arrow{r} & \Map_{\mathcal{E}'}(\overline{R}\overline{L}_{x}e',
      \overline{R}e)_{R\phi} \arrow{d}{\sim} \arrow{r}{\overline{u}_{e'}^{*}}
      & 
    \Map_{\mathcal{E}'}(e', \overline{R}e)_{R\phi u_{x}} \arrow{d}{\sim}
      \\
      \Map_{\mathcal{E}_{Lx}}(\overline{L}_{x}e', \phi^{*}e) \arrow{r} \arrow[equals]{d}
      & \Map_{\mathcal{E}'_{RLx}}(\overline{R}\overline{L}_{x}e',
      (R\phi)^{*}\overline{R}e) \arrow{r} \arrow{d}{\sim}
      & \Map_{\mathcal{E}'_{x}}(e', u_{x}^{*}(R\phi)^{*}\overline{R}e)\arrow{d}{\sim}
      \\
      \Map_{\mathcal{E}_{Lx}}(\overline{L}_{x}e', \phi^{*}e) \arrow{r}
      & \Map_{\mathcal{E}'_{RLx}}(\overline{R}_{Lx}\overline{L}_{x}e',
      \overline{R}_{Lx}\phi^{*}e) \arrow{r}
      & \Map_{\mathcal{E}'_{x}}(e', u_{x}^{*}(R\phi)^{*}\overline{R}e),
    \end{tikzcd}
  \]
  where we used that $(R \phi)^{*}\overline{R}e \simeq
  \overline{R}\phi^{*}e$ since $\overline{R}$ preserves cartesian
  morphisms. We can identify the composite map in the bottom row with
  the composite
  \[ \Map_{\mathcal{E}_{Lx}}(\overline{L}_{x}e', \phi^{*}e) \to
    \Map_{\mathcal{E}'_{x}}(u_{x}^{*}\overline{R}_{Lx}\overline{L}_{x}e',
    u_{x}^{*}\overline{R}_{Lx}\phi^{*}e) \to \Map_{\mathcal{E}'_{x}}(e',
    u_{x}^{*}\overline{R}_{Lx}\phi^{*}e) \]
  which is an equivalence since $\overline{L}_{x}$ is left adjoint to $u_{x}^{*}\overline{R}_{Lx}$.
\end{proof}

\begin{corollary}\label{cor:algdleftadj}
    Suppose $f \colon \mathcal{O} \to \mathcal{P}$ is a
    simple morphism between enrichable patterns such that
$f_{0}$ is extendable, $\mathcal{S}$ is $f_{0}$-admissible, and
  $\overline{f}_{\bX}^{\flat} \colon \mathcal{O}_{\bX}^{\flat} \to
  \mathcal{P}_{f_{0,!}\bX}^{\flat}$ is extendable for every $\bX \in \Seg_{\mathcal{O}_{0}}(\mathcal{S})$. If moreover  $\mathcal{V}^{\otimes}$ is a presentably $\mathcal{P}$-monoidal \icat{}, we then have an adjunction
  \[f_!\colon \Algd_{\mathcal{O}/\mathcal{P}}(\xV)\rightleftarrows
    \Algd_{\mathcal{P}}(\xV)\cocolon f^*,\] where $f^{*}$ is the
  functor given at $\bY \in \Seg_{\mathcal{P}_{0}}(\mathcal{S})$ by
  restriction along
  $f_{\bY} \colon \mathcal{O}_{f_{0}^{*}\bY} \to \mathcal{P}_{\bY}$,
  and the left adjoint $f_{!}$ is given at
  $\bX \in \Seg_{\mathcal{O}_{0}}(\mathcal{S})$ by the functor
  $\overline{f}_{\bX,!}$.
\end{corollary}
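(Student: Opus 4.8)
The plan is to obtain the adjunction from a single application of \cref{propn:leftadjtocartfib}, feeding it the commutative square
\[
  \begin{tikzcd}
    \Algd_{\mathcal{P}}(\mathcal{V}) \arrow{r}{f^{*}} \arrow{d}{p} & \Algd_{\mathcal{O}/\mathcal{P}}(\mathcal{V}) \arrow{d}{p'} \\
    \Seg_{\mathcal{P}_{0}}(\mathcal{S}) \arrow{r}{f_{0}^{*}} & \Seg_{\mathcal{O}_{0}}(\mathcal{S})
  \end{tikzcd}
\]
coming from the functoriality construction of \S\ref{subsec:algdfun}, where $p$, $p'$ are the structural cartesian fibrations of $\Algd$ and $f^{*}$ is fibrewise restriction along $f_{\bY}$. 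I would take $\overline{R} = f^{*}$, $R = f_{0}^{*}$, $\mathcal{E} = \Algd_{\mathcal{P}}(\mathcal{V})$ and $\mathcal{E}' = \Algd_{\mathcal{O}/\mathcal{P}}(\mathcal{V})$. Hypotheses (i) and (ii) of \cref{propn:leftadjtocartfib} --- that $p$, $p'$ are cartesian fibrations and that $f^{*}$ preserves cartesian morphisms --- are built into the definition of $\Algd$ and into the construction of $f^{*}$ (which by design comes from a natural transformation of the classifying functors), so nothing is required there.

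For hypothesis (iii) I must exhibit a left adjoint to $R = f_{0}^{*}$. First I would check that $f_{0} \colon \mathcal{O}_{0} \to \mathcal{P}_{0}$ is a strong Segal morphism: by \cref{rem O_0 int} (applied to both $\mathcal{O}$ and $\mathcal{P}$) the functor $\mathcal{O}^{\el}_{0,O/} \to \mathcal{P}^{\el}_{0,f_{0}O/}$ is identified with $\mathcal{O}^{\el}_{O/} \to \mathcal{P}^{\el}_{f(O)/}$, which is coinitial since $f$ is simple. Hence $f_{0}^{*}$ restricts to Segal $\mathcal{S}$-objects; since $f_{0}$ is extendable and $\mathcal{S}$ is $f_{0}$-admissible, left Kan extension $f_{0,!}$ also restricts to Segal $\mathcal{S}$-objects, and restricting the presheaf-level adjunction then gives $L := f_{0,!} \dashv f_{0}^{*}$ on Segal $\mathcal{S}$-objects.

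The substantive point is hypothesis (iv), the fibrewise left adjoints. Fixing $\bX \in \Seg_{\mathcal{O}_{0}}(\mathcal{S})$ with unit $\eta_{\bX} \colon \bX \to f_{0}^{*}f_{0,!}\bX$, I would identify the composite $\mathcal{E}_{L\bX} \xrightarrow{\overline{R}_{L\bX}} \mathcal{E}'_{RL\bX} \xrightarrow{u_{\bX}^{*}} \mathcal{E}'_{\bX}$ appearing in (iv) as follows: $\overline{R}_{L\bX}$ is the fibre $f_{f_{0,!}\bX}^{*}$ of $f^{*}$ over $f_{0,!}\bX$, while --- since $\Algd_{\mathcal{O}/\mathcal{P}}(\mathcal{V}) \to \Seg_{\mathcal{O}_{0}}(\mathcal{S})$ is by definition the cartesian fibration for $\bY \mapsto \Alg_{\mathcal{O}_{\bY}/\mathcal{P}}(\mathcal{V})$ with functoriality from $\mathcal{O}_{(\blank)}$ --- the cartesian transport $u_{\bX}^{*}$ along $\eta_{\bX}$ is restriction along $\mathcal{O}_{\eta_{\bX}} \colon \mathcal{O}_{\bX} \to \mathcal{O}_{f_{0}^{*}f_{0,!}\bX}$. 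Thus the composite is $\mathcal{O}_{\eta_{\bX}}^{*} \circ f_{f_{0,!}\bX}^{*}$, which is precisely restriction $\overline{f}_{\bX}^{*}$ along $\overline{f}_{\bX} = f_{f_{0,!}\bX} \circ \mathcal{O}_{\eta_{\bX}}$. With this identification, (iv) is exactly \cref{propn:ifextbleitworks} --- whose hypotheses ($f_{0}$ extendable, $\mathcal{S}$ $f_{0}$-admissible, $\overline{f}_{\bX}^{\flat}$ extendable, $\mathcal{V}^{\otimes}$ presentably $\mathcal{P}$-monoidal) are all assumed --- which supplies $\overline{L}_{\bX} := \overline{f}_{\bX,!}$, with the colimit formula recorded there. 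Then \cref{propn:leftadjtocartfib} delivers the adjunction $f_{!} \dashv f^{*}$ with $f_{!}$ lying over $f_{0,!}$ and given fibrewise by $\overline{f}_{\bX,!}$.

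I expect the only real obstacle to be the bookkeeping in step (iv): verifying that the cartesian transport in $\Algd_{\mathcal{O}/\mathcal{P}}(\mathcal{V})$ along $\eta_{\bX}$ really is restriction along $\mathcal{O}_{\eta_{\bX}}$, so that the composite $u_{\bX}^{*}\overline{R}_{L\bX}$ agrees on the nose with the functor $\overline{f}_{\bX}^{*}$ of \cref{propn:ifextbleitworks}. Once this matching is in place, the remaining hypotheses of \cref{propn:leftadjtocartfib} are immediate and the conclusion follows.
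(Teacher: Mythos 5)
Your proposal is correct and follows exactly the route the paper takes: the paper's entire proof is to apply \cref{propn:leftadjtocartfib} to the same square, with the parenthetical remark that $f_{0}^{*}$ has a left adjoint because $f_{0}$ is extendable and $\mathcal{S}$ is $f_{0}$-admissible, and with hypothesis (iv) supplied by \cref{propn:ifextbleitworks} via the identification of $u_{\bX}^{*}\overline{R}_{L\bX}$ with restriction along $\overline{f}_{\bX} = f_{f_{0,!}\bX}\circ\mathcal{O}_{\eta_{\bX}}$, just as you describe. Your write-up merely makes explicit the bookkeeping the paper leaves implicit.
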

\begin{proof}
  Apply \cref{propn:leftadjtocartfib} to the commutative square
  \csquare{\Algd_{\mathcal{P}}(\xV)}{\Algd_{\mathcal{O}/\mathcal{P}}(\xV)}{\Seg_{\mathcal{P}_0}(\xS)}{\Seg_{\mathcal{O}_0}(\xS).}{f^*}{}{}{f^*_0}
  (where $f_{0}^{*}$ has a left adjoint since $f_{0}$ is extendable and $\mathcal{S}$ is $f_{0}$-admissible).
\end{proof}

Combining  \cref{cor:algdleftadj} with the results of the previous section, we get the following conditions for an adjunction on algebroids:
\begin{cor}
  Let $f \colon \mathcal{O} \to \mathcal{P}$ be an extendable simple morphism of enrichable patterns, and suppose one of the following (successively stronger) conditions holds:
  \begin{enumerate}[(1)]
  \item $\mathcal{S}$ is $f_{0}$-admissible and for every $\bX$ in
  $\Seg_{\mathcal{O}_{0}}(\mathcal{S})$ and $\overline{E}$ in
  $\mathcal{P}_{f_{0,!}\bX}$ over $E \in \mathcal{P}_{0}^{\el}$, the \icat{} $\mathcal{O}_{0,\bX/\overline{E}}^{\act}$ is contractible.
\item The \icat{} $\mathcal{O}^{\act}_{0/E}$ is an \igpd{} for all
  $E \in \mathcal{P}^{\el}_{0}$.
\item All maps in $\mathcal{O}_{0}$ are inert.
\end{enumerate}
If $\mathcal{V}^{\otimes}$ is a presentably $\mathcal{P}$-monoidal \icat{}, we then have an adjunction
  \[f_!\colon \Algd_{\mathcal{O}/\mathcal{P}}(\xV)\rightleftarrows
    \Algd_{\mathcal{P}}(\xV)\cocolon f^*,\] where $f^{*}$ is the
  functor given at $\bY \in \Seg_{\mathcal{P}_{0}}(\mathcal{S})$ by
  restriction along
  $f_{\bY} \colon \mathcal{O}_{f_{0}^{*}\bY} \to \mathcal{P}_{\bY}$,
  and the left adjoint $f_{!}$ is given at
  $\bX \in \Seg_{\mathcal{O}_{0}}(\mathcal{S})$ by the functor
  $\overline{f}_{\bX,!}$. \qed
\end{cor}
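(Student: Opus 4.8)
The plan is to observe that the three conditions are successively stronger, so that it is enough to prove the adjunction under the weakest of them, and then to invoke \cref{cor:algdleftadj}.

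First I would verify the implications between the conditions. That (3) implies (2) is the special case of \cref{cor:igpdgivesfwext} recorded just above: if every map in $\mathcal{O}_{0}$ is inert then $\mathcal{O}_{0}^{\act}$ is an \igpd{}, and since the projection $\mathcal{O}^{\act}_{0/E} \to \mathcal{O}_{0}^{\act}$ is a right fibration, each $\mathcal{O}^{\act}_{0/E}$ is an \igpd{}. That (2) implies (1) follows from \cref{cor:igpdgivesfwext} together with the argument in its proof: the corollary itself gives that $\mathcal{S}$ is $f_{0}$-admissible, while its proof upgrades the weak contractibility of $\mathcal{O}^{\act}_{0,\bX/\overline{E}}$ coming from \cref{lem:actslicewc} to genuine contractibility, using that this \icat{} is the source of a left fibration over the \igpd{} $\mathcal{O}^{\act}_{0/E}$.

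Next I would check that condition (1) supplies all the hypotheses of \cref{cor:algdleftadj}. Since $f$ is assumed extendable, its restriction $f_{0}$ is extendable by \cref{lem:extrestrictto0}(i), and $\mathcal{S}$ is $f_{0}$-admissible by hypothesis. The only remaining input is that $\overline{f}_{\bX}^{\flat} \colon \mathcal{O}_{\bX}^{\flat} \to \mathcal{P}_{f_{0,!}\bX}^{\flat}$ is extendable for every $\bX \in \Seg_{\mathcal{O}_{0}}(\mathcal{S})$; but this is exactly \cref{cor:fwextblecond}, whose hypotheses ($f$ extendable and simple, $\mathcal{S}$ $f_{0}$-admissible, and $\mathcal{O}^{\act}_{0,\bX/\overline{E}}$ contractible for each $\bX$ and each $\overline{E}$ over $E \in \mathcal{P}^{\el}_{0}$) are all part of condition (1). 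Then \cref{cor:algdleftadj} yields the adjunction $f_{!} \dashv f^{*}$ together with the asserted descriptions: $f^{*}$ is restriction along $f_{\bY}$ at each $\bY \in \Seg_{\mathcal{P}_{0}}(\mathcal{S})$, and $f_{!}$ is given fibrewise by $\overline{f}_{\bX,!}$.

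I do not expect a genuine obstacle here, as the statement just assembles \cref{cor:algdleftadj} with the extendability criteria of the previous subsection. The one point that deserves care is the implication (2) $\Rightarrow$ (1), where one must know that weak contractibility can be promoted to contractibility because a left fibration over an \igpd{} has \igpd{} total space.
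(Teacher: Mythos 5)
Your proposal is correct and matches the paper's (implicit) argument exactly: the corollary is stated without proof precisely because it is the assembly of \cref{cor:algdleftadj} with \cref{lem:extrestrictto0}, \cref{cor:fwextblecond}, and \cref{cor:igpdgivesfwext} that you describe, including the reduction of (3) to (2) via right fibrations over the \igpd{} $\mathcal{O}_{0}^{\act}$ and of (2) to (1) by upgrading the weak contractibility from \cref{lem:actslicewc} using that a left fibration over an \igpd{} has \igpd{} total space.
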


Let us single out the case of free algebras, which will be our concern for the rest of this section:
\begin{cor}\label{cor:freealg}
  Suppose $\mathcal{O}$ is an enrichable pattern that is also extendable, and let $j$ denote the inclusion $\mathcal{O}^{\xint} \to \mathcal{O}$. If  $\mathcal{V}^{\otimes}$ is a presentably $\mathcal{O}$-monoidal \icat{}, then we have an adjunction
  \[j_!\colon \Algd_{\mathcal{O}^{\xint}/\mathcal{O}}(\xV)\rightleftarrows
    \Algd_{\mathcal{O}}(\xV)\cocolon j^*,\] where $j^{*}$ is the
  functor given at $\bY \in \Seg_{\mathcal{O}_{0}}(\mathcal{S})$ by
  restriction along
  $j_{\bY} \colon \mathcal{O}^{\xint}_{j_{0}^{*}\bY} \to \mathcal{O}_{\bY}$,
  and the left adjoint $j_{!}$ is given at
  $\bX \in \Seg_{\mathcal{O}_{0}}(\mathcal{S})$ by the functor
  $\overline{j}_{\bX,!}$. \qed
\end{cor}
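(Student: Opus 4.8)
The plan is to obtain \cref{cor:freealg} as the special case $f = j \colon \mathcal{O}^{\xint} \to \mathcal{O}$ (with target pattern $\mathcal{P} = \mathcal{O}$) of \cref{cor:algdleftadj}, so the whole task is to check that $j$ meets its hypotheses. First I would record that $j$ is a simple morphism of enrichable patterns. Equip $\mathcal{O}^{\xint}$ with its usual pattern structure (all morphisms inert, the active morphisms being the equivalences, elementary objects those in $\mathcal{O}^{\el}$); then the composite $\mathcal{O}^{\xint} \xto{j} \mathcal{O} \xto{|\blank|} \xFs^{\natural}$ is a morphism of algebraic patterns. Since every inert map out of an object of $\mathcal{O}$ already lies in $\mathcal{O}^{\xint}$, we have $(\mathcal{O}^{\xint})^{\el}_{O/} = \mathcal{O}^{\el}_{O/}$ for all $O$, so $(\mathcal{O}^{\xint})^{\flat,\el}_{O/} = \mathcal{O}^{\flat,\el}_{O/} \isoto \xFs^{\flat,\el}_{*,|O|/}$; hence $(\mathcal{O}^{\xint})^{\flat}$ is cartesian and $\mathcal{O}^{\xint}$ is itself an enrichable pattern, and $j$ is a morphism of enrichable patterns. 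The same identification shows that the maps $(\mathcal{O}^{\xint})^{\el}_{O/} \to \mathcal{O}^{\el}_{jO/}$ and $(\mathcal{O}^{\xint})^{\el}_{0,E/} \to \mathcal{O}^{\el}_{0,jE/}$ are equivalences, so $j$ is a strong (indeed iso-)Segal morphism and is simple by \cref{obs:simplecond}.

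Next I would verify the remaining hypotheses of \cref{cor:algdleftadj}. That $j$ is extendable is precisely the meaning of $\mathcal{O}$ being extendable (\cite[\S 8]{patterns}), and then $j_{0} \colon \mathcal{O}^{\xint}_{0} \to \mathcal{O}_{0}$ is extendable by \cref{lem:extrestrictto0}(i). Since every morphism of $\mathcal{O}^{\xint}_{0}$ is inert, the active-slice \icat{} of $\mathcal{O}^{\xint}_{0}$ over any $E \in \mathcal{O}_{0}$ is a contractible \igpd{}, so $\mathcal{S}$ is $j_{0}$-admissible by \cite[Corollary 7.20]{patterns}. The last hypothesis, that $\overline{\jmath}_{\bX}^{\flat} \colon \mathcal{O}^{\xint,\flat}_{\bX} \to \mathcal{O}^{\flat}_{j_{0,!}\bX}$ be extendable for every $\bX \in \Seg_{\mathcal{O}^{\xint}_{0}}(\mathcal{S})$, is exactly \cref{lem:freealgextble}. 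With $\mathcal{V}^{\otimes}$ presentably $\mathcal{O}$-monoidal by assumption, \cref{cor:algdleftadj} then applies and gives the adjunction; the fibrewise descriptions of $j^{*}$ and $j_{!}$ (including the colimit formula for $\overline{\jmath}_{\bX,!}$ from \cref{propn:ifextbleitworks}) are just its statements specialized to $f = j$, $\mathcal{P} = \mathcal{O}$.

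I do not expect a serious obstacle here: the statement is a repackaging of the \S\ref{sec:lke} machinery, and the only points that require any thought are the two extendability facts invoked in the second paragraph --- that $\mathcal{O}$ extendable makes $j$ extendable, and that this (through \cref{lem:freealgextble}, ultimately \cref{propn:getflatext}) makes each $\overline{\jmath}^{\flat}_{\bX}$ extendable --- together with the elementary observation that $\mathcal{O}^{\xint}_{0}$ carries no nontrivial active morphisms, which is what supplies the $j_{0}$-admissibility of $\mathcal{S}$. Once those are in place the corollary follows immediately from \cref{cor:algdleftadj}.
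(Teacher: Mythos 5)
Your proposal is correct and follows essentially the same route as the paper: the paper obtains \cref{cor:freealg} as the instance $f = j$, $\mathcal{P} = \mathcal{O}$ of the corollary preceding it (using its condition that all maps in the source's fibre over $\angled{0}$ are inert), which is exactly the chain \cref{cor:algdleftadj} plus \cref{lem:freealgextble} that you spell out. Your explicit verification that $\mathcal{O}^{\xint}$ is enrichable and that $j$ is simple, extendable, and has $j_{0}$-admissible $\mathcal{S}$ is just the unpacking the paper leaves implicit.
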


\begin{observation}\label{obs:freealgdformula}
  Explicitly, the free $\mathcal{O}$-algebroid on $A$ is given for
  $\tilde{E}$ in $\mathcal{O}_{\bX}^{\flat,\el}$ by
  \[ (j_{\bX,!}A)(\tilde{E}) \simeq
    \colim_{\tilde{O} \in
      (\mathcal{O}^{\xint}_{\bX})^{\act}_{/\tilde{E}}} \phi_{!}A(\tilde{O}). \]
    Note that if $E \in \mathcal{O}^{\flat,\el}$ is the image of $\tilde{E}$ then $(\mathcal{O}^{\xint}_{\bX})^{\act}_{/\tilde{E}}$ is an \igpd{} with a map to $\Act_{\mathcal{O}}(E)$, so we can also write this as an iterated colimit:
    \[ (j_{\bX,!}A)(\tilde{E}) \simeq \colim_{\phi \colon O \actto E \in \Act_{\mathcal{O}}(E)}
    \colim_{\tilde{O} \in (i_{0,*}\bX)(O) \times_{(i_{0,*}j_{0,!}\bX)(E)} \{\tilde{E}\}} \phi_{!}A(\tilde{O}). \]
\end{observation}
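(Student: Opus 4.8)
The plan is to read off both formulas by unwinding the identification of the free functor $j_{!}$ in \cref{cor:freealg} together with the explicit colimit formula of \cref{propn:ifextbleitworks}. Since $\mathcal{O}$ is extendable, the hypotheses of \cref{cor:freealg} are satisfied --- in particular, by \cref{lem:freealgextble} the maps $\overline{j}_{\bX}^{\flat}$ are extendable --- so that the free $\mathcal{O}$-algebroid $j_{!}A$ on $A \in \Algd_{\mathcal{O}^{\xint}/\mathcal{O}}(\mathcal{V})$, with underlying Segal $\mathcal{O}^{\xint}_{0}$-space $\bX$, is computed fibrewise by $\overline{j}_{\bX,!}$, the left Kan extension along $\overline{j}_{\bX} \colon \mathcal{O}^{\xint}_{\bX} \to \mathcal{O}_{j_{0,!}\bX}$. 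Specializing the formula of \cref{propn:ifextbleitworks} to the morphism $j \colon \mathcal{O}^{\xint} \to \mathcal{O}$ (playing the roles of $f$ and $\mathcal{P}$) and evaluating at $\tilde{E} \in \mathcal{O}_{\bX}^{\flat,\el}$ then gives exactly the first displayed equivalence $(j_{\bX,!}A)(\tilde{E}) \simeq \colim_{\tilde{O} \in (\mathcal{O}^{\xint}_{\bX})^{\act}_{/\tilde{E}}} \phi_{!}A(\tilde{O})$.

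For the second formula I would decompose this colimit along the functor $q \colon (\mathcal{O}^{\xint}_{\bX})^{\act}_{/\tilde{E}} \to \Act_{\mathcal{O}}(E)$ that remembers only the underlying active morphism $\phi \colon O \actto E$ of $\mathcal{O}$. The key point is first to check that $(\mathcal{O}^{\xint}_{\bX})^{\act}_{/\tilde{E}}$ is an \igpd{}: a morphism of this \icat{} lies over a morphism of $\mathcal{O}^{\xint}_{\bX}$, hence --- through $\mathcal{O}^{\xint}_{\bX} \to \mathcal{O}^{\xint} \hookrightarrow \mathcal{O}$ --- over an inert morphism $O_{1} \intto O_{2}$ that is compatible with the active maps $O_{i} \actto E$; so $O_{1} \intto O_{2} \actto E$ is active with the evident inert--active factorization, forcing $O_{1} \intto O_{2}$ to be an equivalence by essential uniqueness of factorizations, and hence, being a cocartesian lift for the left fibration $\mathcal{O}^{\xint}_{\bX} \to \mathcal{O}^{\xint}$, the original morphism is an equivalence. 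Thus $q$ is a functor between \igpds{}, along which the colimit decomposes into the colimit over $\Act_{\mathcal{O}}(E)$ of the colimits over the fibres. Finally, straightening the left fibrations $\mathcal{O}^{\xint}_{\bX} \to \mathcal{O}^{\xint}$ and $\mathcal{O}_{j_{0,!}\bX} \to \mathcal{O}$ identifies the fibre of $q$ over $\phi \colon O \actto E$ with the space of points $\tilde{O}$ of $\mathcal{O}^{\xint}_{\bX}$ over $O$ whose image under $\overline{j}_{\bX}$, transported along $\phi$, equals $\tilde{E}$ --- that is, with $(i_{0,*}\bX)(O) \times_{(i_{0,*}j_{0,!}\bX)(E)} \{\tilde{E}\}$, the map on the right being cocartesian transport along $\phi$ precomposed with the map of fibres induced by $\overline{j}_{\bX}$. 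Since $\phi_{!}$ is constant along this fibre, the iterated-colimit formula follows.

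The \emph{content} of the argument is concentrated in this last identification of the comma \icat{} $(\mathcal{O}^{\xint}_{\bX})^{\act}_{/\tilde{E}}$ and its fibres over $\Act_{\mathcal{O}}(E)$; in carrying it out one must keep careful track of the fact that every morphism of the pattern $\mathcal{O}^{\xint}$ is inert, and that the left fibrations appearing straighten to the right Kan extensions $i_{0,*}\bX$ from $\mathcal{O}^{\xint}_{0}$ and $i_{0,*}j_{0,!}\bX$ from $\mathcal{O}_{0}$. I do not expect a genuine obstacle beyond this bookkeeping: the first formula is a direct substitution into the cited results, and the second is the formal decomposition of a colimit indexed by an \igpd{}.
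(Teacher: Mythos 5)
Your proposal is correct and follows exactly the route the paper intends: the statement is an \emph{Observation} given without proof, being the direct specialization of \cref{propn:ifextbleitworks} via \cref{cor:freealg} (with \cref{lem:freealgextble} supplying extendability of $\overline{\jmath}_{\bX}^{\flat}$), followed by the decomposition of the colimit over the \igpd{} $(\mathcal{O}^{\xint}_{\bX})^{\act}_{/\tilde{E}}$ along its map to $\Act_{\mathcal{O}}(E)$. Your verification that this indexing \icat{} is an \igpd{} (uniqueness of inert--active factorizations plus cocartesianness of morphisms in the left fibration) and your identification of the fibres are precisely the bookkeeping the paper leaves implicit.
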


\begin{observation}
  Let $\mathcal{O}$ be an enrichable pattern and
  $\mathcal{V}^{\otimes}$ an $\mathcal{O}$-monoidal \icat{}. For
  $\bX \in \Seg_{\mathcal{O}_{0}}(\mathcal{S})$, the \icat{}
  $\Alg_{\mathcal{O}_{\bX}^{\xint}/\mathcal{O}_{\bX}}(\mathcal{V}^{\otimes})$
  is naturally equivalent to
  $\Fun_{/\mathcal{O}}(\mathcal{O}^{\el}_{\bX},
  \mathcal{V}^{\otimes})$ by \cite[Lemma 4.14]{freealg}. We can
  therefore identify
  $\Algd_{\mathcal{O}^{\xint}/\mathcal{O}}(\mathcal{V})$ with the
  cocartesian fibration for the corresponding functor.
\end{observation}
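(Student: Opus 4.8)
The plan is to upgrade the fibrewise equivalence of \cite[Lemma 4.14]{freealg} to an equivalence of fibrations and then identify the result. Recall that this result asserts, for a cartesian pattern $\mathcal{Q}$ and a $\mathcal{Q}$-monoidal \icat{} $\mathcal{D}^{\otimes}$, that restriction to the elementary objects is an equivalence $\Alg_{\mathcal{Q}^{\xint}/\mathcal{Q}}(\mathcal{D}^{\otimes}) \simeq \Fun_{/\mathcal{Q}}(\mathcal{Q}^{\el}, \mathcal{D}^{\otimes})$, natural in $\mathcal{D}^{\otimes}$. First I would apply this for each $\bX \in \Seg_{\mathcal{O}_{0}}(\mathcal{S})$, taking $\mathcal{Q} = \mathcal{O}_{\bX}^{\flat}$ (a cartesian pattern by \cref{def OX}) and $\mathcal{D}^{\otimes} = \mathcal{V}^{\otimes}_{\bX} := \mathcal{V}^{\otimes} \times_{\mathcal{O}} \mathcal{O}_{\bX}$, the pullback of $\mathcal{V}^{\otimes}$ along the left fibration $\mathcal{O}_{\bX} \to \mathcal{O}$. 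Using \cref{rem pullback_algebras} to identify $\Alg_{\mathcal{O}_{\bX}^{\xint}/\mathcal{O}}(\mathcal{V}^{\otimes}) \simeq \Alg_{\mathcal{O}_{\bX}^{\xint}/\mathcal{O}_{\bX}}(\mathcal{V}^{\otimes}_{\bX})$, and the defining pullback square of $\mathcal{V}^{\otimes}_{\bX}$ to identify $\Fun_{/\mathcal{O}_{\bX}}(\mathcal{O}^{\el}_{\bX}, \mathcal{V}^{\otimes}_{\bX}) \simeq \Fun_{/\mathcal{O}}(\mathcal{O}^{\el}_{\bX}, \mathcal{V}^{\otimes})$, we obtain the asserted fibrewise equivalence.

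Next I would record naturality in $\bX$. Since the equivalence of \cite[Lemma 4.14]{freealg} is natural in the monoidal \icat{} and the assignment $\bX \mapsto \mathcal{O}_{\bX}$ is functorial by \cref{def OX}, a morphism $\bX \to \bY$ in $\Seg_{\mathcal{O}_{0}}(\mathcal{S})$ intertwines the two equivalences with restriction along $\mathcal{O}_{\bX} \to \mathcal{O}_{\bY}$, respectively $\mathcal{O}^{\el}_{\bX} \to \mathcal{O}^{\el}_{\bY}$. Passing to the cartesian fibrations over $\Seg_{\mathcal{O}_{0}}(\mathcal{S})$ classified by the functors $\bX \mapsto \Alg_{\mathcal{O}_{\bX}^{\xint}/\mathcal{O}}(\mathcal{V}^{\otimes})$ and $\bX \mapsto \Fun_{/\mathcal{O}}(\mathcal{O}^{\el}_{\bX}, \mathcal{V}^{\otimes})$ then yields an equivalence identifying $\Algd_{\mathcal{O}^{\xint}/\mathcal{O}}(\mathcal{V})$ with the latter.

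It remains to see that this fibration is also cocartesian, which is what the phrase ``the cocartesian fibration for the corresponding functor'' refers to; for this it suffices to show that every transition functor admits a left adjoint. Given $\phi \colon \bX \to \bY$, the induced functor $g \colon \mathcal{O}^{\el}_{\bX} \to \mathcal{O}^{\el}_{\bY}$ lies over $\mathcal{O}^{\el}$, and since both structure maps to $\mathcal{O}^{\el}$ are left fibrations, so is $g$ (a functor lying over a left fibration is a left fibration as soon as its composite with that left fibration is); moreover $\mathcal{O}^{\el}_{\bX}$ and $\mathcal{O}^{\el}_{\bY}$ are \igpds{}, since the elementary objects of any cartesian pattern form an \igpd{}. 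Consequently restriction along $g$ into $\mathcal{V}^{\otimes}$ relative to $\mathcal{O}$ has a left adjoint, namely relative left Kan extension, whose underlying colimits are indexed by the \igpd{} fibres of $g$ and so exist as soon as the fibres of $\mathcal{V}^{\otimes}$ admit them --- in particular whenever $\mathcal{V}^{\otimes}$ is compatible with small colimits, which is the setting in which the observation is applied (see \cref{cor:freealg} and \cref{obs:freealgdformula}). A cartesian fibration all of whose transition functors admit left adjoints is bicartesian, so $\Algd_{\mathcal{O}^{\xint}/\mathcal{O}}(\mathcal{V})$ may equally be regarded as the cocartesian fibration classifying $\bX \mapsto \Fun_{/\mathcal{O}}(\mathcal{O}^{\el}_{\bX}, \mathcal{V}^{\otimes})$ with its left-Kan-extension functoriality. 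I expect this final step to be the main obstacle: the first two paragraphs are a purely formal transcription of a cited equivalence, whereas producing the cocartesian structure requires the left-fibration observation together with the (mild) cocompleteness assumption on $\mathcal{V}^{\otimes}$ that is implicit in the surrounding discussion.
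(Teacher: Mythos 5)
Your first two paragraphs reconstruct exactly what the paper intends: the observation carries no argument beyond citing \cite[Lemma 4.14]{freealg} fibrewise --- applied, as you do, to the cartesian pattern $\mathcal{O}_{\bX}^{\flat}$ and the pulled-back monoidal structure via \cref{rem pullback_algebras} --- recording naturality in $\bX$, and unstraightening; so on that content your proposal and the paper agree. The divergence is your third paragraph, where you read ``cocartesian'' literally and prove the fibration is bicartesian. Since $\Algd_{\mathcal{P}/\mathcal{O}}(\mathcal{C})$ is \emph{defined} as the cartesian fibration for $\bX \mapsto \Alg_{\mathcal{P}_{\bX}/\mathcal{O}}(\mathcal{C}^{\otimes})$, and the observation places no colimit hypotheses on $\mathcal{V}^{\otimes}$, the word ``cocartesian'' there is best read as a slip for ``cartesian'': the intended content is only that the classifying functor may be replaced by $\bX \mapsto \Fun_{/\mathcal{O}}(\mathcal{O}^{\el}_{\bX},\mathcal{V}^{\otimes})$, and the ``therefore'' signals that nothing beyond naturality is being claimed. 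That said, your upgrade is sound as far as it goes: for any cartesian pattern $\mathcal{Q}$ the condition $\mathcal{Q}^{\el}_{E/} \simeq \xF^{\flat,\el}_{*,\angled{1}/} \simeq \ast$ forces every morphism in $\mathcal{Q}^{\el}$ to be an equivalence, so $\mathcal{O}^{\flat,\el}_{\bX}$ is indeed an \igpd{}, the transition functors are restrictions along maps of \igpds{}, and fibrewise left Kan extension supplies left adjoints whenever $\mathcal{V}^{\otimes}$ is compatible with small colimits. You correctly identify that this cocompleteness hypothesis is genuinely needed for the bicartesian conclusion --- which is precisely why it cannot be what the unqualified observation is asserting; if you keep that paragraph, state the hypothesis explicitly rather than importing it from the surrounding applications such as \cref{cor:freealg}.
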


We now check that the free algebroid adjunction is monadic, and use this to get a criterion for a morphism between extendable enricable patterns to give an equivalence on \icats{} of algebroids.

\begin{propn}\label{propn:freealgdmonadic}
  Suppose $\mathcal{O}$ is an extendable enrichable pattern and $\mathcal{V}$ is a presentably $\mathcal{O}$-monoidal \icat{}. Then
  \[ j^{*} \colon \Algd_{\mathcal{O}}(\mathcal{V}) \to \Algd_{\mathcal{O}^{\xint}}(\mathcal{V})\]
  is a monadic right adjoint.
\end{propn}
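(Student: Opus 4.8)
The plan is to apply the Barr--Beck--Lurie monadicity theorem to the adjunction $j_{!} \dashv j^{*}$ produced by \cref{cor:freealg}. Since $\mathcal{V}^{\otimes}$ is presentably $\mathcal{O}$-monoidal, the \icat{} $\Algd_{\mathcal{O}}(\mathcal{V})$ is presentable by \cref{cor:algdpres}, hence admits all small colimits; it therefore suffices to check that $j^{*}$ is conservative and preserves colimits of $j^{*}$-split simplicial objects. We will in fact aim to show $j^{*}$ preserves all sifted colimits.

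For conservativity, note that $j^{*}$ lies over the restriction functor $j_{0}^{*} \colon \Seg_{\mathcal{O}_{0}}(\mathcal{S}) \to \Seg_{\mathcal{O}_{0}^{\xint}}(\mathcal{S})$, which is conservative: a morphism of Segal objects over a pattern is an equivalence as soon as it is one on all elementary objects (by the Segal condition), and $\mathcal{O}_{0}$ and $\mathcal{O}_{0}^{\xint} = (\mathcal{O}^{\xint})_{0}$ have the same elementary objects. On the fibre over $\bX \in \Seg_{\mathcal{O}_{0}}(\mathcal{S})$, using the observation following \cref{cor:freealg}, the functor $j^{*}$ is the forgetful functor
\[ \Alg_{\mathcal{O}_{\bX}/\mathcal{O}}(\mathcal{V}) \longrightarrow \Fun_{/\mathcal{O}}(\mathcal{O}^{\el}_{\bX}, \mathcal{V}^{\otimes}) \]
given by restriction along the inclusion of the inert elementary objects; this is conservative because an object of $\Alg_{\mathcal{O}_{\bX}/\mathcal{O}}(\mathcal{V})$ is in particular a Segal $\mathcal{O}_{\bX}$-object, so all of its values are limits of its values on elementary objects. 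A functor which lies over a conservative functor and is conservative on each fibre is conservative, so $j^{*}$ is conservative.

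The main work is the preservation of sifted colimits, which I would assemble from two ingredients. First, on the base: $j_{0}^{*}$ is not merely conservative but a left adjoint, its right adjoint being right Kan extension $j_{0,*}$ along $\mathcal{O}_{0}^{\xint} \hookrightarrow \mathcal{O}_{0}$; this preserves Segal objects since for a Segal $\mathcal{O}_{0}^{\xint}$-object $F$ the pointwise formula gives $j_{0,*}F(O) \simeq \lim_{\mathcal{O}^{\xint}_{0,O/}} F$, and the Segal condition on $F$ lets one rewrite this limit as the Segal limit $\lim_{\mathcal{O}^{\el}_{0,O/}} F$. Hence $j_{0}^{*}$ preserves all colimits. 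Second, on the fibres: the forgetful functor $\Alg_{\mathcal{O}_{\bX}/\mathcal{O}}(\mathcal{V}) \to \Fun_{/\mathcal{O}}(\mathcal{O}^{\el}_{\bX},\mathcal{V}^{\otimes})$ preserves sifted colimits, since $\mathcal{V}^{\otimes}$ is presentably $\mathcal{O}_{\bX}$-monoidal and so the cocartesian transports along active maps preserve sifted colimits in each variable — the standard fact for algebras in a monoidal \icat{} compatible with sifted colimits, which can be cited directly or read off from the free-algebra formula of \cref{obs:freealgdformula}. To combine these, I would show that the cartesian fibration $\Algd_{\mathcal{O}}(\mathcal{V}) \to \Seg_{\mathcal{O}_{0}}(\mathcal{S})$ (and its analogue over $\mathcal{O}^{\xint}$) is also a cocartesian fibration: the pullback functors along $\bX \to \bY$ admit left adjoints, computed fibrewise as left Kan extensions along the maps of spaces $\bX(O) \to \bY(O)$ since each $\mathcal{V}^{\otimes}_{O}$ is cocomplete. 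Colimits in the total space of a cocartesian fibration are computed as a colimit in the base followed by a colimit in a fibre after cocartesian transport, so a sifted colimit in $\Algd_{\mathcal{O}}(\mathcal{V})$ amounts to forming $\bX = \colim_{i} \bX_{i}$ and then a sifted colimit of $\mathcal{O}_{\bX}$-algebras; the two ingredients above, together with the (Beck--Chevalley) compatibility of $j^{*}$ with cocartesian transport, then yield $j^{*}(\colim_{i} A_{i}) \simeq \colim_{i} j^{*}A_{i}$.

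I expect the delicate point to be exactly this last step — verifying that $\Algd_{\mathcal{O}}(\mathcal{V})$ is a bifibration over $\Seg_{\mathcal{O}_{0}}(\mathcal{S})$ and that $j^{*}$ intertwines the cocartesian transports. If that turns out to be cumbersome, a cleaner alternative is to reduce to the Day convolution case: by \cref{rem Vk} write $\mathcal{V}^{\otimes} \simeq \PSh_{\mathcal{O},\mathbb{S}}(\mathcal{C})^{\otimes}$ for a small $\mathcal{O}$-monoidal $\mathcal{C}^{\otimes}$, then use \cref{prop:algdSloc} and \cref{AlgdDayisSeg} to identify $\Algd_{\mathcal{O}}(\mathcal{V})$ with the presentable localization $\Seg_{\mathcal{C}^{\op,\otimes}}^{\mathbb{S}}(\mathcal{S})$ of a presheaf \icat{} — where sifted colimits are transparent — and check that under these identifications $j^{*}$ corresponds to a restriction functor, which manifestly preserves the relevant colimits. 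Either way, once conservativity and sifted-colimit preservation are established, Barr--Beck--Lurie gives that $j^{*}$ is a monadic right adjoint.
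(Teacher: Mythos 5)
Your fallback (``cleaner alternative'') is in fact the paper's proof, and it is the route to take; but even there you have elided the one step that carries the argument. After identifying $\Algd_{\mathcal{O}}(\mathcal{V})$ with $\Seg^{\mathbb{S}}_{\mathcal{C}^{\op,\otimes}}(\mathcal{S})$ via \cref{rem Vk} and \cref{prop:algdSloc}, the functor $j^{*}$ becomes restriction along $(\mathcal{C}^{\op,\otimes})^{\xint} \hookrightarrow \mathcal{C}^{\op,\otimes}$ --- but $\Seg^{\mathbb{S}}_{\mathcal{C}^{\op,\otimes}}(\mathcal{S})$ is a full subcategory of the functor \icat{} that is \emph{not} closed under colimits, so ``restriction manifestly preserves the relevant colimits'' is not yet an argument. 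What makes it work is: (i) both the Segal condition and $\mathbb{S}$-locality are detected after restriction to the inert subcategory, giving a pullback square of the two full subcategories inside the two functor \icats{}; (ii) for a $j^{*}$-split simplicial diagram, the colimit computed in the ambient $\Fun(\mathcal{C}^{\op,\otimes},\mathcal{S})$ restricts to the colimit of a \emph{split} simplicial diagram, and split simplicial diagrams are absolute colimit diagrams, so that restricted colimit lies in $\Seg^{\mathbb{S}}_{\mathcal{C}^{\op,\otimes,\xint}}(\mathcal{S})$; hence the ambient colimit already lies in $\Seg^{\mathbb{S}}_{\mathcal{C}^{\op,\otimes}}(\mathcal{S})$ and is preserved by $j^{*}$. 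Note this only yields preservation of $j^{*}$-split simplicial colimits, which is all Barr--Beck--Lurie requires; you do not need (and the paper does not prove) preservation of all sifted colimits.

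Your primary route has genuine gaps. Colimits in $\Seg_{\mathcal{O}_{0}}(\mathcal{S})$ and in $\Algd_{\mathcal{O}}(\mathcal{V})$ are not computed pointwise, so several steps presuppose facts that need proof. In particular, the claim that $j_{0}^{*}$ preserves all colimits because $j_{0,*}$ is a right adjoint on Segal objects rests on the formula $j_{0,*}F(O) \simeq \lim_{\mathcal{O}^{\xint}_{0,O/}}F$, which is wrong as stated: the pointwise right Kan extension along $\mathcal{O}^{\xint}_{0} \hookrightarrow \mathcal{O}_{0}$ is indexed by the comma category whose objects are \emph{all} morphisms out of $O$ in $\mathcal{O}_{0}$ (with inert maps between them), not only the inert ones, and there is no reason for the inert slice to be coinitial in it. Second, the assertion that $\Algd_{\mathcal{O}}(\mathcal{V}) \to \Seg_{\mathcal{O}_{0}}(\mathcal{S})$ is also a cocartesian fibration and that $j^{*}$ satisfies the requisite Beck--Chevalley compatibility with cocartesian transport is nowhere established in the paper and is itself a substantial piece of work. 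Your conservativity argument is fine, though the paper gets it in one line from the essential surjectivity of $j$. I would drop the primary route and write out the Day-convolution argument in full.
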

\begin{proof}
  We have already established that $j^{*}$ is a right adjoint, so it
  suffices to show that it satisfies the conditions of the monadicity
  theorem. Since $j \colon \mathcal{O}^{\xint} \to \mathcal{O}$ is
  essentially surjective, it is clear that $j^{*}$ is conservative, so
  it remains to show that colimits of $j^{*}$-split simplicial
  diagrams exist and $j^{*}$ preserves them. Since $\mathcal{V}$ is by
  assumption presentably $\mathcal{O}$-monoidal, by
  \cref{prop:algdSloc} and \cref{rem Vk} we can describe
  $\Algd_{\mathcal{O}}(\mathcal{V})$ as
  $\Seg^{\mathbb{S}}_{\mathcal{C}^{\op,\otimes}}(\mathcal{S})$ for
  some small $\mathcal{O}$-monoidal \icat{} $\mathcal{C}$ and a set $\mathbb{S}$ compatible with the $\mathcal{O}$-monoidal structure. Note that both the Segal and $\mathbb{S}$-local conditions are detected on $(\mathcal{C}^{\op,\otimes})^{\xint}$, so that we have a pullback square
  \[
    \begin{tikzcd}
      \Seg^{\mathbb{S}}_{\mathcal{C}^{\op,\otimes}}(\mathcal{S}) \arrow{d}{j^{*}} \arrow[hookrightarrow]{r} & \Fun(\mathcal{C}^{\op,\otimes}, \mathcal{S}) \arrow{d}{j^{*}} \\
      \Seg^{\mathbb{S}}_{\mathcal{C}^{\op,\otimes,\xint}}(\mathcal{S})\arrow[hookrightarrow]{r} & \Fun(\mathcal{C}^{\op,\otimes,\xint}, \mathcal{S}).
    \end{tikzcd}
  \]
  Given a simplicial diagram in
  $\Seg^{\mathbb{S}}_{\mathcal{C}^{\op,\otimes}}(\mathcal{S})$, this
  has a colimit in $\Fun(\mathcal{C}^{\op,\otimes}, \mathcal{S})$ that
  is preserved by $j^{*}$. But if the original diagram is
  $j^{*}$-split, then (since split simplicial diagrams are always
  colimit diagrams) the colimit in
  $\Fun(\mathcal{C}^{\op,\otimes,\xint}, \mathcal{S})$ must lie in
  $\Seg^{\mathbb{S}}_{\mathcal{C}^{\op,\otimes,\xint}}(\mathcal{S})$. This
  means that the colimit we computed in
  $\Fun(\mathcal{C}^{\op,\otimes}, \mathcal{S})$ actually lies in the
  full subcategory
  $\Seg^{\mathbb{S}}_{\mathcal{C}^{\op,\otimes}}(\mathcal{S})$, and so our original simplicial diagram indeed has a colimit that is preserved by $j^{*}$.
\end{proof}

\begin{propn}
  Suppose $\mathcal{O}$ and $\mathcal{P}$ are extendable enrichable patterns and $\phi \colon \mathcal{O} \to \mathcal{P}$ is a simple morphism  of enrichable patterns such that:
  \begin{enumerate}[(1)]
  \item The restriction $\phi^{\xint} \colon \mathcal{O}^{\xint} \to \mathcal{P}^{\xint}$ is also simple.
  \item $\Act_{\mathcal{O}}(O) \to \Act_{\mathcal{P}}(\phi O)$ is an equivalence for all $O \in \mathcal{O}$ (or equivalently just $O \in \mathcal{O}^{\el}$),
  \item $\mathcal{O}^{\el}_{O/} \to \mathcal{P}^{\el}_{\phi O/}$ is coinitial for all $O \in \mathcal{O}$.
  \end{enumerate}
  If $\mathcal{V}$ is a presentably $\mathcal{P}$-monoidal \icat{} such that the restriction functor \[
    \Algd_{\mathcal{P}^{\xint}/\mathcal{P}}(\mathcal{V}) \to \Algd_{\mathcal{O}^{\xint}/\mathcal{P}}(\mathcal{V})\]
  is fully faithful, then the induced functor
  \[\phi^{*} \colon \Algd_{\mathcal{P}}(\mathcal{V}) \to \Algd_{\mathcal{O}}(\mathcal{V})\]
  is fully faithful, with image precisely those $\mathcal{O}$-algebroids whose restriction to $\mathcal{O}^{\xint}$ lies in the image of $\Algd_{\mathcal{P}^{\xint}/\mathcal{P}}(\mathcal{V})$.
\end{propn}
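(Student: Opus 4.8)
The plan is to deduce this from the monadicity of the free‑algebroid adjunctions together with a Beck--Chevalley comparison. Write $j_{\mathcal{O}}\colon\mathcal{O}^{\xint}\hookrightarrow\mathcal{O}$ and $j_{\mathcal{P}}\colon\mathcal{P}^{\xint}\hookrightarrow\mathcal{P}$ for the two inclusions. Since $\mathcal{O}$ and $\mathcal{P}$ are extendable and $\mathcal{V}$, hence also $\phi^{*}\mathcal{V}$, is presentably monoidal, \cref{cor:freealg} and \cref{propn:freealgdmonadic} give monadic adjunctions $(j_{\mathcal{P}})_{!}\dashv(j_{\mathcal{P}})^{*}$ and $(j_{\mathcal{O}})_{!}\dashv(j_{\mathcal{O}})^{*}$, and since $\phi\circ j_{\mathcal{O}}=j_{\mathcal{P}}\circ\phi^{\xint}$ we obtain a commuting square
\[
  \begin{tikzcd}
    \Algd_{\mathcal{P}}(\mathcal{V}) \arrow{r}{\phi^{*}} \arrow{d}{(j_{\mathcal{P}})^{*}} & \Algd_{\mathcal{O}/\mathcal{P}}(\mathcal{V}) \arrow{d}{(j_{\mathcal{O}})^{*}} \\
    \Algd_{\mathcal{P}^{\xint}/\mathcal{P}}(\mathcal{V}) \arrow{r}{(\phi^{\xint})^{*}} & \Algd_{\mathcal{O}^{\xint}/\mathcal{P}}(\mathcal{V}),
  \end{tikzcd}
\]
in which the vertical functors are monadic and the bottom functor, which exists by hypothesis~(1) (as $\phi^{\xint}$ is simple), is by assumption fully faithful with some essential image $\mathcal{D}$. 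Writing $T_{\mathcal{P}}=(j_{\mathcal{P}})^{*}(j_{\mathcal{P}})_{!}$ and $T_{\mathcal{O}}=(j_{\mathcal{O}})^{*}(j_{\mathcal{O}})_{!}$, the statement follows by a formal monad‑theoretic argument once we show that the mate $(j_{\mathcal{O}})_{!}\,(\phi^{\xint})^{*}\to\phi^{*}\,(j_{\mathcal{P}})_{!}$ is an equivalence: applying $(j_{\mathcal{O}})^{*}$ then yields an equivalence $T_{\mathcal{O}}\,(\phi^{\xint})^{*}\simeq(\phi^{\xint})^{*}\,T_{\mathcal{P}}$ of monads along the fully faithful $(\phi^{\xint})^{*}$, so $\mathcal{D}$ is stable under $T_{\mathcal{O}}$; passing to module categories, $\phi^{*}$ is thereby identified with the inclusion of the full subcategory of $\Algd_{\mathcal{O}/\mathcal{P}}(\mathcal{V})\simeq\operatorname{LMod}_{T_{\mathcal{O}}}$ spanned by the algebras whose underlying object lies in $\mathcal{D}$, i.e.\ (since ``underlying object'' is restriction along $j_{\mathcal{O}}$) exactly the asserted full subcategory.

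To prove that the mate is an equivalence I would work fibrewise over underlying Segal spaces and compare the explicit colimit formulas of \cref{obs:freealgdformula}. Fix $A\in\Algd_{\mathcal{P}^{\xint}/\mathcal{P}}(\mathcal{V})$ with underlying Segal space $\bY$, set $\bX:=(\phi^{\xint})_{0}^{*}\bY$, and evaluate at an elementary object $\tilde{E}$ of $\mathcal{O}_{\bX}$ lying over $E'\in\mathcal{O}^{\el}$ with image $E=\phi(E')\in\mathcal{P}^{\el}$. Then $\phi^{*}(j_{\mathcal{P}})_{!}A(\tilde{E})=(j_{\mathcal{P}})_{!}A(\phi\tilde{E})$ is the iterated colimit indexed by $\Act_{\mathcal{P}}(E)$ (outer) and by the fibres of $i_{0,*}^{\mathcal{P}}\bY\to i_{0,*}^{\mathcal{P}}j_{0,!}\bY$ (inner), while $(j_{\mathcal{O}})_{!}(\phi^{\xint})^{*}A(\tilde{E})$ is the analogous iterated colimit indexed by $\Act_{\mathcal{O}}(E')$. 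Hypothesis~(2) makes the outer indexing \icats{} equivalent; the cocartesian‑transport maps agree, $\chi_{!}(\phi^{\xint})^{*}A\simeq\phi(\chi)_{!}A$, because transport in $\phi^{*}\mathcal{V}$ is restricted from $\mathcal{V}$; and the inner colimits agree because $\bX=(\phi^{\xint})_{0}^{*}\bY$ and, $\phi$ and $\phi^{\xint}$ (by~(1) and~(3)) being simple strong Segal morphisms, the Beck--Chevalley identities $\phi^{*}i_{0,*}^{\mathcal{P}}\simeq i_{0,*}^{\mathcal{O}}\phi_{0}^{*}$ (\cref{lem:segprescartcond}) and $\phi_{0,!}i_{0}^{*}\simeq i_{0}^{*}\phi_{!}$ (\cref{lem:extrestrictto0}, using that $\phi_{0}$ is extendable with $\mathcal{S}$ admissible, to compare $j_{0,!}\bX$ with $\phi_{0}^{*}j_{0,!}\bY$) identify the two fibre products term by term. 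Hence the comparison is an equivalence.

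The main obstacle is this fibrewise comparison: matching the two iterated colimits from \cref{obs:freealgdformula} cleanly requires handling several Beck--Chevalley identities simultaneously --- those relating $\phi$ to $i_{0,*}$, $i_{0}^{*}$, $j_{0,!}$ and to the fibrewise maps $\overline{\jmath}_{\bX}$, $\overline{\jmath}_{\bY}$ --- and verifying that conditions~(1), (2), (3) are precisely what each ingredient needs (condition~(2) for the outer index, conditions~(1)/(3) for the Segal and coinitiality inputs). Everything else --- the construction of the square, the invocation of monadicity, and the final passage to a full subcategory of $T_{\mathcal{O}}$‑algebras --- is formal.
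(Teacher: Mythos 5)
Your proposal is correct and follows essentially the same route as the paper: the same square of restriction functors, monadicity of the vertical arrows via \cref{propn:freealgdmonadic} (the paper then cites \cite[Proposition 5.3.5]{AnalMnd} for the formal monad-theoretic step you sketch by hand), and a reduction to showing the mate $j_{\mathcal{O},!}\phi^{\xint,*}\to\phi^{*}j_{\mathcal{P},!}$ is an equivalence, which the paper also verifies fibrewise from \cref{obs:freealgdformula} using condition (2) for the outer index over $\Act$ and the simplicity of $\phi$ and $\phi^{\xint}$ for the inner fibres. The only imprecision is your appeal to \cref{lem:extrestrictto0}(iii) for the comparison $j_{0,!}^{\mathcal{O}}\phi_{0}^{\xint,*}\bY\simeq\phi_{0}^{*}j_{0,!}^{\mathcal{P}}\bY$ --- that lemma concerns $i_{0}$ rather than $j_{0}$; the paper instead gets this from the colimit formula for $j_{0,!}$ together with the fact that $\Act_{\mathcal{O}_{0}}(Z)\to\Act_{\mathcal{P}_{0}}(\phi Z)$ is an equivalence, which follows from hypothesis (2) and \cref{rem O_0 act}.
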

\begin{proof}
  We apply \cite[Proposition 5.3.5]{AnalMnd} to the commutative square
  \[
    \begin{tikzcd}
      \Algd_{\mathcal{P}}(\mathcal{V}) \arrow{r}{\phi^{*}} \arrow{d} & \Algd_{\mathcal{O}}(\mathcal{V}) \arrow{d} \\
      \Algd_{\mathcal{P}^{\xint}/\mathcal{P}}(\mathcal{V}) \arrow{r}{\phi^{\xint,*}} & \Algd_{\mathcal{O}^{\xint}/\mathcal{P}}(\mathcal{V}).
    \end{tikzcd}
  \]
  Here the bottom horizontal map is fully faithful by assumption, and
  the vertical maps are monadic right adjoints by
  \cref{propn:freealgdmonadic}. It therefore only remains to check
  that the mate transformation
  $j_{\mathcal{O},!}\phi^{\xint,*} \to \phi^{*}j_{\mathcal{P},!}$ is an
  equivalence. Using the description of the free algebroids from \cref{obs:freealgdformula}, we see that this will follow if we can show that for every $\bX \in \Seg_{\mathcal{P}_{0}^{\xint}}(\mathcal{S})$ and $\tilde{E} \in \mathcal{O}^{\flat,\el}_{j_{0,!}\phi_{0}^{*}\bX}$, the induced functor
  \[ (\mathcal{O}^{\xint}_{\phi_{0}^{*}\bX})^{\act}_{/\tilde{E}} \to (\mathcal{P}^{\xint}_{\bX})^{\act}_{/\phi(\tilde{E})} \]
  is cofinal (\ie{} an equivalence, since it is a map of \igpds{}). To this end, we first observe that our functor lives in a commutative square
  \[
    \begin{tikzcd}
      (\mathcal{O}^{\xint}_{\phi_{0}^{*}\bX})^{\act}_{/\tilde{E}} \arrow{r} \arrow{d} & (\mathcal{P}^{\xint}_{\bX})^{\act}_{/\phi(\tilde{E})} \arrow{d} \\
      \Act_{\mathcal{O}}(E) \arrow{r}{\sim} & \Act_{\mathcal{P}}(\phi(E)),
    \end{tikzcd}
  \]
  where the bottom horizontal map is an equivalence by assumption. The
  top horizontal map is then an equivalence \IFF{} this square is a
  pullback, and since this is a square of \igpds{}, to see this it suffices to check we get equivalences on all fibres. The map on fibres over an active map $\alpha \colon O \actto E$ is also a map on fibres in the square
  \[
    \begin{tikzcd}
      (i_{0,*}^{\mathcal{O}}\phi_{0}^{\xint,*}\bX)(O) \arrow{rr}{\sim} \arrow{d} & &  (\phi^{\xint,*}i_{0,*}^{\mathcal{P}}\bX)(O) \arrow{d} \\
      (i_{0,*}^{\mathcal{O}} j_{0,!}^{\mathcal{O}}\phi_{0}^{\xint,*}\bX)(E)  \arrow{r} &
      (i_{0,*}^{\mathcal{O}} \phi_{0}^{*} j_{0,!}^{\mathcal{P}}\bX)(E) \arrow{r}{\sim} & (\phi^{*}i_{0,*}^{\mathcal{P}} j_{0,!}^{\mathcal{P}}\bX)(E).
    \end{tikzcd}
  \]
  Here the top horizontal map is an equivalence since $\phi^{\xint}$ is simple, while in the bottom row the second map is an equivalence since $\phi$ is simple. The first map in the bottom row is also an equivalence, which follows by unwinding the formula for $j_{0,!}$ and noting that by assumption $\Act_{\mathcal{O}_{0}}(Z) \to \Act_{\mathcal{P}_{0}}(\phi Z)$ is an equivalence for all $Z \in \mathcal{O}_{0}$ by assumption (using \cref{rem O_0 act}). It follows that we get an equivalence on fibres in this square, which completes the proof.
\end{proof}

\begin{cor}
  Suppose $\mathcal{O}$ and $\mathcal{P}$ are extendable enrichable patterns and $\phi \colon \mathcal{O} \to \mathcal{P}$ is a simple morphism of enrichable patterns such that
  \begin{enumerate}[(1)]
  \item $\phi^{\xint}$ is also simple.
  \item $\Act_{\mathcal{O}}(O) \to \Act_{\mathcal{P}}(\phi O)$ is an equivalence for all $O \in \mathcal{O}$ (or equivalently just $O \in \mathcal{O}^{\el}$),
  \item $\mathcal{O}^{\el}_{O/} \to \mathcal{P}^{\el}_{\phi O/}$ is coinitial for all $O \in \mathcal{O}$,
  \item $\mathcal{O}^{\el} \to \mathcal{P}^{\el}$ is an equivalence.
  \end{enumerate}
  Then the induced functor
    \[\phi^{*} \colon \Algd_{\mathcal{P}}(\mathcal{V}) \to \Algd_{\mathcal{O}}(\mathcal{V})\]
    is an equivalence for every presentably $\mathcal{P}$-monoidal
    \icat{} $\mathcal{V}$. \qed
\end{cor}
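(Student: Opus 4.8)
The plan is to deduce this from the preceding Proposition, which under hypotheses (1)--(3) already shows that $\phi^{*}$ is fully faithful with essential image exactly the $\mathcal{O}$-algebroids whose restriction to $\mathcal{O}^{\xint}$ lies in the image of the restriction functor $\rho\colon \Algd_{\mathcal{P}^{\xint}/\mathcal{P}}(\mathcal{V}) \to \Algd_{\mathcal{O}^{\xint}/\mathcal{P}}(\mathcal{V})$, \emph{provided} $\rho$ is fully faithful. It therefore suffices to show that the extra hypothesis (4) forces $\rho$ to be an \emph{equivalence}: this at once supplies the fully faithfulness assumption needed to invoke the Proposition and guarantees that the essential image of $\phi^{*}$ is all of $\Algd_{\mathcal{O}}(\mathcal{V})$, so that $\phi^{*}$ is fully faithful and essentially surjective, hence an equivalence.

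To analyse $\rho$, I would describe both sides via the observation preceding \cref{propn:freealgdmonadic}: for $\mathcal{Q}\in\{\mathcal{O},\mathcal{P}\}$ the \icat{} $\Algd_{\mathcal{Q}^{\xint}/\mathcal{P}}(\mathcal{V})$ is the cocartesian fibration over $\Seg_{\mathcal{Q}^{\xint}_{0}}(\mathcal{S})$ whose fibre at $\bX$ is $\Fun_{/\mathcal{P}}((\mathcal{Q}^{\xint}_{\bX})^{\el}, \mathcal{V}^{\otimes})$, where $(\mathcal{Q}^{\xint}_{\bX})^{\el} \to \mathcal{Q}^{\el}$ is the left fibration classified by the restriction to $\mathcal{Q}^{\el}$ of $i^{\mathcal{Q}^{\xint}}_{0,*}\bX$, and $\mathcal{Q}^{\el}$ maps to $\mathcal{P}$ through $\phi$ when $\mathcal{Q}=\mathcal{O}$. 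Since all maps in $\mathcal{Q}^{\xint}_{0}$ are inert, a functor on it is a Segal object precisely when it is right Kan extended from its restriction to the full subcategory $\mathcal{Q}^{\el}_{0}$ of elementary objects, so restriction gives an equivalence $\Seg_{\mathcal{Q}^{\xint}_{0}}(\mathcal{S}) \simeq \Fun(\mathcal{Q}^{\el}_{0}, \mathcal{S})$; composing that right Kan extension with the one along $\mathcal{Q}^{\xint}_{0} \hookrightarrow \mathcal{Q}^{\xint}$ defining $i^{\mathcal{Q}^{\xint}}_{0,*}$ and using that $\mathcal{Q}^{\el}_{0} \hookrightarrow \mathcal{Q}^{\xint}$ factors through $\mathcal{Q}^{\el}$, the pointwise formula for right Kan extension identifies $(i^{\mathcal{Q}^{\xint}}_{0,*}\bX)|_{\mathcal{Q}^{\el}}$ with the right Kan extension of $\bX|_{\mathcal{Q}^{\el}_{0}}$ along $\mathcal{Q}^{\el}_{0} \hookrightarrow \mathcal{Q}^{\el}$. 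Thus the entire cocartesian fibration $\Algd_{\mathcal{Q}^{\xint}/\mathcal{P}}(\mathcal{V})$ is built functorially out of the inclusion $\mathcal{Q}^{\el}_{0} \hookrightarrow \mathcal{Q}^{\el}$ together with the functor $\mathcal{Q}^{\el} \to \mathcal{P}$ induced by $\phi$. Finally, hypothesis (4) says $\phi$ restricts to an equivalence $\mathcal{O}^{\el} \isoto \mathcal{P}^{\el}$, which --- since $\phi$ lies over $\xF_{*}^{\natural}$, hence over $\xF_{*}$ --- base-changes to an equivalence $\mathcal{O}^{\el}_{0} \isoto \mathcal{P}^{\el}_{0}$ on the parts over $\angled{0}$, compatibly with the inclusions into $\mathcal{O}^{\el},\mathcal{P}^{\el}$ and with the maps down to $\mathcal{P}$; substituting this into the description above shows that $\rho$ is an equivalence, lying over the induced base equivalence $\Seg_{\mathcal{P}^{\xint}_{0}}(\mathcal{S}) \simeq \Seg_{\mathcal{O}^{\xint}_{0}}(\mathcal{S})$. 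The preceding Proposition then gives that $\phi^{*}$ is an equivalence.

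I expect the main obstacle to be the bookkeeping in the middle step rather than any conceptual difficulty: one must verify that the identification $\Seg_{\mathcal{Q}^{\xint}_{0}}(\mathcal{S}) \simeq \Fun(\mathcal{Q}^{\el}_{0}, \mathcal{S})$ and the description of $(\mathcal{Q}^{\xint}_{\bX})^{\el}$ as a left fibration over $\mathcal{Q}^{\el}$ classified by a right Kan extension of $\bX|_{\mathcal{Q}^{\el}_{0}}$ are natural in $\bX$, so that the fibrewise equivalences assemble into an equivalence of the total cocartesian fibrations, and one must keep careful track of which full-subcategory inclusion each right Kan extension is taken along (the key point being that $\mathcal{Q}^{\el}_{0} \hookrightarrow \mathcal{Q}^{\xint}$ factors through $\mathcal{Q}^{\el}$). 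Hypotheses (2) and (3) do not enter the corollary directly beyond their use in the preceding Proposition; the genuinely new input is hypothesis (4) together with this naturality check.
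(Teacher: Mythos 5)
Your proposal is correct and is exactly the argument the paper intends (the corollary is stated with no written proof): hypothesis (4), together with the identification of $\Algd_{\mathcal{Q}^{\xint}/\mathcal{P}}(\mathcal{V})$ as the cartesian fibration with fibres $\Fun_{/\mathcal{P}}(\mathcal{Q}^{\el}_{\bX},\mathcal{V}^{\otimes})$ and of $\Seg_{\mathcal{Q}^{\xint}_{0}}(\mathcal{S})$ with $\Fun(\mathcal{Q}^{\el}_{0},\mathcal{S})$, forces the restriction functor $\Algd_{\mathcal{P}^{\xint}/\mathcal{P}}(\mathcal{V})\to\Algd_{\mathcal{O}^{\xint}/\mathcal{P}}(\mathcal{V})$ to be an equivalence, so the preceding proposition applies and its image condition becomes vacuous. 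Your bookkeeping (the factorization $\mathcal{Q}^{\el}_{0}\hookrightarrow\mathcal{Q}^{\el}\hookrightarrow\mathcal{Q}^{\xint}$ through full subcategories, the compatibility with the identification $\mathcal{O}^{\xint}_{\phi_{0}^{*}\bX}\simeq\phi^{\xint,*}\mathcal{P}^{\xint}_{\bX}$ coming from simplicity of $\phi^{\xint}$, and the restriction of the equivalence $\mathcal{O}^{\el}\isoto\mathcal{P}^{\el}$ to the fibres over $\angled{0}$) is accurate.
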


\section{Examples of enrichment}\label{sec:ex}
  
\subsection{Enriched $\infty$-categories}\label{subsec enriched cat}

As our first example, we will see that enriched \icats{} in the sense of \cite{enriched} fit into the framework we introduced above.

\begin{defn}
  Let $\simp$ be the simplex category, with objects the ordered sets
  $[n] := \{0,\ldots,n\}$. This has a factorization system where a
  morphism $\phi \colon [n] \to [m]$ is \emph{inert} if $\phi$ is a
  subinterval inclusion, \ie{} $\phi(i) = \phi(0)+i$ for
  $i = 0,\ldots,n$, and \emph{active} if $\phi$ is boundary-preserving,
  \ie{} $\phi(0) = 0$ and $\phi(n) = m$. This gives an inert--active
  factorization system on $\Dop$, and we write $\simp^{\op,\natural}$
  for the algebraic pattern given by this, with the elementary objects
  being $[0]$ and $[1]$.
\end{defn}

\begin{remark}
  Segal $\simp^{\op,\natural}$-spaces are precisely Rezk's Segal spaces, first introduced in \cite{Rezk} as a model for \icats{}.
\end{remark}

\begin{observation}
The pattern $\simp^{\op,\natural}$ is enrichable when equipped
with the functor $|\blank|\colon \Dop \to \xF_{*}$ such that 
\begin{itemize}
	\item it takes an object $[n]$ to $|[n]| := \angled{n}$ and
	\item a morphism in $\Dop$ corresponding to $\phi\colon [n]\to [m]$ in $\simp$ is sent to the map $|\phi| \colon \angled{m} \to \angled{n}$ defined by $|\phi|(i) = j$ if
	$\phi(j-1)<i \leq \phi(j)$ for some $j$, and $|\phi|(i)= 0$ otherwise.
      \end{itemize}
      Here $\Dop_{0} = \{[0]\}$ and
$\Seg_{\Dop_{0}}(\mathcal{S}) \simeq \mathcal{S}$.
\end{observation}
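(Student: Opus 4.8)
The plan is to verify directly the two conditions defining an enrichable pattern: that $|\blank| \colon \simp^{\op,\natural} \to \xF_{*}^{\natural}$ is a morphism of algebraic patterns, and that the associated flat pattern $(\simp^{\op,\natural})^{\flat}$ --- which has underlying \icat{} $\Dop$, the same factorization system, and only $[1]$ as an elementary object --- is cartesian. The claim about $\Dop_{0}$ is then immediate.

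First I would check that $|\blank|$ is a functor; the only nontrivial point is compatibility with composition. The key combinatorial observation is that for $\phi \colon [n] \to [m]$ in $\simp$ the half-open intervals $(\phi(j-1),\phi(j)] \cap \{1,\dots,m\}$, $j = 1,\dots,n$, are pairwise disjoint with union $(\phi(0),\phi(n)]$, so that $|\phi|(i) = j$ exactly when $i$ lies in the $j$-th of these, and $|\phi|(i) = 0$ precisely when $i \le \phi(0)$ or $i > \phi(n)$ (in particular $|\phi|(0) = 0$, so $|\phi|$ is pointed). For a second map $\psi \colon [m] \to [l]$, refining the partition of $\{1,\dots,l\}$ determined by $\psi$ along the one determined by $\phi$ produces exactly the partition determined by $\psi \circ \phi$, whence $|\psi \circ \phi| = |\phi| \circ |\psi|$. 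Using this description I would then read off the remaining axioms (a morphism of patterns need only preserve inert maps, active maps, and elementary objects): a subinterval inclusion $\phi \colon [n] \hookrightarrow [m]$ with $\phi(i) = a+i$ induces the map $\angled{m} \to \angled{n}$ sending $a+i \mapsto i$ for $1 \le i \le n$ and everything else to the basepoint, which is an isomorphism off the basepoint, hence inert; a boundary-preserving $\phi$ has $(\phi(0),\phi(n)] = \{1,\dots,m\}$, so $|\phi|$ sends nothing but the basepoint to the basepoint and is active; and the elementary objects $[0],[1]$ map to $\angled{0},\angled{1}$, which are both elementary in $\xF_{*}^{\natural}$ (this is exactly why the target is $\xF_{*}^{\natural}$ rather than $\xF_{*}^{\flat}$).

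For the cartesian condition I would fix $[n] \in \Dop$ and analyse the comparison functor $(\Dop)^{\flat,\el}_{[n]/} \to \xF^{\flat,\el}_{*,\angled{n}/}$. By the factorization system on $\Dop$, an inert map $[n] \to [1]$ corresponds to a subinterval inclusion $[1] \hookrightarrow [n]$ in $\simp$, \ie{} to one of the $n$ edge-inclusions $\{i-1,i\} \hookrightarrow [n]$, $i = 1,\dots,n$; since the only inert self-map of $[1]$ is the identity, $(\Dop)^{\flat,\el}_{[n]/}$ is the discrete category on these. By the inert computation above, the $i$-th of them is sent by $|\blank|$ to $\rho_{i} \colon \angled{n} \to \angled{1}$, so the comparison functor is the bijection $\{1,\dots,n\} \isoto \{\rho_{1},\dots,\rho_{n}\} = \xF^{\flat,\el}_{*,\angled{n}/}$ of discrete sets, hence an equivalence. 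Thus $(\simp^{\op,\natural})^{\flat}$ is cartesian and $\simp^{\op,\natural}$ is enrichable.

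Finally, $\Dop_{0}$ is the full subcategory on the objects $[n]$ with $\angled{n} = \angled{0}$, \ie{} on $[0]$ alone, and as $\Hom_{\simp}([0],[0]) = \{\id\}$ it is the terminal \icat{} with its unique object elementary; the Segal condition is then vacuous, giving $\Seg_{\Dop_{0}}(\mathcal{S}) \simeq \Fun(\ast,\mathcal{S}) \simeq \mathcal{S}$. There is no conceptual obstacle in any of this; the only thing demanding care is keeping the indexing conventions for $|\phi|$ straight --- in particular the precise description of which elements of $\angled{m}$ are sent to the basepoint.
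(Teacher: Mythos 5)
Your proof is correct and complete. The paper states this as an Observation with no proof supplied, and your direct verification --- functoriality of $|\blank|$ via the interval-partition description, preservation of inert and active morphisms and of elementary objects, the identification of $(\simp^{\op})^{\flat,\el}_{[n]/}$ with the discrete set $\{\rho_{1},\dots,\rho_{n}\}$, and the trivial computation of $\Dop_{0}$ --- is precisely the routine check the paper leaves implicit.
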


For $X \in \mathcal{S}$ the Segal $\Dop$-fibration $\Dop_{X} \to \Dop$
corresponds to the functor $\Dop \to \mathcal{S}$ obtained by right
Kan extension from $\{[0]\}$; this takes $[n]$ to $X^{\times (n+1)}$,
with face maps given by projections and degeneracies given by
diagonals. We thus recover precisely the object denoted $\Dop_{X}$ in
\cite{enriched}, so that a $\Dop$-algebroid with underlying space $X$
in a $\Dop$-monoidal \icat{} is the same thing as a \emph{categorical
  algebra} with space of objects $X$ in the terminology of
\cite{enriched}.

To see why a $\Dop$-algebroid describes (the algebraic structure of) an enriched \icat{}, first note that we can think of the object $[n]$ in
$\simp^{\op}$ as the category
\[ \bullet \to \bullet \to \cdots \to \bullet\]
with $n+1$ objects. An object of $\Dop_{X}$ over $[n]$ can then be though of as this category
together with  a labelling of its objects by elements in the space $X$,
\ie
\begin{equation}
  \label{eq:enrcatob}
  {x_{0}} \to {x_{1}} \to \cdots \to
  {x_{n}}.
\end{equation}
If $\mathcal{C}$ is a $\Dop$-monoidal \icat{} then a $\Dop$-algebroid
$A$ in $\mathcal{C}$ assigns to
$(x,x') \in \Dop_{X,[1]} \simeq X \times X$ an object
$A(x,x') \in \mathcal{C}$, which describes the enriched morphisms from
$x$ to $x'$. The degeneracy $x \mapsto (x,x)$ gives an identity map
$\bbone \to A(x,x)$, and the inner face map $(x,y,z) \mapsto (x,z)$
induces a composition map $A(x,y) \otimes A(y,z) \to A(x,z)$. To a
general object \cref{eq:enrcatob} we assign the list
$(A(x_{0},x_{1}),\ldots,A(x_{n-1},x_{n}))$, and the morphisms of
$\Dop$ give a homotopy-coherently associative and unital composition
operation.

The pattern $\simp^{\op,\natural}$ is extendable (see \cite[Example 8.14]{patterns}). We therefore get a formula for free $\Dop$-algebroids from \cref{cor:freealg}:
\begin{propn}
  Let $\mathcal{V}$ be a presentably $\Dop$-monoidal \icat{}. For a space $X$, the free $\Dop$-algebroid on $\Phi \colon X \times X \to \mathcal{V}$ is given by
  \[ j_{!}\Phi(x,y) \simeq \coprod_{n=0}^{\infty} \colim_{(x_{0},\ldots,x_{n}) \in (X^{n+1})_{x,y}} \Phi(x_{0},x_{1}) \otimes \cdots \otimes \Phi(x_{n-1},x_{n}).\]
\end{propn}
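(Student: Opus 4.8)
The plan is to apply \cref{cor:freealg} and then unwind the colimit formula of \cref{obs:freealgdformula}. Since $\simp^{\op,\natural}$ is extendable (as recalled above) and $\Dop_{0} = \{[0]\}$, the inclusion $j_{0} \colon \Dop^{\xint}_{0} \to \Dop_{0}$ is an equivalence, so $\mathcal{S}$ is trivially $j_{0}$-admissible and \cref{cor:freealg} supplies the free algebroid adjunction $j_{!} \dashv j^{*}$ between $\Algd_{\Dop^{\xint}/\Dop}(\mathcal{V})$ and $\Algd_{\Dop}(\mathcal{V})$. First I would identify the input: by \cite[Lemma 4.14]{freealg} a $\Dop^{\xint}$-algebroid over $\Dop$ with underlying space $X$ is a functor $\Dop^{\el}_{X} \to \mathcal{V}^{\otimes}$ over $\Dop$, and since the fibre of $\mathcal{V}^{\otimes}$ over $[0]$ is the terminal \icat{} while $\Dop_{X,[1]} \simeq X \times X$, this is exactly a functor $\Phi \colon X \times X \to \mathcal{V}$; so $\Phi$ in the statement is being viewed this way.

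Next I would compute $j_{!}\Phi$ at an object $(x,y) \in \Dop_{X,[1]} \simeq X \times X$ lying over $[1]$, using the iterated-colimit formula of \cref{obs:freealgdformula}. The outer colimit is indexed by $\Act_{\Dop}([1])$, the \igpd{} of active maps with target $[1]$ in $\Dop$; an active map $[n] \actto [1]$ in $\Dop$ is a boundary-preserving map $[1] \to [n]$ in $\simp$, of which there is exactly one for each $n \geq 0$ (sending $0 \mapsto 0$ and $1 \mapsto n$), so, as $\simp$ has no nonidentity automorphisms, $\Act_{\Dop}([1]) \simeq \{0,1,2,\dots\}$ is discrete --- this accounts for $\coprod_{n=0}^{\infty}$. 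For fixed $n$, the pointwise formula for the right Kan extension $i_{0,*}X$ gives $(i_{0,*}X)([n]) \simeq X^{n+1}$ (indexed by the vertices of $[n]$) and shows that the active map $[n] \actto [1]$ induces the projection $X^{n+1} \to X^{2}$ onto the first and last coordinates, so the inner indexing \igpd{} $(i_{0,*}X)([n]) \times_{(i_{0,*}X)([1])} \{(x,y)\}$ is the fibre $(X^{n+1})_{x,y}$. Finally, for $(x_{0},\dots,x_{n}) \in (X^{n+1})_{x,y}$ the corresponding object $\tilde{O}$ of $\Dop_{X}$ over $[n]$ has $A(\tilde{O}) \simeq (\Phi(x_{0},x_{1}),\dots,\Phi(x_{n-1},x_{n}))$ in $\mathcal{V}^{\otimes}_{[n]} \simeq \mathcal{V}^{\times n}$ (as $A$ sends inert maps to cocartesian ones), and the cocartesian transport $\phi_{!} \colon \mathcal{V}^{\otimes}_{[n]} \to \mathcal{V}^{\otimes}_{[1]} \simeq \mathcal{V}$ along the active map is the $n$-fold tensor product (the unit $\bbone$ when $n = 0$). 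Substituting all of this into \cref{obs:freealgdformula} yields the stated formula.

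I do not expect a substantial obstacle: the argument is essentially bookkeeping once \cref{cor:freealg} and \cref{obs:freealgdformula} are available. The two points that deserve a line of justification are the identification of $\Act_{\Dop}([1])$ with the natural numbers and the claim that $i_{0,*}X$ carries the unique active map $[1] \actto [n]$ to the ``endpoints'' projection $X^{n+1} \to X^{2}$; both follow immediately from the pointwise right Kan extension formula together with the explicit description of active (boundary-preserving) morphisms in $\simp$.
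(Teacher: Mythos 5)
Your proposal is correct and follows exactly the route the paper takes: the paper simply cites the extendability of $\simp^{\op,\natural}$ and derives the formula as an instance of \cref{cor:freealg} and \cref{obs:freealgdformula}, leaving the unwinding (discreteness of $\Act_{\Dop}([1])$, the identification of the inner indexing \igpd{} with $(X^{n+1})_{x,y}$, and the cocartesian transport being the iterated tensor product) implicit. Your write-up supplies precisely that bookkeeping, including the correct treatment of the $n=0$ term.
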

Here the notation $(X^{n+1})_{x,y}$ means the fibre at $(x,y)$ of the first and last projections to $X$. If $n > 0$ this is equivalent to $X^{n-1}$, while for $n = 0$ it is the space $X(x,y)$ of paths from $x$ to $y$ in $X$; thus the contribution from $n = 0$ is $\colim_{X(x,y)} \bbone$.

\subsection{Enriched $\infty$-operads}\label{sec enropd}
Next, we will see that algebroids over $\bbO$ give one of the models for enriched \iopds{} we studied in \cite{ChuHaugseng}.

\begin{defn}
Let $\bbO$ be the dendroidal category of trees, \ie{} finite directed and
simply connected graphs, as introduced by Moerdijk--Weiss in \cite{MoerdijkWeiss}.
By \cite[Example 3.7]{patterns} its opposite category $\bbOop$ has an algebraic
pattern structure defined as follows:
\begin{itemize}
\item A morphism is inert or active if it is a subgraph inclusion or a boundary preserving map, respectively.
\item An object is elementary if it is either the plain edge $\eta$ or a corolla, i.e. a tree with one vertex.
\end{itemize}
\end{defn}

\begin{remark}
  Segal $\bbO^{\op,\natural}$-spaces are precisely the dendroidal Segal spaces introduced by Cisinski and Moerdijk \cite{CisinskiMoerdijk2} as a model for \iopds{}.
\end{remark}

\begin{observation}
By \cite[Definition 4.1.16]{ChuHaugseng} there is functor
$\name V_\bbO\colon \bbOop\to \mathbb F_*^\natural$ which takes a tree
to its set of vertices with a disjoint base point.  It
then follows from the definition that every inert map
$\name V_\bbO(T)\rightarrowtail \langle 1\rangle$ has a unique inert
lift $T\to C$, hence $\bbOop$ is an enrichable pattern. Moreover,        
as
$\bbOop_0=\{\eta\}$ we have $\Seg_{\bbOop_0}(\xS)\simeq\xS$. 
\end{observation}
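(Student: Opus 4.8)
The plan is to verify the three assertions of the observation in turn: that $\name{V}_{\bbO}$ is a morphism of algebraic patterns, that the associated flat pattern is cartesian (so that $\bbOop$ is enrichable), and that $\bbOop_{0}$ is a point; the first and last are bookkeeping, while the middle one is the substantive step. I would begin by recalling that $\name{V}_{\bbO} \colon \bbOop \to \xFn$ of \cite[Definition 4.1.16]{ChuHaugseng} sends the edge $\eta$ (which has no vertices) to $\angled{0}$ and a corolla (one vertex) to $\angled{1}$, so that it carries elementary objects to elementary objects; that an inert map $T \to S$ in $\bbOop$ corresponds to a subgraph inclusion $S \hookrightarrow T$, which on vertex sets induces the collapse map $\angled{|V(T)|} \to \angled{|V(S)|}$ that is the identity on the vertices of $S$ and sends the remaining vertices of $T$ to the basepoint --- so an inert map in $\xFs$; and that a boundary-preserving map hits every vertex of its target, hence is sent to an active map of finite pointed sets. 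Thus $\name{V}_{\bbO}$ is a morphism of patterns (all of this is contained in \cite{ChuHaugseng}).

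The heart of the matter is to show that $\bbO^{\op,\flat} := \bbOop \times_{\xFn} \xFb$, \ie{} $\bbOop$ with only the corollas kept elementary, is cartesian along $\name{V}_{\bbO}$; concretely, for each tree $T$ with $k := |V(T)|$ vertices the canonical functor
\[ \bbO^{\op,\flat,\el}_{T/} \longrightarrow \xF^{\flat,\el}_{*,\angled{k}/} \simeq \{\rho_{1},\ldots,\rho_{k}\} \]
must be an equivalence. An object of the source is an inert map $T \to C$ in $\bbOop$ with $C$ a corolla, equivalently a subgraph inclusion $C \hookrightarrow T$ onto a one-vertex subtree; each vertex $v$ of $T$ determines exactly one such subtree, namely $v$ together with its incident edges. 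A morphism in the source is an inert map of corollas lying over $T$, hence an isomorphism of corollas, and such an isomorphism commuting with two embeddings of the same subtree of $T$ is uniquely determined by them; thus the source decomposes as a disjoint union over $v \in V(T)$ of contractible groupoids, hence is equivalent to the discrete set of vertices of $T$. Since $\name{V}_{\bbO}$ takes the embedding of the subtree at $v$ to the collapse map onto $\{v\}$, the displayed functor is a bijection, hence an equivalence. This is exactly the assertion that every inert map $\name{V}_{\bbO}(T) \intto \angled{1}$ has an \emph{essentially unique} inert lift $T \to C$, so $\bbOop$ is an enrichable pattern.

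Finally, $\bbOop_{0} = \bbOop \times_{\xFs} \{\angled{0}\}$ is, by the description of $\name{V}_{\bbO}$ on objects, the full subcategory of trees with no vertices; the only such tree is $\eta$, and $\Hom_{\bbO}(\eta,\eta)$ is a singleton (the edges of $\eta$), so $\bbOop_{0} = \{\eta\}$ is the terminal category, with $\eta$ as its unique elementary object. The Segal condition at $\eta$ is then vacuous, since $\bbO^{\op,\el}_{0,\eta/} = \{\id_{\eta}\}$, and hence $\Seg_{\bbOop_{0}}(\xS) = \Fun(\{\eta\}, \xS) \simeq \xS$. The only step I expect to need any care is the bookkeeping of leg-relabellings of corollas in the previous paragraph, which is precisely what makes $\bbO^{\op,\flat,\el}_{T/}$ a \emph{discrete} set rather than merely a weakly contractible \icat{}; but this is standard dendroidal input --- it underlies the dendroidal Segal core --- and is already available from \cite[\S 4.1]{ChuHaugseng}, so no genuine obstacle arises.
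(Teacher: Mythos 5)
Your proposal is correct and follows the same route the paper intends: it simply fills in the details behind the paper's terse assertion that the cartesianness of $\bbO^{\op,\flat}$ "follows from the definition," by identifying inert maps $T \to C$ with one-vertex subtree inclusions indexed bijectively by $V(T)$, and by noting that $\eta$ has trivial automorphisms so $\bbOop_{0}$ is the terminal category. No gaps.
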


For every $X \in \xS$, an object in $\bbOop_X$ can be regarded as a
tree whose edges are labeled by elements of the space $X$.  Thus
$\bbOop_{X} \to \bbOop$ in the sense of \cref{def OX} means the same
thing as in \cite{ChuHaugseng}. In particular, if $\mathcal{C}$ is a
symmetric monoidal \icat{} then by \cite[\S 4]{ChuHaugseng} an
$\bbOop$-algebroid $A$ in $\mathcal{C}$ lying over $X$ models a
$\mathcal{C}$-enriched $\infty$-operad with $X$ as its space of
objects: The algebroid $A$ takes a corolla in $\bbOop_{X}$ whose
leaves and root are labeled by $x_1, \ldots, x_n$ and $y$,
respectively, to an object in $\mathcal{C}$ which can be viewed as the
object of multimorphisms $\xMap(x_1,\ldots, x_n;y)$ for an \iopd{}
enriched in $\mathcal{C}$.  The identities and the operadic
compositions are then given by the images of active morphisms in
$\bbOop_X$ under $A$.

The pattern $\bbO^{\op,\natural}$ is extendable (the conditions are checked in \cite[\S 5.3]{AnalMnd}). We therefore get a formula for free $\bbOop$-algebroids from \cref{cor:freealg}.

\begin{variant}
  By \cite[Example 3.8]{patterns}, for every perfect operator category $\Phi$
  as defined by Barwick in \cite{bar} there is algebraic pattern
  $\simp^{\op,\natural}_\Phi$ whose objects are finite sequences of
  morphisms in $\Phi$. For $\Phi=*$ we  have
  $\simp^{\op, \natural}_*\simeq \simp^{\op, \natural}$. If $\Phi$ is
  the category $\xF$ of finite sets, then $\simp^{\op,\natural}_\xF$
  is a category of finite ``level trees'' with morphisms which
  respects the level structure. We also obtain a planar version by
  choosing $\Phi$ to be the category $\mathbb{O}$ of ordered sets. For
  every $\Phi$, the unique operator morphism $\Phi\to \xF$ induces a
  morphism of algebraic patterns
  $\simp^{\op,\natural}_\Phi\to \simp^{\op,\natural}_\xF$. It is
  easily checked that the composite of this functor with the functor
  $\simp^{\op,\natural}_\xF\to \xF_*^\natural$ sending a level tree
  to its set of vertices with a disjoint base point exhibits
  $\simp^{\op,\natural}_\Phi$ as an enrichable pattern. It then
  follows from \ref{subsec enriched cat} and \cite{ChuHaugseng} that
  for $\Phi$ being $*, \xF, \mathbb{O}$, the $\simp^\op_{\Phi}$-algebroids in a (symmetric) monoidal
  \icat{} $\mathcal{C}$ describe $\mathcal{C}$-enriched \icats{}, $\mathcal{C}$-enriched
  $\infty$-operads, and $\mathcal{C}$-enriched planar $\infty$-operads,
  respectively. Moreover, \cite[Proposition 4.3.2 and Theorem
  4.4.3]{ChuHaugseng} show that $\simp^\op_{\xF}$-algebroids in $\mathcal{C}$
  are equivalent to $\bbO^\op$-algebroids.
\end{variant}

\subsection{Enriched $\infty$-properads}
We now briefly observe that our definition of enriched structures also encompasses the enriched $\infty$-properads studied by Chu and Hackney in \cite{iprpd}.

\begin{defn}
Let $\bbGamma$ denote the category of acyclic connected finite directed graphs
defined by Hackney, Robertson, and Yau in \cite{HRYProperad}. As in \cite[Example 3.9]{patterns} its opposite category $\bbGamma^\op$ admits an algebraic pattern structure $\bbGamma^{\op,\natural}$ such that:
\begin{itemize}
	\item a morphism is inert or active if it is given by a subgraph inclusion or is boundary-preserving, respectively,
	\item an object is elementary if it has at most one vertex, \ie{} it is either the plain edge $\eta$ or a corolla $C_{p,q}$ with $p$ incoming and $q$ outgoing edges.
\end{itemize}
\end{defn}

\begin{observation}
  By \cite[Theorem A.1]{iprpd} and \cite[Proposition 2.2.23]{iprpd},
  there is a morphism of algebraic patterns
  $\name{V}_{\bbGamma^\op}\colon \bbGamma^\op\to \xF_*$ that takes a
  graph to its set of vertices with a disjoint base point. By
  construction, every inert map
  $\name{V}_{\bbGamma^\op}(G)\to \angled{1}$ has a unique lift in
  $\bbGamma^\op$, so we see that $\bbGamma^\op$ is enrichable. As in
  the previous examples we have $\bbGamma^{\op}_{0} \simeq \{\eta\}$
  and $\Seg_{\bbGamma^\op_0}(\xS)\simeq \xS$.
\end{observation}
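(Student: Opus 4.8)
The plan is to verify the three conditions in the definition of an enrichable pattern for $\bbGamma^\op$ --- that it is an algebraic pattern, that it carries a morphism of patterns $|\blank| \colon \bbGamma^\op \to \xF_{*}^{\natural}$, and that the associated flat pattern $(\bbGamma^\op)^{\flat}$ is cartesian --- and then to read off the final two statements about $\bbGamma^{\op}_{0}$ directly from the definitions. The first condition is \cite[Example 3.9]{patterns}. For the second I would take $|\blank| := \name{V}_{\bbGamma^\op}$, the functor of \cite[Theorem A.1, Proposition 2.2.23]{iprpd} sending a graph to its set of vertices with a disjoint basepoint; by those references it is a morphism of algebraic patterns to $\xF_{*}$, hence preserves inert and active maps, and to promote it to a morphism to $\xF_{*}^{\natural}$ it only remains to check that it sends the elementary objects of $\bbGamma^\op$ to $\angled{0}$ or $\angled{1}$. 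This is immediate: the plain edge $\eta$ has no vertices, so $|\eta| = \angled{0}$, and a corolla $C_{p,q}$ has a single vertex, so $|C_{p,q}| = \angled{1}$. The same observation shows that the induced morphism $(\bbGamma^\op)^{\flat} \to \xF_{*}^{\flat}$ of flat patterns preserves elementary objects, and $(\bbGamma^\op)^{\flat}$ is an algebraic pattern by \cite[Corollary 5.5]{patterns}.

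The substantive point is the cartesian condition: for every object $G$ of $\bbGamma^\op$, with $n$ vertices so that $|G| = \angled{n}$, I must show the induced functor
\[ (\bbGamma^\op)^{\flat,\el}_{G/} \longrightarrow \xF^{\flat,\el}_{*,\angled{n}/} \simeq \{\rho_{1},\ldots,\rho_{n}\} \]
is an equivalence. An object of the source is an inert map $G \intto C$ with $C$ a corolla, which in $\bbGamma$ is the inclusion of a one-vertex subgraph of $G$, \ie{} the choice of a vertex of $G$ together with its incident edges; there is exactly one such subgraph per vertex, and since the only corolla-subgraph of a corolla is the whole corolla there are no nonidentity inert maps between these, so $(\bbGamma^\op)^{\flat,\el}_{G/}$ is the discrete set of vertices of $G$. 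Under $\name{V}_{\bbGamma^\op}$ this set maps bijectively onto $\{\rho_{1},\ldots,\rho_{n}\}$ --- which is essentially the statement of \cite[Theorem A.1]{iprpd} --- so the functor is an equivalence. This also yields the ``unique lift'' assertion, since the fibre of an equivalence over each $\rho_{i}$ is contractible. Hence $(\bbGamma^\op)^{\flat}$ is cartesian and $\bbGamma^\op$ is enrichable.

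Finally, $G \in \bbGamma^\op$ lies over $\angled{0} \in \xF_{*}$ exactly when $\name{V}_{\bbGamma^\op}(G)$ has no non-basepoint element, \ie{} when $G$ has no vertices; the only acyclic connected finite directed graph with no vertices is the plain edge $\eta$, and it has trivial automorphism group, so $\bbGamma^{\op}_{0}$ is the terminal \icat{}. Its unique object $\eta$ is elementary and $(\bbGamma^{\op}_{0})^{\el}_{\eta/} \simeq \{\id_{\eta}\}$, so the Segal condition over $\bbGamma^{\op}_{0}$ is vacuous and $\Seg_{\bbGamma^{\op}_{0}}(\xS) \simeq \Fun(\ast, \xS) \simeq \xS$. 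The only step demanding real work is the cartesian condition, and its combinatorial heart --- the bijection between inert maps out of $G$ to corollas and the vertices of $G$, compatible with the $\rho_{i}$ --- has already been established in \cite{iprpd}; everything else is an unwinding of definitions.
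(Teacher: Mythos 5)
Your proposal is correct and follows essentially the same route as the paper: it takes the functor $\name{V}_{\bbGamma^\op}$ from \cite[Theorem A.1, Proposition 2.2.23]{iprpd} and reduces the cartesian condition for $(\bbGamma^\op)^{\flat}$ to the unique-lifting statement that inert maps to corollas correspond bijectively to vertices, which is exactly the content the paper invokes. You simply spell out the combinatorics (one corolla subgraph per vertex, discreteness of $(\bbGamma^\op)^{\flat,\el}_{G/}$, triviality of $\xaut(\eta)$ for the directed edge) that the paper leaves implicit.
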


An object of $\bbGamma^{\op}_{X}$ can then be identified with a graph
whose edges are labelled with points of the space $X$. We thus recover
the same indexing objects used in \cite{iprpd}, and a
$\bbGamma^\op$-algebroid $A$ in a symmetric monoidal \icat{}
$\mathcal{C}$ with underlying space $X$ describes a
$\mathcal{C}$-enriched $\infty$-properad with $X$ as its space of
objects.

\begin{variant}
  By restricting to certain full subcategories of $\bbGamma^\op$ we
  similarly obtain notions of enriched $\infty$-dioperads and enriched
  output $\infty$-properads as algebroids; see \cite{iprpd} for more
  details.
\end{variant}

\subsection{Cartesian products and enriched $n$-fold $\infty$-categories}\label{subsec:cartprod}
As our first new example of an enriched structure, we consider enriched $n$-fold \icats{}. Here by an \emph{$n$-fold \icat{}} we mean an \icat{} internal to $(n-1)$-fold \icats{}, \ie{} \icats{} for $n=1$, double \icats{} for $n = 2$, and so forth. These can be described using the cartesian product $\simp^{n,\op,\natural}$, and we start by discussing products of enrichable patterns in general, using the smash product on $\xFs$:

\begin{defn}
  We write
  \[ (\blank)\wedge(\blank)\colon \xFs \times \xFs \to \xFs \]
  for the smash product of pointed finite sets, given by
  \[ (S,s) \wedge (T,t) \cong \left(S \times T\right) / \left(\{s\}\times T \cup S \times \{t\}\right). \]
  In terms of our preferred skeleton, this can be formulated as:
  \begin{enumerate}
    \item $\langle m\rangle \wedge \langle n\rangle = \langle mn\rangle$,
    \item for morphisms $f\colon \langle m\rangle\to \langle m'\rangle, g\colon \langle n\rangle\to \langle n'\rangle,$ the map $f\wedge g \colon \angled{mn} \to \angled{m' n'}$ is given by
    \begin{equation*}
    f\wedge g(n(i-1)+j) = \begin{cases}
  \langle 0\rangle&\text{if $f(i)=\langle 0\rangle$ or $g(j)=\langle 0\rangle$}\\
    n'(f(i)-1) +g(j), &\text{otherwise.}
    \end{cases}
    \end{equation*}
  \end{enumerate}
\end{defn}

\begin{observation}
  Recall from \cite[Corollary 5.5]{patterns} that the \icat{}
  $\AlgPatt$ has cartesian products: if $\mathcal{O}$ and
  $\mathcal{P}$ are algebraic patterns, then their cartesian product
  is given by the product \icat{} $\mathcal{O} \times \mathcal{P}$
  equipped with the factorization system where the inert and active
  morphisms are those that project to inert and active morphisms in
  both $\mathcal{O}$ and $\mathcal{P}$, and the \icat{} of elementary
  objects is $\mathcal{O}^{\el} \times \mathcal{P}^{\el}$. Then it is
  easy to see that the smash product is a morphism of patterns
  \[ (\blank)\wedge(\blank) \colon \xFn \times \xFn \to \xFn.\]
  Moreover, as a morphism $\xFs^{\flat} \times \xFs^{\flat} \to \xFs^{\flat}$ the smash product is an iso-Segal morphism: it gives an isomorphism
  \[ (\xFs^{\flat,\el})_{\angled{n}/} \times (\xFs^{\flat,\el})_{\angled{m}/} \isoto (\xFs^{\flat,\el})_{\angled{nm}/} \]
  for all $n,m$.
\end{observation}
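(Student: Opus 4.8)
For the final Observation, the plan is to handle its two assertions in turn.

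\textbf{Assertion 1: $\wedge$ is a morphism of patterns $\xFn\times\xFn\to\xFn$, and also $\xFb\times\xFb\to\xFb$.} Functoriality of $\wedge$ is routine (it is the usual smash product of pointed finite sets, and in any case is immediate from the explicit formula above), so only the three preservation conditions remain. For \emph{inert maps}: if $f \colon \angled{m} \to \angled{m'}$ and $g \colon \angled{n} \to \angled{n'}$ are inert, the formula shows that the preimage under $f \wedge g$ of the non-basepoints consists exactly of the $n(i-1)+j$ with $f(i) \neq 0$ and $g(j) \neq 0$, and that on this set $f \wedge g$ is the composite of $(i,j) \mapsto (f(i),g(j))$ with the bijection $\{1,\ldots,m'\} \times \{1,\ldots,n'\} \cong \{1,\ldots,m'n'\}$; since $f$ and $g$ restrict to bijections onto their non-basepoint images, so does $f \wedge g$, \ie\ $f \wedge g$ is inert. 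For \emph{active maps}: if $f$ and $g$ are active then $f(i),g(j) \neq 0$ for all $i \in \{1,\ldots,m\}$ and $j \in \{1,\ldots,n\}$, so the second branch of the formula always applies and $(f \wedge g)(k) \neq 0$ for every non-basepoint $k$; hence $f \wedge g$ is active. For \emph{elementary objects}: $\angled{a} \wedge \angled{b} = \angled{ab}$ carries pairs of elementary objects to $\angled{0}$ or $\angled{1}$, hence preserves elementaries in the $\natural$-version, and carries $(\angled{1},\angled{1})$ to $\angled{1}$, hence preserves them in the $\flat$-version.

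\textbf{Assertion 2: as a map $\xFb\times\xFb\to\xFb$ it is iso-Segal.} Fix an object $(\angled{m},\angled{n})$. Since inert morphisms and elementary objects of a product pattern are computed componentwise, the elementary slice $(\xFb \times \xFb)^{\el}_{(\angled{m},\angled{n})/}$ is $\xF^{\flat,\el}_{*,\angled{m}/} \times \xF^{\flat,\el}_{*,\angled{n}/}$, which by the description of $\xF^{\flat,\el}_{*,\angled{n}/}$ recalled earlier is the discrete set $\{\rho_1,\ldots,\rho_m\} \times \{\rho_1,\ldots,\rho_n\}$, while the target $\xF^{\flat,\el}_{*,\angled{mn}/}$ is $\{\rho_1,\ldots,\rho_{mn}\}$. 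So the whole assertion reduces to computing $\rho_i \wedge \rho_j$: substituting into the formula gives $(\rho_i \wedge \rho_j)(n(a-1)+b) = 0$ unless $a = i$ and $b = j$, in which case it equals $1\cdot(\rho_i(i)-1) + \rho_j(j) = 1$, so $\rho_i \wedge \rho_j = \rho_{n(i-1)+j}$. As $(i,j) \mapsto n(i-1)+j$ is a bijection $\{1,\ldots,m\} \times \{1,\ldots,n\} \to \{1,\ldots,mn\}$, the induced map of (discrete) elementary slices is a bijection, hence an equivalence; this is precisely the displayed isomorphism, and it exhibits $\wedge$ as an iso-Segal morphism $\xFb \times \xFb \to \xFb$.

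I do not anticipate a genuine obstacle: every step is elementary once the conventions are fixed. The only delicate point is bookkeeping with the explicit smash formula — matching the enumeration $n(i-1)+j$ of $\{1,\ldots,mn\}$ against the product enumeration of $\{1,\ldots,m\} \times \{1,\ldots,n\}$, and keeping track of which branch of the case split is in force in the inert, active, and $\rho_i \wedge \rho_j$ verifications. (One could bypass checking functoriality of $\wedge$ directly from the formula by appealing to the universal property of the smash product of pointed sets, but this changes nothing in the rest of the argument.)
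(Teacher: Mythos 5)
Your proof is correct and fills in exactly the verification the paper leaves implicit (the Observation is stated without proof as "easy to see"): a direct check from the explicit smash-product formula that inert, active, and elementary data are preserved, followed by the computation $\rho_i \wedge \rho_j = \rho_{n(i-1)+j}$ identifying the discrete elementary slices. The bookkeeping with the enumeration $n(i-1)+j$ is handled correctly, so nothing further is needed.
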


\begin{lemma}\label{lem:cartprodenrble}
  Suppose $\mathcal{O}$ and $\mathcal{P}$ are enrichable patterns. Then the map
  \[ |\blank|_{\mathcal{O}} \wedge |\blank|_{\mathcal{P}} \colon \mathcal{O} \times \mathcal{P} \to \xFn\]
  makes $\mathcal{O} \times \mathcal{P}$ an enrichable pattern, and $(\mathcal{O} \times \mathcal{P})^{\flat} \simeq \mathcal{O}^{\flat} \times \mathcal{P}^{\flat}$.
\end{lemma}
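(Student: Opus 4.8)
The plan is to assemble the pattern structure on $\mathcal{O}\times\mathcal{P}$ from the cartesian product in $\AlgPatt$ together with the smash morphism $\wedge\colon\xFn\times\xFn\to\xFn$ recorded in the preceding observation, and then to reduce the cartesian-pattern condition to the iso-Segal property of the smash product stated there. First I would check that $|\blank|_{\mathcal{O}}\wedge|\blank|_{\mathcal{P}}$ is a morphism of algebraic patterns: by the universal property of products in $\AlgPatt$ (\cite[Corollary 5.5]{patterns}) the pair $(|\blank|_{\mathcal{O}},|\blank|_{\mathcal{P}})$ gives a morphism of patterns $\mathcal{O}\times\mathcal{P}\to\xFn\times\xFn$, and composing this with $\wedge$ --- itself a morphism of patterns --- yields the required morphism to $\xFn$.

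Next I would identify $(\mathcal{O}\times\mathcal{P})^{\flat}$ with $\mathcal{O}^{\flat}\times\mathcal{P}^{\flat}$. Both of these are the \icat{} $\mathcal{O}\times\mathcal{P}$ with the product factorization system (inert, resp.\ active, morphisms are those projecting to inert, resp.\ active, morphisms in each factor), so only the classes of elementary objects need to be compared. An object $(O,P)$ is elementary in $(\mathcal{O}\times\mathcal{P})^{\flat}$ precisely when $O\in\mathcal{O}^{\el}$, $P\in\mathcal{P}^{\el}$ and $|O|\wedge|P|\cong\angled{1}$; since $\angled{nm}\cong\angled{1}$ forces $n=m=1$, this last condition says exactly that $O$ and $P$ both lie over $\angled{1}$, \ie{} that $O\in\mathcal{O}^{\flat,\el}$ and $P\in\mathcal{P}^{\flat,\el}$. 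Hence the elementary objects agree and $(\mathcal{O}\times\mathcal{P})^{\flat}=\mathcal{O}^{\flat}\times\mathcal{P}^{\flat}$ as algebraic patterns.

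It then remains to verify that $\mathcal{O}^{\flat}\times\mathcal{P}^{\flat}$, equipped with the composite $\mathcal{O}^{\flat}\times\mathcal{P}^{\flat}\to\xFb\times\xFb\xrightarrow{\wedge}\xFb$, is a cartesian pattern. In a product of patterns the inert subcategory and the elementary objects are the respective products, so for each $(O,P)$ one has $(\mathcal{O}^{\flat}\times\mathcal{P}^{\flat})^{\el}_{(O,P)/}\simeq\mathcal{O}^{\flat,\el}_{O/}\times\mathcal{P}^{\flat,\el}_{P/}$, and under this identification the comparison map factors as
\[ \mathcal{O}^{\flat,\el}_{O/}\times\mathcal{P}^{\flat,\el}_{P/}\longrightarrow(\xFs^{\flat,\el})_{|O|/}\times(\xFs^{\flat,\el})_{|P|/}\longrightarrow(\xFs^{\flat,\el})_{(|O|\wedge|P|)/}. \]
The first map is an equivalence because $\mathcal{O}^{\flat}$ and $\mathcal{P}^{\flat}$ are cartesian patterns, and the second is an isomorphism by the iso-Segal property of the smash product on $\xFb$ from the preceding observation; hence the composite is an equivalence, which is exactly the cartesian-pattern condition. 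Combined with the first two steps this shows that $\mathcal{O}\times\mathcal{P}$ with $|\blank|_{\mathcal{O}}\wedge|\blank|_{\mathcal{P}}$ is an enrichable pattern and that $(\mathcal{O}\times\mathcal{P})^{\flat}\simeq\mathcal{O}^{\flat}\times\mathcal{P}^{\flat}$. The argument is largely bookkeeping; the one point needing a little care is the compatible identification of slices in a product pattern with products of slices, and the only genuine input is the iso-Segal statement for $\wedge$ already in hand.
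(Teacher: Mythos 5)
Your proposal is correct and follows essentially the same route as the paper's proof: identify $(\mathcal{O}\times\mathcal{P})^{\flat}$ with $\mathcal{O}^{\flat}\times\mathcal{P}^{\flat}$ via the observation that $\angled{n}\wedge\angled{m}\cong\angled{1}$ forces $n=m=1$, then verify the cartesian condition by factoring the slice comparison through $(\xFs^{\flat,\el})_{|O|/}\times(\xFs^{\flat,\el})_{|P|/}$ and invoking the iso-Segal property of the smash product. The only difference is that you spell out the preliminary bookkeeping (that the composite with $\wedge$ is a pattern morphism) which the paper leaves implicit.
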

\begin{proof}
  We must show that $(\mathcal{O} \times \mathcal{P})^{\flat}$ is a
  cartesian pattern. It is immediate from the definition that
  $\angled{n} \wedge \angled {m} \cong \angled{1}$ implies
  $n = m = 1$, so the elementary objects of
  $\mathcal{O} \times \mathcal{P}$ over $\angled{1}$ are precisely
  those of $\mathcal{O}^{\flat} \times \mathcal{P}^{\flat}$; in particular, this product is precisely $(\mathcal{O} \times \mathcal{P})^{\flat}$.

  Given $O \in \mathcal{O}$ over $\angled{n}$ and $P \in \mathcal{P}$ over $\angled{m}$, we then have
  \[ (\mathcal{O} \times \mathcal{P})^{\flat,\el}_{O \times P/} \simeq \mathcal{O}^{\flat,\el}_{O/} \times \mathcal{P}^{\flat,\el}_{P/} \isoto
    (\xFs^{\flat,\el})_{\angled{n}/} \times (\xFs^{\flat,\el})_{\angled{m}/} \isoto (\xFs^{\flat,\el})_{\angled{nm}/},\]
  so that $(\mathcal{O} \times \mathcal{P})^{\flat}$ is indeed cartesian.
\end{proof}

\begin{remark}
  The smash product is a symmetric monoidal structure on $\xFs$, and
  it is easy to check that this lifts to algebraic patterns, making
  $\xFn$ a commutative algebra in $\AlgPatt$. We then have by
  \cite[Theorem 2.2.2.4]{ha} an induced symmetric monoidal structure
  on $\AlgPatt_{/\xFn}$ where the tensor product of objects
  $\mathcal{O},\mathcal{P} \to \xFn$ is given by the composite
  \[ \mathcal{O} \times \mathcal{P} \to \xFn \times \xFn \xto{\wedge}
    \xFn.\] By \cref{lem:cartprodenrble} we have shown that this
  symmetric monoidal structure restricts to one on the full
  subcategory of enrichable patterns. (In particular, the product of enrichable patterns is associative and symmetric, with unit the pattern $\{\angled{0}\}$.)
\end{remark}

We define $\simp^{n,\op,\natural}$ to be the cartesian product $(\simp^{\op,\natural})^{\times n}$, viewed as an enrichable pattern via the iterated smash product. We can think of an object $([m_1], \ldots, [m_n])$ in $\simp^{\op, n,\natural}$
as a diagram of $n$-dimensional cubes which has length $m_k$
in the $k$-th dimension; this object is elementary if
$0\leq m_k\leq 1$ for all $k$.

The category $\simp^{n,\op,\natural}_0$ is the full subcategory
spanned by objects $([m_1], \ldots, [m_n])$ such that $m_k=0$ for some
$k$, with elementary objects those where each $m_{i}$ is $0$ or $1$,
with $0$ occurring at least once. We can think of these elementary
objects as the vertices of the unit $n$-cube except for
$(1,\ldots,1)$, or more appropriately as its lower-dimensional faces
that contain the origin but not $(1,\ldots,1)$. For example, for $n = 2$ a functor $\Phi \colon \simp^{n,\op,\natural,\el}_{0} \to \mathcal{S}$ specifies a space $\Phi([0],[0])$ of objects and spaces $\Phi([1],[0])$ and $\Phi([0],[1])$ of two different types (``vertical and horizontal'') of 1-morphisms, while for $n = 3$ we get
\begin{itemize}
\item a space $\Phi([0],[0],[0])$ of objects,
\item 3 spaces $\Phi([1],[0],[0])$, $\Phi([0],[1],[0])$, $\Phi([0],[0],[1])$ of different types of 1-morphisms,
\item 3 spaces $\Phi([0],[1],[1])$, $\Phi([1],[0],[1])$, $\Phi([0],[1],[1])$ of different types of squares. 
\end{itemize}

By \cite[Example 3.4]{patterns},
Segal $\simp^{n,\op,\natural}$-spaces describe  $n$-fold
$\infty$-categories, \ie{} \icats{} internal to \ldots{} internal to
\icats{}, in the sense that
\[ \Seg_{\simp^{n,\op,\natural}}(\mathcal{S}) \simeq \Seg_{\simp^{\op,\natural}}(\Seg_{\simp^{n-1,\op,\natural}}(\mathcal{S})).\]
A Segal $\simp^{n,\op,\natural}_{0}$-space can similarly be viewed as a compatible collection of $n$ different $(n-1)$-fold \icats{} (where all have the same space of objects, etc.).

Given $\bX \in \Seg_{\simp^{n,\op,\natural}_{0}}(\mathcal{S})$, an
object of $\simp^{n,\op}_{\bX}$ can then be viewed as a diagram of
$n$-cubes each of whose $i$-dimensional faces for $0 \leq i < n$ has
been labelled by a point in the space obtained by evaluating $\bX$ at
the appropriate elementary object.

Suppose $\mathcal{C}$ is a $\Dnop$-monoidal \icat{} (which is
equivalent to an $E_{n}$-monoidal \icat{}, by the additivity theorem)
and $\bX\in \Seg_{\simp^{\op, n}_0}(\xS)$. Then a
$\simp^{\op, n}$-algebroid $A$ over $\bX$ in $\mathcal{C}$ models an
$n$-fold \icat{} whose $n$-cubes are enriched in $\mathcal{C}$. All
the lower dimensional structure in $A$ such as objects, morphisms,
$\ldots$, $n-1$-dimensional cubes as well as compositions of these
morphisms and compatibility conditions are determined by the object
$\bX$.

\begin{remark}
The pattern $\simp^{n,\op,\natural}$ is extendable by \cite[Example 3.3.22]{envelopes}, so we get a formula for free algebroids, that is free enriched $n$-fold \icats{}, from \cref{obs:freealgdformula}. More generally, if $\mathcal{O}$ and $\mathcal{P}$ are enrichable patterns that are \emph{soundly extendable} in the sense of \cite[Definition 3.3.16]{envelopes} such that $\mathcal{O}^{\el}_{O/}$ and $\mathcal{P}^{\el}_{P/}$ are weakly contractible for all $O \in \mathcal{O}$, $P \in \mathcal{P}$, then $\mathcal{O} \times \mathcal{P}$ is extendable by \cite[Lemma 3.3.21]{envelopes}.
\end{remark}

\subsection{Wreath products and enriched $(\infty, n)$-categories}\label{subsec:wreath}
As our second new example of an enriched structure, we consider enriched $(\infty,n)$-categories via the algebroid analogue of Rezk's $\bbTheta_{n}$-spaces. Here $\bbTheta_{n}$ is Joyal's $n$-disc category, which was identified by Berger \cite{Berger} with an inductive construction as the $n$-fold \emph{wreath product} of $\simp$. We therefore start with a brief discussion of wreath products of enrichable patterns in general; this is a minor variant of Lurie's construction of wreath products of \iopds{} in \cite[\S 2.4.4]{ha}.

We first recall the definition of the cocartesian \iopd{} from 
 \cite[Construction 2.4.3.1]{ha}:
 \begin{propn}\label{def Camalg}
   Let $\Gamma^{*}$ denote the full subcategory of $(\xFs)_{\angled{1}/}$ spanned by the active maps $\angled{1} \actto \angled{n}$. Then
   for any \icat{} $\mathcal{C}$, there exists an \icat{}
   $\mathcal{C}^{\amalg}$ over $\xFs$ characterized by the universal
   property that for any $\mathcal{K}$ over $\xFs$ there is a natural
   equivalence
   \[ \Map_{/\xFs}(\mathcal{K}, \mathcal{C}^{\amalg}) \simeq
     \Map(\mathcal{K} \times_{\xFs} \Gamma^{*}, \mathcal{C}),
   \]
   where the fibre product is over the forgetful functor
  $\Gamma^{*} \to \xFs$. \qed
\end{propn}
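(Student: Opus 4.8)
The plan is to realize $\mathcal{C}^{\amalg}$ as the value at $\mathcal{C}$ of a right adjoint to the functor
\[ \mathcal{K} \mapsto \mathcal{K} \times_{\xFs} \Gamma^{*}, \]
which is exactly the content of \cite[Construction 2.4.3.1]{ha}; I will sketch the argument in our language. First observe that $\Gamma^{*}$ is the nerve of a $1$-category and that the forgetful functor $\Gamma^{*} \to \xFs$ is the nerve of an isofibration of $1$-categories: an isomorphism $\angled{n} \isoto \angled{n}$ in $\xFs$ lifts along $\Gamma^{*} \to \xFs$ since postcomposing an active map $\angled{1} \actto \angled{n}$ with a bijection is again active. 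Hence $\Gamma^{*} \to \xFs$ is a categorical fibration, so for every $\mathcal{K}$ over $\xFs$ the fibre product $\mathcal{K} \times_{\xFs} \Gamma^{*}$ formed on the nose already models the \icat{}-level (homotopy) pullback, and the functor above is well defined.

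I would construct $\mathcal{C}^{\amalg}$ first at the level of simplicial sets. The functor
\[ \xsSet_{/\xFs} \to \xsSet, \qquad K \mapsto K \times_{\xFs} \Gamma^{*}, \]
preserves all colimits: it is the composite of the pullback functor $\xsSet_{/\xFs} \to \xsSet_{/\Gamma^{*}}$, which preserves colimits because $\xsSet$ is a presheaf topos and so has universal colimits, followed by the colimit-preserving slice projection $\xsSet_{/\Gamma^{*}} \to \xsSet$. Since both sides are locally presentable, the adjoint functor theorem provides a right adjoint $\mathcal{C} \mapsto \mathcal{C}^{\amalg}$, so that every simplicial set $\mathcal{C}$ gives a simplicial set $\mathcal{C}^{\amalg}$ over $\xFs$ with $\Hom_{/\xFs}(K, \mathcal{C}^{\amalg}) \cong \Hom(K \times_{\xFs} \Gamma^{*}, \mathcal{C})$ naturally in $K \to \xFs$. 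The step I expect to be the real work is checking that $\mathcal{C}^{\amalg}$ is an \icat{} when $\mathcal{C}$ is: one unwinds the explicit description of an $n$-simplex of $\mathcal{C}^{\amalg}$ lying over a fixed $n$-simplex of $\xFs$ as a compatible family of functors indexed by the objects $\angled{1} \actto \angled{k}$ of $\Gamma^{*}$, and deduces inner horn filling in $\mathcal{C}^{\amalg}$ from that in $\mathcal{C}$; this is precisely the argument of \cite[\S 2.4.3]{ha}. (The same description shows the fibre of $\mathcal{C}^{\amalg}$ over $\angled{n}$ is $\mathcal{C}^{n}$, though we do not need this here.)

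Finally, the strict universal property upgrades to the stated \icat{}-level one. Feeding $K \times \Delta^{n}$, regarded as an object over $\xFs$ via its projection to $K$, into the bijection above and using the identification $(K \times \Delta^{n}) \times_{\xFs} \Gamma^{*} \cong (K \times_{\xFs} \Gamma^{*}) \times \Delta^{n}$ yields an isomorphism of function complexes $\xFun_{/\xFs}(K, \mathcal{C}^{\amalg}) \cong \xFun(K \times_{\xFs} \Gamma^{*}, \mathcal{C})$. When $\mathcal{C}$ is an \icat{} both sides are \icats{} --- for the left-hand side one first replaces $\mathcal{C}^{\amalg} \to \xFs$ by a categorical fibration, which alters neither $\mathcal{C}^{\amalg}$ nor the universal property up to equivalence --- and passing to underlying \igpds{} (maximal Kan complexes) identifies $\Map_{\CatI_{/\xFs}}(\mathcal{K}, \mathcal{C}^{\amalg})$ with $\Map_{\CatI}(\mathcal{K} \times_{\xFs} \Gamma^{*}, \mathcal{C})$, naturally in $\mathcal{K} \in \CatI_{/\xFs}$. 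As a byproduct this shows that the \icat{}-level functor $\mathcal{K} \mapsto \mathcal{K} \times_{\xFs} \Gamma^{*}$ preserves colimits, which is not obvious a priori since $\CatI$ does not have universal colimits.
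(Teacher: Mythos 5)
Your proposal is correct and takes exactly the route the paper does: the paper offers no argument beyond citing \cite[Construction 2.4.3.1]{ha}, and your sketch is a faithful reconstruction of that construction (strict right adjoint to $K \mapsto K \times_{\xFs} \Gamma^{*}$ on simplicial sets over $\xFs$, inner-horn filling via the explicit description of simplices, then passage from function complexes to mapping spaces). The one step whose justification is too quick is the claim that the on-the-nose fibre product $\mathcal{K} \times_{\xFs} \Gamma^{*}$ models the $\infty$-categorical one merely because $\Gamma^{*} \to \xFs$ is a categorical fibration: the Joyal model structure is not right proper, so pullback along a categorical fibration need not preserve categorical equivalences, and this is precisely why flat fibrations are introduced in \cite[\S B.3]{ha}. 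What rescues the argument is that $\Gamma^{*} \to \xFs$ is in fact flat: for a composite $\angled{n} \to \angled{m} \to \angled{k}$ in $\xFs$ and a morphism $(\angled{n},i) \to (\angled{k},j)$ in $\Gamma^{*}$ over it, the category of factorizations through the fibre over $\angled{m}$ is a single point (the intermediate element is forced to be the image of $i$), so pullback along $\Gamma^{*} \to \xFs$ does preserve categorical equivalences and the strict pullback computes the homotopy pullback.
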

\begin{observation}
  An object of $\mathcal{C}^{\amalg}$ lying over $\angled{n}$ corresponds to a map from $\{\angled{n}\} \times_{\xF} \Gamma^{*}$, which we can identify with the discrete set of $n$ active maps $\angled{1} \to \angled{n}$. Thus an object over $\angled{n}$ can be described as a list $(c_{1},\ldots,c_{n})$ of objects $c_{i} \in \mathcal{C}$; we will denote this object as $\angled{n}(c_{1},\ldots,c_{n})$. Similarly, a
  morphism in $\mathcal{C}^{\amalg}$ lying over a given morphism $\phi \colon \angled{n} \to \angled{m}$ in $\xFs$ corresponds to a diagram in $\mathcal{C}$ of shape $[1] \times_{\xFs} \Gamma^{*}$. We can identify this category with the partially ordered set of pairs $(i,0)$ for $i = 1,\ldots,n$ and $(j,1)$ for $j = 1,\ldots,m$ where $(i,0) \leq (j,1)$ \IFF{} $\phi(i) = j$. In other words, a morphism $\angled{n}(c_{1},\ldots,c_{n})\to \angled{m}(c'_{1},\ldots, c'_{m})$ consists of a morphism $\phi \colon \angled{n} \to \angled{m}$ in $\xFs$ and a list of morphisms $c_{i} \to c'_{\phi(i)}$ for each $i$ where $\phi(i) \neq 0$; we will denote this morphism as $\phi(f_{i})$.
\end{observation}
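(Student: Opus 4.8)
The plan is to read off both descriptions directly from the universal property in \cref{def Camalg}, evaluated on the two simplest test \icats{} over $\xFs$: the point $\{\langle n\rangle\}$ for the statement about objects, and the arrow category $[1]$ with a chosen morphism $\phi\colon \langle n\rangle\to\langle m\rangle$ of $\xFs$ for the statement about morphisms. In each case the right-hand side $\Map(\mathcal{K}\times_{\xFs}\Gamma^{*},\mathcal{C})$ will be the space of functors out of an explicit $1$-category, which I would then unwind.

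For objects, I would take $\mathcal{K} = \{\langle n\rangle\}\hookrightarrow \xFs$, so that the left-hand side $\Map_{/\xFs}(\{\langle n\rangle\},\mathcal{C}^{\amalg})$ is the fibre of $\mathcal{C}^{\amalg}$ over $\langle n\rangle$, while $\mathcal{K}\times_{\xFs}\Gamma^{*}$ is the fibre of the forgetful functor $\Gamma^{*}\to\xFs$ over $\langle n\rangle$. First I would check that this fibre is the discrete set of active maps $\langle 1\rangle\actto\langle n\rangle$: a morphism of $\Gamma^{*}$ lying over $\id_{\langle n\rangle}$ is $\id_{\langle n\rangle}$ regarded as a map under $\langle 1\rangle$, which forces the two active maps involved to coincide, so only identity morphisms occur; and an active map $\langle 1\rangle\to\langle n\rangle$ is determined by the image of the non-basepoint element, which can be any of $1,\dots,n$. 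Hence $\mathcal{K}\times_{\xFs}\Gamma^{*}\simeq\{1,\dots,n\}$, so $\Map(\mathcal{K}\times_{\xFs}\Gamma^{*},\mathcal{C})\simeq\mathcal{C}^{\times n}$, and objects of $\mathcal{C}^{\amalg}$ over $\langle n\rangle$ correspond to lists $(c_{1},\dots,c_{n})$.

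For morphisms, I would fix $\phi\colon\langle n\rangle\to\langle m\rangle$ and take $\mathcal{K}$ to be $[1]$ with the functor $[1]\to\xFs$ that is $\langle n\rangle$ at $0$, $\langle m\rangle$ at $1$, and $\phi$ on the nontrivial arrow. Then $\Map_{/\xFs}([1],\mathcal{C}^{\amalg})$ is the space of arrows of $\mathcal{C}^{\amalg}$ lying over $\phi$, and restricting along $\{0\},\{1\}\hookrightarrow[1]$ together with the naturality of the universal property identifies their sources and targets with objects $\langle n\rangle(c_{1},\dots,c_{n})$ and $\langle m\rangle(c'_{1},\dots,c'_{m})$ of the previous kind. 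The remaining task is to identify the $1$-category $P_{\phi}:=[1]\times_{\xFs}\Gamma^{*}$. Using the identification above, its objects are the pairs $(i,0)$ for $i\le n$ and $(j,1)$ for $j\le m$; a morphism $(i,0)\to(j,1)$ consists of the unique arrow $0\to1$ of $[1]$ (lying over $\phi$) together with a morphism in $\Gamma^{*}$ over $\phi$ from the active map selecting $i$ to the active map selecting $j$, and such a morphism is the factorization identity expressing that $\phi$ composed with the active map selecting $i$ equals the active map selecting $j$ — which holds, and is then unique, exactly when $\phi(i)=j$. There are no arrows in the opposite direction and only identities within each fibre, so $P_{\phi}$ is the poset on $\{(i,0)\}_{i\le n}\sqcup\{(j,1)\}_{j\le m}$ with $(i,0)\le(j,1)$ \IFF{} $\phi(i)=j$. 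A functor $P_{\phi}\to\mathcal{C}$ is then precisely a choice of objects $c_{i}$ and $c'_{j}$ together with a morphism $f_{i}\colon c_{i}\to c'_{\phi(i)}$ for each $i$ with $\phi(i)\neq0$, which is the claimed description of the morphism $\phi(f_{i})$.

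Everything is a direct consequence of \cref{def Camalg}; the only step that will require a little bookkeeping is the computation of $P_{\phi}$, where one must verify that no non-identity endomorphisms or ``backward'' morphisms appear and that its composition matches the claimed poset. I do not anticipate any genuine obstacle.
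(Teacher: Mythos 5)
Your proposal is correct and is exactly the intended justification: the paper states this as an observation without proof, and its content is precisely the unwinding of the universal property of $\mathcal{C}^{\amalg}$ from \cref{def Camalg} on the test objects $\{\angled{n}\}$ and $[1]$ over $\xFs$ that you carry out. Your identification of the fibres of $\Gamma^{*}\to\xFs$ and of the poset $[1]\times_{\xFs}\Gamma^{*}$ (including the condition $\phi(i)=j\neq 0$ for the existence of an arrow $(i,0)\to(j,1)$) matches the paper's description, so there is nothing to add.
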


\begin{example}
  It follows from the construction that $\mathbb{F}_*^{\amalg}$ is the
  category such that
	\begin{enumerate}
        \item an object is a finite sequence $(\angled{m_1},\ldots,\angled{m_k})$,
        \item a morphism $(\angled{m_1},\ldots,\angled{m_k})\to (\angled{n_1},\ldots,\angled{n_l})$ is determined by a morphism $\phi\colon \angled{k}\to \angled{l}$ in $\mathbb{F}_*$ and a set of morphisms $\{f_i\colon \angled{m_i}\to \angled{n_j} : \phi(i)=j\}$ in $\xF_{*}$.
	\end{enumerate}
\end{example}

\begin{propn}\label{propn:Camalgfact}
  Suppose $\mathcal{C}$ is an \icat{} equipped with a factorization system $(\mathcal{C}_{L}, \mathcal{C}_{R})$. Then $\mathcal{C}^{\amalg}$ has a factorization system $(\mathcal{C}^{\amalg}_{L}, \mathcal{C}^{\amalg}_{R})$ where a morphism $(\phi \colon \angled{n} \to \angled{m}, f_{i} \colon c_{i} \to c'_{\phi(i)})$ lies
  \begin{itemize}
  \item in $\mathcal{C}^{\amalg}_{L}$ if $\phi$ is inert and each $f_{i}$ lies in $\mathcal{C}_{L}$,
  \item in $\mathcal{C}^{\amalg}_{R}$ if $\phi$ is active and each $f_{i}$ lies in $\mathcal{C}_{R}$.
  \end{itemize}
\end{propn}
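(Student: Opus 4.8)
To prove that $(\mathcal{C}^{\amalg}_{L}, \mathcal{C}^{\amalg}_{R})$ is a factorization system on $\mathcal{C}^{\amalg}$, I would verify the two defining conditions: (a) every morphism in $\mathcal{C}^{\amalg}$ factors as an $\mathcal{C}^{\amalg}_{L}$-morphism followed by an $\mathcal{C}^{\amalg}_{R}$-morphism, and (b) this factorization is essentially unique, which in the $\infty$-categorical setting is best packaged as the orthogonality statement that $\Map_{\mathcal{C}^{\amalg}}(X, Y)$ decomposes as a pullback over the factorization data (equivalently, that each $\mathcal{C}^{\amalg}_{L}$-morphism is left orthogonal to each $\mathcal{C}^{\amalg}_{R}$-morphism). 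Throughout I would use the two known inputs: the factorization system $(\mathcal{C}_{L},\mathcal{C}_{R})$ on $\mathcal{C}$, and the factorization system on $\xFs$ into inert maps followed by active maps.

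\emph{Factorization.} Given a morphism $(\phi \colon \angled{n} \to \angled{m},\, f_{i} \colon c_{i} \to c'_{\phi(i)})$, first factor $\phi = \phi^{\act} \circ \phi^{\xint}$ in $\xFs$ through some $\angled{k}$, where $\phi^{\xint}$ is inert and $\phi^{\act}$ active. Since $\phi^{\xint}$ is an isomorphism away from the basepoint, each $i$ with $\phi(i) \neq 0$ has a well-defined image $\phi^{\xint}(i) =: \ell$ in $\angled{k}$, and the intermediate list is built by factoring each $f_{i}$ as $c_{i} \xrightarrow{g_{i}} d_{\ell} \xrightarrow{h_{\ell}} c'_{\phi(i)}$ in $\mathcal{C}$ using the factorization system there; one must check that the objects $d_{\ell}$ indexed by $\ell \in \angled{k}$ are well-defined (each $\ell$ in the image of $\phi^{\xint}$ comes from a unique $i$ since $\phi^{\xint}$ is inert, and one picks an arbitrary object for $\ell$ not in the image). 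This gives the required factorization; the first leg lies in $\mathcal{C}^{\amalg}_{L}$ and the second in $\mathcal{C}^{\amalg}_{R}$ by definition.

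\emph{Orthogonality and uniqueness.} For the lifting property, suppose given a commutative square in $\mathcal{C}^{\amalg}$ whose left edge $(\psi, g_{i})$ lies in $\mathcal{C}^{\amalg}_{L}$ (so $\psi$ inert, $g_{i} \in \mathcal{C}_{L}$) and whose right edge $(\chi, h_{j})$ lies in $\mathcal{C}^{\amalg}_{R}$ (so $\chi$ active, $h_{j} \in \mathcal{C}_{R}$). Projecting to $\xFs$ gives a square with inert left edge and active right edge, which has a unique diagonal filler $\delta$ by the factorization system on $\xFs$; the component-wise lifting problems in $\mathcal{C}$ (along $\delta$) then have unique solutions by orthogonality in $(\mathcal{C}_{L}, \mathcal{C}_{R})$, and assembling these produces a unique diagonal filler in $\mathcal{C}^{\amalg}$. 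To make this precise at the level of mapping spaces one should really argue that $\mathcal{C}^{\amalg}_{L}$ and $\mathcal{C}^{\amalg}_{R}$ are closed under equivalences and composition and then invoke the standard characterization of factorization systems (e.g.\ \cite[\S 5.2.8]{ht}); alternatively, one can verify directly that the natural map
\[
  \Map_{\mathcal{C}^{\amalg}}(X, Y) \to
  \Map_{\mathcal{C}^{\amalg}}(X, Z) \times_{\Map_{\mathcal{C}^{\amalg}}(X', Z)} \Map_{\mathcal{C}^{\amalg}}(X', Y)
\]
is an equivalence whenever $X \to X'$ is in $\mathcal{C}^{\amalg}_{L}$ and $Z \to Y$ is in $\mathcal{C}^{\amalg}_{R}$, by using the description of mapping spaces in $\mathcal{C}^{\amalg}$ via diagrams of shape $[1] \times_{\xFs} \Gamma^{*}$ and reducing fibrewise over the mapping spaces in $\xFs$.

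\emph{Main obstacle.} The routine part is the component-wise bookkeeping; the genuinely delicate step is handling the indices $\ell \in \angled{k}$ (equivalently $j \in \angled{m}$) that are \emph{not} hit by $\phi$ (resp.\ not in the image of an active map), since for those there is no constraint coming from the $f_{i}$ and one must check the choices do not affect the factorization up to equivalence — this is exactly where the description of $\mathcal{C}^{\amalg}$ via $\Gamma^{*}$ (so that objects over $\angled{n}$ really are honest $n$-tuples, with no extra gluing data) is used, and it is the point at which one must be careful that "essentially unique" holds at the level of $\infty$-categories rather than just $1$-categories.
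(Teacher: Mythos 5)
Your proposal is correct and follows essentially the same route as the paper: both verify the conditions of Lurie's definition of a factorization system (\cite[Definition 5.2.8.8]{ht}) by combining the inert--active factorization on $\xF_{*}$ with componentwise use of $(\mathcal{C}_{L},\mathcal{C}_{R})$, solving lifting problems by first lifting in $\xF_{*}$ and then in each component. Two small remarks: the condition to check alongside orthogonality and factorization is closure of both classes under \emph{retracts} (which the paper verifies, and which follows at once from retract-closure in $\xF_{*}$ and in $\mathcal{C}$), and your worry about intermediate indices not hit by the inert part is vacuous, since inert maps in $\xF_{*}$ are surjective away from the basepoint, so the intermediate object of the factorization is exactly the support of $\phi$ and no arbitrary choices arise.
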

\begin{proof}
  We check the conditions of \cite[Definition 5.2.8.8]{ht}: If a
  morphism $\phi'(f'_{j})$ in $\mathcal{C}^{\amalg}$ is a retract of
  $\phi(f_{i})$ in $\mathcal{C}^{\amalg}_{L}$, then $\phi'$ is a
  retract of the inert map $\phi$ and so is again inert; moreover,
  each component $f'_{j}$ is a retract of some $f_{i}$ in
  $\mathcal{C}_{L}$ and so lies in $\mathcal{C}_{L}$. Similarly,
  $\mathcal{C}_{R}^{\amalg}$ is closed under retracts, which gives
  condition (1). To show (2), that $\mathcal{C}_{L}^{\amalg}$ is left
  orthogonal to $\mathcal{C}_{R}^{\amalg}$, we observe that a lift of
  a map in $\mathcal{C}_{L}^{\amalg}$ against one in
  $\mathcal{C}_{R}^{\amalg}$ is given by a lift of an inert map
  against an active map in $\xFs$ together with a set of lifts of
  maps in $\mathcal{C}_{L}$ against ones in
  $\mathcal{C}_{R}$. Finally, for (3) we observe that for a map
  $\phi(f_{i}) \colon \angled{n}(c_{i}) \to \angled{m}(c'_{j})$ we can
  pick an inert--active factorization of $\phi$ as
  $\angled{n} \xto{\alpha} \angled{k} \xto{\beta} \angled{m}$ and then
  factor $\phi(f_{i})$ as
  $\beta(f_{\beta^{-1}j}^{R})\circ \alpha(f_{i}^{L})$ where
  $f_{i}^{R} \circ f_{i}^{L}$ is the
  $(\mathcal{C}_{L},\mathcal{C}_{R})$-factorization of $f_{i}$.
\end{proof}

\begin{cor}
  Suppose $\mathcal{O}$ is an algebraic pattern. Then
  $\mathcal{O}^{\amalg}$ is an algebraic pattern over $\xFn$ when
  equipped with the factorization system induced as in
  \cref{propn:Camalgfact} by the inert--active factorization system on
  $\mathcal{O}$, and with the elementary objects given by
  $\angled{0}()$ and $\angled{1}(E)$ where $E \in
  \mathcal{O}^{\el}$. Moreover, any morphism of algebraic patterns
  $\mathcal{O} \to \mathcal{P}$ induces a morphism
  $\mathcal{O}^{\amalg} \to \mathcal{P}^{\amalg}$ of algebraic
  patterns over $\xFn$. \qed
\end{cor}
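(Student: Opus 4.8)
The plan is to reduce every assertion to \cref{def Camalg} and \cref{propn:Camalgfact}, together with the explicit descriptions of the objects and morphisms of $\mathcal{C}^{\amalg}$ recorded above, so that the three claims --- that $\mathcal{O}^{\amalg}$ is an algebraic pattern, that it lies over $\xFn$, and that the construction is functorial --- all become routine unwindings of the definitions.

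First I would produce the factorization system by applying \cref{propn:Camalgfact} to the \icat{} $\mathcal{O}$ equipped with its inert--active factorization system. This yields a factorization system on $\mathcal{O}^{\amalg}$ whose left (``inert'') class consists of the morphisms $\phi(f_{i})$ with $\phi$ inert in $\xFs$ and each $f_{i}$ inert in $\mathcal{O}$, and whose right (``active'') class consists of those with $\phi$ active and each $f_{i}$ active. I then take the elementary objects to be the full subcategory on $\angled{0}()$ and the objects $\angled{1}(E)$ for $E \in \mathcal{O}^{\el}$; since the inert subcategory $\mathcal{O}^{\amalg,\xint}$ is wide, this is automatically a full subcategory of it, so we have precisely the data demanded by the definition of an algebraic pattern, which imposes no further conditions.

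Next I would observe that the forgetful functor $\mathcal{O}^{\amalg} \to \xFs$ of \cref{def Camalg} sends $\phi(f_{i})$ to $\phi$ --- hence takes inert maps to inert maps and active maps to active maps --- and sends $\angled{0}()$ and $\angled{1}(E)$ to the elementary objects $\angled{0}$ and $\angled{1}$ of $\xFs^{\natural}$; thus it is a morphism of algebraic patterns $\mathcal{O}^{\amalg} \to \xFn$, exhibiting $\mathcal{O}^{\amalg}$ as a pattern over $\xFn$. For functoriality, a morphism of algebraic patterns $g \colon \mathcal{O} \to \mathcal{P}$ induces, by post-composition, a natural transformation $\Map(\mathcal{K} \times_{\xFs} \Gamma^{*}, \mathcal{O}) \to \Map(\mathcal{K} \times_{\xFs} \Gamma^{*}, \mathcal{P})$ of functors in $\mathcal{K}$ over $\xFs$, which by the universal property of \cref{def Camalg} corresponds to a functor $g^{\amalg} \colon \mathcal{O}^{\amalg} \to \mathcal{P}^{\amalg}$ over $\xFs$, given on objects by $\angled{n}(c_{1},\ldots,c_{n}) \mapsto \angled{n}(gc_{1},\ldots,gc_{n})$ and on morphisms by $\phi(f_{i}) \mapsto \phi(gf_{i})$. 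Since $g$ preserves inert maps, active maps and elementary objects, the explicit formulas show $g^{\amalg}$ does too, and it lies over $\xFn$ because it lies over $\xFs$; hence $g^{\amalg}$ is a morphism of algebraic patterns over $\xFn$. The statement is bookkeeping rather than substance --- all the real content sits in \cref{propn:Camalgfact} --- and the one point meriting a moment's care is to confirm that the functor $g^{\amalg}$ produced abstractly from the universal property is indeed computed by $\phi(f_{i}) \mapsto \phi(gf_{i})$ on morphisms; once that formula is in hand, preservation of the pattern structure is immediate.
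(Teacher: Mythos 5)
Your proposal is correct and matches the paper's (implicit) argument: the corollary is stated with a \qed precisely because it is the routine unwinding of \cref{propn:Camalgfact}, the definition of an algebraic pattern, and the universal property of $(\blank)^{\amalg}$ that you carry out. The one point you flag as needing care --- that the functor induced by the universal property acts by $\phi(f_{i}) \mapsto \phi(gf_{i})$ --- is indeed the only place where anything needs to be checked, and your verification of it is fine.
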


\begin{defn}\label{def wr}
  Let $\mathcal{O}$ be an algebraic pattern over $\xFn$, and let $\mathcal{P}$ be any algebraic pattern. Their \emph{wreath product} is the algebraic pattern
  \[ \mathcal{O} \wr \mathcal{P} := \mathcal{O} \times_{\xFn}\mathcal{P}^\amalg, \]
  defined using the projection $\mathcal{P}^{\amalg} \to \xFn$.
\end{defn}

We now want to make the wreath product of two enrichable patterns into
an enrichable pattern; for this we need to look at a different map from $\mathcal{P}^{\amalg}$ to
$\xFn$:

\begin{lemma}
  Let $\vee\colon \xFs^{\amalg}\to \xFs$ be the functor given on objects by
\[ \angled{n}(\angled{m_1},\ldots,\angled{m_n})\mapsto \bigvee_{i=1}^{n} \angled{m_{i}} \cong \angled{m_1+\ldots +m_n} \]
and on a morphism $\phi(\psi_{i}) \colon \angled{n}(\angled{m_{i}}) \to \angled{n'}(\angled{m'_{j}})$ by
\[ \vee(\phi(\psi_{i}))(t) =
  \begin{cases}
    \psi_{i}(t) \in \angled{m'_{\phi(i)}} \subseteq \vee \angled{n'}(\angled{m'_{j}}), & \phi(i) \neq 0, \\
    0, & \text{otherwise.}
  \end{cases}
\]
Then $\vee$ is a morphism of algebraic patterns
$\xFs^{\natural,\amalg} \to \xFn$, and this exhibits $\xFs^{\natural,\amalg}$ as an enrichable pattern.
\end{lemma}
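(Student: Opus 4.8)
The plan is to verify the two assertions directly from the combinatorial description of $\xFs^{\amalg}$ recorded above. Functoriality of $\vee$ is the routine bookkeeping of a \emph{concatenation} operation, so I would note it and move on. To see that $\vee$ refines to a morphism of algebraic patterns $\xFs^{\natural,\amalg} \to \xFn$, I would check that it preserves inert morphisms, active morphisms, and elementary objects. For elementary objects this is immediate: by the corollary following \cref{propn:Camalgfact} the elementary objects of $\xFs^{\natural,\amalg}$ are $\angled{0}()$, $\angled{1}(\angled{0})$, and $\angled{1}(\angled{1})$, which $\vee$ sends to $\angled{0}$, $\angled{0}$, and $\angled{1}$, all of which are elementary in $\xFn$. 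For inert morphisms, I would use that if $\phi\colon\angled{n}\to\angled{n'}$ is inert and each $\psi_{i}$ is inert, then the fibre of $\vee(\phi(\psi_{i}))$ over any non-basepoint element of $\angled{m'_{1}+\cdots+m'_{n'}}$ is a singleton: such an element lies in exactly one block $\angled{m'_{a}}$, that block is hit by the unique source block $i$ with $\phi(i)=a$, and $\psi_{i}$ being inert contributes a unique preimage; hence $\vee(\phi(\psi_{i}))$ is inert. For active morphisms, since $\phi$ active forces every $\psi_{i}$ to be defined and each $\psi_{i}$ active, no non-basepoint element of the source is sent to the basepoint, so $\vee(\phi(\psi_{i}))$ is active.

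For the second assertion I must show that $(\xFs^{\natural,\amalg})^{\flat}$ is a cartesian pattern, taking $\vee$ itself as the structure map to $\xFs^{\flat}$: having the same underlying functor, $\vee$ still preserves inert and active morphisms, and it sends the unique elementary object of $(\xFs^{\natural,\amalg})^{\flat}$ to the elementary object $\angled{1}$ of $\xFs^{\flat}$. By definition the elementary objects of the flat pattern are those of $\xFs^{\natural,\amalg}$ lying over $\angled{1}$, so among $\angled{0}()$, $\angled{1}(\angled{0})$, $\angled{1}(\angled{1})$ only $\angled{1}(\angled{1})$ survives. Writing $\mathcal{O}=\xFs^{\natural,\amalg}$, I would then compute $\mathcal{O}^{\flat,\el}_{O/}$ for an object $O=\angled{n}(\angled{m_{1}},\ldots,\angled{m_{n}})$: an inert map $O\intto\angled{1}(\angled{1})$ is the datum of an inert map $\angled{n}\to\angled{1}$, necessarily $\rho_{i}$ for some $i\in\{1,\ldots,n\}$, together with an inert map $\angled{m_{i}}\to\angled{1}$, necessarily $\rho_{j}$ for some $j\in\{1,\ldots,m_{i}\}$; and the only inert endomorphism of $\angled{1}(\angled{1})$ is the identity. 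Thus $\mathcal{O}^{\flat,\el}_{O/}$ is the discrete set of such pairs $(i,j)$, of cardinality $m_{1}+\cdots+m_{n}$.

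It then remains to check that the functor $\mathcal{O}^{\flat,\el}_{O/}\to(\xFs^{\flat})^{\el}_{\vee(O)/}$ induced by $\vee$ is an equivalence. Since $\vee(O)=\angled{m_{1}+\cdots+m_{n}}$, the target is the discrete set $\{\rho_{1},\ldots,\rho_{m_{1}+\cdots+m_{n}}\}$, and unwinding the defining formula for $\vee$ shows that the functor sends the pair $(i,j)$ to $\rho_{m_{1}+\cdots+m_{i-1}+j}$; this is a bijection, hence an equivalence of discrete \icats{}. This verifies that $(\xFs^{\natural,\amalg})^{\flat}$ is cartesian, so $\vee$ exhibits $\xFs^{\natural,\amalg}$ as an enrichable pattern. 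I do not anticipate any real obstacle: the argument is entirely elementary, and the only point requiring attention is keeping the combinatorics of concatenation straight — in particular the precise form of inert and active maps in $\xFs$ and in the $(-)^{\amalg}$-construction.
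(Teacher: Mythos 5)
Your proof is correct and follows essentially the same route as the paper's (which simply asserts that $\vee$ preserves inerts, actives, and elementaries and that the map on $\flat$-elementary slices is the evident bijection); you have just spelled out the combinatorics in more detail. You are also right that the elementary objects of $\xFs^{\natural,\amalg}$ include $\angled{1}(\angled{0})$ — the paper's own proof omits it, harmlessly, since $\vee$ sends it to the elementary object $\angled{0}$.
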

\begin{proof}
  It is easy to see from the definition that $\vee$ preserves inert and active morphisms, and it preserves elementary objects since in $\xFs^{\natural,\amalg}$ these are by definition $\angled{0}()$ and $\angled{1}(\angled{1})$. Similarly, it is clear that
  \[ \xF^{\amalg,\flat,\el}_{\angled{n}(\angled{m_{1}},\ldots,\angled{m_{n}})} \to \xF^{\flat,\el}_{\angled{m_{1}+\cdots+m_{n}}/}\]
  is an isomorphism, since an inert map from $\angled{n}(\angled{m_{1}},\ldots,\angled{m_{n}}$ to $\angled{1}(\angled{1})$ consists of an inert map $\rho_{i} \colon \angled{n} \to \angled{1}$ together with an inert map $\rho_{j}\colon \angled{m_{i}} \to \angled{1}$.
\end{proof}

\begin{lemma}
  Suppose $\mathcal{O}$ and $\mathcal{P}$ are enrichable patterns. Then the composite
  \[ \mathcal{O} \wr \mathcal{P} \to \mathcal{P}^{\amalg} \to \xFs^{\natural,\amalg} \xto{\vee} \xFn\]
  makes $\mathcal{O} \wr \mathcal{P}$ an enrichable pattern.
\end{lemma}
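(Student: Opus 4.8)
The plan is to verify directly that the composite $\mathcal{O}\wr\mathcal{P}\to\xFn$ of the statement is a morphism of patterns and that $(\mathcal{O}\wr\mathcal{P})^{\flat}$ is cartesian. Recall $\mathcal{O}\wr\mathcal{P}=\mathcal{O}\times_{\xFn}\mathcal{P}^{\amalg}$ has objects the pairs $(O,\angled{n}(P_{1},\ldots,P_{n}))$ with $|O|_{\mathcal{O}}=\angled{n}$, with inert (resp.\ active) morphisms those that are inert (resp.\ active) in both factors, and with elementary objects the pairs $(E,\angled{0}())$ where $E\in\mathcal{O}^{\el}$ lies over $\angled{0}$ together with the pairs $(E,\angled{1}(E'))$ where $E\in\mathcal{O}^{\el}$ lies over $\angled{1}$ and $E'\in\mathcal{P}^{\el}$. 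The composite sends such an object to $|P_{1}|_{\mathcal{P}}\vee\cdots\vee|P_{n}|_{\mathcal{P}}$; preservation of inert and active morphisms is inherited from the projection to $\mathcal{P}^{\amalg}$, the functoriality of $(\blank)^{\amalg}$, and the preceding lemma on $\vee$, while preservation of elementary objects is immediate since the empty join is $\angled{0}$ and $|E'|_{\mathcal{P}}\in\{\angled{0},\angled{1}\}$ for $E'\in\mathcal{P}^{\el}$ (as $|\blank|_{\mathcal{P}}$ is a morphism of patterns). The same computation identifies $(\mathcal{O}\wr\mathcal{P})^{\flat,\el}$ with the set of pairs $(E,\angled{1}(E'))$ where $E\in\mathcal{O}^{\flat,\el}$ and $E'\in\mathcal{P}^{\flat,\el}$, and shows that $(\mathcal{O}\wr\mathcal{P})^{\flat}$ carries a morphism of patterns to $\xFs^{\flat}$.

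The substance of the proof is the Segal-slice condition. Fixing $X=(O,\angled{n}(P_{1},\ldots,P_{n}))$, I would compute $(\mathcal{O}\wr\mathcal{P})^{\flat,\el}_{X/}$ using that undercategories commute with fibre products of \icats{}: this slice is the fibre product over the \icat{} $\xF^{\xint}_{*,\angled{n}/}$ of inert morphisms out of $\angled{n}$ of $\mathcal{O}^{\flat,\el}_{O/}$ and the full subcategory of $(\mathcal{P}^{\amalg})^{\xint}_{\angled{n}(P_{1},\ldots,P_{n})/}$ spanned by the inert maps to objects of the form $\angled{1}(E')$ with $E'\in\mathcal{P}^{\flat,\el}$. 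Since $\mathcal{O}^{\flat}$ is cartesian, the first factor is the discrete set $\{\rho_{1}^{O},\ldots,\rho_{n}^{O}\}$ with $\rho_{i}^{O}$ lying over $\rho_{i}$; and by the explicit description of the inert morphisms of $\mathcal{P}^{\amalg}$, an inert map $\angled{n}(P_{1},\ldots,P_{n})\intto\angled{1}(E')$ consists of an index $i$ and an inert map $P_{i}\intto E'$ in $\mathcal{P}$, so the second factor is $\coprod_{i=1}^{n}\mathcal{P}^{\flat,\el}_{P_{i}/}$ with the $i$-th summand lying over $\rho_{i}$. As $\xF^{\xint}_{*,\angled{n}/}$ has empty mapping spaces between distinct $\rho_{i}$ and contractible ones at each $\rho_{i}$, the fibre product matches $\rho_{i}^{O}$ with the $i$-th summand, yielding $(\mathcal{O}\wr\mathcal{P})^{\flat,\el}_{X/}\simeq\coprod_{i=1}^{n}\mathcal{P}^{\flat,\el}_{P_{i}/}$; and each $\mathcal{P}^{\flat,\el}_{P_{i}/}$ is a discrete set of cardinality $|P_{i}|_{\mathcal{P}}$ because $\mathcal{P}^{\flat}$ is cartesian.

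It then remains to identify the canonical functor to $\xF^{\flat,\el}_{*,|X|/}$. Writing $|P_{i}|_{\mathcal{P}}=\angled{k_{i}}$ and $|X|=\angled{k}$ with $k=k_{1}+\cdots+k_{n}$, the target is the discrete set $\{\rho_{1},\ldots,\rho_{k}\}$, which decomposes into $n$ blocks according to $\angled{k}=\angled{k_{1}}\vee\cdots\vee\angled{k_{n}}$. Tracing the object of $(\mathcal{O}\wr\mathcal{P})^{\flat,\el}_{X/}$ coming from $i$ and an inert $\rho_{a}^{P_{i}}\colon P_{i}\intto(P_{i})_{a}$ through the composite and invoking the formula for $\vee$ on morphisms --- using that $\mathcal{P}^{\flat}$ is cartesian, so $\rho_{a}^{P_{i}}$ has image $\rho_{a}\colon\angled{k_{i}}\to\angled{1}$ --- one finds that it lands on $\rho_{k_{1}+\cdots+k_{i-1}+a}$, the $a$-th element of the $i$-th block. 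Hence the functor sends the $i$-th summand bijectively onto the $i$-th block via the equivalence $\mathcal{P}^{\flat,\el}_{P_{i}/}\isoto\xF^{\flat,\el}_{*,\angled{k_{i}}/}$ furnished by cartesianness of $\mathcal{P}^{\flat}$, and so is an equivalence overall. This proves $(\mathcal{O}\wr\mathcal{P})^{\flat}$ is cartesian and hence that $\mathcal{O}\wr\mathcal{P}$ is enrichable. I expect the only delicate point to be exactly this bookkeeping --- seeing that the slice splits into the $n$ pieces coming from $\mathcal{O}^{\flat,\el}_{O/}$ and that the resulting coproduct lines up with the block decomposition of $|X|$ under $\vee$; the apparent worry that $\mathcal{P}^{\flat,\el}_{P_{i}/}$ might fail to be discrete is dispatched by the cartesianness of $\mathcal{P}^{\flat}$, and everything else is a routine unwinding of the definitions.
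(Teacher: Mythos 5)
Your proof is correct and follows essentially the same route as the paper: both reduce to computing $(\mathcal{O}\wr\mathcal{P})^{\flat,\el}_{O(P_1,\ldots,P_n)/}$ as a fibre product of $\mathcal{O}^{\flat,\el}_{O/}$ with the inert maps to elementaries in $\mathcal{P}^{\amalg}$, identify the result with $\coprod_{i}\mathcal{P}^{\flat,\el}_{P_i/}$ using cartesianness of $\mathcal{O}^{\flat}$ and $\mathcal{P}^{\flat}$, and match this with $\xF^{\flat,\el}_{*,\angled{k_1+\cdots+k_n}/}$ via the block decomposition coming from $\vee$. Your version just makes the final bookkeeping explicit where the paper phrases it as a composite of iso-Segal morphisms.
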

\begin{proof}
  We must check that $(\mathcal{O} \wr \mathcal{P})^{\flat}$ is a cartesian pattern. The elementary objects of $(\mathcal{O} \wr \mathcal{P})^{\flat}$ are those of the form $E(E')$ where $E \in \mathcal{O}^{\flat,\el}$ and $E' \in \mathcal{P}^{\flat,\el}$; an inert map $O(P_{1},\ldots,P_{n}) \intto E(E')$ consists of an inert map $O \intto E'$ in $\mathcal{O}$ over some $\rho_{i}$ together with an inert map $P_{i} \intto E'$. More precisely, from the definition of the wreath product pattern we get equivalences
  \[ (\mathcal{O} \wr \mathcal{P})^{\flat,\el}_{O(P_{1},\ldots,P_{n})/} \simeq \mathcal{O}^{\flat,\el}_{O/} \times_{(\xFs^{\flat})^{\el}_{\angled{n}/}} \mathcal{P}^{\amalg,\flat,\el}_{\angled{n}(P_{1},\ldots,P_{n})},\]
  \[ \mathcal{P}^{\amalg,\flat,\el}_{\angled{n}(P_{1},\ldots,P_{n})} \simeq \prod_{i=1}^{n} \mathcal{P}^{\flat,\el}_{P_{i}/}.\]
  Since both $\mathcal{O}$ and $\mathcal{P}$ are enrichable patterns we see that the map
  \[ (\mathcal{O} \wr \mathcal{P})^{\flat} \to \xFs^{\amalg,\flat}\]
  is an iso-Segal morphism; since $\vee \colon \xFs^{\amalg,\flat} \to \xFs^{\flat}$ is an iso-Segal morphism it follows that $(\mathcal{O} \wr \mathcal{P})^{\flat}$ is a cartesian pattern, as required.
\end{proof}

\begin{observation}\label{obs:wr0}
  If $\mathcal{O}$ and $\mathcal{P}$ are enrichable patterns, then we have an equivalence  \[(\mathcal{O} \wr \mathcal{P})_{0} \simeq \mathcal{O} \wr (\mathcal{P}_{0})\] of algebraic patterns.
\end{observation}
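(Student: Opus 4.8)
The plan is to realize both sides as the same full subcategory of $\mathcal{O} \wr \mathcal{P} = \mathcal{O} \times_{\xFn} \mathcal{P}^{\amalg}$ and then match up the pattern structures. First I would unwind the underlying \icats{}. By construction the enrichable structure map of $\mathcal{O} \wr \mathcal{P}$ is the composite $\mathcal{O} \wr \mathcal{P} \to \mathcal{P}^{\amalg} \xto{|\blank|_{\mathcal{P}}^{\amalg}} \xFs^{\natural,\amalg} \xto{\vee} \xFn$, which sends an object $(O, \angled{n}(P_1,\ldots,P_n))$ to $\bigvee_{i=1}^{n} |P_i|_{\mathcal{P}}$; since the sets $|P_i|$ are finite this wedge is $\angled{0}$ precisely when every $|P_i| = \angled{0}$, \ie{} when every $P_i$ lies in $\mathcal{P}_0$. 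Hence $(\mathcal{O} \wr \mathcal{P})_0$ is the full subcategory of $\mathcal{O} \wr \mathcal{P}$ spanned by the objects all of whose $\mathcal{P}$-entries lie in $\mathcal{P}_0$. On the other hand, using the description of objects and mapping spaces in $\mathcal{C}^{\amalg}$ recalled after \cref{def Camalg} (morphisms over $\phi \colon \angled{n} \to \angled{m}$ are tuples of morphisms $c_i \to c'_{\phi(i)}$), for a full subcategory $\mathcal{D} \subseteq \mathcal{C}$ the induced functor $\mathcal{D}^{\amalg} \to \mathcal{C}^{\amalg}$ over $\xFs$ is fully faithful with essential image the objects whose entries all lie in $\mathcal{D}$. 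Applying this to $\mathcal{P}_0 \subseteq \mathcal{P}$ identifies $(\mathcal{P}_0)^{\amalg}$, over $\xFn$ via its projection, with the full subcategory of $\mathcal{P}^{\amalg}$ on lists of objects of $\mathcal{P}_0$; so $\mathcal{O} \wr (\mathcal{P}_0) = \mathcal{O} \times_{\xFn} (\mathcal{P}_0)^{\amalg}$ has the same underlying \icat{} as $(\mathcal{O} \wr \mathcal{P})_0$.

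Next I would check that the factorization systems agree. A morphism of $\mathcal{O} \wr \mathcal{P}$ is inert (resp.\ active) precisely when it projects to an inert (resp.\ active) morphism in $\mathcal{O}$ and one in $\mathcal{P}^{\amalg}$, and by \cref{propn:Camalgfact} the latter means its image in $\xFs$ is inert (resp.\ active) and each of its components is inert (resp.\ active) in $\mathcal{P}$. For $\mathcal{O} \times_{\xFn} (\mathcal{P}_0)^{\amalg}$ we ask for the analogous condition with $(\mathcal{P}_0)^{\amalg}$ in place of $\mathcal{P}^{\amalg}$; since by \cref{no O_0} the inert (resp.\ active) morphisms of $\mathcal{P}_0$ are exactly the inert (resp.\ active) morphisms of $\mathcal{P}$ between objects of $\mathcal{P}_0$, and $(\mathcal{O} \wr \mathcal{P})_0$ carries the factorization system restricted from $\mathcal{O} \wr \mathcal{P}$ (again \cref{no O_0}), these two factorization systems coincide.

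Finally I would compare elementary objects. Using the corollary describing $\mathcal{Q}^{\amalg}$ as an algebraic pattern over $\xFn$, the elementary objects of $\mathcal{O} \times_{\xFn} \mathcal{P}^{\amalg}$ are the pairs lying over a common elementary object of $\xFn$, namely $(O,\angled{0}())$ with $O \in \mathcal{O}_0^{\el}$ and $(O, \angled{1}(E))$ with $O \in \mathcal{O}^{\flat,\el}$ and $E \in \mathcal{P}^{\el}$; among these, the ones mapping to $\angled{0}$ under $\vee$ are exactly $(O,\angled{0}())$ with $O \in \mathcal{O}_0^{\el}$ together with $(O,\angled{1}(E))$ with $O \in \mathcal{O}^{\flat,\el}$ and $E \in \mathcal{P}_0^{\el}$. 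The elementary objects of $\mathcal{O} \times_{\xFn} (\mathcal{P}_0)^{\amalg}$ are, by the same description applied to $\mathcal{P}_0$, exactly $(O,\angled{0}())$ with $O \in \mathcal{O}_0^{\el}$ and $(O,\angled{1}(E'))$ with $O \in \mathcal{O}^{\flat,\el}$ and $E' \in (\mathcal{P}_0)^{\el} = \mathcal{P}_0^{\el}$. These agree, so the equivalence of underlying \icats{} is an equivalence of algebraic patterns.

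The main obstacle — really the only point requiring care — is the claim that $\mathcal{D}^{\amalg} \to \mathcal{C}^{\amalg}$ is fully faithful onto the full subcategory on $\mathcal{D}$-valued lists when $\mathcal{D} \subseteq \mathcal{C}$ is a full subcategory. This follows directly from the explicit description of morphisms in $\mathcal{C}^{\amalg}$, but if one prefers it can be deduced from the universal property of $(\blank)^{\amalg}$ by testing against \icats{} over $\xFs$ and comparing $\Map(\mathcal{K} \times_{\xFs} \Gamma^{*}, \mathcal{D})$ with $\Map(\mathcal{K} \times_{\xFs} \Gamma^{*}, \mathcal{C})$. Everything else is bookkeeping with the two structure maps $\mathcal{P}^{\amalg} \to \xFn$ (the projection used to form the fibre product and the $\vee$-composite used for enrichability) and the descriptions of the induced pattern structures.
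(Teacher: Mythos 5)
The paper states this as an observation with no written proof, and your direct unwinding is precisely the intended justification: both sides are the full subcategory of $\mathcal{O} \times_{\xFn} \mathcal{P}^{\amalg}$ on lists whose $\mathcal{P}$-entries lie in $\mathcal{P}_0$ (since $\vee$ of the $|P_i|$ is $\angled{0}$ exactly when each $|P_i|$ is), with matching factorization systems and elementaries. Your argument is correct, including the one point needing care — that $(\mathcal{P}_0)^{\amalg} \hookrightarrow \mathcal{P}^{\amalg}$ is the full subcategory on $\mathcal{P}_0$-valued lists, which indeed follows from the explicit description of objects and morphisms of $\mathcal{C}^{\amalg}$.
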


\begin{observation}
  An object in $\mathcal{O}\wr \mathcal{P}$ consists of an object
  $O \in \mathcal{O}$ together with a list of objects
  $P_{1},\ldots,P_{n} \in \mathcal{P}$ where $|O| \cong \angled{n}$;
  we will denote this object by $O(P_{1},\ldots, P_{n})$. Similarly a
  morphism in $\mathcal{O}\wr \mathcal{P}$ from $O(P_{i})$ to
  $O'(P'_{j})$ is given by a map $f\colon O\to O'$ in $\mathcal{O}$
  together with a sequence of morphisms
  $\{g_i\colon P_j\to P'_{|f|(i)}\}$ in $\mathcal{P}$.
\end{observation}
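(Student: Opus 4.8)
The plan is to read the description off directly from the definition $\mathcal{O} \wr \mathcal{P} = \mathcal{O} \times_{\xFn} \mathcal{P}^{\amalg}$ of \cref{def wr}, combined with the explicit description of the objects and morphisms of $\mathcal{P}^{\amalg}$ recorded in the observation following \cref{def Camalg}. Recall that in the fibre product $\mathcal{O} \times_{\xFn} \mathcal{P}^{\amalg}$ of \icats{}, formed along $|\blank| \colon \mathcal{O} \to \xFn$ and the projection $\pi \colon \mathcal{P}^{\amalg} \to \xFn$, an object is a triple consisting of $O \in \mathcal{O}$, $Q \in \mathcal{P}^{\amalg}$, and an equivalence $|O| \simeq \pi(Q)$ in $\xFn$, while a morphism is a pair of morphisms in $\mathcal{O}$ and $\mathcal{P}^{\amalg}$ together with a compatibility between their images in $\xFn$. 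Since $\xFs$ is an ordinary category equipped with a chosen skeleton, after passing to skeletal representatives we may assume $|O| = \angled{n} = \pi(Q)$ with the structural equivalence the identity, so that these data become strict.

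First I would handle objects: by the observation after \cref{def Camalg}, an object $Q$ of $\mathcal{P}^{\amalg}$ lying over $\angled{n}$ is precisely a list $(P_{1},\ldots,P_{n})$ of objects of $\mathcal{P}$, written $\angled{n}(P_{1},\ldots,P_{n})$, so pairing it with an object $O$ of $\mathcal{O}$ with $|O| \cong \angled{n}$ gives the object of $\mathcal{O} \wr \mathcal{P}$ that we denote $O(P_{1},\ldots,P_{n})$. Next I would handle morphisms in the same way: a morphism $O(P_{i}) \to O'(P'_{j})$ of $\mathcal{O} \wr \mathcal{P}$ consists of a morphism $f \colon O \to O'$ of $\mathcal{O}$ together with a morphism $\angled{n}(P_{1},\ldots,P_{n}) \to \angled{m}(P'_{1},\ldots,P'_{m})$ of $\mathcal{P}^{\amalg}$ whose image in $\xFs$ is $|f| \colon \angled{n} \to \angled{m}$, and, again by the observation after \cref{def Camalg}, such a morphism of $\mathcal{P}^{\amalg}$ is exactly a family $\{g_{i} \colon P_{i} \to P'_{|f|(i)}\}$ indexed by those $i$ with $|f|(i) \neq 0$. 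Assembling these pieces yields the asserted description.

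I do not expect any real obstacle, as the argument is a direct unwinding of the two ingredients. The only point deserving a word is the homotopy-coherent nature of the fibre product: an object of $\mathcal{O} \wr \mathcal{P}$ carries an equivalence $|O| \simeq \pi(Q)$ over $\xFn$, so the list $(P_{1},\ldots,P_{n})$ attached to it is canonical only relative to a choice of identification $|O| \cong \angled{n}$, and reordering the list corresponds precisely to acting by the automorphisms of $\angled{n}$ in $\xFs$. This is, however, exactly the implicit content of the notation $O(P_{1},\ldots,P_{n})$, and since $\xFs$ is a $1$-category there is no further coherence data to track beyond this identification.
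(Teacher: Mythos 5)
Your unwinding is correct and is exactly the argument the paper intends: the observation is a direct consequence of the definition $\mathcal{O} \wr \mathcal{P} = \mathcal{O} \times_{\xFn} \mathcal{P}^{\amalg}$ together with the explicit description of objects and morphisms of $\mathcal{P}^{\amalg}$ recorded after \cref{def Camalg}, and the paper offers no further justification. Your remark on the homotopy-coherent fibre product (the identification $|O| \simeq \pi(Q)$ being the only extra datum, with no higher coherence since $\xFs$ is a $1$-category) is a worthwhile clarification, and you also correctly read the morphism components as $g_{i} \colon P_{i} \to P'_{|f|(i)}$ for $|f|(i) \neq 0$, silently fixing the index typo in the statement.
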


\begin{remark}
  Suppose $\mathcal{O}, \mathcal{P}, \mathcal{Q}$ are enrichable
  patterns. Since the wreath product of enrichable patterns is
  enrichable, we obtain enrichable patterns
  $\mathcal{O} \wr (\mathcal{P} \wr \mathcal{Q})$ and
  $(\mathcal{O} \wr \mathcal{P}) \wr \mathcal{Q}$. It is not hard to
  show that these are naturally equivalent, \ie{} that the wreath
  product of enrichable patterns is associative. With a bit more
  effort we expect that one can make the wreath product a monoidal
  structure on the \icat{} of enrichable patterns, as done by Barwick
  for the wreath product of operator categories \cite[Proposition
  3.9]{bar}, but we do not pursue this here as we only care about a single instance of the construction.
\end{remark}

\begin{defn}
  The enrichable patterns $\Tnopn$ are defined inductively by $\Tqopn{1} := \Dopn$ and $\Tnopn := \Dopn \wr \Tqopn{n-1}$. 
\end{defn}

\begin{observation}
  From \cref{obs:wr0} we see that $(\Tnopn)_{0}$ is $\Tqopn{n-1}$.
\end{observation}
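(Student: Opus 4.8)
The plan is to prove $(\Tnopn)_{0} \simeq \Tqopn{n-1}$ by induction on $n$, using \cref{obs:wr0} to strip off one wreath factor at a time. Since $\Tnopn = \Dopn \wr \Tqopn{n-1}$, applying \cref{obs:wr0} with $\mathcal{O} = \Dopn$ and $\mathcal{P} = \Tqopn{n-1}$ gives an equivalence of algebraic patterns
\[ (\Tnopn)_{0} \simeq \Dopn \wr (\Tqopn{n-1})_{0}. \]
Here $\Dopn \wr (\blank)$ depends on its second argument only as an algebraic pattern — by \cref{def wr} only the first factor needs a map to $\xFn$, while the second enters through the canonical projection $(\blank)^{\amalg} \to \xFn$ — and it is functorial for morphisms of algebraic patterns, hence carries equivalences to equivalences. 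It therefore suffices to identify $(\Tqopn{n-1})_{0}$ with $\Tqopn{n-2}$.

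For $n \geq 3$ the identification $(\Tqopn{n-1})_{0} \simeq \Tqopn{n-2}$ is exactly the inductive hypothesis, so combining it with the displayed equivalence and the definition $\Tqopn{n-1} = \Dopn \wr \Tqopn{n-2}$ closes the induction. It remains to treat the base case $n = 2$: here \cref{obs:wr0} together with the equality $\Dopn_{0} = \{[0]\}$ noted in \S\ref{subsec enriched cat} gives
\[ (\Tqopn{2})_{0} \simeq \Dopn \wr \Dopn_{0} = \Dopn \wr \{[0]\}, \]
where $\{[0]\}$ is the algebraic pattern with a single (elementary) object. So everything reduces to showing that $\{[0]\}$ is a right unit for the wreath product, i.e. that $\Dopn \wr \{[0]\} \simeq \Dopn$.

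To establish this I would compute $\{[0]\}^{\amalg}$. As $\{[0]\}$ is a contractible \icat{}, the universal property of \cref{def Camalg} gives $\Map_{/\xFs}(\mathcal{K}, \{[0]\}^{\amalg}) \simeq \Map(\mathcal{K} \times_{\xFs} \Gamma^{*}, \{[0]\}) \simeq *$ for every \icat{} $\mathcal{K}$ over $\xFs$, so $\{[0]\}^{\amalg}$ is the terminal object of $\Cat_{\infty/\xFs}$, namely $\xFs$ equipped with its identity projection. Tracing through \cref{propn:Camalgfact} and the induced pattern structure on $(\blank)^{\amalg}$, the factorization system on $\{[0]\}^{\amalg}$ is carried to the inert--active one on $\xFs$, and the two elementary objects $\angled{0}()$ and $\angled{1}([0])$ are carried to $\angled{0}$ and $\angled{1}$; hence $\{[0]\}^{\amalg} \simeq \xFn$ as algebraic patterns over $\xFn$. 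Pulling back along $|\blank| \colon \Dopn \to \xFn$ then yields $\Dopn \wr \{[0]\} = \Dopn \times_{\xFn} \{[0]\}^{\amalg} \simeq \Dopn \times_{\xFn} \xFn \simeq \Dopn$, compatibly with factorization systems and elementary objects. The identification $\{[0]\}^{\amalg} \simeq \xFn$ as patterns (rather than merely as \icats{}) is the only point that needs genuine verification; everything else is bookkeeping with \cref{obs:wr0} and functoriality of $\wr$.
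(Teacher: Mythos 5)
Your proof is correct and follows the route the paper intends: \cref{obs:wr0} gives $(\Tnopn)_{0} \simeq \Dopn \wr (\Tqopn{n-1})_{0}$, and the identification with $\Tqopn{n-1}$ then requires exactly the induction you set up, with base case $\Dopn \wr \{[0]\} \simeq \Dopn$. The paper states this as a bare observation without spelling out these details, and you correctly isolate the one point needing actual verification, namely that $\{[0]\}^{\amalg} \simeq \xFn$ as algebraic patterns over $\xFn$ (so that the point pattern is a right unit for $\wr$); your computation of this via the universal property of $(\blank)^{\amalg}$ is right.
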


Segal $\Tnopn$-spaces are (ignoring completeness) Rezk's model for
$(\infty,n)$-categories \cite{RezkThetan}. We therefore regard
$\Tnopn$-algebroids as \emph{enriched $(\infty,n)$-categories}. Such
an algebroid is given by a Segal $\Tqopn{n-1}$-space $\bX$, which is
the \emph{underlying $(\infty,n-1)$-category} that specifies the
objects, morphisms, 2-morphisms, etc., up to $(n-1)$-morphisms, while
the $n$-morphisms are given by an enrichment in an $E_{n}$-monoidal
\icat{} $\mathcal{V}$. A bit more precisely, an object $I$ of $\Tn$
can be thought of as a ``pasting diagram'', that is an $n$-category
that specifies a shape that can be composed in any
$(\infty,n)$-category, and an object of $\TqXop{n}{\bX}$ over $I$ can be
described as a functor $\partial I \to \bX$, where $\partial I$ is the
underlying $(n-1)$-category of $I$; in other words, this is a
labelling of the objects, morphisms, etc. up to $(n-1)$-morphisms of
$I$ by such data from $\bX$.

Let us spell this out a bit more explicitly in the case $n = 2$.
An object in $\bbTheta_2^\op=\Dop\wr\Dop$ is of the form $[n]([m_1], \ldots, [m_n])$, which we think of as a (strict) 2-category
with objects $0,\ldots,n$ and with the poset $[m_{i}]$ as the category of morphisms from $i-1$ to $i$ (with the other morphism categories freely generated by these under composition). For instance, the object $[4]([0], [3], [1],[2])$ is given by the following 2-category:
\begin{equation}\label{dia Theta}
\begin{tikzcd}
  \bullet \ar[r] &
  \bullet \ar[r, bend right=85, ""{name=D, inner sep=1pt, above}] \ar[r, bend right=25, ""{name=C, inner sep=1pt, above}, ""{name=CC, inner sep=1pt, below}] \ar[r, bend left=25, ""{name=B, inner sep=1pt, above}, ""{name=BB, inner sep=1pt, below}] \ar[r, bend left=85, ""{name=AA, inner sep=1pt, below}]
  \ar[Rightarrow, from=AA, to=B]
  \ar[Rightarrow, from=BB, to=C]
    \ar[Rightarrow, from=CC, to=D]
  &
  \bullet \arrow[r, bend left=30, ""{name=EU, inner sep=1pt, below}] \arrow[r, bend right=30, ""{name=EL, inner sep=1pt, above}]
    \ar[Rightarrow, from=EU, to=EL]
    & \bullet \arrow[r, ""{name=FL, inner sep=1pt, above}, ""{name=GU, inner sep=1pt, below}] \arrow[r, bend right=45, ""{name=GL, inner sep=1pt, above}] \arrow[r, bend left=45, ""{name=FU, inner sep=1pt, below}]
    \ar[Rightarrow, from=FU, to=FL]
        \ar[Rightarrow, from=GU, to=GL]
    & \bullet.
\end{tikzcd}
\end{equation}
The pattern $(\Tqopn{2})_{0}$ is $\Dopn$, and the three elementary objects of
$\Tqopn{2}$ are $[0]()$, $[1]([0])$ and $[1]([1])$, corresponding to the 2-categories
\[
\begin{tikzcd}
\bullet, & \bullet \arrow[r] & \bullet, &  \bullet \arrow[r, bend left, ""{name=FU, inner sep=1pt, below}] \arrow[r, bend right, ""{name=FL, inner sep=1pt, above}] \arrow[Rightarrow, from=FU, to=FL] & \bullet,
\end{tikzcd}
\]
respectively, where the first two elementary objects lie over $0$ and can be identified with the two elementary objects in $\simp^{\op, \natural}$.

Given $\bX\in \Seg_{\Dopn}(\mathcal{S})$, meaning an \icat{} in the sense of a (not necessarily complete) Segal space \cite{Rezk}, an object of  $\TqXop{2}{\bX}$ can be regarded as a labelling of the objects and morphisms of an object of $\Tqop{2}$ by objects and morphisms in $\bX$; for the object illustrated above this is a diagram
\[\begin{tikzcd}
a_1 \arrow[r, "\phi_1"] & a_2 \arrow[r, bend right=20, "\phi_4"] \arrow[r, bend right=70, "\phi_5"]\arrow[r, bend left=20, "\phi_3"] \arrow[r, bend left=70, "\phi_2"]& a_3 \arrow[r, bend left, "\phi_6"] \arrow[r, bend right, "\phi_7"] & a_4 \arrow[r, "\phi_9"] \arrow[r, bend right, "\phi_{10}"] \arrow[r, bend left, "\phi_7"]& a_5,
\end{tikzcd}\]
where $a_i\in \bX([0])$ and $\phi_j\in \bX([1])$ (with appropriate sources and targets).
A $\Tqopn{2}$-algebroid $A$ in an $E_{2}$-monoidal \icat{} $\mathcal{V}$ then models an enriched $(\infty,2)$-category whose underlying \icat{} is given by some Segal space $\bX\in \Seg_{\Dopn}(\mathcal{S})$; in particular, $A$ takes a $2$-cell whose boundary is labelled as
\begin{tikzcd}
 a \arrow[r, bend left, "\phi"] \arrow[r, bend right, "\psi"] & b
\end{tikzcd}
to an object in $\mathcal{C}$, which gives the enriched 2-morphisms from $\phi$ to $\psi$. The horizontal and vertical compositions of $2$-cells are then determined by the structure of $\TqXop{2}{\bX}$.

\begin{remark}
The pattern $\Tnopn$ is extendable for all $n$ by \cite[Example 8.15]{patterns}, so we get a formula for its free algebroids, that is free enriched $(\infty,n)$-categories, from \cref{obs:freealgdformula}.
\end{remark}

\begin{remark}
  For any \icat{} $\mathcal{C}$, the projection $\mathcal{C}\times \Gamma^{*} \to  \mathcal{C}$ corresponds under the equivalence of Definition~\ref{def Camalg} to a functor $\mathcal{C} \times \xFs \to \mathcal{C}^{\amalg}$, which takes an object $(C,\angled{n})$ to $\angled{n}(C,\ldots,C)$. For enrichable patterns $\mathcal{O}, \mathcal{P}$, pulling back this map for $\mathcal{P}$ gives a functor
  \[ \delta \colon \mathcal{O} \times \mathcal{P} \to \mathcal{O} \wr \mathcal{P}, \]
  which is a morphism of enrichable patterns since the triangle
  \[
    \begin{tikzcd}
      \xFs \times \xFs \arrow{dr}[swap]{\wedge} \arrow{rr}{\delta} & & \xFs^{\amalg} \arrow{dl}{\vee} \\
       & \xFs
    \end{tikzcd}
  \]
  commutes. As a special case, we have a morphism of enrichable
  patterns $\delta_{n} \colon \Dnopn \to \Tnopn$ for every $n$; this
  is the functor that induces the eequivalence between Rezk's
  $\bbTheta_{n}$-spaces and Barwick's $n$-fold Segal spaces
  \cite{BSP,BergnerRezk,thetan}, which are Segal $\Dnopn$-spaces that
  satisfy an additional constancy condition. We expect that in the
  enriched setting we can similarly identify $\Tnopn$-algebroids with
  a full subcategory of $\Dnopn$-algebroids via $\delta_{n}$. This
  comparison should also lead to an equivalence between
  $\Tnopn$-algebroids and enriched $(\infty,n)$-categories defined
  inductively as \icats{} enriched in enriched
  $(\infty,n-1)$-categories, as in \cite{enriched}; we intend to
  return to these comparisons elsewhere.
\end{remark}

\begin{variant}\label{var:opcat}
  Both of the last two examples $\bbTheta_{n}^{\op,\natural}$ and
  $\simp^{n,\op,\natural}$ are special cases of the following general
  construction of enrichable patterns, due to Barwick: Suppose $\Phi$
  is a \emph{perfect operator category} as introduced in \cite{bar}, and
  let $\Lambda(\Phi)$ be its Leinster category, which is the Kleisli
  category of a certain monad on $\Phi$. By \cite[Example
  3.6]{patterns} there is an algebraic pattern structure
  $\Lambda(\Phi)^\natural$ which is compatible with the map
  $|\blank|\colon \Lambda(\Phi)^\natural\to
  \Lambda(\xF)^\natural\simeq \xF_*^\natural$ induced by the unique
  operator morphism $\Phi\to \xF$ (which takes $X$ to the cardinality of the
  finite set $\xHom_{\Phi}(*,X)$). An object
  $E\in\Lambda(\Phi)^\natural$ is elementary if it admits an inert map
  $*\to E$ in $\Lambda(\Phi)^\natural$ where $*$ is the terminal
  object in $\Phi$. The preservation of inert morphisms shows that $*$
  is the unique elementary object $\Lambda(\Phi)^\natural$ lying over
  $\angled{1}$. It then follows from the description of inert
  morphisms in $\Lambda(\Phi)^\natural$ (see \cite[Definition
  8.1]{bar}) that
  $\Lambda(\Phi)^{\flat,\el}_{X/}\simeq \xF_{*,|X|/}^{\flat,
    \el}$. Hence, for every perfect operator category $\Phi$ we have
  an enrichable pattern $\Lambda(\Phi)^\natural$ which then allows us
  to define $\Lambda(\Phi)$-algebroids. By choosing $\Phi$ to be the
  operator categories $\xF$ of finite sets, $\mathbb{O}$ of finite
  ordered sets, its cartesian power $\mathbb{O}^{n}$ and its wreath
  power $\mathbb{O}^{\wr n}$ we obtain the enrichable patterns
  $\xF^\natural_*$, $\simp^{\op,\natural}$, $\simp^{\op,n, \natural}$
  and $\Theta_n^{\op, \natural}$ and the associated algebroids
  discussed above.
\end{variant}

\subsection{Enriched modular $\infty$-operads}\label{subsec:modular}
A \emph{modular operad} is, roughly speaking, an operad-like structure
where operations can have multiple inputs and outputs, but there is a
duality on the set of objects that allows us to replace an input
object by its dual as an output object. They were first introduced by
Getzler and Kapranov~\cite{GetzlerKapranov} in order to describe the
``operadic'' structure on the homology of moduli spaces of stable
curves with marked points. Hackney, Robertson, and Yau
\cite{HRYModular} have defined an \icatl{} version of modular operads
as presheaves on a category $\bU$ of connected graphs that satisfy
Segal conditions. As our final example of an enriched
homotopy-coherent structure we will consider the corresponding notion
of enriched modular \iopds{}.

We begin by sketching the definition of the category $\bU$, for which
we follow the simplified (but equivalent) definition given by
Hackney~\cite{HackneyCats}; for more details we recommend the
exposition in \cite{HackneySeg}.

\begin{defn}
  An \emph{undirected graph} (with loose ends) $G$ is a diagram of
  finite sets
    \[
    \begin{tikzcd}
      A(G) \arrow[l, loop left, "\dagger"] & D(G) \arrow[l,
      hookrightarrow] \arrow[r, "t"] &  V(G)
    \end{tikzcd}
  \]
  where $\dagger$ is a fixed-point free involution and the map $D(G)
  \hookrightarrow A(G)$ is a monomorphism. Here $A(G)$ is the set of
  \emph{arcs} of $G$ and $V(G)$ is the set of \emph{vertices}. The
  \emph{boundary} of $G$ is the set $A(G)\setminus D(G)$ and the set
  $E(G)$ of \emph{edges} is the set of pairs $\{a,a^{\dagger}\}$,
  \ie{} of $\dagger$-orbits.
\end{defn}

\begin{defn}
  To each undirected graph $G$ we can associate a topological space
  that is its ``geometric realization'', and we say that $G$ is
  \emph{connected} if this topological space is connected; this
  condition can also be defined purely combinatorially
  \cite[Definition 2.10]{HackneySeg}.
\end{defn}

\begin{examples}\ 
  \begin{enumerate}[(i)]
  \item The \emph{undirected edge} $\eta$ is the undirected graph
    \[
    \begin{tikzcd}
      \textbf{2} \arrow[l, loop left, "\dagger"] & \emptyset \arrow[l,
      hookrightarrow] \arrow[r] &  \emptyset
    \end{tikzcd}
  \]
  with two arcs interchanged by the involution, both of which lie on
  the boundary, and with no vertices.
\item The \emph{$n$-star} $\star_{n}$ is the undirected graph
  \[
    \begin{tikzcd}
      \textbf{n} \amalg \textbf{n} \arrow[l, loop left, "\dagger"] & \textbf{n} \arrow[l,
      hookrightarrow] \arrow[r] &  \textbf{1}
    \end{tikzcd}    
  \]
  \end{enumerate}
\end{examples}

\begin{defn}
  Let $G,G'$ be connected undirected graphs.
  An \emph{embedding} of $G'$ in $G$ is a commutative diagram
  \[
    \begin{tikzcd}
      A(G') \arrow{d} & D(G') \arrow{d} \arrow[phantom]{dr}[very near start,description]{\lrcorner} \arrow[hookrightarrow]{l} \arrow{r} & V(G')\arrow[hookrightarrow]{d} \\
      A(G) & D(G) \arrow[hookrightarrow]{l} \arrow{r} & V(G)
    \end{tikzcd}
  \]
  such that the right square is a pullback, the map $A(G') \to A(G)$ is involutive, and the map
  $V(G') \to V(G)$ is injective. We write $\Emb(G)$ for the set of
  isomorphism classes of embeddings of graphs in $G$.
\end{defn}

\begin{observation}
  Let $G$ be a connected undirected graph. We can identify the set
  $E(G)$ of edges of $G$ with the set of embeddings of $\eta$ in $G$,
  and the set $V(G)$ of vertices with the set of embeddings of
  $\star_{n}$ for all $n$. In particular, we have inclusions
  $E(G),V(G) \hookrightarrow \Emb(G)$.
\end{observation}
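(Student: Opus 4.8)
The plan is to unwind the definition of an embedding in the two special cases $G' = \eta$ and $G' = \star_{n}$ and check that, up to isomorphism, such embeddings are parametrized by $\dagger$-orbits of arcs and by vertices of $G$, respectively; the inclusions into $\Emb(G)$ are then automatic since $\eta$ and $\star_n$ are connected undirected graphs.

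First I would analyze embeddings of $\eta$. Since $D(\eta) = V(\eta) = \emptyset$, the right-hand square of the defining diagram has $\emptyset$ in both top corners and is therefore trivially cartesian, so the only remaining datum is an involutive map $f \colon A(\eta) = \mathbf{2} \to A(G)$. Such an $f$ is precisely a choice of $a := f(1) \in A(G)$, with $f(2) = a^{\dagger}$ forced; since $\dagger$ is fixed-point free, $f$ is automatically injective, so every such $f$ defines an embedding. Now $\mathrm{Aut}(\eta) \cong \mathbb{Z}/2$ acts by swapping the two arcs, so the embeddings attached to $a$ and $a'$ are isomorphic exactly when $\{a, a^{\dagger}\} = \{a', a'^{\dagger}\}$; hence isomorphism classes of embeddings of $\eta$ in $G$ biject with the set of $\dagger$-orbits, which is by definition $E(G)$.

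Next I would analyze embeddings of $\star_{n}$. Here $V(\star_{n}) = \mathbf{1}$, so the vertical map on vertices selects a single vertex $v \in V(G)$, and the pullback condition identifies $D(\star_{n}) = \mathbf{n}$ with $t^{-1}(v) = D(G) \times_{V(G)} \mathbf{1}$; this forces $n = |t^{-1}(v)|$ and exhibits the datum on $D$ as a bijection $g \colon \mathbf{n} \isoto t^{-1}(v)$. The map $f$ on arcs is then determined: on the first copy of $\mathbf{n}$ it is the composite $\mathbf{n} \xrightarrow{g} t^{-1}(v) \hookrightarrow A(G)$, and on the second copy it is forced by involutivity, since $\dagger$ exchanges the two copies of $\mathbf{n}$ in $A(\star_{n})$. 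Thus an embedding of $\star_{n}$ in $G$ amounts to a pair $(v, g)$ consisting of a vertex $v$ of valence $n$ together with an ordering $g$ of the arcs incident to it. Since $\mathrm{Aut}(\star_{n}) \cong \Sigma_{n}$ acts simply transitively on these orderings for fixed $v$, while distinct vertices yield non-isomorphic embeddings, isomorphism classes of embeddings of $\star_{n}$ correspond to the vertices of $G$ of valence $n$; taking the disjoint union over $n \geq 0$ (each vertex having a well-defined valence) gives a bijection with $V(G)$.

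Finally, because $\eta$ and $\star_{n}$ are themselves connected undirected graphs, their embeddings into $G$ form subsets of $\Emb(G)$ by definition, and combining this with the two bijections above produces the asserted inclusions $E(G), V(G) \hookrightarrow \Emb(G)$ — with disjoint images, since $\eta$ has no vertices whereas each $\star_{n}$ has exactly one. I do not expect any real obstacle here: the only point needing care is the bookkeeping of the automorphism groups $\mathrm{Aut}(\eta) \cong \mathbb{Z}/2$ and $\mathrm{Aut}(\star_{n}) \cong \Sigma_{n}$ and the verification that they act freely on the relevant combinatorial data, so that the passage to isomorphism classes of embeddings is transparent.
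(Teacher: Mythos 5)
Your proposal is correct: the paper states this as an unproved observation, and your argument is precisely the definition-unwinding it leaves implicit, identifying isomorphism classes of embeddings of $\eta$ with $\dagger$-orbits of arcs and isomorphism classes of embeddings of $\star_{n}$ with vertices of valence $n$ (the pullback condition forcing the bijection $\mathbf{n} \isoto t^{-1}(v)$, and the automorphism groups acting simply transitively on the residual orderings). No gaps; the extra remark that the two images in $\Emb(G)$ are disjoint is a correct bonus.
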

\begin{defn}[{\cite[Definition 4.1]{HackneySeg}}]
  Let $G,G'$ be connected undirected graphs. A \emph{graph map} $\phi
  \colon G \to
  G'$ consists of
  \begin{itemize}
  \item an involutive function $\phi_{0} \colon A(G) \to A(G')$
    (\ie{} $\phi_{0} \dagger = \dagger \phi_{0}$),
  \item a function $\hat{\phi} \colon \Emb(G) \to \Emb(G')$
  \end{itemize}
  such that
  \begin{itemize}
  \item $\hat{\phi}$ sends edges to edges,
  \item $\hat{\phi}$ preserves unions and takes vertex disjoint pairs
    to vertex disjoint pairs (\cite[Definition 2.18]{HackneySeg}),
  \item $(\phi_{0},\hat{\phi})$ is boundary-compatible
    (\cite[Definition 2.19]{HackneySeg}).
  \end{itemize}
\end{defn}

\begin{defn}
  The category $\bU$ has connected undirected graphs as objects and
  graph maps as morphisms; these compose simply by composing their two
  component functions.
\end{defn}

\begin{defn}
  A graph map $\phi \colon G \to G'$ in $\bU$ is \emph{active} if
  $\hat{\phi}(\id_{G}) = \id_{G'}$ and \emph{inert} if the composite
  $V(G) \hookrightarrow \Emb(G) \xto{\hat{\phi}} \Emb(G')$ factors
  through $V(G')$. Let $\bU^{\act}, \bU^{\xint}$ denote the
  wide subcategories of $\bU$ containing the active and inert
  morphisms, respectively.
\end{defn}

\begin{observation}
  Inert maps $G' \to G$ can be identified with embeddings of $G'$ in $G$.
\end{observation}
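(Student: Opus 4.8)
The plan is to exhibit mutually inverse constructions between inert graph maps $G' \to G$ and embeddings of $G'$ in $G$, and then to check compatibility with composition. Throughout I would use the identifications $E(H) \hookrightarrow \Emb(H) \hookleftarrow V(H)$ of the edges and vertices of a connected undirected graph $H$ with the embeddings of $\eta$ and of the stars $\star_{n}$ in $H$, together with the fact that embeddings of connected undirected graphs compose.

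In one direction, from an embedding $\iota$ of $G'$ in $G$ --- \ie{} an involutive map $\iota_{0}\colon A(G') \to A(G)$ and an injection $V(G') \hookrightarrow V(G)$ fitting into the pullback square of the definition --- I would build a graph map $\phi^{\iota}\colon G' \to G$ by taking $\phi^{\iota}_{0} := \iota_{0}$ and letting $\hat{\phi}^{\iota}\colon \Emb(G') \to \Emb(G)$ be post-composition with $\iota$. The graph-map axioms should be routine: $\hat{\phi}^{\iota}$ preserves the underlying graph of an embedding, hence sends edges to edges; it preserves unions and vertex-disjoint pairs because these are properties of the image subgraphs, which are stable under composition; and boundary-compatibility follows from the pullback condition on $\iota$. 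Inertness of $\phi^{\iota}$ is clear, since post-composing an embedding $\star_{n} \hookrightarrow G'$ with $\iota$ gives an embedding $\star_{n}\hookrightarrow G$, \ie{} a vertex of $G$.

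In the other direction, from an inert graph map $\phi\colon G' \to G$ I would take the candidate embedding $\iota_{\phi} := \hat{\phi}(\id_{G'}) \in \Emb(G)$. Since $\id_{G'}$ is the top element of the partial order on $\Emb(G')$, every embedding into $G'$ --- in particular every edge and every vertex of $G'$ --- maps under $\hat{\phi}$ into $\iota_{\phi}$; because $\hat{\phi}$ sends edges to edges, sends vertices to vertices (by inertness), and respects unions and vertex-disjoint pairs, I expect this to identify the edge set, vertex set, and incidence data of the source of $\iota_{\phi}$ with those of $G'$, with $\phi_{0}$ supplying the arc map via boundary-compatibility; so $\iota_{\phi}$ will be an embedding of $G'$ in $G$.

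It remains to see these constructions are inverse and natural. One composite is immediate: $\hat{\phi}^{\iota}(\id_{G'}) = \iota \circ \id_{G'} = \iota$. For the other I would need to show that an inert graph map $\phi$ is determined by the pair $(\phi_{0}, \hat{\phi}(\id_{G'}))$ --- concretely, that $\hat{\phi}$ on an arbitrary embedding $H \hookrightarrow G'$ equals its composite with $\hat{\phi}(\id_{G'})$ --- using that $H\hookrightarrow G'$ lies below $\id_{G'}$ and that $\hat{\phi}$ is compatible with the embedding order and with unions, so is pinned down by its value at the top. Compatibility with composition of inert maps is then formal. The hard part is exactly this last rigidity statement --- that an inert graph map carries no more information than the subgraph it selects --- where the combinatorial axioms on graph maps from \cite{HackneySeg} (preservation of unions and vertex-disjoint pairs, and boundary-compatibility) have to be used carefully; if one prefers, this identification is already established in \cite{HackneyCats,HackneySeg} and can simply be cited.
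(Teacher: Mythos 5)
The paper offers no proof of this observation at all --- it is stated as an immediate consequence of the definitions, with the actual verification living in the cited work of Hackney--Robertson--Yau and Hackney \cite{HRYModular,HackneyCats,HackneySeg} --- so there is no argument of the paper's to compare yours against line by line. Your reconstruction is the standard one and is essentially sound: postcomposition with an embedding $\iota$ gives an inert graph map $(\iota_{0}, \iota\circ{-})$, and $\hat{\phi}(\id_{G'})$ recovers the embedding from an inert $\phi$; you also correctly isolate the genuine mathematical content, namely (a) that the source of the embedding $\hat{\phi}(\id_{G'})$ is isomorphic to $G'$ compatibly with $\phi_{0}$, and (b) the rigidity statement that an inert $\phi$ is determined by the pair $(\phi_{0},\hat{\phi}(\id_{G'}))$. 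One refinement to how you phrase (b): ``pinned down by its value at the top'' is not quite the right mechanism, since monotonicity alone would not determine $\hat{\phi}$ below the top. What actually does the work is that every element of $\Emb(G')$ is the union of the edges and vertices it contains, so $\hat{\phi}$ is determined by its values on these atoms together with preservation of unions; and the values on the atoms are forced because inertness sends vertices to vertices, preservation of vertex-disjoint pairs makes the induced map on vertices injective (hence a bijection onto the vertices of $\hat{\phi}(\id_{G'})$), and boundary-compatibility with $\phi_{0}$ pins down the edge assignments. With that adjustment --- or simply with the citation to \cite{HackneyCats,HackneySeg}, where this identification of $\bU^{\xint}$ with the category of embeddings is established --- your argument is complete and is exactly the proof the paper is implicitly relying on.
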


\begin{thm}[{\cite[Theorem 2.15]{HRYModular}}]\label{graphfact}
  $(\bU^{\act}, \bU^{\xint})$ is a factorization system on $\bU$. \qed
\end{thm}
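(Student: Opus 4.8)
The plan is to check the three conditions of \cite[Definition 5.2.8.8]{ht} for the pair $(\bU^{\act},\bU^{\xint})$, following \cite[Theorem 2.15]{HRYModular}. Since $\phi \mapsto \phi_{0}$ and $\phi \mapsto \hat{\phi}$ are functorial, both classes contain all isomorphisms and are closed under composition; this is immediate from the defining conditions ``$\hat{\phi}(\id_{G}) = \id_{G'}$'' for active maps and ``the composite $V(G) \hookrightarrow \Emb(G) \xto{\hat{\phi}} \Emb(G')$ lands in $V(G')$'' for inert maps. It is also convenient to record at the outset that every inert map is a monomorphism in $\bU$: it is identified with an embedding, which is injective on arcs and on embedding-sets, and a graph map is determined by these two component functions (see \cite{HackneySeg}).

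The heart of the argument is the existence of active--inert factorizations. Given a graph map $\phi \colon G \to G'$, the element $\hat{\phi}(\id_{G}) \in \Emb(G')$ has a representative embedding $\iota \colon G_{\phi} \hookrightarrow G'$, which is an inert map, and I would show that $\phi$ factors as $G \xto{a} G_{\phi} \xto{\iota} G'$ with $a$ active. On arcs, boundary-compatibility of $(\phi_{0},\hat{\phi})$ forces $\phi_{0}$ to take values in the image of the monomorphism $A(G_{\phi}) \to A(G')$, so $\phi_{0}$ descends to an involutive map $a_{0} \colon A(G) \to A(G_{\phi})$; on embedding-sets, $\hat{\phi}$ is monotone (it preserves unions), so $\hat{\phi}(x) \le \hat{\phi}(\id_{G}) = \hat{\iota}(\id_{G_{\phi}})$ for every $x$, and since the image of the injection $\hat{\iota} \colon \Emb(G_{\phi}) \to \Emb(G')$ is precisely the down-set of $[\iota]$, there is a unique monotone lift $\hat{a} \colon \Emb(G) \to \Emb(G_{\phi})$ with $\hat{\iota}\hat{a} = \hat{\phi}$. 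One then verifies that $(a_{0},\hat{a})$ satisfies the three graph-map axioms of \cite[Definition 4.1]{HackneySeg} --- preservation of edges, of unions and vertex-disjoint pairs, and boundary-compatibility --- these being inherited from the corresponding properties of $\phi$ together with the fact that $\iota$ is an embedding. Finally $a$ is active since $\hat{\iota}(\hat{a}(\id_{G})) = \hat{\phi}(\id_{G}) = \hat{\iota}(\id_{G_{\phi}})$ and $\hat{\iota}$ is injective. Such a factorization is moreover essentially unique: in any active--inert factorization $G \to H \to G'$ the embedding $H \hookrightarrow G'$ represents $\hat{\phi}(\id_{G})$, so it agrees with $\iota$, and the comparison isomorphism is forced --- hence unique --- because inert maps are monic.

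Next I would establish the unique lifting property of active maps against inert maps; granted this, the previous paragraph yields $\bU^{\act} = {}^{\perp}\bU^{\xint}$ and $\bU^{\xint} = (\bU^{\act})^{\perp}$ by the usual bootstrapping argument, so that both classes are automatically closed under retracts and all conditions of \cite[Definition 5.2.8.8]{ht} hold. Given a commutative square $gu = vf$ with $u \colon A \to B$ active and $v \colon C \to D$ inert (so, up to isomorphism, the inclusion of a subgraph $C \subseteq D$), activeness of $u$ gives $\hat{g}(\id_{B}) = \hat{g}(\hat{u}(\id_{A})) = \hat{v}(\hat{f}(\id_{A}))$, which lies in the image of $\hat{v}$ and hence satisfies $\hat{g}(\id_{B}) \le [C \hookrightarrow D]$. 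Therefore the active--inert factorization of $g$ constructed above runs through a subembedding of $C \subseteq D$, producing a graph map $h \colon B \to C$ with $vh = g$; then $v(hu) = gu = vf$ together with $v$ monic forces $hu = f$, and $v$ monic also makes $h$ the unique such filler. In a $1$-category this is exactly the contractibility of the space of lifts required.

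I expect the one genuinely technical point to be the combinatorial verification in the factorization step that the induced pair $(a_{0},\hat{a})$ is a bona fide graph map --- in particular that it inherits boundary-compatibility from $\phi$. This requires unpacking Hackney's notions of unions, vertex-disjoint pairs and boundary-compatibility (\cite{HackneySeg}), and is where essentially all the real work sits; the rest of the proof is diagram-chasing in $\bU$ together with the formal passage from the one-sided lifting $\bU^{\act} \subseteq {}^{\perp}\bU^{\xint}$ plus factorization to the full factorization-system axioms.
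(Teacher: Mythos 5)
The paper does not prove this statement at all: it is imported verbatim from Hackney--Robertson--Yau \cite[Theorem 2.15]{HRYModular}, which is why it carries a \qed with no argument. Your outline is essentially the proof given there: the factorization of $\phi$ through an embedding representing $\hat{\phi}(\id_{G})$, the unique-lifting argument using that inert maps are monomorphisms, and the formal bootstrapping to retract-closure are all sound, and the one place you defer the work --- checking that the induced pair $(a_{0},\hat{a})$ satisfies the graph-map axioms, in particular boundary-compatibility --- is precisely where the combinatorial content of the cited proof lives.
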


\begin{defn}
  We define an algebraic pattern $\bUopn$ on $\bU^{\op}$
  using the inert--active factorization system from \cref{graphfact},
  with the elementary objects being $\eta$ and $\star_{n}$ for all
  $n$.
\end{defn}

Our goal is now to show that the pattern $\bUopn$ is enrichable, for
which we must define a functor to $\xFs$:
\begin{construction}
  We define the functor $|\blank|$ from $\bUop$ to $\xFs$ on objects
  by assigning to a connected undirected graph $G$ the set $V(G)_{+}$
  of vertices of $G$, with an added base point $*$. For a graph map $\phi
  \colon G' \to G$ we define $|\phi| \colon V(G)_{+} \to V(G')_{+}$ by
  \[
    |\phi|(v) =
    \begin{cases}
      w \in V(G'), & \text{if } v \in V_{\hat{\phi}(w)} \subseteq V(G'), \\
      *, & \text{otherwise.}
    \end{cases}
  \]
  Checking that this is compatible with composition boils down to the
  observation that for $u$ a vertex of $G$, a vertex $w$ of
  $\hat{\phi}'\hat{\phi}(u)$ must lie in $\hat{\phi}'(v)$ for a unique
  vertex $v$ of $\hat{\phi}(u)$, so that we get
  \[ |\phi|(|\phi'|(w)) = |\phi|(v) = u = |\phi'\phi|(w). \]
\end{construction}

\begin{lemma}
  The functor $|\blank|$ is a morphism of algebraic patterns
  $\bUopn \to \xFn$, and exhibits $\bUopn$ as an enrichable pattern.
\end{lemma}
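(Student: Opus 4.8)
The plan is to verify two things: first, that $|\blank|$ is a morphism of algebraic patterns $\bUopn \to \xFn$, i.e.\ that it preserves inert morphisms, active morphisms and elementary objects; and second, that for every $G \in \bUop$ the induced functor $(\bUop)^{\flat,\el}_{G/} \to \xFs^{\flat,\el}_{|G|/}$ is an equivalence, which says precisely that $(\bUopn)^{\flat}$ is a cartesian pattern.

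For elementary objects there is nothing to do: $\eta$ has no vertices, so $|\eta| = \angled{0}$; each $\star_{n}$ has a single vertex, so $|\star_{n}| = \angled{1}$; and $\angled{0},\angled{1}$ are the elementary objects of $\xFn$. For an inert morphism of $\bUop$, i.e.\ (the opposite of) an embedding $\iota \colon G' \hookrightarrow G$, the inertness of $\iota$ as a graph map means that $\hat{\iota}$ carries $V(G')$ into $V(G)$; since the map $V(G') \to V(G)$ is moreover injective for an embedding, each $V_{\hat\iota(w)}$ (for $w \in V(G')$) is a single vertex of $G$ and these are pairwise distinct, so the defining formula for $|\iota| \colon V(G)_{+} \to V(G')_{+}$ shows it is a bijection away from the basepoint, i.e.\ inert. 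For an active morphism of $\bUop$, i.e.\ (the opposite of) a graph map $\psi \colon G' \to G$ with $\hat\psi(\id_{G'}) = \id_{G}$: if $G$ has no vertices there is nothing to check, and otherwise $G'$ must also have a vertex (an active graph map out of $\eta$ lands only in $\eta$, since the $\Emb$-component of a graph map sends edges to edges while $\id_{\eta}$ is an edge), so that $\id_{G'} = \bigcup_{w \in V(G')} w$ is the union of its vertex-stars (\cf{} \cite{HackneySeg}). Since $\hat\psi$ preserves unions we get $\id_{G} = \bigcup_{w} \hat\psi(w)$, and hence every vertex of $G$ lies in $V_{\hat\psi(w)}$ for some $w$ (in fact exactly one, using that $\hat\psi$ preserves vertex-disjoint pairs). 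By the defining formula this means $|\psi|$ sends no vertex to the basepoint, i.e.\ $|\psi|$ is active.

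For the cartesian condition, fix $G$ with $|G| \cong \angled{n}$ where $n = |V(G)|$. The target $\xFs^{\flat,\el}_{|G|/}$ is the discrete set $\{\rho_{1},\dots,\rho_{n}\}$. The source $(\bUop)^{\flat,\el}_{G/}$ has as objects the inert morphisms in $\bUop$ from $G$ to corollas $\star_{m}$, i.e.\ the embeddings of corollas into $G$; under the identification of $V(G)$ with the set of corolla-embeddings, the vertex $v$ corresponds to the embedding of the star $t^{-1}(v)$ at $v$. Moreover one checks from the pullback condition in the definition of an embedding of graphs that any morphism in $\bUop$ between two corollas compatible with their embeddings into $G$ is forced to be an isomorphism, and is unique, so that $(\bUop)^{\flat,\el}_{G/}$ is equivalent to the discrete set $V(G)$. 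Finally, applying $|\blank|$ to the corolla-embedding at $v$ yields, by the defining formula, exactly the inert map $\rho_{v}$; hence the functor $(\bUop)^{\flat,\el}_{G/} \to \xFs^{\flat,\el}_{|G|/}$ is a bijection of discrete sets, hence an equivalence, and $(\bUopn)^{\flat}$ is a cartesian pattern.

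The main obstacle is the active-morphism case: it rests on the two combinatorial facts that the identity embedding of a graph with a vertex is the union of its vertex-stars, and that graph maps preserve unions and vertex-disjointness; these are among the basic properties of $\bU$ recorded in \cite{HackneySeg}. Apart from this the argument is formal, and parallels the analogous verifications for the dendroidal and properadic patterns in \cite{ChuHaugseng} and \cite{iprpd}.
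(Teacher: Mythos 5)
Your proof is correct and follows essentially the same route as the paper's: check that $|\blank|$ sends inert, active, and elementary data to the corresponding data in $\xFn$, and then reduce the cartesian condition on $(\bUopn)^{\flat}$ to the fact that each vertex of $G$ corresponds to a unique embedding of a star. You supply somewhat more combinatorial detail (e.g.\ deriving activity of $|\psi|$ from $\id_{G'}$ being the union of its vertex-stars and $\hat\psi$ preserving unions), where the paper simply asserts the corresponding facts.
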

\begin{proof}
  If $\phi \colon G' \to G$ is an active morphism in $\bU$, so that
  $\hat{\phi}(\id_{G'}) = \id_{G}$, then every vertex of $G$ lies in
  $\hat{\phi}(v)$ for some vertex $v$ of $G'$. Thus $|\phi|$ is
  active. On the other hand, if $\phi$ is inert, then $\hat{\phi}(v)$
  is a vertex of $G$ for every $v \in V(G')$. This means there is at
  most one vertex $w$ of $G$ such that $|\phi|(w) = v$, so that
  $|\phi|$ is inert. Since we clearly also have $|\phi|(\eta) \cong
  \angled{0}$ and $|\phi|(\star_{n}) \cong \angled{1}$, this shows
  that $|\blank|$ is a morphism of algebraic pattern. It remains to
  check that $|\blank|$ induces for every $G \in \bU$ an equivalence
  \[ (\bU^{\op,\flat})^{\el}_{G/} \isoto
    \xF^{\flat,\el}_{*,V(G)_{+}/},\]
  but this just means that there is a unique embedding of a star in
  $G$ for every vertex.
\end{proof}

The edge $\eta$ is the only connected undirected graph with no
vertices, so $\bU^{\op}_{0}$ is the full subcategory on this
object. There is a single non-trivial endomorphism of $\eta$, which is
the order-2 automorphism that swaps the two arcs, so that
$\bU^{\op}_{0} \simeq BC_{2}$. Thus
\[\Seg_{\bU^{\op,\natural}_{0}}(\mathcal{S}) \simeq \Fun(BC_{2},
  \mathcal{S}) \]
is the \icat{} of $C_{2}$-spaces. For a $C_{2}$-space $\bX$, we can
describe the objects of $\bU^{\op}_{\bX}$ as connected undirected
graphs whose arcs are labelled by points of $\bX$, so that the
labelling of $a^{\dagger}$ is obtained from the labelling of $a$ via
the $C_{2}$-action on $\bX$. If $\mathcal{V}$ is a symmetric monoidal
\icat{}, then a $\bU^{\op,\natural}$-algebroid $A$ in $\mathcal{V}$
over the $C_{2}$-space $\bX$ describes a $\mathcal{V}$-enriched
modular $\infty$-operad whose $C_{2}$-space of objects is $\bX$.

\begin{variant}
  Hackney, Robertson, and Yau have defined a number of other
  categories of graphs and trees that can also be used to define
  \icatl{} analogues of other variants of operads
  \cite{HRYProperad,HRYCyclic}, which have recently
  been given a unified presentation by Hackney in
  \cite{HackneyCats,HackneySeg}. Using the latter description, these
  can be shown to be enrichable patterns by minor variants of our
  discussion for $\bU$, so we will not spell out any details
  here. Instead, we will just briefly mention some interesting cases and
  the corresponding homotopy-coherent structures, all of which can be enriched:
  \begin{itemize}
  \item The category $\bU$ has full subcategories
    \[ \bU_{\name{cyc}} \subseteq \bU_{0} \subseteq \bU\] where
    $\bU_{0}$ consists of undirected trees and $\bU_{\name{cyc}}$ of
    undirected trees with inhabited boundary. The enrichable pattern
    structure on $\bU^{\op}$ restricts to these full subcategories,
    which describe augmented cyclic \iopds{} and cyclic \iopd{},
    respectively. One can also consider the full subcategory of
    undirected linear graphs, which describes involutive \icats{}.
  \item There is a category $\bO$ of directed graphs, which can be
    described as the category of elements $\bU_{/\mathfrak{o}}$ where
    $\mathfrak{o}$ is the ``orientation presheaf'' on $\bU$. This has
    a similar enrichable pattern structure, for which Segal objects in
    $\mathcal{S}$ are wheeled $\infty$-properads.
  \item The category $\bO$ has full subcategories
    \[ \simp \subseteq \bbO \subseteq \bO_{0} \subseteq \bO,\]
    where the objects of $\bO_{0}$ are the simply connected
    directed graphs (\ie{} trees, but where vertices need not have a
    single output edge), the objects of $\bbO$ are the directed trees where
    vertices \emph{must} have a single output, and the objects of
    $\simp$ are the linear graphs. The enrichable pattern structure on
    $\bO^{\op}$ restricts to $\bO_{0}^{\op}$, whose Segal objects in
    $\mathcal{S}$ are $\infty$-dioperads (or ``symmetric
    poly-\icats{}''); this pattern structure restricts further to the
    standard patterns $\bbO^{\op,\natural}$ and $\Dopn$.
  \end{itemize}
\end{variant}

\providecommand{\bysame}{\leavevmode\hbox to3em{\hrulefill}\thinspace}
\providecommand{\MR}{\relax\ifhmode\unskip\space\fi MR }
% \MRhref is called by the amsart/book/proc definition of \MR.
\providecommand{\MRhref}[2]{%
  \href{http://www.ams.org/mathscinet-getitem?mr=#1}{#2}
}
\providecommand{\href}[2]{#2}

\end{document}